\documentclass[11pt,reqno,a4paper]{amsart}
\usepackage[margin=1in]{geometry}
\usepackage{amsmath, amsthm, amssymb}
\usepackage{enumitem}
\usepackage[colorlinks=true, pdfstartview=FitV, linkcolor=blue,citecolor=blue, urlcolor=blue]{hyperref}
\usepackage{times}
\usepackage[dvipsnames]{xcolor}
\usepackage{subcaption}
\usepackage{mathtools}
\usepackage{bbm}
\usepackage{upgreek}
\usepackage{comment}
\usepackage{dsfont}
\usepackage{tikz}
\usepackage{subcaption}
\usepackage{booktabs}
\usepackage[export]{adjustbox}
\usepackage[numbers,sort&compress]{natbib}
\usepackage{placeins}
 \usepackage[capitalise]{cleveref}
\usepackage{autonum}
\crefname{enumi}{}{}
\Crefname{enumi}{}{}

\crefformat{equation}{(#2#1#3)}
\crefrangeformat{equation}{(#3#1#4) to~(#5#2#6)}
\crefmultiformat{equation}{(#2#1#3)}%
{ and~(#2#1#3)}{, (#2#1#3)}{ and~(#2#1#3)}

\def\dd{{\rm d}}

\def\eps{\varepsilon}
\def\epsilon{\varepsilon}
\def\e{{\rm e}} 
\def\de{{\partial}}

\def\RR {\mathbb{R}}
\def\R {\mathbb{R}} 

\def\NN {\mathbb{N}}

\def\Re{{\rm Re}}
\def\Im{{\rm Im}}

\newcommand{\cB}{\mathcal{B}}

\newcommand{\cD}{\mathcal{D}}
\newcommand{\cE}{\mathcal{E}}

\newcommand{\cH}{\mathcal{H}}

\newcommand{\cJ}{\mathcal{J}}
\newcommand{\cK}{\mathcal{K}}
\newcommand{\cL}{\mathcal{L}}
\newcommand{\cM}{\mathcal{M}}
\newcommand{\cN}{\mathcal{N}}
\newcommand{\cO}{\mathcal{O}}
\newcommand{\cP}{\mathcal{P}}
\newcommand{\cQ}{\mathcal{Q}}
\newcommand{\cR}{\mathcal{R}}
\newcommand{\cS}{\mathcal{S}}
\newcommand{\cT}{\mathcal{T}}

\newcommand{\cV}{\mathcal{V}}

\newcommand{\cX}{\mathcal{X}}
\newcommand{\cY}{\mathcal{Y}}
\newcommand{\cZ}{\mathcal{Z}}

\newcommand{\adv}{\mathsf{adv}}
\newcommand{\diff}{\mathsf{diff}}

\newcommand{\app}{\mathrm{app}}

\newcommand{\Ein}{\mathrm{Ein}}

\newcommand{\mrm}{\mathrm{m}}

\newcommand{\mrM}{\mathrm{M}}

\newtheorem{proposition}{Proposition}[section]
\newtheorem{theorem}[proposition]{Theorem}
\newtheorem{corollary}[proposition]{Corollary}
\newtheorem{lemma}[proposition]{Lemma}
\theoremstyle{definition}
\newtheorem{definition}[proposition]{Definition}
\newtheorem{remark}[proposition]{Remark}

\theoremstyle{definition}

\numberwithin{equation}{section}

\newcommand{\Mapp}[2]{#1^{(#2)}_\mathrm{app}}
\newcommand{\Mappp}[3]{#1^{(#2)}_\mathrm{app,#3}}

\title{Viscous vortex crystals}
\author{Michele Dolce}
\address{Institute of Mathematics, EPFL, Station 8, 1015 Lausanne, Switzerland}
\email{michele.dolce@epfl.ch}

\author{Martin Donati}
\address{CNRS, Université de Poitiers, LMA, Poitiers, France}
\email{martin.donati@math.univ-poitiers.fr}

\begin{document}
\begin{abstract}
    We study the solution to the two-dimensional incompressible Navier-Stokes equations arising from a sum of Dirac masses in a particular co-rotating configuration. This configuration consists of a polygonal vortex crystal with or without a central vortex. By exploiting the  symmetries and stability properties of the system, we describe and control the solution up to sub-diffusive time scales, prior to the expected onset of vortex merging.
\end{abstract}

\maketitle

\tableofcontents

\section{Introduction}
To understand large-scale atmospheric dynamics, it is essential to characterize the motion and interaction of  coherent vortices. These vortices can be relatively short-lived and extreme, such as cyclones, or persistent and somewhat stable, such as Earth's tropospheric polar vortices. In this paper, we examine a specific yet not uncommon scenario where a group of vortices organizes into a highly stable structure, achieving an abnormally long lifetime due to this particular configuration. A striking example of this phenomenon is currently observed at Jupiter's poles, where polygonally arranged vortices rotate collectively in an almost rigid manner around each pole, see \cite{Adriani2018Nature}.

More broadly, relative equilibria of vortices manifest in diverse contexts, including metastable states in electron plasmas \cite{schecter1999vortex}, wake structures behind wind turbines, and as outcomes of two-dimensional turbulence relaxation \cite{Fin95}. In the latter case, the stability of these equilibria is pivotal, as it determines which structures persist even with turbulent chaos. For a comprehensive mathematical analysis of this phenomenon, we refer to \cite{Drivas_Elgindi_2023_Singularity}, while \cite{SieYouIng22} provides numerical simulations illustrating the formation of such vortex structures from turbulence, particularly near the poles of rotating planets. Moreover, the emergence of large vortex structures following a turbulent phase was explored in \cite{Modin22}, where a finite-dimensional approximation of ideal fluids on a sphere, known as the \emph{Zeitlin model}, is used. The authors consistently observed at most four large vortices in the long-time regime (see also \cite{Modin26} for the mathematical foundations of the matrix hydrodynamic model used in \cite{Modin22}).

Despite the inherent complexity of real atmospheric systems, where multiple physical processes interact, many fundamental stability mechanisms and emergent behaviors can be effectively captured by even the simplest fluid models. For instance, the \emph{point-vortex} model pioneered by Helmholtz \cite{Helmholtz_1858} and Kirchhoff \cite{kirchhoff1876vorlesungen}, in which  vortices are represented as singularities concentrated at discrete points, provides an accurate first approximation of the motion of strong, concentrated vortices. Within this model, polygonally arranged point vortices indeed rotate in a rigid manner. The study of their stability properties dates back to Thomson's 1883 foundational result \cite{thomson1883treatise}.   Since then, the topic has been extensively studied, with \cite{CabSch1999,aref2002vortex} providing modern results and historical accounts.

Another natural research direction involves the \emph{desingularization} of the point-vortex model to account for finite-size effects. In this context, particular exact concentrated solutions can be constructed through variational methods (e.g. \cite{Turkington_1985,Cao25}), the study of contour dynamics \cite{HmMa17,Hassainia25,hassainia2024desingularization}, or the \emph{gluing method} introduced in \cite{Davila20}, while the control of any non-viscous concentrated solution to the planar incompressible Euler equation can be obtained by the \emph{vortex blobs method}, e.g. \cite{MarPul84, Butta18}.
 Furthermore, the second author \cite{Donati_2025_Crystal} proved that non-singular, concentrated vortices also exhibit robust stability properties when arranged in polygonal configurations. In the present work, we consider these polygonal configurations in weakly viscous fluid flows.

\subsection{Mathematical setting}
We consider a two-dimensional incompressible and viscous fluid flow,  described by a velocity field $u$ with vorticity $\omega=\nabla^\perp\cdot u$ and viscosity $\nu > 0$. This evolves according to the Navier-Stokes equations in $\R^2$, which in vorticity form are given by:
\begin{equation}
\label{eq:NS}
    \begin{cases}
        \displaystyle \de_t \omega(x,t) +u(x,t)\cdot \nabla \omega(x,t) =\nu \Delta \omega(x,t),  & x\in \RR^2, \quad t>0 \vspace{2mm}\\
        \displaystyle u(x,t) = \frac{1}{2\pi}\int_{\RR^2}\frac{(x-y)^\perp}{|x-y|^2}\omega(y,t) \dd y, & x\in \RR^2, \quad t \ge 0\vspace{2mm}\\
        \displaystyle  \omega|_{t=0}=\omega^{in},
    \end{cases}
\end{equation}
where the initial datum $\omega^{in}$ has to be specified. We aim to describe the evolution of vortices that are very concentrated in space around their center of mass. It is well known that, at least over some time interval, if the viscosity of the fluid is small enough, the motion of those vortices (the motion of their center of mass) is well approximated by the so-called \emph{point-vortex dynamics}, or \emph{Helmholtz-Kirchhoff} equations
\begin{align}
\label{eq:HK}
    & z_j'(t)=\sum_{\substack{l=1 \\ l\neq j}}^{N+1}\frac{\Gamma_l}{2\pi}\frac{(z_j(t)-z_l(t))^\perp}{|z_j(t)-z_l(t)|^2}, \qquad j,l=1,\dots, N+1, \\
    & z_j(0) = z_j^{in}.
\end{align}
where $z_1(t),\dots,z_{N+1}(t)\in \mathbb{R}^2$ are the positions of the $N+1$ point-vortices  and $\Gamma_1,\dots,\Gamma_{N+1}\in \RR$ their circulations. Going back to Helmholtz \cite{Helmholtz_1858}, a rigorous proof of this statement was made for a non-viscous fluid in \cite{MarPul84} (see also \cite{MarPul94}). For weakly-viscous fluids, it was done in \cite{Gallay2011} in the case of a sum of Dirac masses of vorticity (point-vortices) as initial data, where control of the solution was established on a non-viscous time scale. In the case of concentrated (but not infinitely concentrated) initial vortices, it was done by
 \cite{Mar98,CeSe18} with a viscosity depending on the considered initial data, then by \cite{CeSeis24} with fixed small viscosity. In \cite{dolce2024long}, the same study as \cite{Gallay2011} was conducted, but on a particular configuration of vortices: the translating pair of opposite vortices. In that case, using the specific symmetries and stability of the configuration, the control of the solution is obtained on a much larger time scale, nearly reaching what we call the \emph{diffusive time}, which is the time after which the diffusive effects become so strong that the vortices cannot be considered concentrated anymore. In the recent contribution \cite{zhang2025long}, the authors treated the case of two vortices of any given intensity, obtaining the result on the same large time scales reached in \cite{dolce2024long}. For details regarding the ODE system~\eqref{eq:HK} itself, we refer to \cite{Aref_2007}.

In this paper, we perform the same analysis but on a configuration consisting of equal vortices placed at the vertices of a regular polygon, with or without a central vortex of given circulation. 
We thus consider equations~\cref{eq:NS} with initial datum
\begin{equation}\label{eq:omegain}
    \omega^{in}=\Gamma\sum_{j=1}^N\delta_{z_j^{in}}+\gamma \Gamma \delta_{0},
\end{equation}
where $N\geq 2$, $\Gamma \in \R\setminus\{0\}$, $\gamma \in \R$, and the $z_j^{in}$ are
\begin{equation}\label{eq:zin}
    z_j^{in} = \mathsf{r}\Big(\cos\big(\frac{2\pi j}{N}\big) ,\sin\big(\frac{2\pi j}{N}\big) \Big), \qquad \text{for}\quad  j=1,\dots, N,
\end{equation}
with $\mathsf{r} >0$. This initial datum is a sum of equal Dirac masses regularly placed on the disk of center 0 and radius $\mathsf{r}$, with (or without if $\gamma=0$) a central vortex, see Figure~\ref{fig:crystal1}. Note that the cases $N=1,\gamma\neq0$ and $N=2$, $\gamma=0$ are fully covered by \cite{dolce2024long,zhang2025long}, which is why we restrict our attention to $N\geq2$. Let us recall that existence and uniqueness of a solution to the system~\cref{eq:NS} with a measure as initial data were proven in \cite{GaGa2005}. Moreover, the solution of the point-vortex dynamics~\cref{eq:HK} with initial datum~\cref{eq:zin} is well known (see for instance \cite{aref2002vortex}) to be given by
\begin{align}
\label{eq:zjt}
\begin{cases}z_j(t)=\mathsf{r}\big(\cos\big(\frac{2\pi j}{N}+\mathsf{a} t\big) ,\sin\big(\frac{2\pi j}{N}+\mathsf{a} t\big) \big), \qquad &\text{for}\quad  j=1,\dots, N\\
    z_{N+1}(t)=(0,0),\\
    \mathsf{a}=\frac{\Gamma}{4\pi \mathsf{r}^2}(N-1+2\gamma).
    \end{cases}
\end{align}

\begin{figure}
    
    \centering
    \begin{tikzpicture} 

        \draw (-5+0,0) node {$\bullet$};
        \draw (-5+1.2,0) node {$\bullet$};
        \draw (-5+0.5*1.2,0.866*1.2) node {$\bullet$};
        \draw (-5-0.5*1.2,0.866*1.2) node {$\bullet$};
        \draw (-5-1.2,0) node {$\bullet$};
        \draw (-5-0.5*1.2,-0.866*1.2) node {$\bullet$};
        \draw (-5+0.5*1.2,-0.866*1.2) node {$\bullet$};

        \draw (1.2,0) node {$\bullet$};
        \draw (0.309*1.2,0.951*1.2) node {$\bullet$};
        \draw (-0.809*1.2,0.587*1.2) node {$\bullet$};
        \draw (-0.809*1.2,-0.587*1.2) node {$\bullet$};
        \draw (0.309*1.2,-0.951*1.2) node {$\bullet$};

    \end{tikzpicture}
    \caption{Two examples of polygonal vortex configuration as described at \cref{eq:zin}, with $N=6$ and $\gamma = 1$ (left), and $N=5$ and $\gamma =0$ (right).}
    \label{fig:crystal1}
\end{figure}
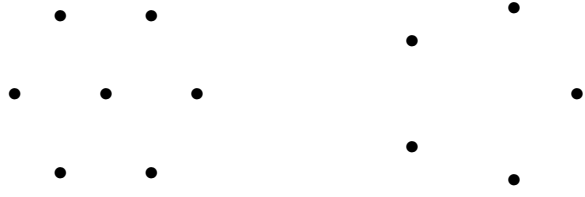
This configuration is uniformly rotating around the origin. It is a relative equilibrium of the dynamics, called in this context a \emph{vortex crystal}. The stability of this configuration was studied in \cite{CabSch1999}\footnote{Let us mention that \cite{LP05} disagrees with the main result of~\cite{CabSch1999} concerning the case $N \le 6$, $\gamma < 0$.}, and the result is recalled in Section~\ref{sec:stability}. In the non-viscous case, for a special value of $\gamma$ and for $N \ge 4$, a result of long-time stability for non-singular vortices concentrated near this initial configuration was obtained in \cite{Donati_2025_Crystal}, see also \cite{DR26} for the same problem on the rotating sphere. In that regard, we now perform a viscous analysis of this configuration, but we have to first introduce several dimensionless parameters that allow us to describe the solution precisely. Let $L_N=2\mathsf{r}\sin(\pi/N)$ be the length of the sides of our regular polygon and define the characteristic distance in the system as
\begin{equation}
\label{def:d}
    d:=\min\{ \mathsf{r},L_N\}.
\end{equation}
Note that $L_N=\cO(\mathsf{r}N^{-1})$ as $N\to \infty$. We then define our adimensional quantities of interest as
\begin{equation}
\label{def:adpar}
    \eps(t):=\frac{\sqrt{\nu t}}{d}, \qquad \delta:=\frac{\nu}{\Gamma}, \qquad T_\adv:=\frac{d^2}{\Gamma}, \qquad T_\diff:=\frac{d^2}{\nu},
\end{equation}
where $\eps(t)$ is the aspect ratio between the core size of the vortices and their mutual distances, $\delta$ the inverse circulation Reynolds number, $T_{\rm adv}$ the advection time  and $T_{\rm diff}$ is the standard diffusive time. Note that $T_{\rm adv}$ corresponds to a time where the point vortex polygon rotates by a fixed angle, so that one can effectively measure the point vortex dynamics at this time. In fact, to actually observe the inviscid dynamics described by \cref{eq:zjt} in the viscous problem, we need to work in the high Reynolds number regime $\delta\ll 1$, which implies $T_{\rm adv}\ll T_{\rm diff}$. Note that $\eps(T_{\rm diff})=1$, and we therefore will only consider sub-diffusive time scales.

\subsection{Main result}
We aim to describe the solution of equations~\cref{eq:NS} with initial datum given by~\cref{eq:omegain} up to a large time scale. 
Applying the result of~\cite{Gallay2011}, we know that the solution is well approximated by Lamb-Oseen vortices moving according to the point-vortex dynamics on a non-viscous time scale. Let us state this fact precisely. Let
\begin{equation}
    \omega_{\mathrm{PV}}(x,t) = \sum_{j=1}^N \frac{\Gamma}{4\pi\nu t} \exp\Big(-\frac{|x-z_j(t)|^2}{4\nu t}\Big) + \gamma\Gamma \exp\Big(-\frac{|x|^2}{4\nu t}\Big)
\end{equation}
where the trajectories $t \mapsto z_j(t)$ are given by~\eqref{eq:zjt}.
We have the following.
\begin{proposition}[Corollary of {\cite[Theorem 2.4]{Gallay2011}}]\label{prop:gallay1}
    Let $K > 0$ be a fixed constant. Then there exist  constants $\delta_0 \in(0,1)$ and  $C_0>0$ such that for every $\delta \le \delta_0$, the unique solution $\omega$ of equations~\cref{eq:NS} with initial datum~\cref{eq:omegain} satisfies
    \begin{equation}
        \frac{1}{|\Gamma|}\int_{\R^2} \big|\omega(x,t) - \omega_{\mathrm{PV}}(x,t)\big| \dd x \le C_0 \eps^2(t), \qquad \forall t \in (0,KT_{\rm adv}),
    \end{equation}
    where $\delta,\eps(t)$ and $T_{\rm adv}$ are defined in \cref{def:adpar}.
\end{proposition}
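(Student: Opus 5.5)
This follows by specialising \cite[Theorem 2.4]{Gallay2011} to the initial datum \cref{eq:omegain}, after reducing to dimensionless variables. Gallay's theorem concerns initial data $\sum_i \alpha_i\delta_{x_i}$ with distinct points $x_i$. It requires the point-vortex trajectories of \cref{eq:HK} to stay pairwise separated on a fixed interval $[0,T]$. Under that hypothesis, and for viscosity small relative to $|\alpha|$, it gives an $L^1$ bound of order $\nu t/d^2$ on $\omega-\omega_{\mathrm{PV}}$, uniformly for $t\in(0,T)$, with the error normalised by total circulation.

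\emph{Step 1: rescaling.} Set $x = d\,\tilde x$, $t = T_{\rm adv}\tilde t$ and $\omega = (\Gamma/d^2)\tilde\omega$. The equation \cref{eq:NS} keeps its form, with $\nu$ replaced by $\delta=\nu/\Gamma$. The initial datum becomes $\sum_j\delta_{\tilde z_j^{in}}+\gamma\delta_0$, with $\tilde z^{in}_j = z^{in}_j/d$. The quantity $\eps(t)^2=\nu t/d^2$ equals $\delta\tilde t$, and the interval $(0,KT_{\rm adv})$ becomes $\tilde t\in(0,K)$. Since $d=\min\{\mathsf r,L_N\}$, the rescaled vertices lie at mutual distance at least $1$ from each other and from the origin.

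\emph{Step 2: checking the separation hypothesis.} By \cref{eq:zjt}, the trajectories are rigid rotations. So for all times the mutual distances are exactly the initial ones, all $\ge 1$ in rescaled units, and no collision occurs on $[0,K]$. If $\gamma=0$ we simply drop the central vortex and apply the theorem to $N$ vortices. The Lamb–Oseen profile of $\omega_{\mathrm{PV}}$ rescales to the profile in Gallay's statement. I read the central term in $\omega_{\mathrm{PV}}$ as carrying the normalisation $\frac{1}{4\pi\nu t}$, which appears to be a typo in the display.

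\emph{Step 3: conclusion.} Gallay's theorem then provides $\delta_0$ and $C_0$, depending only on $N$, $\gamma$, $K$ and the rescaled configuration, such that for $\delta\le\delta_0$ the $L^1$ error is at most $C_0\,\delta\tilde t=C_0\eps^2(t)$. This is the claimed bound with the $1/|\Gamma|$ normalisation.

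\emph{Main obstacle.} The difficulty is bookkeeping rather than analysis. First, one must match Gallay's precise error form, which is stated as $C\nu t/d^2$ with $d$ the minimal initial distance, to $\eps^2(t)$. Second, one must confirm that the constants depend on the data only through dimensionless quantities. For fixed $N$ and $\gamma$ this dependence is harmless: the rescaled configuration ranges over a one-parameter family, $\mathsf r/d\in\{1,1/(2\sin(\pi/N))\}$ in the two cases of \cref{def:d}, which is a finite set.
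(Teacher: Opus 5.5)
Your proposal is correct and matches the paper, which gives no argument beyond presenting the statement as a direct corollary of \cite[Theorem 2.4]{Gallay2011}. Your rescaling, your check that the rigidly rotating configuration keeps all mutual distances $\ge d$, and the passage from Gallay's normalisation by the total circulation $|\Gamma|(N+|\gamma|)$ to $1/|\Gamma|$ (a factor absorbed into $C_0$) are exactly the implicit bookkeeping; you are also right that the central term of $\omega_{\mathrm{PV}}$ must carry the factor $\frac{1}{4\pi\nu t}$, as in \cref{eq:PVnu}, since without it the estimate fails for $\gamma\neq 0$.
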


Fixing $\Gamma$ and taking $\nu$ sufficiently small, we naturally expect that not only the control of the solution becomes better, as it is already visible in Proposition~\ref{prop:gallay1}, but also that it should hold for a longer time, as the viscous effects that will eventually destroy the structure are less powerful. As remarked in \cite{donati2025fast}, it is only possible in all generality to adapt the proof made in \cite{Gallay2011} to obtain the control of the solution up to a time of order $T_{\rm adv} |\ln \delta|$, which is also the result obtained in \cite{CeSeis24}. So far, to the best of our knowledge, the only known way to push further in time is to take advantage of stability properties arising from special vortex configurations. This was done in \cite{dolce2024long} for the pair of counter-rotating vortices and in \cite{zhang2025long} for any pair of vortices. We establish an equivalent result for our configuration of polygonally arranged vortices with (or without) central vortex. We show that for longer times, the angular velocity of the configuration of vortices starts changing due to viscous effects. 
We thus introduce corrected approximate positions as follows.
For all $j \in \{1,\ldots,N\}$, let
\begin{equation}
    \zeta_j(t) = R(t)\Big(\cos\Big(\frac{2\pi j}{N}+\alpha(t)\Big) ,\sin\Big(\frac{2\pi j}{N}+\alpha(t) \Big) \Big),
\end{equation}
where with $R(0)=\mathsf{r}$ and $\alpha(0) = 0$. These quantities act as \emph{modulation parameters} used to precisely track the center of each Lamb-Oseen vortex. We then define
\begin{align}
    \label{eq:PVnu}&\omega_{\mathrm{PV}_\nu}(x,t) = \frac{\Gamma}{4\pi\nu t}\Bigg(\sum_{j=1}^N  \exp\Big(-\frac{|x-\zeta_j(t)|^2}{4\nu t}\Big) + \gamma \exp\Big(-\frac{|x|^2}{4\nu t}\Big)\Bigg), \\
    \label{eq:appM}&\omega_{{\rm app},M}(x,t)=\frac{\Gamma}{\nu t}\sum_{m=2}^M\eps^m(t)\Bigg(\sum_{j=1}^N  \Omega_{m}\Big(\frac{x-\zeta_j(t)}{\sqrt{\nu t}}\Big) + \gamma\widehat{\Omega}_{m}\Big(\frac{x}{\sqrt{\nu t}}\Big)\Bigg).
\end{align}
Here, $\omega_{{\rm app},M}$ captures higher-order viscous corrections, up to a fixed order $M$, to the radial profile close to the vortex cores. Note that these corrections slowly evolve in a self-similar way. The main result of this paper is the following.

\begin{theorem}\label{theo:main}
        Let $\sigma\in [0,1)$ and $M>(5-\sigma)/(1-\sigma)$. Then there exist constants $\delta_1 \in (0,1)$, and  $C_1>0$ such that for every $\delta \in(0, \delta_1]$, there exists $\Omega_m,\widehat{\Omega}_m$ for $m=2,\dots,M$, and modulation parameters $R(t),\alpha(t)$ such that the unique solution $\omega$ of equations~\cref{eq:NS} with initial datum~\cref{eq:omegain} satisfies
    \begin{equation}
    \label{bd:mainNLintro}
        \frac{1}{|\Gamma|}\int_{\R^2} \big|\omega - (\omega_{\mathrm{PV}_\nu}+\omega_{{\rm app},M})\big|(x,t) \dd x \le C_1 \delta\eps^2(t), \qquad \forall t \in \big(0,T_{\rm adv} \delta^{-\sigma} \big).
    \end{equation}
    Furthermore, the radius $R(t)$ and the angular speed $\alpha'(t)$ are smooth functions satisfying 
    \begin{equation}
    \label{eq:expRa}
    R(t)=\mathsf{r}\big(1+r_6 \eps^6(t) + \cO(\eps^7(t))\big), \qquad \alpha'(t) = \mathsf{a} \big(1+\alpha_4\eps^4(t)+\cO(\eps^5(t)+\delta \eps^4(t))\big),
\end{equation}
where $\alpha(0)=0$ and the exact values of the constants $r_6$ and $\alpha_4$ are given in Proposition~\ref{prop:deviation}. 
\end{theorem}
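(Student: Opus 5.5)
The plan follows the strategy of \cite{dolce2024long}: first build a very accurate approximate solution, then control the remainder by an energy argument in self-similar variables that exploits the symmetry and stability of the crystal.

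\emph{Step 1: approximate solution.} For each vortex $j$ I will work in self-similar variables $\xi=(x-\zeta_j(t))/\sqrt{\nu t}$, and also for the central vortex centred at $0$. The velocity induced by all the other vortices is Taylor expanded around $\zeta_j$. Each term of order $\eps^m$ in this expansion is a homogeneous polynomial in $\xi$ multiplied by the Lamb--Oseen profile $G$. At each order I will solve a linear elliptic problem $\Lambda \Omega_m=$ source, where $\Lambda$ is the linearization of the Biot--Savart advection around $G$, projected onto angular Fourier modes. Solvability is guaranteed by the invertibility of $\Lambda$ on nonradial modes in the Gaussian-weighted space $L^2(G^{-1})$. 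The obstructions are the translation modes $\partial_i G$ and the radial modes. These I will absorb by choosing the ODEs for $R(t)$ and $\alpha(t)$, and by letting the radial parts evolve self-similarly. The discrete rotational symmetry $\mathbb{Z}_N$, together with the reflection symmetry, means every $\Omega_m$ is determined by $\Omega_m$ for the vortex $j=N$ and is equivariant. Consequently only two modulation parameters are needed. The central vortex sees an $N$-fold symmetric field, so its first correction appears only at order $\eps^N$ and its centre stays at $0$. Computing the first nonzero contributions to the solvability conditions yields the expansion \cref{eq:expRa}, with $\alpha_4$ coming from the quadrupolar deformation at order $\eps^2$ and $r_6$ arising at the next compatible order. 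The outcome is a residual $\mathcal{R}_M$ with $\|\mathcal{R}_M\|\lesssim \frac{\Gamma}{\nu t}\,\eps^{M+1}$ in weighted norms. Cutoffs will be used to keep the profiles localized and separated.

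\emph{Step 2: remainder estimate.} For short times, Proposition~\ref{prop:gallay1} gives the control up to a time of order $T_{\rm adv}$. After that I write $\omega=\omega_{\mathrm{PV}_\nu}+\omega_{{\rm app},M}+w$ and decompose $w$ into pieces localized near each vortex. Each piece is measured in the Gaussian-weighted energy $\int |w_j|^2 G^{-1}(\xi)\,\dd\xi$, made slightly sub-Gaussian through a time-dependent weight, in the spirit of Gallay. The key cancellation is that the leading advection term $\delta^{-1}\Lambda$ is skew-adjoint in this weighted space, so it drops out of the energy identity. The interaction terms between different vortices contribute $O(\eps^2)$ times the energy. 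Integrated over time in the natural time variable $\tau=\ln t$ (scaled by $\delta^{-1}$), these produce growth of size $\exp(C\delta^{-1}\eps^2\cdots)$, which remains harmless for $t\le T_{\rm adv}\delta^{-\sigma}$, because there $\eps^2=\delta t/T_{\rm adv}\le \delta^{1-\sigma}$. The residual forcing integrated up to $T_{\rm adv}\delta^{-\sigma}$ is controlled by $\delta\eps^2$ precisely when $\eps^{M+1}\delta^{-1}\lesssim \delta\eps^{2}$ holds at the final time. This gives the condition $M>(5-\sigma)/(1-\sigma)$.

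\emph{Main obstacle.} The hard part is controlling the neutral modes (translations and mass) of $w$ over long times; if left alone they would generate secular drift. My approach is to impose orthogonality conditions: the first moments of $w$ near each vortex should vanish. I will enforce these by an exact modulation of $R$ and $\alpha$, and use the symmetry to reduce the problem to these two parameters. The symmetry is essential here, because an arbitrary displacement of an individual vortex is not generated by $R$ or $\alpha$. In principle such a displacement could be excited through linear instability of the crystal, the stability issue recalled in Section~\ref{sec:stability}. It is excluded because the uniqueness of the solution preserves the $\mathbb{Z}_N$ symmetry of the initial datum. Finally I will close a bootstrap on the weighted norm and convert the bound to the $L^1$ estimate \cref{bd:mainNLintro} using Cauchy--Schwarz against the Gaussian weight.
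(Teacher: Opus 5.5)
\paragraph{Gap in Step 2: the $\delta^{-1}$-scaled $\cO(\eps^2)$ terms.} Your energy $\int|w_j|^2G^{-1}$ makes only $\Lambda$ skew-adjoint. The rest of the linearized operator is of size $\cO(\eps^2)$: strain from the other vortices, the deformation $\eps^2\Omega_2$, and the nonlocal cross-interactions. It stays in the energy identity with the prefactor $\delta^{-1}$. In a Gaussian weight it is not even bounded by the energy (its relative size is $\eps^2|\xi|^2$), so it can only be absorbed by the $\cO(1)$ diffusive gain. Since $\delta^{-1}\eps^2(t)=t/T_{\rm adv}$, Gronwall in $\tau=\ln t$ gives a growth factor $\exp\big(C\delta^{-1}\eps^2(t)\big)=\exp(Ct/T_{\rm adv})$. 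Your own inequality $\eps^2\le\delta^{1-\sigma}$ gives $\delta^{-1}\eps^2\le\delta^{-\sigma}$, so at the final time the factor is $\exp(C\delta^{-\sigma})$. That is not harmless for any $\sigma>0$. This is precisely the mechanism that limits \cite{Gallay2011}-type arguments to $t\lesssim T_{\rm adv}$, or at best $T_{\rm adv}|\ln\delta|$.

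The missing idea is the Arnold-type functional $E_\eps(\mathbf w)=\frac12\langle\mathbf w,\cM_\eps\mathbf w\rangle$. In the inner region its diagonal weight is $A_\eps=F'(\Omega_{\rm app,E})$, respectively $\widehat F'(\widehat\Omega_{\rm app,E})$. Here $F=F_0+\sum_{k\ge2}\eps^kF_k$ comes from the approximate functional relationship $\Phi_{\rm app,E}+F(\Omega_{\rm app,E})=\Theta$ with $\nabla\Theta=\cO(\eps^{M+1})$, built order by order (Proposition~\ref{prop:functional}). Its nonlocal blocks $(I+\cQ_\eps)\Delta^{-1}$, $\frac{\gamma}{N}\cV_\eps^*\Delta^{-1}$, $\frac{\gamma}{N}\cV_\eps\Delta^{-1}$ and $\frac{\gamma^2}{N}\Delta^{-1}$ encode the interactions. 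They are self-adjoint by Lemma~\ref{lem:adjoints}, using $\cT_\eps=\frac1N\cV_\eps^*$ on $N$-fold symmetric functions. With this functional the whole linearization $\cJ_{\rm app}$, not just $\Lambda$, is skew-adjoint up to errors $\delta^{-1}\cO(\eps^{M+1-\sigma_1\widetilde N}+\eps^{1+\sigma_2})$. These are negligible up to $T_{\rm adv}\delta^{-\sigma}$. The price is that $E_\eps$ is coercive only modulo mass and first moments (Proposition~\ref{prop:E}).

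\paragraph{Gap in the moment control.} You propose to kill both first moments by exact modulation of $R$ and $\alpha$. But modulation errors enter the $w$-equation with a factor $\delta^{-1}$. For $R$ the term is proportional to $\delta^{-1}t(R'-R'_{\rm app})\,\partial_1\Omega_{\rm app}/(\eps d)$, and you give no mechanism that makes it harmless. The paper deliberately does not modulate $R$ (Remark~\ref{rem:commentRalpha}). It fixes $R=R_{\rm app}$ and uses $\alpha$ only to enforce $\mrm^1_2(w)=0$. It then bounds $\mrm^1_1(w)$ through the exact angular-momentum law (Lemma~\ref{lem:NSmom}) combined with the Eulerian cancellation of Lemma~\ref{lem:keyLEuler}, which gives Lemma~\ref{lem:keymom}. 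Even the $\alpha$-forcing $\frac{\mathfrak b_\eps}{2\delta}\big(\{|\cT_\eps\xi|^2,\Omega_{\rm app,E}\},\{|\xi|^2,\widehat\Omega_{\rm app,E}\}\big)$ is controlled only because the generator of global rotations lies approximately in the kernel of $\cM_\eps$ (Lemma~\ref{lem:keykernel}). That fact again rests on the functional relationship.

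\paragraph{Two smaller points.} First, solving only $\Lambda\Omega_m=\text{source}$ order by order leaves a residual of size $\cO(\delta^2\eps^2)$; the paper has $\cR_M=\cO_\cZ(\eps^{M+1}+\delta^2\eps^2)$. It is this term that sets the final accuracy $\delta\eps^2$. Second, Proposition~\ref{prop:gallay1} cannot start the bootstrap: at $t\sim T_{\rm adv}$ it gives an error $\eps^2\sim\delta$, whereas the target is $\delta\eps^2\sim\delta^2$. The paper instead starts at $t=0$ with $w\to0$.
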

Naturally, as $\delta \to 0$, the error in \cref{bd:mainNLintro} is much smaller than that in Proposition~\ref{prop:gallay1}\footnote{In fact, in \cite{Gallay2011} the final result includes the approximate solution up to order $M=4$ with $\delta^{-\sigma}$ replaced by the constant $K$ in Proposition~\ref{prop:gallay1}. The analogue of Theorem~\ref{theo:main} is proved with $\delta \eps^2(t)$ replaced by $\eps^3(t)$, but on the time scale under consideration, $\delta \ll \eps(t)$ for any $t \ge T_{\rm adv}$ and thus the error is still much smaller after the advection time.}. 
Moreover, since $T_{\rm adv} \delta^{-\sigma} \gg T_{\rm adv}$ and $T_{\rm adv} \delta^{-\sigma} \gg T_{\rm adv} |\ln\delta|$, Theorem~\ref{theo:main} controls the solution on a significantly longer time scale than Proposition~\ref{prop:gallay1}. Regarding the  radius and angular speed, \cref{eq:expRa} only highlights the leading-order viscous corrections to the inviscid Helmholtz-Kirchhoff dynamics. The full, rigorous characterization of $R(t)$ and $\alpha(t)$ is not entirely captured by \cref{eq:expRa} alone. Their precise definition is one of the central components of our proof and relies on technical constraints that we will carefully detail later in the paper. Nonetheless, we emphasize that relation~\cref{eq:expRa} implies
\begin{equation}\label{maj:zeta_j}
  \frac{1}{\mathsf{r}}  |\zeta_j(t) - z_j(t)| = \cO\Big( \frac{t}{T_{\rm adv}} \eps^4(t) \Big).
\end{equation}
Observing that $t \eps^4(t) / T_{\rm adv} \ll 1$ is valid only for $t \ll T_{\rm adv} \delta^{-2/3}$, it follows that the point-vortex dynamics in \cref{eq:zjt} remains a good approximation of the viscous vortices up to a time of order $\cO(T_{\rm adv}\delta^{-2/3})$. Beyond this time scale, the configuration becomes out of phase relative to the inviscid point-vortex motion, see Remark~\ref{rem:delta23} and Corollary~\ref{cor:alpha} for more details.

To prove Theorem~\ref{theo:main}, we proceed similarly to \cite{dolce2024long}. After reformulating the equations in self-similar variables, the first step is to construct an approximate solution $\omega_{\rm app} = \omega_{\mathrm{PV}_\nu} + \omega_{{\rm app},M}$ that remains close to $\omega$ over the time interval $(0,T_\adv\delta^{-\sigma} )$ up to any power of $\eps(t)$. The construction of such an accurate approximate solution is essential for the following reason: the control of the configuration over such long time scales requires an extremely accurate knowledge of the vortex positions, as any positional error would accumulate in time and eventually invalidate an estimate like in \cref{bd:mainNLintro}.  
In addition, the controls are obtained in more refined norms. We observe that for the particular choice of $\gamma$ used in \cite{Donati_2025_Crystal} our approximate solution starts at order $\eps^3(t)$ rather than the standard $\eps^2(t)$ found in all previous constructions \cite{dolce2024long,zhang2025long}, see discussion in the next subsection.

\begin{remark}[Bounds on the time scale]
Note that, in this vortex crystal configuration, it is expected that vortices merge abruptly at a time of order $\cO( T_{\rm adv} \delta^{-1} )=\cO( T_{\rm diff} )$, which corresponds to the regime where the aspect ratio $\eps(t)$ becomes of order one. Therefore, approximating $\omega$ by $\omega_{\mathrm{PV}_\nu}$ is only physically meaningful up to a time of at most $t_0 T_{\rm adv}\delta^{-1}$ for some sufficiently small constant $t_0>0$. By taking $\sigma \to 1^-$, we almost reach this  limit. However, our method relies on an asymptotic expansion in $\eps(t)$, but  one cannot completely decouple the parameters $\eps(t)$ and $\delta$, as the construction introduces remainder terms of size $\cO(\delta^{-2}\eps^{M+1}(t))$. On time scales of order $\cO(T_{\rm adv}\delta^{-\sigma})$, these errors can be made arbitrarily small by choosing $M$ large enough depending on $\sigma$, but this inherently limits the analysis to the regime $\eps(t) \ll \delta^s$ for some small but strictly positive $s$. Therefore, even though we control the solution up to a time that is arbitrarily close to the maximal time at which vortices remain well separated, we are mathematically far from reaching exactly $\sigma = 1$. This limitation is also present in the more structurally stable configuration of a vortex dipole \cite{dolce2024long}, where no abrupt merging is expected and the dynamics seem to evolve smoothly for all times \cite{DelRos2009}. Hence, we believe that reaching the diffusive time scale is not merely a technical artifact of the proof. Instead, reaching the diffusive time scale requires the introduction of fundamentally new ideas, as the nonlinearity and the diffusion become of the same order of magnitude. In essence, prior to the diffusive time, diffusion is treated purely as a perturbative effect that smoothly changes the shape and position of the vortices.
\end{remark}

\begin{remark}[On the circular vortex sheet limit]
\label{rem:sheet}
If we rescale our initial data in \cref{eq:omegain} as
\begin{equation}
\label{eq:ominN}
    \omega^{in}_N = \frac{\Gamma}{N}\sum_{j=1}^N\delta_{z_j}+\gamma \Gamma\delta_0,
\end{equation}
we observe that in the $N\to \infty$ limit, this sequence of measures converges to the circular vortex sheet with a central point vortex, given by 
\begin{equation}
    \label{eq:omininfty}
    \omega^{in}_{\infty} := \frac{\Gamma}{2\pi \mathsf{r}}\delta_{|x|=\mathsf{r}} + \gamma\Gamma \delta_0.
\end{equation}
The study of the evolution and stability of vortex sheets is a classical problem in the inviscid setting, where Kelvin-Helmholtz instabilities are commonly observed for both flat and circular sheets \cite{krasny1986study,MajdaBertozzi,moore1974stability,wilcox2024modern}. Moreover, a desingularization procedure for more general geometries of these singular structures was established only recently in \cite{enciso2025desingularization}.  A classical numerical approach to simulate the dynamics of vortex sheets relies on approximating them with a collection of point vortices \cite{krasny1986study}. In our setting, due to the specific choice of the initial data in \cref{eq:ominN} and the fact that the Navier-Stokes equations preserve the $N$-fold symmetry, one should not see any instability in a suitable limiting regime. Rather, one expects to effectively carry out the $N\to \infty$ limit for any fixed $t>0$ on an appropriate time scale. Indeed, with the initial data given by \cref{eq:omininfty}, the unique solution to \cref{eq:NS} is simply the radial function $\omega_\infty(t)= e^{\nu t \Delta}\omega_{\infty}^{in}$. This coincides with the solution to the 2D heat equation with data \cref{eq:omininfty}, which can be approximated by $\widetilde{\omega}_N (t):= e^{\nu t \Delta}\omega_{N}^{in}$ for any fixed $t>0$ as $N\to \infty$.

To formalize the limit $N\to \infty$ in our framework, recall that $L_N/\mathsf{r} = \cO(N^{-1})$, which implies that the dimensionless parameters defined in \cref{def:adpar} (with $\Gamma$ replaced by $\Gamma/N$)  now scale as \[\eps(t) = \cO(N\sqrt{\nu t}/\mathsf{r}),\quad T_{\rm adv} = \cO(\mathsf{r}^2/(\Gamma N)), \quad T_{\rm diff} = \cO(\mathsf{r}^2/(\nu N^2)).\]
Our analysis requires the time scale separation $T_{\rm adv}\ll T_{\rm diff}$, which is equivalent to imposing $\delta \ll 1/N$. To reach a dynamically relevant conclusion, we must ensure that the upper bound of our time interval, $T_{\rm adv}\delta^{-\sigma}$, does not vanish as $N\to \infty$, so we require $\delta\lesssim N^{-1/\sigma}$. Under these hypotheses, a careful tracking of the $N$-dependence of the constants suggests that it is feasible to take the $N\to \infty$ limit for any fixed $t\in (0,T_{\rm adv}\delta^{-\sigma})$ in Theorem~\ref{theo:main}. 
Specifically, since $\eps^2(t) \lesssim N \delta^{1-\sigma}$, if $\delta$ decays sufficiently fast with $N$ (for instance, $\delta=\min \{N^{-\frac{1}{\sigma}^{-}},N^{-\frac{1}{1-\sigma}^{-}}\}$), all the higher-order corrections in \cref{eq:appM} will vanish as $N\to \infty$. While one must carefully track the $N$-dependent constants in the definition of the approximate solution and the bound in Theorem \ref{theo:main}, we observe for example that the profile $\Omega_{2}$ defined in Proposition~\ref{prop:Om2} contains $N$-dependent constants that precisely balance out to yield a term of order $\cO(1)$ (see Remark \ref{rem:Om2Ninfinity}), whereas $\widehat{\Omega}_m=0$ for $m=2,\dots, N-1$ by the $N$-fold symmetry.
Furthermore, since the angular speed satisfies $\mathsf{a}\to \Gamma(1+2\gamma)/(4\pi \mathsf{r}^2)$ as $N\to \infty$, Theorem~\ref{theo:main} implies that the solution is well-approximated by $N$ Lamb-Oseen vortices rotating with a constant angular speed. This describes the evolution of $N$ Dirac masses under the heat equation in a rotating frame. The only quantitative difference between the Navier-Stokes and the heat equation evolutions then lies in this rotation of the discrete configuration. However, as $N \to \infty$, the limiting measure \cref{eq:omininfty} is purely radial and thus invariant under continuous rotations. This rotational effect therefore becomes irrelevant in the limiting process, smoothly reconciling the discrete dynamics with the purely diffusive evolution of the continuous vortex sheet. 
Consequently, we expect that our analysis can provide a vanishing viscosity justification for the point-vortex approximation of a circular vortex sheet, and we emphasize that reaching time scales of order $\cO(T_{\rm adv}\delta^{-\sigma})$ is a crucial ingredient to carry out this limiting process. 
\end{remark}

\subsection{Non radial viscous corrections}\label{sec:corrections}

The original proof of Proposition~\ref{prop:gallay1} in \cite{Gallay2011} already requires constructing an approximate solution that takes into account a non-radial deformation of the vortices due to viscous effects, specifically corresponding to the case $M=4$. This phenomenon of vortex deformation is well known by physicists (see for instance \cite{DiVer2002,Dritschel1985}), and was also investigated mathematically in more detail in \cite{donati2025fast}. Let us discuss and illustrate it briefly in our particular case. There exists a critical value for $\gamma$, which we denote by 
\begin{equation}
\label{eq:gammaN*}
    \gamma_N^* := \frac{(N-1)(N-5)}{12},
\end{equation}
which is a threshold in the behavior of the \emph{outer} vortices (meaning all except the central one). When $\gamma > \gamma_N^*$, the outer vortices deform in an elliptical shape with their larger radius pointing towards the central vortex. When $\gamma < \gamma_N^*$, the outer vortices deform elliptically with their smaller radius pointing towards the central vortex. The intensity of this deformation is proportional to $\eps^2(t)(\gamma-\gamma_N^*)$, and in the critical case $\gamma = \gamma_N^*$, there is no elliptical deformation of the outer vortices. This can be observed in the numerical simulations presented in Figure~\ref{fig:deformation_sim}. One can still mathematically compute a non-radial deformation at order $\eps^3(t)$ in that case, which happens to be 3-fold symmetric. Please note that this value $\gamma_N^*$ is the precise choice of $\gamma$ that leads to the long-time confinement in the non-viscous case as studied in \cite{Donati_2025_Crystal}.

The central vortex also deforms due to viscous effects. However, the solution $\omega$ of the Navier-Stokes equations~\cref{eq:NS} with initial data~\cref{eq:omegain} is $N$-fold symmetric at all times. The central vortex thus can only undergo an $N$-fold symmetric deformation, at order at least $\eps^N(t)$. Therefore, unless $N=2$, these effects are very weak, and the instabilities inherent to the numerical simulations will destroy the symmetries before one can hope to see such a subtle deformation become relevant.

\begin{figure}[hbtp]
\centering
\begin{subfigure}{0.25\linewidth}
  \includegraphics[width=\linewidth]{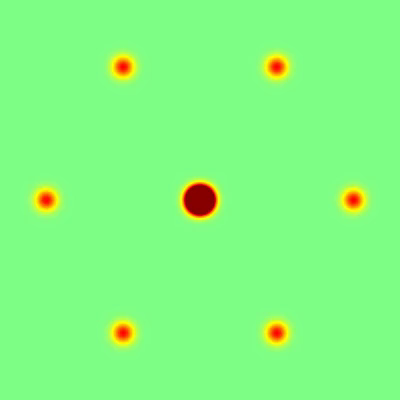}
\end{subfigure}\hspace{6mm}
\begin{subfigure}{0.25\linewidth}
  \includegraphics[width=\linewidth]{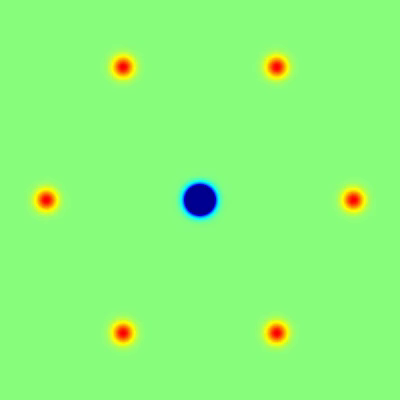}
\end{subfigure}\hspace{6mm}
\begin{subfigure}{0.25\linewidth}
  \includegraphics[width=\linewidth]{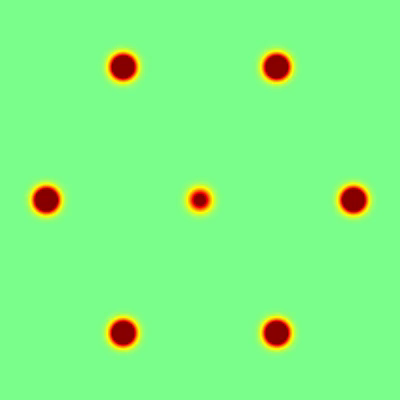}
\end{subfigure}

\vspace{1mm}

\begin{subfigure}{0.25\linewidth}
  \includegraphics[width=\linewidth]{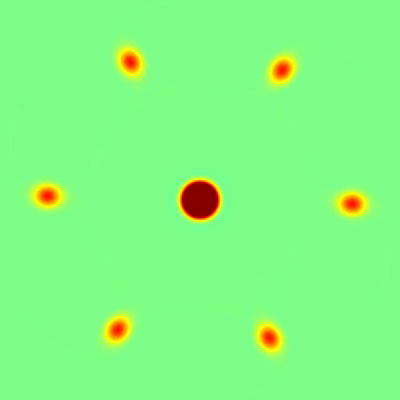}
\end{subfigure}\hspace{6mm}
\begin{subfigure}{0.25\linewidth}
  \includegraphics[width=\linewidth]{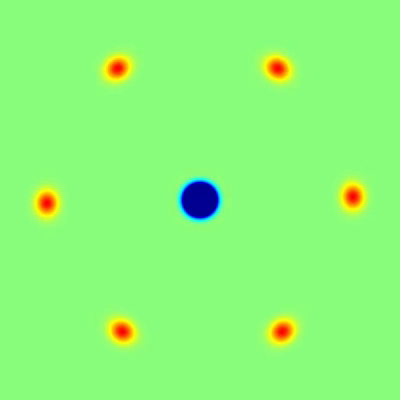}
\end{subfigure}\hspace{6mm}
\begin{subfigure}{0.25\linewidth}
  \includegraphics[width=\linewidth]{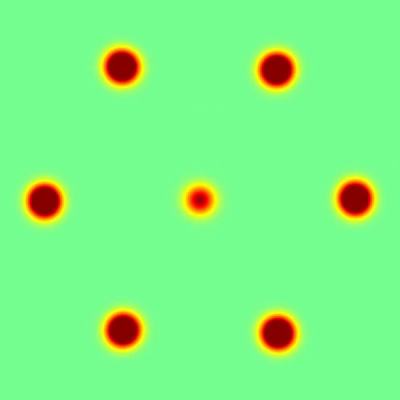}
\end{subfigure}
\caption{Initially radially symmetric concentrated vortices (top) evolve in time (bottom), becoming more elliptical with different orientations in the case $\gamma = 10$ (left) and in the case $\gamma = -10$ (middle). In the case $\gamma= \gamma_6^* = \frac{5}{12}$ (right), the vortices do not display any clear 2-fold symmetric deformation. The 3-fold deformation happening in that case, as well as the 6-fold deformation of the central vortices in each case, are too subtle to be seen here. The bottom pictures are taken in all three cases after the whole configuration has completed a rotation of angle $2\pi/3$.}
\label{fig:deformation_sim}
\end{figure}

These facts are mostly direct consequences of Theorem~\ref{theo:main} completed with the explicit computation of the second-order term of the approximate solution that we construct. More examples, with some details and numerical simulations, with various choices of parameters $(N,\gamma)$ can be found in Section~\ref{sec:sim}.

\subsection{Analogies and differences with previous related results}
From a technical point of view, the proof of Theorem \ref{theo:main} relies on the techniques developed to obtain the analogous result in the simpler configuration of a point vortex dipole in \cite{dolce2024long}.  That result itself builds upon the recent revisitation of \emph{Arnold's variational viewpoint} by Gallay and \v{S}ver\'ak \cite{GaSarnold}, which they successfully applied to the more geometrically involved configuration of a single vortex ring in the 3D axisymmetric Navier--Stokes equations \cite{GaSring}. The point vortex dipole is arguably the simplest nontrivial configuration to study, in the sense that it enjoys rigid symmetries that prevent violent instability mechanisms, such as the merging commonly observed for like-signed vortices \cite{Meunier2005}. From a mathematical perspective, these symmetries are reflected in key properties of the operators appearing in the equation governing the dipole evolution. These properties lead to almost exact symmetries in the approximate solution and crucial cancellations in the nonlinear problem, which play a fundamental role in justifying the validity of the approximate solution \cite{dolce2024long}.

In the case of two point vortices with general circulations recently studied by P. Zhang and Y. Zhang  in \cite{zhang2025long} (corresponding to the case $N=1$, $\gamma\neq -1$ in our setting\footnote{Note that $N=2$ and $\gamma=0$ is equivalent to $N=1$ and $\gamma=1$ up to redefining the center and the distance.}), one can still construct a very accurate approximate solution by adapting the methods of \cite{dolce2024long}.  However, it is no longer possible to reduce the analysis to a single scalar equation (unless $\gamma=1$), and the operators in the equations exhibit a more complicated dependence on the modulation parameters, such as the radius and angular speed. Although the configuration still enjoys some key structural symmetries, it is less clear how to exploit them effectively at certain steps. This becomes a serious obstacle in proving that the full solution remains close to the approximate one on very long time scales, as in Theorem \ref{theo:main}. Indeed, a key point in \cite{dolce2024long} is the use of the vertical position as a modulation parameter to fix the centers of vorticity, whereas the horizontal position is automatically fixed by symmetry. For two vortices with general circulations, one can control the vortex centers by passing to a rotating frame and modulating both the angular speed and the mutual distance. However, unlike the dipole case, neither of these parameters is automatically fixed by symmetry. As a consequence, potentially dangerous terms appear in the full nonlinear problem that are not easily controlled by the symmetry arguments. In \cite{zhang2025long}, the authors introduced a new strategy to bypass the problems given by some of these terms at the cost of constructing \emph{pseudo-momenta}, which are needed to replace standard first-order moments commonly used to control the vortex centers. We note that these pseudo-momenta are related to the presence of elements in the kernel of the leading-order operator arising from the linearization around the full approximate solution. These kernel elements are themselves linked to the symmetries of the configuration (especially the invariance with respect to a rotation around the origin of the whole configuration). Hence, symmetries still play a key role, but their use is  more subtle in \cite{zhang2025long}.

For the more general configuration of vortex crystals with a central vortex, we are in a situation that is structurally close to that of \cite{zhang2025long}, but the equations are further complicated by nontrivial interactions among all exterior vortices.  Nevertheless, we are able to fully exploit the only robust symmetries available, namely the $N$-fold symmetry and the rotational invariance of the total configuration. This allows us to avoid the introduction of pseudo-momenta and instead rely on a key quantity that is directly linked to these symmetries: the \emph{total angular momentum}. This quantity varies significantly only on diffusive time scales (see \cref{eq:angmom}), and it can be used to measure viscous changes to the radius of the polygon. By constructing an approximate radius $R_{\rm app}(t)$ that precisely follows the vortex centers of the approximate solution, see \cref{def:Rapp}, and modulating the angular speed of the full configuration, see \cref{def:alphaNL}, we use the almost-conservation of the total angular momentum to show that the centers of the exterior vortices can only deviate by a factor of size $\cO(\delta \eps^3)$ at most. This is related to a highly nontrivial property of the \emph{Eulerian} part of the approximate solution (namely, the part that approximately solves the inviscid problem), and it relies on suitable cancellations related to the exact conservation of the total angular momentum in the inviscid case, see Lemma \ref{lem:keyLEuler}. This result is a fundamental building block of our analysis that allows us to fix the radius $R(t)$ as the approximate one $R_{\rm app}(t)$. It highlights how robust symmetries can be used directly and effectively. At the level of the full nonlinear problem, symmetries continue to play an essential role at several stages of the argument. One key fact is the self-adjointness of an operator used to define an energy functional. Another one is that a potentially dangerous term generated by the modulation of the angular speed belongs (approximately) to the kernel of the operator naturally arising from the linearization around the approximate solution. Both properties are in fact linked to the rotational invariance of the whole configuration.

\subsection{Plan of the paper}

The paper is organized as follows. In Section~\ref{sec:eq}, we write the equations in self-similar variables and start exploiting the symmetries of the problem to reduce some degrees of freedom. We also introduce the relevant operators and write the equations in a compact form. In Section~\ref{sec:Func}, we discuss the important properties of these operators and their associated functional spaces that will be of use later, setting up the analytic framework for our proof. In Section~\ref{sec:App}, we construct the approximate solution $\omega_\app$ and compute explicitly the first few viscous corrections. Then, Section~\ref{sec:nonlinear} concludes the proof of Theorem~\ref{theo:main} by obtaining the control of the difference $|\omega-\omega_{\app}|$ in the relevant norms, from which we deduce the desired behavior of $\omega$. In Section \ref{sec:sim} we present a comparison of our results with numerical simulations made with \href{http://basilisk.fr/}{Basilisk}. 
In the appendices, we give the details of some proofs that are non-trivial but follow by explicit computations related  to symmetries, and  we also compute terms for which obtaining an explicit expression is interesting but not necessary to the proofs.

\section{Reformulation of the problem}\label{sec:eq}
In this section we reformulate the problem and we set up the notation and framework used in the rest of the paper. 
\subsection{Notations and conventions}

With the adimensional parameters introduced in \cref{def:adpar}, we will work in the asymptotic regime $\delta \to 0$ and, fixing $\Gamma$ once and for all, this is equivalent to the standard vanishing viscosity regime $\nu \to 0$. However, it will be very convenient to perform expansions in terms of $\eps$. Since $\eps^2 = \delta \frac{t}{T_\adv}$, we have that $\eps(t) \ll 1 \Longleftrightarrow \delta \ll 1$ uniformly on the time interval $t \in T_\adv \delta^{-\sigma}$ for any $\sigma < 1$. This leads us to the following notation.
\begin{definition}
     Given a map $t\mapsto f(t)$ evolving in a space $X$, we denote by $f = \cO_X(\eps)$ the fact that for any given $\sigma < 1$, there exists a constant $C$ not depending on $\delta$ and $t$, and a constant $\delta_0 > 0 $ such that for every $\delta \in (0,\delta_0)$ and for every $t \le T_\adv \delta^{-\sigma}$, there holds that $\| f(\cdot,t) \|_X \le C \eps(t)$. When $f$ depends on $t$ only through $\eps(t)$, this definition conveniently coincides with considering $\eps$ as an abstract parameter and taking the asymptotics $\eps \to 0$, hence the use of the notation $\cO(\eps)$.
\end{definition}
   Note also that, in the definition of $\delta,T_\adv, T_\diff$, we could  replace $\Gamma$ with $\Gamma\max\{1,|\gamma|\}$. However, it would simply change how the parameter $\gamma$ scales in the subsequent analysis; therefore, we prefer to use the same conventions that have already appeared in \cite{dolce2024long}.

\medskip

Relative equilibria in the point vortex system are intimately related to conserved quantities in \cref{eq:HK}. For the dynamics of \cref{eq:NS}, two important quantities are the \textit{mean} $\mrM(\omega)$ and the \textit{linear momentum} $\mrm^1(\omega)=(\mrm^1_1(\omega),\mrm^1_2(\omega))$ defined as 
\begin{equation}\label{def:premiers_moments}
    \mrM(\omega)=\int_{\RR^2}\omega(x)\dd x, \qquad \mrm^1_j(\omega)=\int_{\RR^2}x_j\omega(x)\dd x, \quad i=1,2.
\end{equation}
We will perform operations on vectors in $\RR^2$ as if they were complex numbers. For instance, given $a=(a_1,a_2)\leftrightarrow a_1+ia_2 $ and $ b=(b_1,b_2)\leftrightarrow b_1+ib_2$ we identify 
\begin{equation}
\label{def:multiplication}
 ab\leftrightarrow (a_1b_1-a_2b_2, a_1b_2+a_2b_1)\qquad \text{or} \qquad a^{-1}b\leftrightarrow \frac{1}{|a|^2}(a\cdot b, a^\perp \cdot b).
\end{equation}
We also identify the rotation matrices 
\begin{equation}
    \label{def:Qj}
Q_j=\begin{pmatrix}
    \cos(\frac{2\pi}{N}j) & -\sin(\frac{2\pi}{N}j)\\
    \sin(\frac{2\pi}{N}j) &\cos(\frac{2\pi}{N}j) 
\end{pmatrix}, \qquad \text{ for } j=1,\dots, N.
\end{equation}
with the complex number $\e^{i\frac{2\pi}{N}j}$.
For a vector $a=(a_1,a_2)\in \RR^2$ we write $\Re(a)=a_1, \, \Im(a)=a_2$. Then, with this notation we define the $k$-th moment vectors as 
\begin{equation}
    \label{def:mk} {\rm m}^k(f)\coloneqq \int_{\RR^2}\xi^k f(\xi)\dd \xi, \qquad \text{for } k\in \NN.
\end{equation}
As an example, we compute that ${\rm m}^0(f)=({\rm M}(f),0)$ which we will identify with $\mrM(f)$ and
\begin{equation}
    {\rm m}^1(f)=\int_{\RR^2}(\xi_1,\xi_2) f(\xi)\dd \xi, \qquad {\rm m}^2(f)=\int_{\RR^2}(\xi_1^2-\xi_2^2,2\xi_1\xi_2) f(\xi)\dd \xi.
\end{equation}
Then, we need to make use of the angular momentum that we define through the operator 
\begin{align}
\label{def:L}
    \mathrm{L}(f)\coloneqq \int_{\RR^2}|\xi|^2f(\xi)\dd \xi.
\end{align}
We denote the $L^2(\RR^2)$ inner product as 
\begin{align}
    \langle f,g\rangle=\int_{\RR^2} f(\xi)g(\xi)\dd \xi.
\end{align}

We conclude this section with the introduction of streamfunction and Poisson brackets. The incompressibility condition $\nabla \cdot u = 0$ implies the existence of a streamfunction $\psi$ such that $u = \nabla^\perp \psi$ and $\Delta \psi = \omega$. To get the expression of $\psi$ in terms of $\omega$, and thus recover the Biot-Savart law in~\cref{eq:NS}, one can invert the Laplacian with  the following formula:
\begin{equation}
\label{def:psiom}
    \psi(x,t)= [\Delta^{-1} \omega(\cdot , t)](x) := \frac{1}{2\pi}\int_{\RR^2}\ln|x-y|\omega(y,t) \dd y.
\end{equation}
We denote by $\{f,g\}=\nabla^\perp f\cdot \nabla g=(\de_1 f)(\de_2 g)-(\de_2f)(\de_1 g)$ the standard Poisson bracket of $f$ and $g$ and then notice, since we will make extensive use of that notation, that the first two equations in the Navier-Stokes system~\cref{eq:NS} can be changed to \cref{def:psiom} and
\begin{equation}
    \partial_t \omega + \{\psi,\omega\} = \nu \Delta \omega.
\end{equation}

\subsection{Symmetries and self-similar variables}
For the 2D Navier-Stokes equation \cref{eq:NS}, it is well known that the total angular momentum $\mathrm{L}(\omega(t))$, as defined in \cref{def:L}, can be computed explicitly and changes significantly only after diffusive times. This is related to the fact that the equation is rotationally invariant and that $\mathrm{L}(\omega(t))$ is exactly conserved when $\nu=0$. We then recall the following basic fact.
\begin{lemma}
    \label{lem:NSmom} Let $\omega$ be the solution to \cref{eq:NS} with initial data \cref{eq:omegain}. Then
\begin{align}
\label{eq:angmom}
\mathrm{L}(\omega(t))=\mathrm{L}(\omega^{in})+4\nu t\mrM(\omega^{in})=\Gamma N\big(\mathsf{r}^2+4\nu t \big(1+\frac{\gamma}{N}\big)\big).
\end{align}
\end{lemma}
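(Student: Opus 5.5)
The plan is to differentiate $\mathrm{L}(\omega(t))$ in time, using the vorticity equation written as $\partial_t\omega + \{\psi,\omega\} = \nu\Delta\omega$. This gives
\begin{equation*}
\frac{\dd}{\dd t}\mathrm{L}(\omega) = -\int_{\RR^2}|x|^2\,u\cdot\nabla\omega\,\dd x + \nu\int_{\RR^2}|x|^2\Delta\omega\,\dd x .
\end{equation*}

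For the viscous term we integrate by parts twice. Since $\Delta|x|^2=4$, it equals $4\nu\,\mrM(\omega(t))$. The mass is conserved: integrate the equation, use $\nabla\cdot u=0$, and note that the integral of a Laplacian vanishes. Hence this term is $4\nu\,\mrM(\omega^{in})$.

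For the transport term we use $\nabla\cdot u=0$ to rewrite it as $\int 2x\cdot u\,\omega\,\dd x$. Inserting the Biot--Savart law turns it into a double integral against $\omega(x)\omega(y)$ with kernel $x\cdot(x-y)^\perp/|x-y|^2$. We then symmetrize in $(x,y)$. Because $(x-y)\cdot(x-y)^\perp=0$, we have $x\cdot(x-y)^\perp = y\cdot(x-y)^\perp$. The kernel is therefore antisymmetric under the swap $x\leftrightarrow y$, and the double integral vanishes. This is the inviscid conservation of angular momentum, and it leaves
\begin{equation*}
\frac{\dd}{\dd t}\mathrm{L}(\omega)=4\nu\,\mrM(\omega^{in}).
\end{equation*}
Integrating from $0$ to $t$ gives the first equality.

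The second equality is a direct evaluation on the initial datum. The outer vortices sit at distance $\mathsf{r}$ from the origin, so $\mathrm{L}(\omega^{in})=N\Gamma\mathsf{r}^2$; the central vortex contributes nothing because it sits at the origin. The mass is $\mrM(\omega^{in})=\Gamma(N+\gamma)$. Together these give $\Gamma N\big(\mathsf{r}^2+4\nu t(1+\gamma/N)\big)$.

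The main obstacle is justification, not algebra. The datum is a measure, and $|x|^2$ is unbounded, so the integrations by parts and the continuity of $\mathrm{L}$ as $t\to0$ need care.

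\begin{itemize}
\item For $t>0$, the solution from \cite{GaGa2005} is smooth and has Gaussian spatial decay (bounded by a sum of heat kernels, for instance). This makes all the moments finite and all the boundary terms vanish.
\item The symmetrization step requires $\int\!\!\int|\omega(x)||\omega(y)|\,|x|/|x-y|\,\dd x\,\dd y<\infty$. This holds by the $L^1\cap L^\infty$ bounds together with the decay.
\item For $t\to0^+$, we show $\mathrm{L}(\omega(t))\to\mathrm{L}(\omega^{in})$. The solution converges weakly to $\omega^{in}$, and the second moments are uniformly bounded on $(0,1]$, which follows from the same Gaussian bounds and gives tightness.
\end{itemize}

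An alternative that sidesteps most of these issues: work with a cutoff $|x|^2\chi(x/R)$ and let $R\to\infty$.
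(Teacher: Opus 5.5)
Your proof is correct and has the same skeleton as the paper's. You test with $|x|^2$, the viscous term gives $4\nu\mrM(\omega^{in})$ by mass conservation, the transport term vanishes, and you integrate in time and evaluate on the datum.

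The one real difference is how you kill the transport term. You insert the Biot--Savart law and use the antisymmetry of the kernel $x\cdot(x-y)^\perp/|x-y|^2$ under $x\leftrightarrow y$. The paper instead stays with the streamfunction and writes $\langle\{\psi,\omega\},|x|^2\rangle=\langle\psi,\{\omega,|x|^2\}\rangle=-2\langle\psi,\partial_\theta\Delta\psi\rangle$. It then concludes from the fact that $\partial_\theta$ is skew-adjoint and commutes with $\Delta$.

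Both arguments express the rotational invariance of the Biot--Savart law, and each has an advantage:
\begin{itemize}
\item Your version only needs absolute convergence of the double integral. This follows from $\omega\in L^1\cap L^\infty$ with a finite first moment, since the kernel is bounded by $|x|/|x-y|$. It also never touches $\psi$, which grows like $\log|x|$.
\item The paper's version is a one-line formal computation. It is written in the symmetry language that recurs later, in Lemma~\ref{lem:adjoints} and in the remark on the Coriolis term.
\end{itemize}

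Your justification paragraph goes beyond what the paper provides, since the paper argues formally. One correction is needed there. Uniformly bounded second moments plus tightness do not by themselves give $\mathrm{L}(\omega(t))\to\mathrm{L}(\omega^{in})$ as $t\to0^+$, because $|x|^2$ is unbounded. You need uniform integrability of $|x|^2$ against $|\omega(t)|$, for instance a uniform bound on a moment of order $2+\eta$ on $(0,1]$. The same Gaussian estimates you invoke supply this.
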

\begin{proof}
By testing the equation \cref{eq:NS} with $|x|^2$, we get
\begin{equation}
    \de_t \mathrm{L}(\omega)=-\langle\{\psi,\omega\},|x|^2\rangle+\nu \langle \Delta \omega,|x|^2\rangle. 
\end{equation}
Exploiting the identities
\begin{align}
    &\langle \{\psi,\omega\},|x|^2\rangle=\langle\psi,\{\omega,|x|^2\}\rangle=-2\langle \psi,\de_\theta \omega\rangle=-2\langle \psi,\de_\theta \Delta\psi\rangle=0\\
    &\nu \langle \Delta\omega,|x|^2\rangle=4\nu\mrM(\omega(t))=4\nu\Gamma (N+\gamma),
\end{align}
the result follows by integrating in time.
\end{proof}
The initial data \cref{eq:omegain} is $N$-fold symmetric and it is well known that the Navier-Stokes equations \cref{eq:NS} preserve this symmetry. 
We can thus decompose the solution of equations \cref{eq:NS} as
\begin{equation}
    \omega(x,t)= \sum_{j=1}^N \tilde{\omega}\big(Q_{-j}x,t) + \widehat{\omega}(x,t)
\end{equation}
where $\tilde{\omega}$ is the vorticity profile of each exterior vortices and $\widehat{\omega}$ is the vorticity profile of the central vortex.
Recalling that the mean $M(\omega)$ and the linear momentum $\mrm^1(\omega)$ are conserved, the $N$-fold symmetry has the following consequence on the central blob of vorticity.
\begin{lemma}\label{lem:centre0}
    For every $t\ge 0$, we have that $\mrm^1(\widehat{\omega}(t)) = 0$.
\end{lemma}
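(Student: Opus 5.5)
The argument rests on the $N$-fold symmetry of the solution: the first moment of any $N$-fold symmetric function vanishes once $N\ge 2$. The decomposition $\omega=\sum_j\tilde\omega(Q_{-j}\cdot)+\widehat\omega$ is not canonical, so I first fix it in a symmetric way. The natural choice is to define $\widehat\omega$ as the part of $\omega$ localized near the origin, for instance $\widehat\omega=\chi\,\omega$ with $\chi$ a radial cutoff. An equivalent choice is to let $\widehat\omega$ solve the linear equation $\partial_t\widehat\omega+u\cdot\nabla\widehat\omega=\nu\Delta\widehat\omega$ with datum $\gamma\Gamma\delta_0$, where $u$ is the full velocity. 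With either construction, and since $\omega$ is invariant under $x\mapsto Q_1x$, we get the invariance $\widehat\omega(Q_1x,t)=\widehat\omega(x,t)$ for all $t\ge0$.

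For the cutoff choice this invariance is immediate, because $\chi$ is radial. For the transport choice, it follows from uniqueness for the linear advection-diffusion equation with measure data, as in \cite{GaGa2005}. Indeed, $u$ is equivariant, $u(Q_1x)=Q_1u(x)$, and the datum $\delta_0$ is invariant, so $\widehat\omega(Q_1\cdot,t)$ solves the same problem as $\widehat\omega$. In the same way, $\tilde\omega$ can be taken as the solution started from $\Gamma\delta_{z_N^{in}}$, and by linearity and uniqueness the sum of the rotated pieces reconstructs $\omega$.

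With the invariance in hand, the moment computation is a change of variables. Identifying vectors with complex numbers as in \cref{def:multiplication}, substitute $\xi=Q_1\eta$, which has unit Jacobian:
\[
\mrm^1(\widehat\omega)=\int \xi\,\widehat\omega(\xi)\,\dd\xi=\int e^{i2\pi/N}\eta\,\widehat\omega(Q_1\eta)\,\dd\eta=e^{i2\pi/N}\,\mrm^1(\widehat\omega).
\]
Since $N\ge2$, we have $e^{i2\pi/N}\neq1$, and therefore $\mrm^1(\widehat\omega)=0$.

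Integrability of $|\xi|\,|\widehat\omega|$ holds because the solution has Gaussian decay for $t>0$, while at $t=0$ the moment is trivially zero. The only real subtlety is making the decomposition symmetric. Conservation of $\mrm^1(\omega)$ is not even needed for this argument; it would only serve as an alternative route: $\mrm^1(\omega)=0$ by symmetry, and the outer part contributes $\sum_jQ_j\mrm^1(\tilde\omega)=0$ because $\sum_j e^{i2\pi j/N}=0$.
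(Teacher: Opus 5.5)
Your proof is correct, but it takes a different route from the paper's. The paper's proof of this lemma never uses a symmetry property of $\widehat\omega$ itself. It uses conservation of $\mrm^1(\omega)$, which vanishes initially since $\sum_j z_j^{in}=0$, and changes variables in each outer piece to get $\big(\sum_j Q_j\big)\mrm^1(\tilde\omega)=0$. It then reads off $\mrm^1(\widehat\omega)=0$ as what remains. This is essentially your ``alternative route'', with the conservation law replacing symmetry to obtain $\mrm^1(\omega)=0$.

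You instead prove $Q_1$-invariance of $\widehat\omega$ and conclude from $\mrm^1(\widehat\omega)=e^{2\pi i/N}\mrm^1(\widehat\omega)$. This needs no conservation law and gives more: every $\mrm^k(\widehat\omega)$ with $N\nmid k$ vanishes. It is also exactly the argument the paper itself uses later for $\mrm^1(\widehat w)$ in \eqref{momhw}.

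However, the step you flag as the only real subtlety is not one, so you can drop both the cutoff and the uniqueness theorem for the linear advection--diffusion problem. For \emph{any} splitting $\omega=\sum_{j=1}^N\tilde\omega(Q_{-j}\cdot)+\widehat\omega$, the sum over all $N$ rotations is $Q_1$-invariant after reindexing $j\mapsto j-1$. Hence $\widehat\omega=\omega-\sum_j\tilde\omega(Q_{-j}\cdot)$ is invariant whenever $\omega$ is; this is precisely the computation in the proof of Lemma~\ref{lem:Nfold}.

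Also, your two constructions are alternatives rather than ``equivalent'': they produce different functions $\widehat\omega$. The transport one is the splitting the paper effectively uses through the system \eqref{eq:Omega}--\eqref{eq:P}.
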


\begin{proof} By conservation of $\mrm^1(\omega(t))$, we have that
\begin{equation}
    0 = \int_{\R^2}x \omega(x,t)\dd x = \sum_{j=1}^N\int_{\R^2}x \tilde{\omega}(Q_{-j}x,t)\dd x + \int_{\R^2}x \widehat{\omega}(x,t)\dd x.
\end{equation}
Then, we compute by change of variables that
\begin{equation}
    \sum_{j=1}^N\int_{\R^2}x \tilde{\omega}(Q_{-j}x,t)\dd x = \left(\sum_{j=1}^N Q_j\right)\int_{\R^2}x \tilde{\omega}(x,t)\dd x = 0,
\end{equation}
hence,
\begin{equation}\label{eq:center_vortex_centered}
    \int_{\R^2}x \widehat{\omega}(x,t)\dd x = 0.
\end{equation}
\end{proof}
As is usually done in the context of studying solutions arising from Dirac masses in the Navier-Stokes equations (see \cite{Gallay2011,dolce2024long,donati2025fast,zhang2025long}), we introduce the rescaled vorticity functions
\begin{equation}
    \tilde{\omega}(x,t) = \frac{\Gamma}{\nu t} \Omega\left( \frac{x-Z(t)}{\sqrt{\nu t}},t\right)
\end{equation}
and
\begin{equation}
    \widehat{\omega}(x,t) = \gamma \frac{\Gamma}{\nu t} \widehat{\Omega}\left( \frac{x}{\sqrt{\nu t}},t\right),
\end{equation}
where
\begin{equation}
    \label{def:Zalpha}
    Z(t)=(Z_1(t),Z_2(t))
\end{equation}
is a quantity to choose that acts as the position of the $N$-th exterior vortex. By $N$-fold symmetry, the positions of the other exterior vortices are $(Q_j Z(t))_j$, and from \cref{lem:centre0}, we choose to define the position of the central vortex to be 0.

Denoting by $\xi$ and $\widehat{\xi}$ the new variables, namely letting
\begin{equation}\label{def:xi}
    \xi = \frac{x-Z(t)}{\sqrt{\nu t}}
\end{equation}
and 
\begin{equation}\label{def:xi0}
    \widehat{\xi} = \frac{x}{\sqrt{\nu t}},
\end{equation}
and letting for $1 \le j \le N$
\begin{equation}\label{def:xi_j}
    \xi_j = \frac{Q_{-j}x-Z(t)}{\sqrt{\nu t}} = Q_{-j} \xi + \frac{Q_{-j}Z(t)-Z(t)}{\sqrt{\nu t}},
\end{equation}
we arrive in the end to the following ansatz for $\omega$ in the self-similar variables:
\begin{equation}\label{eq:omSS}
    \omega(x,t)=\frac{\Gamma}{\nu t}\sum_{j=1}^N \Omega(\xi_j,t)+\frac{\gamma \Gamma}{\nu t} \widehat{\Omega}(\widehat{\xi},t).
\end{equation}
Then, we also decompose the streamfunction as 
\begin{equation}
    \psi(x,t)=\Gamma \sum_{j=1}^N \Psi(\xi_j,t)+\gamma \Gamma \widehat{\Psi}(\widehat{\xi},t),
\end{equation}
where
\begin{equation}
    \Psi=\Delta^{-1}\Omega \; \text{ and } \; \widehat{\Psi}=\Delta^{-1} \widehat{\Omega}, \quad \text{recalling that}\quad \Delta^{-1}f(\xi):=\frac{1}{2\pi}\int_{\RR^2}\ln|\xi-\eta| f(\eta)\dd \eta.
\end{equation}

For later needs, we observe that we have the following identities to express $\xi_l$ in terms of $\xi_j$ for $1 \le j,l \le N$:
\begin{align}\label{eq:xi_l_in_xi_j}
    \xi_l & = Q_{-l+j} \xi_j + \frac{Q_{-l+j} Z(t) - Z(t)}{\sqrt{\nu t}} \\
    \xi_l & = Q_{-l} \widehat{\xi} - \frac{Z(t)}{\nu t} \\
    \widehat{\xi} & = Q_j\xi_j + \frac{Q_j Z(t)}{\nu t}.
\end{align}
We then have the following symmetry result for the central vortex.
\begin{lemma}\label{lem:Nfold}
    The functions $\widehat{\Omega}$ and $\widehat{\Psi}$ are $N$-fold symmetric.
\end{lemma}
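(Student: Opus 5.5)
We will deduce the claim from the $N$-fold symmetry of the full solution $\omega$, together with a precise definition of the splitting into $\tilde\omega$ and $\widehat\omega$. The decomposition $\omega=\sum_j\tilde\omega(Q_{-j}\cdot,t)+\widehat\omega$ by itself does not determine $\widehat\omega$. We therefore fix it by linearity and uniqueness. The initial datum \cref{eq:omegain} splits as $\Gamma\sum_j\delta_{z_j^{in}}$ plus $\gamma\Gamma\delta_0$. We take $\widehat\omega$ and $\tilde\omega$ to solve the coupled system obtained by writing \cref{eq:NS} for the sum. In this system, $\widehat\omega$ solves
\[
\de_t\widehat\omega+u\cdot\nabla\widehat\omega=\nu\Delta\widehat\omega,\qquad \widehat\omega|_{t=0}=\gamma\Gamma\delta_0 ,
\]
where $u$ is the full velocity field. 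Likewise, $\tilde\omega$ solves the same linear advection-diffusion equation with datum $\Gamma\delta_{z_N^{in}}$. Summing $\tilde\omega(Q_{-j}\cdot)$ over $j$ together with $\widehat\omega$ reproduces $\omega$, by uniqueness (\cite{GaGa2005}) and by the rotation covariance of the linear equation once $u$ is known to be $N$-fold symmetric.

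The key steps, in order, are as follows.

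First, we show that $\omega(Q_1x,t)=\omega(x,t)$. Both sides solve \cref{eq:NS}, since the Biot--Savart law commutes with rotations: $u[\omega\circ Q_1](x)=Q_{-1}u[\omega](Q_1x)$. They also share the initial datum, because $\{z_j^{in}\}$ and $\{0\}$ are invariant under $Q_1$. Uniqueness for measure-valued data then gives the identity. Consequently $u(Q_1x,t)=Q_1u(x,t)$.

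Second, we check that $\widehat\omega\circ Q_1$ solves the same linear equation as $\widehat\omega$. We have
\[
\nabla(\widehat\omega\circ Q_1)(x)=Q_{-1}(\nabla\widehat\omega)(Q_1x),
\qquad\text{so}\qquad
u(x)\cdot\nabla(\widehat\omega\circ Q_1)(x)=\big(Q_1u(x)\big)\cdot(\nabla\widehat\omega)(Q_1x)=\big(u\cdot\nabla\widehat\omega\big)(Q_1x),
\]
using the equivariance of $u$ from the first step. The Laplacian commutes with rotations. The initial datum $\delta_0$ is rotation invariant.

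Third, we apply uniqueness for the linear advection-diffusion equation with the given, sufficiently regular for $t>0$, divergence-free field $u$ and measure initial data. This gives $\widehat\omega\circ Q_1=\widehat\omega$. Rescaling via \cref{def:xi0} transfers the symmetry to $\widehat\Omega$. Since $\widehat\Psi=\Delta^{-1}\widehat\Omega$ and the logarithmic kernel in \cref{def:psiom} is rotation invariant, the change of variables $\eta\mapsto Q_1\eta$ yields $\widehat\Psi\circ Q_1=\widehat\Psi$.

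The main obstacle is the third step: uniqueness for the linear equation with a Dirac initial datum and a velocity field that is singular as $t\to0$ (behaving like $t^{-1/2}$ near the vortices). We plan to invoke the uniqueness framework of \cite{GaGa2005}. That framework in fact establishes uniqueness for the linear problem with the given velocity, via the fundamental solution estimates for $\de_t+u\cdot\nabla-\nu\Delta$. Alternatively, we could argue by approximating the Dirac masses with $N$-fold symmetric mollified data and passing to the limit.
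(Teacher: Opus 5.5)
Your argument is correct, but it takes a heavier route than the paper. The paper does not single out any particular splitting. It only uses two facts: $\omega$ is $N$-fold symmetric, and the exterior part has the form $\sum_{j=1}^N\tilde\omega(Q_{-j}x,t)$. Replacing $x$ by $Q_lx$ turns $\xi_j$ into $\xi_{j-l}$, so it merely permutes the summands. Hence the exterior sum is invariant for any profile $\tilde\omega$, and $\widehat\omega=\omega-\sum_j\tilde\omega(Q_{-j}\cdot,t)$ inherits the symmetry of $\omega$. The statement for $\widehat\Psi$ then follows from the rotation invariance of the logarithmic kernel, as in your last step. In particular, the non-uniqueness of the decomposition that you flag is harmless for this lemma: every splitting of the stated form has an $N$-fold symmetric central part, and the only analytic input needed is your first step (uniqueness for \cref{eq:NS}).

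Your approach adds a second, nontrivial ingredient: uniqueness for $\de_t f+u\cdot\nabla f=\nu\Delta f$ with a drift of size $t^{-1/2}$ and Dirac initial data. This is indeed available from the fundamental-solution framework behind \cite{GaGa2005}, and it is exactly how \cite{Gallay2011} splits the vorticity. In exchange, your route identifies the specific decomposition that the paper uses implicitly afterwards. In that decomposition both profiles start from $G$ in \cref{eq:Init}, and $\widehat\Omega$ is governed by \cref{eq:P}, which is your linear equation written in self-similar variables. Your route also obtains the symmetry from the evolution of $\widehat\omega$ itself rather than by subtraction. That is the kind of argument one needs if Lemma~\ref{lem:derivationequation} is read in the direction in which it is stated, starting from a solution of the system.
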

\begin{proof}
    We know that $\omega$ is $N$-fold symmetric, therefore, for every $x \in \R^2$ and every $l \in \{1,\ldots,N\}$,
    \begin{equation}
        \omega(x,t) = \omega(Q_j x,t)
    \end{equation}
    and thus
    \begin{align}
        \frac{\Gamma}{\nu t}\sum_{j=1}^N \Omega(\xi_j,t)+\frac{\gamma \Gamma}{\nu t} \widehat{\Omega}(\widehat{\xi},t) &  = \frac{\Gamma}{\nu t}\sum_{j=1}^N \Omega\bigg(\frac{Q_{-j+l}x-Z(t)}{\sqrt{\nu t}},t\bigg)+\frac{\gamma \Gamma}{\nu t} \widehat{\Omega}(Q_l \widehat{\xi},t) \\
        & = \frac{\Gamma}{\nu t}\sum_{j=1}^N \Omega(\xi_{j-l},t)+\frac{\gamma \Gamma}{\nu t} \widehat{\Omega}(Q_l \widehat{\xi},t)
    \end{align}
    with the convention that $\xi_{-k} = \xi_{N-k}$ for $k \in \{0,\ldots,N-1\}$. By reorganizing the terms we get
\begin{equation}
    \frac{\Gamma}{\nu t}\sum_{j=1}^N \Omega(\xi_j,t)+\frac{\gamma \Gamma}{\nu t} \widehat{\Omega}(\widehat{\xi},t) = \frac{\Gamma}{\nu t}\sum_{j=1}^N \Omega(\xi_j,t)+\frac{\gamma \Gamma}{\nu t} \widehat{\Omega}(Q_l \widehat{\xi},t)
\end{equation}
namely
\begin{equation}
    \widehat{\Omega}(\widehat{\xi},t) = \widehat{\Omega}(Q_l \widehat{\xi},t).
\end{equation}
Thus $\widehat{\Omega}$ is $N$-fold symmetric, and therefore $\widehat{\Psi}$ is too.
\end{proof}

\subsection{Expression of the system in self-similar variables and rotating frame}
Since we expect the system to be rigidly rotating with the unknown angular speed $\alpha'(t)$, it is convenient to first reformulate \cref{eq:NS} in a rotating frame. Thus, with a slight abuse of notation we keep the same notation for $\omega(x,t)$ and $\omega(Q_{-N\alpha(t)}x,t)$ (with $Q_{-N\alpha(t)}$ defined as $Q_j$ in \cref{def:Qj} with $j=-N\alpha(t)$) and we obtain the Navier-Stokes equations on a rotating frame with angular speed $\alpha'$ which are given by 
\begin{equation}
    \label{eq:NSrot}
    \de_t\omega+\big\{\psi-\frac{\alpha'}{2}|x|^2,\omega\big\}=\nu\Delta\omega.
\end{equation}
We will refer to the extra term involving $\alpha'$ as the \textit{Coriolis term}.
\begin{remark}
    Note that the identity in Lemma \ref{lem:NSmom} is not affected by the presence of the Coriolis term, since this term does not contribute to the evolution of $\mathrm{L}(\omega(t))$ because $\langle \{\psi,|x|^2\},|x|^2\rangle=0$.
\end{remark}
To write down the system satisfied by $\Omega,\widehat{\Omega}$, we need to introduce some operators: 
\begin{itemize}
    \item The (rescaled) diffusion operator 
    \begin{equation}
    \label{def:cL}
    \cL:=\Delta_\xi+\frac12 \xi \cdot \nabla_\xi +1.
\end{equation}
\item The translation operator 
\begin{equation}
\label{def:Teps}
(\cT_{\eps(t),Z(t),d} f)(\xi)\coloneqq(\cT_{\eps,Z} f)(\xi)=f\Big(\xi+\frac{1}{\eps(t) d}Z(t)\Big)
\end{equation}

\item The \emph{polygon-center} operator
\begin{equation}
\label{def:Veps}
    (\cV_{\eps(t),Z(t),d,N} f)(\xi)\coloneqq(\cV_{\eps,Z} f)(\xi)=\sum_{l=1}^Nf\Big(Q_{l}\xi-\frac{1}{\eps(t) d}Z(t)\Big)
\end{equation}
\item The \emph{polygon-polygon} operator 
\begin{equation}
\label{def:Qeps}
    (\cQ_{\eps(t),Z(t),d,N} f)(\xi)\coloneqq(\cQ_{\eps,Z} f)(\xi)=\sum_{l=1}^{N-1}f\Big(Q_l\xi+\frac{1}{\eps(t) d}(Q_l-I)Z(t)\Big).
\end{equation}
\end{itemize}
We then have the following.
\begin{lemma}
\label{lem:derivationequation}
    The vorticity $\omega$ defined in \cref{eq:omSS} satisfies \cref{eq:NSrot} if $\Omega,\widehat{\Omega}$ solve the following system
\begin{align}
    \label{eq:Omega}&t\de_t \Omega=\cL\Omega-\frac{1}{\delta}\Big\{(I+\cQ_{\eps,Z})\Psi+\gamma \cT_{\eps,Z}\widehat{\Psi}-\frac{\delta t}{ \eps d}\xi^\perp\cdot Z'- \frac{\delta t}{2}\alpha'\big|\xi+\frac{1}{\eps d}Z(t)\big|^2 \, , \, \Omega\Big\}\\
    \label{eq:P}&t\de_t \widehat{\Omega}=\cL \widehat{\Omega}-\frac{1}{\delta}\Big\{\gamma \widehat{\Psi}+\cV_{\eps,Z} \Psi-\frac{\delta t}{2} \alpha'|\xi|^2 \, , \, \widehat{\Omega}\Big\} \\
    \label{eq:Init}& \Omega(\xi,0) = \widehat{\Omega}(\xi,0) = \frac{1}{4\pi} e^{-|\xi|^2/4}, \quad \forall \xi \in \R^2.
\end{align}
\end{lemma}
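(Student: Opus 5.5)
This is a direct verification: substitute the ansatz \cref{eq:omSS} into the rotating-frame equation \cref{eq:NSrot}, then group terms according to which rescaled variable they depend on. We use the $N$-fold symmetry to reduce to one exterior vortex (the $N$-th, centred at $Z(t)$) plus the central one. It suffices to show that the term $\frac{\Gamma}{\nu t}\Omega(\xi,t)$, with $\xi=(x-Z)/\sqrt{\nu t}$, satisfies \cref{eq:NSrot} with the full streamfunction $\psi$ as transport field, and likewise for $\frac{\gamma\Gamma}{\nu t}\widehat\Omega(\widehat\xi,t)$. The $j$-th exterior pieces then follow by applying the rotation $Q_j$. This works because \cref{eq:NSrot} is rotation-invariant (Laplacian, Poisson bracket and $|x|^2$ all commute with $Q_j$), and because $\psi$ is itself $N$-fold symmetric by Lemma~\ref{lem:Nfold}. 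Summing the pieces then gives a solution of \cref{eq:NSrot}, since the bracket $\{\psi-\frac{\alpha'}2|x|^2,\cdot\}$ is linear in its second argument.

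\textbf{Step 1: time derivative and diffusion.} Set $F(x,t)=\frac{\Gamma}{\nu t}\Omega(\xi,t)$. By the chain rule,
\[
\de_t F=\frac{\Gamma}{\nu t}\Big(-\frac1t\Omega-\frac{1}{2t}\xi\cdot\nabla\Omega-\frac{Z'}{\sqrt{\nu t}}\cdot\nabla\Omega+\de_t\Omega\Big),
\]
and $\nu\Delta_x F=\frac{\Gamma}{\nu t}\cdot\frac1t\Delta_\xi\Omega$. Multiplying by $t\cdot\frac{\nu t}{\Gamma}$, the diffusion and scaling terms combine into $\cL\Omega$. The drift term equals $-\frac{t}{\sqrt{\nu t}}Z'\cdot\nabla\Omega$. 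Writing $a\cdot\nabla f=\{a^\perp\cdot\xi,f\}$ up to sign, and using $\sqrt{\nu t}=\eps d$ together with $\nu=\delta\Gamma$, this becomes $\frac1\delta\{\frac{\delta t}{\eps d}\xi^\perp\cdot Z',\Omega\}$ with the sign as displayed in \cref{eq:Omega}. I will fix the sign convention here carefully.

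\textbf{Step 2: the transport term.} We have $\{\psi,F\}_x=\frac{\Gamma}{\nu t}\cdot\frac{1}{\nu t}\{\psi\circ x(\xi),\Omega\}_\xi$, and after multiplying by $t\frac{\nu t}{\Gamma}$ this carries the factor $\frac{\Gamma}{\nu}\cdot\frac1\Gamma=\frac1\delta$ once $\psi$ is written as $\Gamma\times$ (dimensionless). Now express each summand of $\psi$ in the variable $\xi$ using \cref{eq:xi_l_in_xi_j}. The term $l=N$ gives $\Psi(\xi)$. The terms $l\ne N$ give $\Psi(Q_{l}\xi+\frac{(Q_l-I)Z}{\eps d})$ after reindexing $l\to -l$, which is exactly $\cQ_{\eps,Z}\Psi$. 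The central term gives $\widehat\Psi(\xi+Z/(\eps d))=\cT_{\eps,Z}\widehat\Psi$. The Coriolis term gives $\frac{\alpha'}2|x|^2=\frac{\alpha'}{2}\nu t|\xi+Z/(\eps d)|^2$, and after normalisation its coefficient inside the bracket is $\frac{\delta t}{2}\alpha'$.

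\textbf{Step 3: the central vortex.} The computation for $\widehat F=\frac{\gamma\Gamma}{\nu t}\widehat\Omega(\widehat\xi)$ is the same but has no drift, since the centre is fixed at $0$ (consistent with Lemma~\ref{lem:centre0}). The exterior contributions $\Psi(Q_{-l}\widehat\xi-Z/(\eps d))$, after reindexing, sum to $\cV_{\eps,Z}\Psi$. The factor $\gamma$ cancels in front of $\widehat\Omega$ but remains on $\widehat\Psi$. Finally, the initial condition \cref{eq:Init} is the standard statement that the Lamb--Oseen profile $\frac{1}{4\pi}e^{-|\xi|^2/4}$ corresponds to $\Gamma\delta$ as $t\to0$, giving \cref{eq:omegain}.

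The only delicate point is bookkeeping: getting the signs and indices right in the reindexing $l\to -l$ (equivalently $Q_{-l}$ versus $Q_l$) so that the sums match $\cQ$ and $\cV$ as defined, and matching the sign conventions of the drift term. I will check these against the case $N=1$ of \cite{dolce2024long}.
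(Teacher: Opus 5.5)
Your proposal is correct and follows essentially the paper's route: the chain rule for $t\de_t$ and the diffusion, the change of variables $\{\xi_1,\xi_2\}_x=1/(\nu t)$ inside the Poisson bracket, and the rewriting of the streamfunction summands as $\cQ_{\eps,Z}\Psi$, $\cT_{\eps,Z}\widehat{\Psi}$ and $\cV_{\eps,Z}\Psi$ after reindexing. The differences are only organizational, and both points you rely on are sound:
\begin{itemize}
\item You reduce to the $N$-th vortex using rotation invariance and the $N$-fold symmetry of $\psi$. The paper instead works with a general $j$ and invokes the $N$-fold symmetry of $\widehat{\Psi}$ to write $\widehat{\Psi}(\widehat{\xi})=\cT_{\eps,Z}\widehat{\Psi}(\xi_j)$, which is the same fact used in a different place.
\item For the initial data you argue via weak convergence of the Lamb--Oseen profiles to Dirac masses. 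The paper instead appeals to the uniqueness result of \cite{GaGa2005}, and that uniqueness is what you would also need if you want to identify $\omega$ with \emph{the} solution of \cref{eq:NS}.
\end{itemize}
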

\begin{proof}
We first compute the terms associated to the heat-equation part in \cref{eq:NSrot}. Hence, we observe that
\begin{align}
    \de_t\omega(x,t)-\nu\Delta\omega(x,t)=\, &\frac{\Gamma}{\nu t^2}\sum_{j=1}^N\big(t\de_t \Omega-\cL \Omega-\sqrt{\frac{t}{\nu}} Z'\cdot \nabla \Omega\big)(\xi_j,t)\\
    &+\frac{\gamma \Gamma}{\nu t^2}\big(t\de_t \widehat{\Omega}-\cL \widehat{\Omega}\big)(\widehat{\xi},t).
\end{align}
For the Poisson bracket terms, we compute using relation~\cref{eq:omSS} that
\begin{equation}
    \{|\cdot|^2,\omega\}_x = \frac{\Gamma}{\nu t} \sum_{j= 1}^N \{ |\cdot|^2,\Omega(\xi_j(\cdot),t)\}_x + \frac{\gamma\Gamma}{\nu t}\{ |\cdot|^2,\widehat{\Omega}(\widehat{\xi}(\cdot),t)\}_x,
\end{equation}
where we interpreted the $\xi_j$ as functions of $x$. We now want to remove the map composition in the Poisson bracket, namely express it in terms of the maps $\Omega$ and $\widehat{\Omega}$ with their natural variable $\xi$, applied in $\xi_j$ and $\widehat{\xi}$. To this end, we express $|x|^2$ in terms of $\xi_j$ using relation~\cref{def:xi_j} to get that
\begin{equation}\label{eq:x_in_xi_j}
    x=\sqrt{\nu t}Q_j( \xi_j+\frac{1}{\eps d}Z(t)), \qquad \text{for } j=1,\dots,N.
\end{equation}
Then, to complete this change of variable, we compute using relations~\cref{def:xi0} and~\cref{def:xi_j}  that for $0\le j \le N$,
\begin{equation}
    \{(\xi_j)_1,(\xi_j)_2\}_x=\frac{1}{\nu t},
\end{equation}
and thus, recalling the well-known identity for changing variables inside a Poisson bracket
\begin{equation}\label{eq:change_var_Poisson}
    \{ f(\xi(\cdot)),g(\xi(\cdot)) \}_x(x) = \{ \xi_1,\xi_2 \}_x(x) \{f,g\}_\xi(\xi(x)),
\end{equation}
we obtain that
\begin{align}
    \frac{\alpha'}{2}\{|\cdot|^2,\omega\}_x = \, & \frac{\Gamma\alpha'(t)}{2\nu t}\sum_{j=1}^N\big\{\big| Q_j\big(\cdot + \frac{1}{\eps d}Z(t)\big)\big|^2,\Omega(\cdot,t)\big\}_\xi(\xi_j)\\ 
&+\frac{\gamma\Gamma\alpha'(t)}{2\nu t}\big\{|\cdot|^2,\widehat{\Omega}(\cdot,t)\big\}_{\xi} (\widehat{\xi}).
\end{align}
Similarly, keeping the notations compact, we have that 
\begin{align}
    \{\psi,\omega\}_x(x,t)=\,&\frac{\Gamma^2}{\nu^2 t^2}\sum_{j=1}^{N}\bigg\{\sum_{l=1}^N\Psi(\xi_l),\Omega\bigg\}_\xi(\xi_j,t)+
    \frac{\gamma^2\Gamma^2}{\nu^2 t^2}\{\widehat{\Psi},\widehat{\Omega}\}_\xi(\widehat{\xi},t)\\
    &+\frac{\gamma\Gamma^2}{\nu^2 t^2}\bigg\{\sum_{l=1}^N\Psi(\xi_l),\widehat{\Omega}\bigg\}_\xi(\widehat{\xi},t)+\frac{\gamma\Gamma^2}{\nu^2 t^2}\sum_{j=1}^N\{\widehat{\Psi}(\widehat{\xi}),\Omega\}_\xi(\xi_j,t).
\end{align}
One should remember that in this expression, $\xi_l$ is seen as a function of $\xi_j$ given by relation~\cref{eq:xi_l_in_xi_j}. In particular, this means that
\begin{align}
\sum_{l=1}^N\Psi(\xi_l,t) & =\Psi(\xi_j,t)+\sum_{\substack{l=1\\ l\neq j}}^N\Psi\Big(Q_{-l+j}\xi_j+\frac{1}{\sqrt{\nu t}}(Q_{-l+j}-I)Z(t),t\Big) \\
& = \Psi(\xi_j,t)+\sum_{l=1}^{N-1}\Psi\Big(Q_{-l}\xi_j+\frac{1}{\sqrt{\nu t}}(Q_{-l}-I)Z(t),t\Big)\\
& =((I+\cQ_{\eps,Z})\Psi)(\xi_j,t)
\end{align}
where we used the definition of the operator $\cQ_{\eps,Z}$ in \cref{def:Qeps} and simply reordered the terms in the second equality. Analogously,
\begin{align}
    &\sum_{l=1}^N\Psi(\xi_l,t) =   \sum_{l=1}^N\Psi\bigg(Q_{-l}\widehat{\xi}-\frac{1}{\sqrt{\nu t}}Z(t),t\bigg)=(\cV_{\eps,Z}\Psi)(\widehat{\xi},t),\\
    &\widehat{\Psi}(\widehat{\xi},t)=   \widehat{\Psi}\bigg(Q_j\xi_j+\frac{1}{\sqrt{\nu t}}Q_jZ(t),t\bigg)= \widehat{\Psi}\bigg(\xi_j+\frac{1}{\sqrt{\nu t}}Z(t),t\bigg)=(\cT_{\eps,Z}\widehat{\Psi})(\xi_j,t),
\end{align}
where we used that $\widehat{\Psi}$ is $N$-fold symmetric. Collecting all the identities above, since $\sqrt{t/\nu}=t/(\eps d)$ and $Z'\cdot \nabla \Omega = \{\xi^\perp \cdot Z , \Omega\}_\xi$, we find that we solve \cref{eq:NS} if
\begin{align}
   &\sum_{j=1}^N\bigg(t\de_t\Omega-\cL\Omega+\frac{1}{\delta}\Big\{(I+\cQ_{\eps,Z})\Psi+\gamma \cT_{\eps,Z}\widehat{\Psi}-\frac{\delta t}{ \eps d}\xi^\perp\cdot Z'- \frac{\delta t}{2}\alpha'|\xi+\frac{1}{\eps d}Z(t)|^2, \Omega\Big\}_\xi\bigg)(\xi_j,t)\\
   &\, +\gamma\bigg(t\de_t\widehat{\Omega}-\cL \widehat{\Omega}+\frac{1}{\delta}\Big\{\gamma \widehat{\Psi}+\cV_{\eps,Z} \Psi-\frac{\delta t}{2} \alpha'|\xi|^2, \widehat{\Omega}\Big\}_\xi\bigg)(\widehat{\xi},t)=0.
\end{align}
Therefore, when $\Omega,\widehat{\Omega}$ satisfy \cref{eq:Omega}-\cref{eq:P} we clearly have the identity above.
To assign an initial data to the equations \cref{eq:Omega}-\cref{eq:P}, we argue as in \cite{dolce2024long}. By the uniqueness result in \cite{GaGa2005} we deduce that 
\begin{equation}
\label{def:G}
\Omega_0(\xi)=\widehat{\Omega}_0(\xi)=G(\xi), \qquad \text{ with } \qquad G(\xi)=\frac{1}{4\pi}\e^{-|\xi|^2/4}.
\end{equation}
\end{proof}
We  introduce the notation for the initial velocity and streamfunction as
\begin{equation}\label{def:UG}
  U_0(\xi) \, \,:=\, \frac{1}{2\pi}
  \frac{\xi^\perp}{|\xi|^2}\,\Bigl(1-e^{-|\xi|^2/4}\Bigr)\,, \qquad
  \Psi_0(\xi) \, := \,\frac{1}{4\pi}\big(\Ein\bigl(|\xi|^2/4\bigr) -
  \gamma_E\big)\,,
\end{equation}
where $\gamma_E\approx 0,5772$ is the Euler-Mascheroni constant and ${\rm Ein}(x)=\int_0^xs^{-1}(1-\e^{-s}) \dd s$ is the exponential integral function. In the sequel, we will need to use also the function 
\begin{equation}
    \label{def:A}
    A(\xi)=-\frac{\nabla \Psi_0(\xi)}{\nabla \Omega_0(\xi)}=\frac{4}{|\xi|^2}(\e^{|\xi|^2/4}-1).
\end{equation}

\subsection{Choice of position and angular speed}
In the system~\cref{eq:Omega}-\cref{eq:P}, and already in the definition of the self-similar variables, we introduced $Z$ and $\alpha$ as given quantities to be chosen later. 
The key point in the choice of these modulation parameters is to ensure that the center of vorticity of the blobs remains centered in our new coordinates, namely that
\begin{equation}
    \mrm^1(\Omega(t))=\mrm^1(\widehat{\Omega}(t))=(0,0), \qquad \text{for all } t\geq0.
\end{equation}
This specific choice is necessary for technical needs, but for very concentrated vortices, the center of vorticity is also a good estimation of the location of most of the mass of the vortex\footnote{In fact, to determine the position of the vortices, one could decide to follow the maximum of vorticity or the stagnation point. None of these choices are equivalent in general and we prefer to fix the center of vorticity blobs for later convenience.}. Recall that for the central vortex $\widehat{\Omega}$, the condition above is automatically guaranteed from the $N$-fold symmetry with $N\geq2$, see \cref{lem:centre0}. 

We are thus left with the freedom of choosing three parameters $Z_1(t),Z_2(t),\alpha(t)$ to fix only two conditions on $\Omega$. This is simply because those three variables are linked by a triangular relation. Therefore, we choose to set
\begin{equation}
    Z_2(t)=0 \qquad \text{for all } t\geq0,
\end{equation}
which we can do without loss of generality by adjusting the choice of $\alpha$.
The quantity $Z_1(t)$ is now  the distance to the center of the $N$-th vortex, meaning that it is the radius of our regular polygon and we thus call it
\begin{align}
\label{eq:choiceZ1}
    Z_1(t)=R(t), \qquad R(0)=\mathsf{r}.
\end{align}
Then, in analogy with \cite{dolce2024long,GaSring}, the choice that fixes the center of mass is given in the following lemma.
\begin{lemma}\label{lem:alpha'gives_moments}
    Let $\Omega,\widehat{\Omega}$ be the solutions to \cref{eq:Omega,eq:P,eq:Init}. Define the radius of the vortex polygon and the angular speed of the solution to be
    \begin{align}
    \label{def:R} tR'(t)&\coloneqq-\frac{d\eps(t)}{\delta }\int_{\RR^2}\big(\de_2(\cQ_{\eps,(R,0)}\Psi+\gamma \cT_{\eps,(R,0)}\widehat{\Psi})\Omega\big)(\xi,t)\dd \xi,\\
    \label{def:alpha}
t\alpha'(t)&\coloneqq\frac{d\eps(t)}{\delta R(t)}\int_{\RR^2}\big(\de_1(\cQ_{\eps,(R,0)}\Psi+\gamma \cT_{\eps,(R,0)}\widehat{\Psi})\Omega\big)(\xi,t)\dd \xi,
    \end{align}
    with $\alpha(0)=0$ and  $R(0)=\mathsf{r}$.
    Then, for all $t\geq 0$ the following moment identities hold true:
    \begin{align}
&\mrM(\Omega(t))=\mrM(\widehat{\Omega}(t))=1, \quad \mrm^1(\Omega(t))=\mrm^1(\widehat{\Omega}(t))=(0,0).
\end{align}
\end{lemma}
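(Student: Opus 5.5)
The plan is to differentiate the moments in time using equations \cref{eq:Omega} and \cref{eq:P}, and to show that each moment satisfies a linear ODE of the form $t\frac{\dd}{\dd t}\mathrm{m}=\lambda\,\mathrm{m}$ with $\lambda\le 0$ and initial value equal to the Gaussian moment. Uniqueness for this ODE (for a continuous function vanishing at $t=0$) then fixes the moment for all times. Two facts will be used throughout. First, $\langle \{f,g\},1\rangle=0$ for any pair of reasonable functions. Second, $\langle\{f,g\},\xi_k\rangle=-\langle g,\{f,\xi_k\}\rangle$ after integrating by parts. The identities needed for $\cL$ are $\int \cL f=0$ and $\int \xi_k\,\cL f=-\tfrac12\int\xi_k f$; both follow by integration by parts from $\cL=\Delta+\tfrac12\xi\cdot\nabla+1$.

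\textbf{Masses.} Integrate \cref{eq:Omega} and \cref{eq:P} over $\RR^2$. The $\cL$ term and all Poisson brackets vanish, so $t\de_t\mrM(\Omega)=0$ and the same holds for $\widehat\Omega$. Since $\int G=1$, both masses are identically $1$.

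\textbf{Central first moment.} Here I will simply invoke \cref{lem:centre0}, or equivalently the $N$-fold symmetry of \cref{lem:Nfold}. Since $N\ge2$, an $N$-fold symmetric function has vanishing first moment.

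\textbf{Exterior first moment.} Test \cref{eq:Omega} against $\xi_k$ for $k=1,2$. The linear part gives $-\tfrac12\mrm^1_k(\Omega)$. For the Poisson bracket, write the advecting streamfunction as $\Phi=\Psi+\cQ_{\eps,Z}\Psi+\gamma\cT_{\eps,Z}\widehat\Psi-(\text{Coriolis and translation terms})$.

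The self-interaction $\langle\{\Psi,\Omega\},\xi_k\rangle$ vanishes by the standard symmetry of the Biot–Savart kernel. Concretely, $\int \Omega\,\de_k^\perp\Psi=0$ because the kernel is antisymmetric.

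The term $\frac{\delta t}{\eps d}\xi^\perp\cdot Z'$ produces exactly $\mp\frac{t}{\eps d}Z'_k\,\mrM(\Omega)=\mp \frac{t}{\eps d}Z_k'$. With $Z=(R,0)$, this gives $tR'$ in the first component and $0$ in the second.

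The Coriolis term $\frac{\delta t}{2}\alpha'|\xi+Z/(\eps d)|^2$ has gradient $\delta t\alpha'(\xi+Z/(\eps d))$. It contributes a multiple of $\alpha'$ times $\mrm^1(\Omega)^\perp$, plus $\frac{t\alpha'}{\eps d}Z^\perp$. The latter equals $\frac{t\alpha' R}{\eps d}$ in the second component only.

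The interaction terms produce $\frac1\delta\int\Omega\,\de_j(\cQ\Psi+\gamma\cT\widehat\Psi)$ for the appropriate index $j$. Definitions \cref{def:R,def:alpha} are designed precisely so that these cancel the $tR'$ and $t\alpha' R$ contributions. I will carefully track the signs and the factor $\sqrt{\nu t}=\eps d$ and check that they match \cref{def:R,def:alpha} exactly.

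After these cancellations, one is left with $t\de_t\mrm^1(\Omega)=-\tfrac12\mrm^1(\Omega)+c(t)\,\mrm^1(\Omega)^\perp$, where $c$ is real and proportional to $t\alpha'$. Writing $\mrm^1$ as a complex number, this reads $t\de_t m=(-\tfrac12+ic)m$, so $|m|^2$ satisfies $t\de_t|m|^2=-|m|^2$. Hence $|m(t)|^2=C/t$, and since $m(0)=\mrm^1(G)=0$ with $m$ continuous, it follows that $m\equiv0$.

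\textbf{Main obstacle.} The delicate step is the exact bookkeeping of the interaction and modulation terms so that they cancel identically, rather than up to an error. A second, more technical issue is justifying that $R,\alpha$ defined by \cref{def:R,def:alpha} make sense near $t=0$, where $1/\eps$ is singular. The coupled system of PDE and modulation ODEs must be well posed, which I would take from the local theory as in \cite{Gallay2011,dolce2024long}. The moments must also be continuous at $t=0$, which follows from the Gaussian decay of $\Omega$ in the weighted spaces used there.
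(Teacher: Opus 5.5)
Your proposal is correct and is essentially the paper's argument. You test with $1$ and with $\xi_k$, use $\langle \mathcal{L} f,\xi_k\rangle=-\tfrac12\mathrm{m}^1_k(f)$ and the vanishing self-interaction, and let the definitions of $R'$ and $\alpha'$ cancel the interaction terms against the translation and Coriolis contributions; this leaves $t\partial_t\mathrm{m}^1(\Omega)=-\tfrac12\mathrm{m}^1(\Omega)-t\alpha'(\mathrm{m}^1(\Omega))^\perp$, so $t|\mathrm{m}^1(\Omega)|^2$ is constant and hence zero, while the central first moment vanishes by $N$-fold symmetry as in Lemma~\ref{lem:centre0}.
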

\begin{proof}
    The proof for the mean is straightforward. In the sequel,  we need the Poisson bracket identities $\{f,g\}=-\{g,f\}$ and
    \begin{equation} 
    \langle \{f,g\},h\rangle =\langle f,\{g,h\}\rangle.
    \end{equation}
    Moreover, we have the following identity involving the operator $\cL$
    \begin{align}
    \label{eq:Lmom}
    \langle \cL (f), \xi_j\rangle=-\frac12\mrm_j^1(f).
    \end{align}
    Since $$\de_1f=\{f,\xi_{2}\}, \quad \de_2f=-\{f,\xi_1\} $$ 
    and $\de_i$ commute with  $\Delta^{-1}$, it is not hard to verify also that 
    \begin{align}
    \label{eq:Pbracket}
        \langle \{\Delta^{-1}f,f\},\xi_i\rangle =0, \qquad \text{ for } i=1,2.
    \end{align}
    Then, since  $\mrM(\Omega)=1$ and $Z(t)=(R(t),0)$, by testing \cref{eq:Omega} with $\xi_1$ and $\xi_2$ we deduce that 
    \begin{align}
    \label{eq:m21}
                &t\de_t \mrm_1^1(\Omega)=-\frac12\mrm_1^1(\Omega)
            -\frac{1}{\delta}\int_{\RR^2}\de_2(\cQ_{\eps,Z} \Psi+\gamma \cT_{\eps,Z}\widehat{\Psi})\Omega \, \dd \xi
            - \frac{tR'(t)}{\eps d}+t\alpha'(t)\mrm_2^1(\Omega),
\\
            &t\de_t \mrm_2^1(\Omega)=-\frac12\mrm_2^1(\Omega)
            +\frac{1}{\delta}\int_{\RR^2}\de_1(\cQ_{\eps,Z} \Psi+\gamma \cT_{\eps,Z}\widehat{\Psi})\Omega \, \dd \xi
            - t\alpha'(t)\big(\mrm_1^1(\Omega)+\frac{R}{\eps d }\big).
    \end{align}
    By the choice of $R',\alpha'$ in \cref{def:R,def:alpha}, the equation above simplifies to 
        \begin{align}
        \label{eq:simpm12}
            t\de_t \mrm^1(\Omega)=-\frac12\mrm^1(\Omega)
        -t\alpha'(t)(\mrm^1(\Omega))^\perp.
    \end{align}
    The identity above readily implies that 
        \begin{align}
        t\de_t (t|\mrm^1(\Omega)|^2)=0\qquad \Longrightarrow   \qquad |\mrm^1(\Omega(t))|^2=0, \text{ for all } t>0.
    \end{align}
    Since clearly $\mrm^1(\Omega(0))=\mrm^1(G)=(0,0)$ the proof of the lemma is over.
\end{proof}
\begin{remark}
\label{rem:commentRalpha}

The choice of the modulation parameters $R$ and $\alpha$ as in Lemma \ref{lem:alpha'gives_moments} is fundamental in the construction of the approximate solution. Indeed, we avoid the iterative construction in \cite{dolce2024long,zhang2025long} by showing that the choices in \cref{def:R,def:alpha} can be used to fix the first-order moments of the approximate solution. However, in the full nonlinear problem, we do not choose the radius as in \cref{def:R}, but instead fix it to be the one obtained from the approximate solution. This is because the operators depend in a complicated way on the radius, and a fully nonlinear choice as in \cref{def:R} introduces several technical difficulties (an issue also present in \cite{zhang2025long}). Thus, we will drop the requirement that $\mrm^1_1(\Omega(t))=0$. From the proof of Lemma \ref{lem:alpha'gives_moments}, it is straightforward to see that one can still fix $\mrm_2^1(\Omega(t))=0$ with the choice
    \begin{align}
    t\alpha'(t)&\coloneqq\frac{d\eps(t)}{\delta (R+\eps d \mrm^1_1(\Omega))(t)}\int_{\RR^2}\big(\de_1(\cQ_{\eps,(R,0)}\Psi+\gamma \cT_{\eps,(R,0)}\widehat{\Psi})\Omega\big)(\xi,t)\dd \xi,
    \end{align}
    for any $R(t)$. On the other hand, we now have to guarantee that $\mrm^1_1(\Omega(t))$ remains sufficiently small on the time scale of interest. To this end, we fully exploit a property of the approximate solution related to the evolution of the angular momentum in Lemma \ref{lem:NSmom} (see Lemma \ref{lem:keyLEuler}).
    \end{remark}
\subsection{Equations in reduced form}
In view of the previous sections, we introduce the operator
\begin{equation}\label{def:cA}
    \cB_{\eps,Z}(\varphi,\widehat{\varphi},\widetilde{\varphi}) := \int_{\R^2} \nabla^\perp(\cQ_{\eps,Z}\varphi+\gamma\cT_{\eps,Z}\widehat{\varphi})\Delta\widetilde{\varphi} \dd \xi,
\end{equation}
and by abuse of notation, define
\begin{equation}
 \cB_{\eps,Z}(\varphi,\widehat{\varphi}) := \mathcal{B}_{\eps,Z}(\varphi,\widehat{\varphi},\varphi).
\end{equation}

We make once and for all the choice that the parameter $Z(t)$ of the change of variable~\cref{def:xi} is given by $$Z(t) = (R(t),0), $$ and we identify $(R(t),0) \simeq R$ for notational convenience. 

Then, we introduce
\begin{align}
    \label{def:Xieps}&\Phi_{\eps,R}(\Psi,\widehat{\Psi}) =\Psi+\cQ_{\eps,R}\Psi+\gamma\cT_{\eps,R} \widehat{\Psi}+\xi_2\Re(\cB_{\eps,R}(\Psi,\widehat{\Psi}))-\frac{d \eps}{2 R}\Im(\cB_{\eps,R}(\Psi,\widehat{\Psi}))\big|\cT_{\eps,R}\xi\big|^2\\
\label{def:Thetaeps}&\widehat{\Phi}_{\eps,R}(\Psi,\widehat{\Psi}) =\gamma\widehat{\Psi}+\cV_{\eps,R} \Psi-\frac{d \eps}{2 R}\Im(\cB_{\eps,Z}(\Psi,\widehat{\Psi}))|\xi|^2.
\end{align}
When the parameters $R,\alpha$ from the rotating frame are defined by relations~\cref{def:R,def:alpha}, observe that
\begin{equation}
    tR'=\frac{\eps d}{\delta } \Re(\cB_{\eps,R}(\Psi,\widehat{\Psi})), \qquad t \alpha'(t) = \frac{\eps d}{\delta R} \Im(\cB_{\eps,R}(\Psi,\widehat{\Psi}))
\end{equation}
with $\Psi$ and $\widehat{\Psi}$ the streamfunctions of the solutions to equations~\cref{eq:Omega,eq:P,eq:Init}.
Equipped with all the previous notations, and with this specific choice of parameters, we obtain that $\Omega$ and $\widehat{\Omega}$ are solutions of the system~\cref{eq:Omega,eq:P} if and only if
\begin{equation}
\label{eq:OmPhiR}
    t\de_t\Omega=\cL\Omega-\frac{1}{\delta}\Big\{\Phi_{\eps,R}(\Psi,\widehat{\Psi}),\Omega \Big\},\qquad t\de_t\widehat{\Omega}=\cL \widehat{\Omega}-\frac{1}{\delta}\Big\{\widehat{\Phi}_{\eps,R}(\Psi,\widehat{\Psi}),\widehat{\Omega}\Big\},
\end{equation}
and in that case, by Lemma~\ref{lem:alpha'gives_moments}, we have that $\mrM(\Omega(t))=\mrM(\widehat{\Omega}(t))=1$ and  $\mrm^1(\Omega(t))=\mrm^1(\widehat{\Omega}(t))=(0,0)$.
Moreover, when there will be no ambiguity, we even use the notation $\Phi = \Phi_{\eps,R}(\Psi,\widehat{\Psi})$ and $\widehat{\Phi} = \widehat{\Phi}_{\eps,R}(\Psi,\widehat{\Psi})$, so that we arrive to the compact form
\begin{equation}
\label{eq:simplified}
t\de_t\Omega=\cL\Omega-\frac{1}{\delta}\{\Phi,\Omega\},\qquad t\de_t\widehat{\Omega}=\cL \widehat{\Omega}-\frac{1}{\delta}\{\widehat{\Phi},\widehat{\Omega}\},
\end{equation}
for the system~\cref{eq:Omega,eq:P}. The use of such similar notation in the equations for the vortex at one vertex $\Omega$ and the central vortex $\widehat{\Omega}$ is made to stress the structural analogies. Indeed, the construction of the approximate solution follows a rather general asymptotic expansion scheme, in which the differences are all encoded in the different expansions for $\Phi$ and $\widehat{\Phi}$. 
Therefore, we will often present detailed computations for $\Omega$, which satisfies the most complicated equation, while only highlighting the interesting differences for $\widehat{\Omega}$ when needed. One has to remember that equations~\cref{eq:simplified} are coupled since $\Phi$ depends on $\widehat{\Omega}$ and vice versa.

\section{Function spaces, operators and their asymptotic expansion}
\label{sec:Func}In this section, following \cite{GalWay2005,Gallay2011,dolce2024long}, we introduce the function spaces needed to construct the approximate solution, and state some relevant properties of the operators introduced above. As some facts are either well-known, or simply coming from explicit computations, some details and proofs are delayed to Appendix~\ref{app:functional}.

\subsection{Function spaces}

We first define the weighted $L^2$ spaces 
\begin{align}
\label{def:cY}
  \cY \,&=\, \biggl\{f\in L^2(\RR^2)\,:\, \int_{\RR^2} |f(\xi)|^2
  \,\e^{|\xi|^2/4}\,\dd \xi < \infty\biggr\}\,,\\
  \label{def:calZ}
  \cZ \,&=\, \bigl\{f:\RR^2\to \RR \,:\, \xi \mapsto \e^{|\xi|^2/4}f(\xi) \in
  \cS_*(\RR^2)\bigr\}\,,
\end{align}
where $\cS_*(\RR^2)$ denotes the space of smooth functions with at most
polynomial growth at infinity. Note that $\cZ$ is a dense subset of $\cY$. The space $\cY$ is a Hilbert spaces equipped with the inner product 
\begin{equation}
  \langle{f,g}\rangle_{\cY} \,=\, \int_{\RR^2}f(\xi)\,g(\xi)\,\e^{|\xi|^2/4}\,\dd \xi\,, \qquad
  \forall\,f,g \in \cY.
\end{equation}
In view of the Fourier transform in the angular variable in polar coordinates $\xi=(r\cos(\theta),r\sin(\theta))$,  we also have the orthogonal decomposition 
\begin{equation}\label{Ydecomp}
  \cY \,= \,\bigoplus_{n=0}^{\infty}\,\cY_n\,,
\end{equation}
where $\cY_n = \bigl\{f\in \cY\,:\, f(\xi) = a(r)\cos(n\theta) + b(r)\sin(n\theta) \text{ with }  a,b:
\RR_+\to\RR\bigr\}$. We also denote with $\cP_n$ the $L^2$-projection operator onto the $n$-th angular Fourier mode.

Then, we recall the $\cO_{\cS_*}$ and $\cO_\cZ$ notation introduced in \cite[Definition 3.4]{dolce2024long}.
\begin{definition}\label{def:topo}
Let $M \in \NN$ be a positive integer. 

\smallskip\noindent 1)
If $g_\eps \in \cS_*(\RR^2)$ depends on a small parameter $\eps > 0$,
we say that $g_\eps \,=\, \cO_{\cS_*}\bigl(\eps^M\bigr)$ if, for all $\alpha \in \NN^2 $ and all $m\in \{0,\dots, M\}$ there exists $C>0$ and $\widetilde{N}\in \NN$ such that
\[
  |\partial^{\alpha}_\xi\de_\eps^mg_\eps(\xi)| \le C(1+|\xi|)^{\widetilde{N}}\eps^{M-m}
  \quad \forall\,\xi \in \RR^2\,.
\]

\noindent 2) Similarly, if $f_\eps \in \cZ$, we write $f_\eps = \cO_{\cZ}
\bigl(\eps^M\bigr)$ if $\e^{|\xi|^2/4}f_\eps = \cO_{\cS_*}\bigl(\eps^M\bigr)$. 
\end{definition}
\begin{remark}
    In the sequel, we will not explicitly verify the conditions on the derivatives in Definition \ref{def:topo}. Indeed, in the asymptotic expansions  of the operators proved in Appendix \ref{app:functional}, we either have errors involving tails of explicit integrals or errors arising by truncating a Taylor series. In both cases, it is straightforward to check the conditions for the derivatives in \cref{def:topo}.
\end{remark}
\subsection{Diffusion and advection operators}
We now recall properties of the difussion operator $\cL$ in \cref{def:cL} and the \emph{advection} operator arising from the linearization of the Euler equations at the Lamb-Oseen vortex.
\subsubsection*{{\rm i)} \textbf{The diffusion operator}} The diffusion operator $\cL$ defined by \cref{def:cL} is
a linear operator in $\cY$ with (maximal) domain
\begin{equation}\label{def:cLdom}
  D(\cL) \,=\, \bigl\{f\in \cY \, : \,\Delta f\in \cY, \, 
  \xi\cdot \nabla f \in \cY\bigr\}\,.
\end{equation}
It is well known, see \cite[Appendix~A]{GalWay2002}, that $\cL$ is {\em self-adjoint} in
$\cY$ with compact resolvent and purely discrete spectrum   $\sigma(\cL) \,=\, \{-n/2 \, : \, n\in \NN\}\,$. We have $\mathrm{Ker}(\cL)=\mathrm{span}(G)$ where $G$ is the Gaussian function defined in \cref{def:G}. The eigenspace corresponding to the eigenvalue $-1/2$ is $\mathrm{span}(\de_1G,\de_2G)$. Moreover, it is straightforward to verify that if $f \in \cY_n \cap D(\cL)$, then $\cL f \in \cY_n$. We also recall the result \cite[Lemma 3.5]{dolce2024long}, telling us the following.
\begin{lemma}\label{lem:Linvert}
For any $\kappa > 0$ and any $f \in \cZ$ one has $(\kappa - \cL)^{-1}f
\in \cZ$.   
\end{lemma}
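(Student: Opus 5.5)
The plan is to conjugate $\cL$ by the Gaussian $G$ and reduce the claim to a statement about an Ornstein--Uhlenbeck semigroup acting on functions of polynomial growth, for which the Mehler formula gives explicit control.

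\emph{Step 1 (conjugation).} Write $f = G\,p$ with $p\in\cS_*(\RR^2)$. For any smooth $q$ a direct computation gives
\[
\nabla(Gq)=G\big(\nabla q-\tfrac{\xi}{2}q\big),\qquad
\Delta(Gq)=G\big(\Delta q-\xi\cdot\nabla q+(\tfrac{|\xi|^2}{4}-1)q\big).
\]
Substituting into \cref{def:cL}, the terms in $|\xi|^2 q$ and the constant terms cancel, leaving
\[
\cL(Gq)=G\,\cL_* q,\qquad \cL_*:=\Delta-\tfrac12\xi\cdot\nabla .
\]
Hence it suffices to show that $(\kappa-\cL_*)^{-1}p\in\cS_*$ whenever $p\in\cS_*$. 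We must also check that $G\,(\kappa-\cL_*)^{-1}p$ coincides with the $\cY$-resolvent $(\kappa-\cL)^{-1}f$; this follows from uniqueness, since $\kappa$ lies in the resolvent set of the self-adjoint operator $\cL$, whose spectrum is $\{-n/2\}$.

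\emph{Step 2 (Mehler formula).} The operator $\cL_*$ generates the Ornstein--Uhlenbeck semigroup
\[
(P_s h)(\xi)=\int_{\RR^2} h\big(e^{-s/2}\xi+2\sqrt{1-e^{-s}}\,y\big)\,\frac{e^{-|y|^2}}{\pi}\,\dd y .
\]
The normalisation is chosen so that the diffusion coefficient matches $\Delta$; I will verify it directly by differentiating in $s$. From this formula two facts follow immediately:
\[
\partial^\alpha P_s h=e^{-|\alpha|s/2}P_s\partial^\alpha h,
\qquad
|h(\eta)|\le C(1+|\eta|)^{\widetilde N}\ \Longrightarrow\ |P_sh(\xi)|\le C'(1+|\xi|)^{\widetilde N},
\]
with $C'$ independent of $s\ge0$. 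For the second, bound $(1+|e^{-s/2}\xi|+2|y|)^{\widetilde N}\lesssim(1+|\xi|)^{\widetilde N}(1+|y|)^{\widetilde N}$ and integrate against the Gaussian weight. Consequently every derivative of $P_sp$ has polynomial growth, uniformly in $s\ge0$.

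\emph{Step 3 (Laplace transform).} Define
\[
q:=\int_0^\infty e^{-\kappa s}P_sp\,\dd s .
\]
Since $\kappa>0$ and the bounds of Step 2 are uniform in $s$, the integral converges, and by dominated convergence we may differentiate under the integral sign. Thus $q\in\cS_*$ and $(\kappa-\cL_*)q=p$. Setting $u:=Gq\in\cZ$, Step 1 gives $(\kappa-\cL)u=f$. Finally, $u\in D(\cL)$: indeed $u$ lies in $\cZ\subset D(\cL)$, because Gaussian decay times polynomial growth is integrable against $e^{|\xi|^2/4}$ after squaring. By injectivity of $\kappa-\cL$ on $D(\cL)$, $u=(\kappa-\cL)^{-1}f$.

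\emph{Main obstacle.} The only delicate point is the uniformity in $s$ of the polynomial-growth bounds on all derivatives of $P_sp$, which is what makes the time integral converge in $\cS_*$. The commutation identity $\partial^\alpha P_s=e^{-|\alpha|s/2}P_s\partial^\alpha$, read off from the Mehler formula, handles this. If the normalisation of the Mehler kernel caused trouble, I would instead verify the required bounds on a Hermite-type expansion of $p$.
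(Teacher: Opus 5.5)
Your proof is correct. The paper gives no argument of its own for this lemma: it quotes it from \cite[Lemma 3.5]{dolce2024long} and uses it as a black box, for instance to define the radial corrections $(\cL-\frac M2)^{-1}\cP_0(\cH_{M,\mathrm{NS}})$ in Proposition~\ref{prop:iterative}. So your proposal is a self-contained proof where the paper relies on a citation.

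Your route is the natural one: conjugate by $G$ to $\cL_*=\Delta-\frac12\xi\cdot\nabla$, then write the resolvent as the Laplace transform of the Mehler semigroup. This works because $e^{\tau\cL}$ is just the heat semigroup in self-similar variables and has an explicit Gaussian kernel. The commutation identity $\partial^\alpha P_s=e^{-|\alpha|s/2}P_s\partial^\alpha$ converts polynomial bounds on all derivatives of $p$ into bounds that are uniform in $s\ge 0$. Requiring bounds on all derivatives is the correct reading of $\cS_*$, consistent with Definition~\ref{def:topo}. Your normalisation is also right. If $y$ has density $\pi^{-1}e^{-|y|^2}$, then $2\sqrt{1-e^{-s}}\,y$ has covariance $2(1-e^{-s})I$, which matches the diffusion generated by $\cL_*$. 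So the Hermite fallback is unnecessary, and it would in any case not directly give pointwise derivative bounds for non-polynomial $p$.

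The one step you should write out is the generator identity $\partial_sP_sh=\cL_*P_sh$. Differentiating $\sqrt{1-e^{-s}}$ produces a factor $(1-e^{-s})^{-1/2}$. After integrating by parts in $y$, using $y e^{-|y|^2}=-\frac12\nabla_y e^{-|y|^2}$, that factor cancels and you get
\[
\partial_sP_sh=-\tfrac12 e^{-s/2}\,\xi\cdot P_s\nabla h+e^{-s}P_s\Delta h .
\]
The right-hand side is bounded uniformly down to $s=0$. This justifies $\int_0^\infty e^{-\kappa s}\partial_sP_sp\,\dd s=\kappa q-p$, hence $(\kappa-\cL_*)q=p$.
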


\subsubsection*{{\rm ii)} 
\textbf{The advection operator}}
We also need the operator associated to the linearization of the $2D$ Euler equation at the Lamb-Oseen vortex. This is denoted by  $\Lambda:D(\Lambda)\to \cY$, and is defined as 
\begin{equation}\label{def:Lambda2}
  \Lambda f \,=\, \bigl\{\Psi_0\,,\,f\bigr\} + \bigl\{\Delta^{-1}f\,,\,\Omega_0\}\,,
  \qquad f \in D(\Lambda)\,, 
\end{equation}
where $\Omega_0,\Psi_0$ are  defined in \cref{def:G}, \cref{def:UG}. 
The operator $\Lambda$ is considered in its maximal domain
$  D(\Lambda) \,=\, \bigl\{f\in \cY \, : \, \{\Psi_0, f\} \in \cY\bigr\}\,$ and we also have that  $\Lambda$ is invariant under rotations
about the origin, so that it commutes with the  direct sum decomposition
\cref{Ydecomp}. Moreover, it is clear that $\Lambda f \in \cZ$ if $f \in \cZ$. In view of the definition of $A$ in \cref{def:A}, we also note that we can write 
\begin{align}
\label{eq:LambdaA}
    \Lambda f=\{(A+\Delta^{-1})f,\Omega_0\}.
\end{align}
We then recall the following properties, that are well known by now, e.g. \cite{Gallay2011}.
\begin{proposition}\label{prop:Lambda}
The following properties holds true:
\begin{enumerate}
    \item The operator $\Lambda$ is skew-adjoint in the Hilbert space $\cY$ with
kernel 
\begin{equation}\label{kerLam}
  \mathrm{Ker}(\Lambda) \,=\, \cY_0 \,\oplus\, \bigl\{\beta_1\partial_1 G +
  \beta_2\partial_2 G  \,:\, \beta_1, \beta_2 \in \RR\bigr\}\,.
\end{equation}
\item If $h \in \mathrm{Ker}(\Lambda)^\perp \cap \cZ$ or $h\in \cZ$ with $\cP_0(h)=\mrm^1_1(h)=\mrm^1_2(h)=0$ the equation $\Lambda f = h$
has a unique solution $f \in \mathrm{Ker}(\Lambda)^\perp \cap \cZ$, and $f$ is an
even (odd) function of the variable $\xi_2$ if $h$ is an odd (even) function of $\xi_2$.
\end{enumerate}
\end{proposition}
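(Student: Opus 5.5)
The plan is to reduce everything, via the polar Fourier decomposition \cref{Ydecomp}, to a scalar radial problem on each angular mode $n\ge1$. Skew-adjointness comes first. By \cref{eq:LambdaA}, $\Lambda f=\{(A+\Delta^{-1})f,\Omega_0\}$. Since $\Omega_0=G$ is radial, $\{g,G\}$ is, up to sign, $\frac{G'(r)}{r}\de_\theta g$, and $e^{|\xi|^2/4}G'(r)/r=-1/(8\pi)$ is constant. Hence, for $f,g\in D(\Lambda)$,
\begin{equation}
\langle \Lambda f,g\rangle_\cY=c\int \de_\theta\big((A+\Delta^{-1})f\big)\,g\,\dd\xi .
\end{equation}
Write $g=G\,Ag$, using that $A e^{-|\xi|^2/4}\cdot e^{|\xi|^2/4}$ relation: indeed $A G\, e^{|\xi|^2/4}$ is, by \cref{def:A}, comparable to $4(e^{r^2/4}-1)/r^2\cdot\frac1{4\pi}$. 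The cleanest route is the standard one from \cite{GalWay2005,Gallay2011}. Rewrite the $\cY$ pairing of the first piece $\{\Psi_0,f\}$ as the $L^2$ pairing against $e^{r^2/4}g$, which is radial-weighted. Since $\{\Psi_0,\cdot\}=\phi(r)\de_\theta$ with $\phi$ radial, this term is skew by integration by parts in $\theta$. For the second piece, use $\{\Delta^{-1}f,G\}=-\{G,\Delta^{-1}f\}$ and $e^{|\xi|^2/4}\nabla G=-\frac{\xi}{8\pi}$. This gives $\langle\{\Delta^{-1}f,G\},g\rangle_\cY=\frac1{8\pi}\int \de_\theta(\Delta^{-1}f)\,g$, which equals $\frac1{8\pi}\langle \de_\theta\Delta^{-1}f,\Delta\Delta^{-1}g\rangle$. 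After two integrations by parts this is antisymmetric in $(f,g)$, because $\de_\theta$ commutes with $\Delta$. Along the way I will also check that $D(\Lambda)$ is the maximal domain, so that the operator is skew-adjoint and not merely skew-symmetric; this follows from the fact that $\Lambda$ is a bounded perturbation of the skew-adjoint multiplication-type operator $\phi(r)\de_\theta$.

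Next comes the kernel. On $\cY_0$ both brackets vanish, since everything there is radial. Translation invariance gives $\Lambda\de_iG=0$, because differentiating the stationary Lamb–Oseen relation $\{\Psi_0,G\}=0$ yields $\{\de_i\Psi_0,G\}+\{\Psi_0,\de_iG\}=0$. For the converse, restrict to a mode $n\ge1$, with $f=a(r)e^{in\theta}$. Then $\Lambda f=0$ is equivalent to $(A+\Delta^{-1})f=0$, that is, to the ODE
\begin{equation}
-\Big(\de_r^2+\frac1r\de_r-\frac{n^2}{r^2}\Big)w=\frac{w}{A(r)}\cdot(\text{sign}),\qquad w=\Delta^{-1}f,
\end{equation}
with $w$ decaying at the appropriate rate. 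The main obstacle is showing that this ODE has no admissible nonzero solution for $n\ge2$, and only the translation mode for $n=1$. I would follow Gallay–Wayne and use a Sturm comparison argument, or the positivity of the quadratic form $\langle (A+\Delta^{-1})f,f\rangle$ on mode $n\ge2$, which is coercive because $A\ge1$ and $\|\nabla\Delta^{-1}f\|$ is controlled by $\frac1{n^2}$ Hardy-type bounds.

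Finally, solvability. $\Lambda$ has compact-resolvent-type structure on each mode. More concretely, for $h\in\mathrm{Ker}(\Lambda)^\perp\cap\cZ$ I decompose into modes and solve $\de_\theta\big((A+\Delta^{-1})f_n\big)=\tilde h_n$. This means solving $(A+\Delta^{-1})f_n=k_n$ with $k_n$ explicit in $\cZ$-weighted form, which reduces to an inhomogeneous radial ODE for $w_n=\Delta^{-1}f_n$. It is uniquely solvable for $n\ge2$ by the kernel triviality above. For $n=1$ it is solvable if and only if the right-hand side is orthogonal to the translation mode, which is exactly $\mrm^1_1(h)=\mrm^1_2(h)=0$, since $\langle h,\de_iG\rangle_\cY\propto\mrm^1_i(h)$. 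The condition $\cP_0h=0$ removes the $n=0$ obstruction, and together these give the equivalence of the two hypotheses.

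Membership in $\cZ$ follows because $f=A^{-1}(k-\Delta^{-1}f)\cdot$ is Gaussian times polynomially bounded smooth, with ODE asymptotics giving smoothness and growth bounds. Uniqueness follows by imposing $f\perp\mathrm{Ker}\Lambda$. For parity, the reflection $\xi_2\mapsto-\xi_2$ anticommutes with $\Lambda$, since it flips $\de_\theta$ and preserves radial functions and $\Delta^{-1}$, and it preserves $\mathrm{Ker}(\Lambda)^\perp$. So if $h$ is odd (even), then the reflected $f$ solves the equation with right-hand side $\mp h$. Uniqueness then forces $f$ to be even (odd), as claimed.
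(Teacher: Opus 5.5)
Your skeleton differs from the paper's and is a legitimate route. The paper does not prove this proposition: it quotes it from \cite{Gallay2011} and then recovers the solvability via $F=A^{1/2}f$, $({\rm Id}-T_n)F=A^{-1/2}\cP_n\varphi$, the bound on $T_n$ imported from \cite{GaSarnold}, and a Neumann series. You instead use the angular modes, the identity $\Lambda f=\frac{G}{2}\,\de_\theta\big((A+\Delta^{-1})f\big)$, per-mode ODEs, and the reflection $S:\xi_2\mapsto-\xi_2$ anticommuting with $\Lambda$.

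However, the step you yourself call the main obstacle is not closed by what you write. Coercivity of $\langle(A+\Delta^{-1})f,f\rangle=\int Af^2-\|\nabla\Delta^{-1}f\|^2$ on modes $n\ge2$ cannot follow from $A\ge1$. The operator $\nabla\Delta^{-1}$ is unbounded on $L^2$ even on a fixed mode: replacing $f$ by $f(\lambda\,\cdot)$ multiplies $\|\nabla\Delta^{-1}f\|^2/\|f\|^2$ by $\lambda^{-2}\to\infty$ as $\lambda\to0$. So the growth of $A$ is what matters. The argument that works is as follows. On mode $n$, with $w=\Delta^{-1}f$,
\[
\|\nabla w\|^2=-\langle w,f\rangle\le\big\|w/|\xi|\big\|\,\big\||\xi|f\big\|\le\tfrac1n\|\nabla w\|\,\big\||\xi|f\big\|,
\]
hence $\langle(A+\Delta^{-1})f,f\rangle\ge\int\big(A-|\xi|^2/n^2\big)f^2$. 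You then need the pointwise inequality $|\xi|^2<n^2A$, i.e. $4x^2<n^2(e^x-1)$ with $x=|\xi|^2/4$. Since $\sup_{x>0}x^2/(e^x-1)\approx0.65$, this gives $A-|\xi|^2/n^2\ge0.35\,A$ for every $n\ge2$. It fails for $n=1$, as it must. Equivalently, $\|T_n\|\le2.6\,n^{-2}<1$ for $n\ge2$, an elementary proof of what the paper's Neumann series needs on those modes; a Sturm comparison would use the same inequality, namely positivity of $n^2/r^2-1/A$. For $n=1$ no comparison is needed. Solutions of $-\Delta_1w=w/A$ that are regular at the origin form a line spanned by $\Psi_0'$. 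This gives the kernel, and the Fredholm condition $\langle k,\de_iG\rangle=0$, which is equivalent to $\mrm^1(h)=0$.

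Second, per-mode ODE solvability ("uniquely solvable by kernel triviality") gives no bound uniform in $n$. So for a general $h\in\cZ$ with infinitely many angular modes you cannot sum the $f_n$, and membership in $\cY$, let alone $\cZ$, is not established. The corrected coercivity fixes this. It gives uniform invertibility on $\bigoplus_{n\ge2}$ in $L^2(A\,\dd\xi)$, so you solve once there and add the mode-one ODE solution. You then get $\cZ$-regularity globally from $f=A^{-1}(k-\Delta^{-1}f)$ by elliptic bootstrap, using $e^{|\xi|^2/4}/A\sim|\xi|^2/4$. This also needs $k=\de_\theta^{-1}(2h/G)\in\cS_*$: smoothness at the origin follows by averaging $s\,u(R_s\xi)$ over $s\in[0,2\pi]$, with $R_s$ the rotation by $s$, and subtracting the angular mean.

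Minor points:
\begin{itemize}
\item The ODE is $-\Delta_nw=+w/A$.
\item In the parity step, $\Lambda(Sf)=-Sh$, so the right-hand side is $\pm h$, not $\mp h$; your conclusion is nevertheless correct.
\item The sentence with $g=G\,Ag$ is false and should be dropped. The identity $\langle\Lambda f,g\rangle_\cY=\frac1{8\pi}\int\de_\theta\big((A+\Delta^{-1})f\big)g\,\dd\xi$ already gives skew-symmetry in one line.
\end{itemize}
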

We remark that, in fact, to determine the leading order terms in the construction of the approximate solution one has to invert $\Lambda$ on functions of the type $h=\{\varphi,\Omega_0\}$ with $\varphi\in \cS_*$. Since $\Omega_0$ is radial, there is the degree of freedom of adding any radial function to $\varphi$, and we therefore fix $\cP_0\varphi=0$ to remove this issue. Then, thanks to the representation \cref{eq:LambdaA} and Proposition \ref{prop:Lambda}, we know that 
\begin{align}
\label{def:ADelta}
  \Lambda f=\{\varphi,\Omega_0\}\quad \iff \quad  (A+\Delta^{-1})f=\varphi.
\end{align}
Properties of the operator $A+\Delta^{-1}$ on the space 
\begin{align}
\label{def:XLambda}
    \cX=\Big\{ f\in L^2(\RR^2) \, :\, \int_{\RR^2}|f(\xi)|^2A(\xi)\, \dd \xi<\infty\Big\}
\end{align}
have been carefully studied in \cite[Section 2]{GaSarnold}. For our purposes, observe that $\cP_0(\{\varphi,\Omega_0\})=0$ and therefore 
\begin{align}
\label{eq:KerA}
    \{\varphi,\Omega_0\}\in \mathrm{Ker}(\Lambda)^\perp \quad \iff \quad \langle \varphi,\de_i \Omega_0\rangle=0 \quad \text{for }i=1,2,
\end{align}
namely $\varphi$ is orthogonal to $\de_i \Omega_0$ for $i=1,2$ with respect to the standard $L^2(\RR^2)$ inner product. Moreover, solving  the second equation in \cref{def:ADelta} in the space $\cX$ is  equivalent to the following problem in $L^2(\RR^2)$
\begin{align}
\label{eq:newLambda}
    \big({\rm Id}-A^{-\frac12}(-\Delta)^{-1}A^{-\frac12}\big)F=\phi, \quad \text{with } F=A^{\frac12}f \text{ and } \phi=A^{-\frac12}\varphi.
\end{align}
Having that $A$ is radially symmetric, we can as well introduce the angular Fourier decomposition, meaning that we can solve equation \cref{eq:newLambda} mode by mode. The problem is then reduced in proving a bound for the operator $T_n:=\cP_n(A^{-\frac12}(-\Delta)^{-1}A^{-\frac12})\cP_n : L^2(\RR^2)\to L^2(\RR^2)$, which can be represented as 
\begin{align}
\label{def:Tn}
 T_n[g]=\frac{1}{2n}\int_{0}^\infty \frac{\min\big(\frac{r}{s},\frac{s}{r}\big)^n}{\sqrt{A(r)A(s)}}g(s) s \, \dd s,    \quad \text{ with } g\in L^2(r\dd r).
\end{align}
In \cite{GaSarnold}, it was proved that there exists $0<c<1$ such that 
\begin{equation}
\label{bd:Tn}
    \|T_n\|_{X_n\to X_n}\leq \frac{c}{n}, \qquad X_n=\begin{cases}
        L^2(r\dd r) \qquad &n\geq 2\\
       \{g\in L^2(r\dd r)\, : \, \langle g, G' A^\frac12\rangle_{L^2(r\dd r)}=0\}\qquad &n=1.
    \end{cases}
\end{equation}
When $n=1$, the orthogonality condition in \cref{bd:Tn} is equivalent to \cref{eq:KerA}. This is  related to the fact that $T_1[G'A^{\frac12}]=G'A^{\frac12}$ \footnote{To prove this idendity, recall that in polar coordinates $A=-\Psi_0'/\Omega_0'$ with $(\de_{rr}+r^{-1}\de_r)\Psi_0=\Omega_0$. Thus $\Delta_1\Psi_0'=\Omega_0'$ and therefore $(-\Delta_1)^{-1}\Omega_0'=-\Psi_0'=A \Omega_0'$, which proves the desired identity after dividing by $A^{-\frac12}$.}, meaning that $G'A^\frac12\in \mathrm{Ker}({\rm Id}-T_1)$. Moreover,  one can deduce that $1$ is the largest eigenvalue of $T_1$ on $L^2(r\dd r)$ by Sturm-Liouville theory \cite{GaSarnold}, that is the key point to justify the bound \cref{bd:Tn} above. By taking the projection onto the $n$-th angular Fourier mode in \cref{eq:newLambda}, we see that we can define
\begin{align}
    \cP_n F=\big({\rm Id}-T_n\big)^{-1}\cP_n\phi=\sum_{j=0}^\infty T_{n}^j(\cP_n\phi).
\end{align}
With these observations, we have actually proved the following reinterpretation of Proposition \ref{prop:Lambda} to invert $\Lambda$ on some particular functions.
\begin{proposition}
\label{prop:explicitsol}
    Let $h=\{\varphi,G\}\in \mathrm{Ker}(\Lambda)^\perp\cap \cZ$ with $\varphi\in \cS_*$ and $\cP_0\varphi=0$. Then, the projections on the $n$-th angular Fourier mode of the unique solution to $\Lambda f=h$ can be written as
    \begin{equation}
        \cP_nf=A^{-\frac12}\sum_{j=0}^\infty T_n^j[A^{-\frac12}\cP_n\varphi], \qquad \text{for all } n\in \NN.
    \end{equation}
\end{proposition}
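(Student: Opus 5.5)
We aim to prove Proposition~\ref{prop:explicitsol} by reducing $\Lambda f = h$ to the weighted equation \cref{eq:newLambda} and then solving that equation one angular Fourier mode at a time with a Neumann series. First, Proposition~\ref{prop:Lambda} gives a unique solution $f\in \mathrm{Ker}(\Lambda)^\perp\cap\cZ$; what remains is to identify its Fourier projections with the stated formula.

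\textbf{Step 1: reduction to $(A+\Delta^{-1})f=\varphi$.} By \cref{eq:LambdaA}, the equation $\Lambda f=h$ reads $\{(A+\Delta^{-1})f-\varphi,G\}=0$. Since $G$ is radial, $\{g,G\}=G'(r)\,r^{-1}\partial_\theta g$. Hence $g:=(A+\Delta^{-1})f-\varphi$ satisfies $\partial_\theta g=0$, i.e. $g$ is radial. Because $f\perp\mathrm{Ker}(\Lambda)\supset\cY_0$, we have $\cP_0 f=0$, so $\cP_0(A+\Delta^{-1})f=0$ ($A$ is radial and $\Delta^{-1}$ commutes with $\cP_0$). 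Together with $\cP_0\varphi=0$ this gives $g=0$. This step needs the usual convention that $\Delta^{-1}$ of a zero-mean, non-radial function has no added radial constant; it is fine here because $\cP_0 f=0$.

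\textbf{Step 2: mode-by-mode reformulation.} Set $F=A^{1/2}f$ and $\phi=A^{-1/2}\varphi$. Since $A$ grows like $e^{|\xi|^2/4}$ and $f\in\cZ$, $F\in L^2$; also $\phi\in L^2$ because $A^{-1/2}\lesssim e^{-|\xi|^2/8}$ and $\varphi$ grows at most polynomially. Multiplying by $A^{-1/2}$ turns Step 1 into \cref{eq:newLambda}. Applying $\cP_n$, which commutes with the radial weights and with $\Delta^{-1}$, gives $(\mathrm{Id}-T_n)\cP_nF=\cP_n\phi$, with $T_n$ the kernel operator \cref{def:Tn} (the Green's function of $-\Delta$ restricted to mode $n$). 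The case $n=0$ is trivial: both sides vanish, and $T_0$ never enters.

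\textbf{Step 3: Neumann series.} For $n\ge2$, \cref{bd:Tn} gives $\|T_n\|\le c/n<1$ on $L^2(r\,\dd r)$, so $\mathrm{Id}-T_n$ is invertible with inverse $\sum_j T_n^j$, convergent in operator norm. For $n=1$ this is the main obstacle, since $T_1$ has eigenvalue $1$ with eigenfunction $G'A^{1/2}$. Here one must check two things, which we expect to be the delicate point of the argument.
\begin{enumerate}
\item Both $\cP_1\phi$ and $\cP_1F$ lie in $X_1$. For $\phi$, note $\langle A^{-1/2}\varphi, G'A^{1/2}\rangle=\langle\varphi,G'\rangle$ (with the $\cos\theta,\sin\theta$ components), and this vanishes by \cref{eq:KerA} because $h\in\mathrm{Ker}(\Lambda)^\perp$. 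For $F$, the same pairing equals $\langle f, A\,\partial_iG\rangle_{L^2}=\pm\langle f,\partial_i G\rangle_{\cY}$ up to a constant, which is zero since $f\perp\partial_iG$ in $\cY$.
\item $T_1$ maps $X_1$ into itself. This holds because $T_1$ is self-adjoint on $L^2(r\,\dd r)$ (its kernel is symmetric) and $G'A^{1/2}$ is an eigenvector, so the orthogonal complement is invariant.
\end{enumerate}
With these, the bound $\|T_1\|_{X_1\to X_1}\le c$ yields convergence of the series on $X_1$, and uniqueness in $X_1$ identifies $\cP_1F$ with $\sum_j T_1^j\cP_1\phi$.

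\textbf{Conclusion.} Multiplying back by $A^{-1/2}$ gives the claimed formula for $\cP_n f$ for every $n\in\NN$.
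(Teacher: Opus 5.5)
Your Steps 1--2, the Neumann series for $n\ge 2$, the trivial case $n=0$, and your check that $A^{-1/2}\cP_1\varphi\in X_1$ all follow the paper's argument and are correct. The gap is in the other half of your $n=1$ step, the claim that $\cP_1F\in X_1$. Up to a constant factor, the pairing $\langle \cP_1F,G'A^{1/2}\rangle_{L^2(r\,\dd r)}$ equals $\langle f,A\,\partial_iG\rangle$. By the definition of $A$, $A\,\partial_iG=-\partial_i\Psi_0=-\frac{\xi_i}{2\pi|\xi|^2}\big(1-\e^{-|\xi|^2/4}\big)$, whereas $\e^{|\xi|^2/4}\partial_iG=-\frac{\xi_i}{8\pi}$. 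These two functions are not proportional. So $f\perp\partial_iG$ in $\cY$, which only says $\mrm^1_i(f)=0$, tells you nothing about $\int f\,\partial_i\Psi_0\,\dd\xi$. Your uniqueness-in-$X_1$ argument therefore does not apply to $\cP_1F$.

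A better justification will not fix this, because the claim is false in general. Since $G'A^{1/2}\in\mathrm{Ker}(\mathrm{Id}-T_1)$, the mode-one equation determines $\cP_1 f$ only modulo $\mathrm{span}\{\partial_1G,\partial_2G\}$, the mode-one part of $\mathrm{Ker}(\Lambda)$.

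\begin{itemize}
\item The Neumann series lies in $X_1$. It therefore selects the representative with $\int f\,\partial_i\Psi_0\,\dd\xi=0$, i.e.\ the one orthogonal to $\partial_iG$ in $\cX=L^2(A\,\dd\xi)$.
\item Proposition~\ref{prop:Lambda} instead selects the representative with $\mrm^1(f)=0$.
\end{itemize}

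Since $\mrm^1_i(\partial_iG)=-1$ and $\int\partial_iG\,\partial_i\Psi_0\,\dd\xi=-\|\partial_iG\|_{\cX}^2\neq0$, the two choices agree only if the series solution $g$ satisfies $\mrm^1(g)=0$. Now $\mrm^1_1(g)=\pi\langle(\mathrm{Id}-T_1)^{-1}\phi_1,\,rA^{-1/2}\rangle_{L^2(r\,\dd r)}$, where $\phi_1$ is the $\cos\theta$ coefficient of $A^{-1/2}\cP_1\varphi$. Suppose this vanished for every admissible $\varphi$. The admissible $\phi_1$ are dense in $X_1$ and $(\mathrm{Id}-T_1)^{-1}$ is self-adjoint on $X_1$, so $rA^{-1/2}$ would have to be parallel to $G'A^{1/2}$. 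That means $GA$ would be constant, but $GA=\frac{1-\e^{-r^2/4}}{\pi r^2}$ is not.

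Hence, for the solution in $\mathrm{Ker}(\Lambda)^\perp\subset\cY$, the correct statement at $n=1$ is
\[
\cP_1f=g+\mrm^1_1(g)\,\partial_1G+\mrm^1_2(g)\,\partial_2G,\qquad g:=A^{-1/2}\sum_{j\ge0}T_1^j\big[A^{-1/2}\cP_1\varphi\big].
\]
The stated formula holds at $n=1$ only if ``the unique solution'' means the one orthogonal to $\mathrm{Ker}(\Lambda)$ in $\cX$. The paper's justification uses the same reduction in $\cX$ and also skips over this normalization. This is harmless for the paper, since only the mode $n=2$ is used later.
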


\subsection{Properties and expansions of the configurational operators} \label{sec:propExpAdj}

As explained in Section~\ref{sec:eq}, we are only interested in computing the operators $\cT_{\eps,Z}$, $\cV_{\eps,Z}$, $\cQ_{\eps,Z}$ in $Z = (R,0)$ so we simply write $\cT_{\eps,R},\cV_{\eps,R},\cQ_{\eps,R},\cB_{\eps,R}$ instead of $\cT_{\eps,(R,0)},\cV_{\eps,(R,0)},\cQ_{\eps,(R,0)},\cB_{\eps,(R,0)}$.

\subsubsection*{{\rm i)} \textbf{Expansions}}

We start by introducing the constants 
\begin{equation}
\label{def:cnk}
    c_{0,k}=(-1)^{k-1}, \qquad  c_{n,k}\coloneqq \begin{pmatrix}
    n\\ k
\end{pmatrix}(-1)^{n+k-1}\frac{ d^n}{2\pi n} \qquad \text{for } n\in \NN\setminus\{0\}, \, k\in \NN,
\end{equation}
and
\begin{equation}
\label{def:Snk}
    {\rm S}_{n,k}=\sum_{l=1}^{N-1}\frac{Q_l^k}{(Q_l-1)^n}, \qquad Q_l\leftrightarrow \e^{\frac{2\pi il}{N}}, \qquad 0\leq k\leq n.
\end{equation}
We provide in Appendix~\ref{app:constants} details about the values of the constants ${\rm S}_{n,k}$, and we only note that ${\rm S}_{n,k}\in \RR$ as proved in Lemma \ref{lem:Snk}.
The expansion of the operators is encoded in the following.
\begin{lemma}\label{lem:expansionsR}
    Assume that $f \in \cZ$, let $\varphi = \Delta^{-1}f$ and $M\in \NN$. Then, the translation operator admits the expansion
\begin{equation}\label{Texpansion}
  \cT_{\eps,R}\varphi(\xi) \,=\, \frac{{\rm M}(f)}{2\pi}\log\Big(\frac{R}{\eps d}\Big)
+\sum_{n=1}^M\sum_{k=0}^{n} \frac{\eps^n}{R^n}c_{n,k} \Re\Big(\xi^{n-k}{\rm m}^k(f)\Big) + \cO_{\cS_*}\bigl(\epsilon^{M+1}\bigr)\,.
\end{equation}
The polygon-center operator admits the expansion
\begin{equation}\label{Vexpansion}
  \begin{split}
      \cV_{\eps,R}\varphi(\xi) \,=\,& \mrM(f)\frac{N}{2\pi}\log\Big(\frac{R}{\eps d}\Big)
  +N\sum_{n=1}^M (-1)^n\frac{\eps^n}{R^n}c_{n,n} \Re\big({\rm m}^n(f)\big)\\
&+N\sum_{n=N}^M\sum_{\substack{\ell \in \mathbb{N}\setminus \{0\}\\
\ell N\leq M}}(-1)^n\frac{\eps^n}{R^n} c_{n,n-\ell N} \Re\Big(\xi^{\ell N}\mrm^{n-\ell N}(f)\Big)+\cO_{\cS_*}\bigl(\epsilon^{M+1}\bigr)\,.
        \end{split}
\end{equation}
The polygon-polygon operator admits the expansion

\begin{equation}\label{Qexpansion}
  \begin{split}
\cQ_{\eps,R}\varphi(\xi) \,=&\, \frac{\mrM(f)}{2\pi}\sum_{l=1}^{N-1}\log\Big(\frac{R}{\eps d}\Big)
  +\sum_{n=1}^M \sum_{k=0}^n\frac{\epsilon^n}{R^n}c_{n,k}{\rm S}_{n,n-k}\Re\Big(\xi^{n-k}\mrm^k(f)\Big)+\cO_{\cS_*}(\eps^{M+1}).
        \end{split}
\end{equation}
\end{lemma}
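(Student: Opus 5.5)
The plan is to expand the logarithmic kernel by a multipole (Taylor) expansion in complex notation. Since $\varphi=\Delta^{-1}f$ with $\Delta^{-1}f(\xi)=\frac{1}{2\pi}\int\ln|\xi-\eta|f(\eta)\dd\eta$, each operator evaluates $\varphi$ at a point of the form $a\xi+b/\eps$, with $|a|=1$ and $b$ of size $R/d$. Write $\ln|w|=\Re\log w$ and factor out the large term. For instance, for $\cT_{\eps,R}$:
\[
\ln\Big|\xi-\eta+\frac{R}{\eps d}\Big|=\ln\frac{R}{\eps d}+\Re\log\Big(1+\frac{\eps d}{R}(\xi-\eta)\Big).
\]
Then expand $\Re\log(1+z)=\sum_{n\ge1}\frac{(-1)^{n-1}}{n}\Re z^n$ and use the binomial formula
\[
(\xi-\eta)^n=\sum_k\binom nk\xi^{n-k}(-\eta)^k.
\]
Integrating against $f(\eta)$ turns $\eta^k$ into $\mrm^k(f)$. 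Collecting the signs $(-1)^{n-1}(-1)^k=(-1)^{n+k-1}$ and the factor $d^n/(2\pi n)$ reproduces $c_{n,k}$ exactly, giving \cref{Texpansion}. The $n=0$ term yields $\frac{\mrM(f)}{2\pi}\ln\frac{R}{\eps d}$.

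For $\cQ_{\eps,R}$, the argument of the $l$-th term is $Q_l\xi+\frac{(Q_l-1)R}{\eps d}-\eta$. Factor out $(Q_l-1)R/(\eps d)$; since $|Q_l-1|=2\sin(\pi l/N)\ge L_N/\mathsf r$, the leading constant is a fixed $l$-dependent logarithm. I would absorb that constant (the paper's displayed constant suggests it is normalized via $d$; the $\ln|Q_l-1|$ pieces are constants which I would check carefully). Expanding $\Re\log\big(1+\frac{\eps d}{R}\frac{Q_l\xi-\eta}{Q_l-1}\big)$ produces $(Q_l\xi-\eta)^n/(Q_l-1)^n$, and the binomial expansion gives $Q_l^{n-k}\xi^{n-k}\eta^k/(Q_l-1)^n$. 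Summing over $l$ gives ${\rm S}_{n,n-k}$, which is real by Lemma \ref{lem:Snk}, so it can be pulled outside $\Re$.

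For $\cV_{\eps,R}$, the $l$-th argument is $Q_l\xi-\frac{R}{\eps d}-\eta$. Expand $\Re\log\big(1-\frac{\eps d}{R}(Q_l\xi-\eta)\big)$, which contributes an extra $(-1)^n$. Summing over $l$, $\sum_l Q_l^{n-k}=N$ if $N\mid n-k$ and $0$ otherwise. This leaves $k=n$ (the pure moment terms) and $n-k=\ell N$ with $\ell\ge1$, matching \cref{Vexpansion}.

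The main obstacle is justifying the remainder as $\cO_{\cS_*}(\eps^{M+1})$, since the Taylor series of $\log(1+z)$ only converges for $|z|<1$, that is for $|\xi-\eta|\lesssim R/(\eps d)$. I would split the $\eta$-integral into the regions $|\eta|\le\frac{R}{4\eps d}$ and its complement. On the complement, $f\in\cZ$ has Gaussian decay $e^{-c/\eps^2}$, which beats any power of $\eps$ and handles the logarithmic singularity, both for the true integral and for the polynomial terms. The same tail bound applies to the truncated moments. On the inner region, with $|\xi|\le\frac{R}{4\eps d}$, the Taylor remainder is bounded by $C\eps^{M+1}(|\xi|+|\eta|)^{M+1}$, which is polynomial in $\xi$ after integrating against $f$. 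For $|\xi|$ large, polynomial growth of order $\widetilde N$ absorbs the bound, since $\eps\lesssim 1/|\xi|$ there. The $\eps$-derivative conditions follow by the same estimates applied to the differentiated expressions.
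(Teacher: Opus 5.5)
Your proposal follows the paper's own proof (the general-$Z$ version, Lemma~\ref{lem:Expansions} in the appendix) essentially step by step. You factor out the large vector and expand $\log|1+x|$ with $\xi,\eta$ restricted to a ball of radius $\sim 1/\eps$, control the complement by the Gaussian decay of $f$ and the polynomial growth in $\xi$, and use $\sum_l Q_l^{n-k}\in\{0,N\}$ for $\cV_{\eps,R}$ and $\sum_l Q_l^{n-k}(Q_l-1)^{-n}={\rm S}_{n,n-k}\in\RR$ for $\cQ_{\eps,R}$.

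The one correction concerns the constant in $\cQ_{\eps,R}$. The $\ln|Q_l-1|$ pieces cannot be absorbed into the $\cO_{\cS_*}(\eps^{M+1})$ error, because they sum to $\frac{\mrM(f)}{2\pi}\ln\prod_{l=1}^{N-1}|1-Q_l|=\frac{\mrM(f)}{2\pi}\ln N$. The correct leading term is therefore $\frac{\mrM(f)}{2\pi}\sum_{l}\ln\frac{|Q_l-1|R}{\eps d}$, as in the appendix version; the displayed statement is off by this constant, which is harmless since only gradients and Poisson brackets of $\cQ_{\eps,R}\varphi$ are ever used. For the same reason, your cutoff radius for $\cQ_{\eps,R}$ must carry the factor $\min_l|Q_l-1|$ so that you stay inside the disc of convergence.
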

These expansions are proven in Appendix~\ref{app:functional}, by applying Lemma~\ref{lem:Expansions} in the case $Z=(R,0)$.
Let us now gather some direct consequences of these expansions that will be of use later.
\begin{proposition}\label{prop:small_operator}
    For any $\varphi \in \Delta^{-1}\cZ$, the gradient of the operators is always at least of order $\eps$:
        \begin{equation}\label{eq:small_gradients}
            \nabla Q_{\eps,R} \varphi = \cO_{\cS_*}(\eps), \qquad \nabla \cT_{\eps,R} \varphi = \cO_{\cS_*}(\eps), \qquad \nabla \cV_{\eps,R} \varphi = \cO_{\cS_*}(\eps),
        \end{equation}
    meaning in particular that for any $\varphi,\widehat{\varphi} \in \Delta^{-1}\cZ$,
        \begin{equation}\label{eq:smallA}
            \cB_{\eps,R}(\varphi,\widehat{\varphi}) = \cO(\eps),
        \end{equation}
    and thus that for any $f \in \cZ$,
    \begin{equation}\label{eq:devPhi}
        \big\{\Phi_{\eps,R}(\varphi,\widehat{\varphi}),f \big\} = \{ \varphi, f\} + \cO_\cZ(\eps).
    \end{equation}
    Moreover, let $X_\eps,Y_\eps \in \mathbb{C}$ be such that $|X_\eps|,|Y_\eps|\geq c>0$ for a constant $c$ independent of $\eps$ and $|X_\eps - Y_\eps| = \cO(\eps^M)$ for some $M \in \NN$. Then
        \begin{equation}
        \label{eq:TXY}
            (\cT_{\eps,X_\eps}-\cT_{\eps,Y_\eps})\varphi = \cO_{\cS_*}(\eps^{M+1}).
        \end{equation}
        The same holds true with $\cT_{\eps,\cdot}$ replaced by $\cV_{\eps,\cdot}$ and $\cQ_{\eps,\cdot}$
\end{proposition}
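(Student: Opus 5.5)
We derive every claim of Proposition~\ref{prop:small_operator} from the expansions in Lemma~\ref{lem:expansionsR}, applied with $M=0$ or $M=1$.

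\textbf{Gradient bounds \eqref{eq:small_gradients}.} For $\varphi=\Delta^{-1}f$ with $f\in\cZ$, the expansions \eqref{Texpansion}, \eqref{Vexpansion}, \eqref{Qexpansion} with $M=0$ say that each of $\cT_{\eps,R}\varphi$, $\cV_{\eps,R}\varphi$, $\cQ_{\eps,R}\varphi$ equals a constant plus $\cO_{\cS_*}(\eps)$. The constant is a multiple of $\log(R/(\eps d))$, which is independent of $\xi$. Taking $\nabla_\xi$ kills this constant, and by Definition~\ref{def:topo} the $\xi$-derivative of an $\cO_{\cS_*}(\eps)$ term is again $\cO_{\cS_*}(\eps)$.

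\textbf{Bound \eqref{eq:smallA} on $\cB_{\eps,R}$.} In $\cB_{\eps,R}(\varphi,\widehat\varphi)$ the integrand is $\nabla^\perp(\cQ_{\eps,R}\varphi+\gamma\cT_{\eps,R}\widehat\varphi)$ times $\Delta\varphi=f\in\cZ$. The first factor is $\cO_{\cS_*}(\eps)$, so it grows at most polynomially, while $f$ decays like a Gaussian. The integral is therefore bounded by $C\eps$.

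\textbf{Expansion \eqref{eq:devPhi} of the bracket.} Write out $\Phi_{\eps,R}(\varphi,\widehat\varphi)-\varphi$ from \eqref{def:Xieps}. It consists of four pieces:
\begin{itemize}
\item $\cQ_{\eps,R}\varphi+\gamma\cT_{\eps,R}\widehat\varphi$, whose gradient is $\cO_{\cS_*}(\eps)$ by the first step;
\item $\xi_2\Re\cB_{\eps,R}$, whose gradient is $\cO(\eps)$ by the second step;
\item $-\tfrac{d\eps}{2R}\Im(\cB_{\eps,R})\,|\cT_{\eps,R}\xi|^2$, the Coriolis-type term.
\end{itemize}
The last piece needs care, because $|\cT_{\eps,R}\xi|^2=|\xi+R/(\eps d)|^2$ is large. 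Its gradient is $2(\xi+R/(\eps d))$. Multiplying by the prefactor $\tfrac{d\eps}{2R}\Im\cB$ gives $\Im\cB\,(\eps d\,\xi/R+1)$, which is $\cO_{\cS_*}(\eps)$ since $\cB=\cO(\eps)$ and $R$ stays bounded away from zero. Finally, $\{g,f\}=\nabla^\perp g\cdot\nabla f$ with $\nabla g=\cO_{\cS_*}(\eps)$ and $f\in\cZ$ is $\cO_\cZ(\eps)$, since the product of polynomial growth and Gaussian decay stays in the right class.

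\textbf{Difference estimate \eqref{eq:TXY}.} By definition, $(\cT_{\eps,X}\varphi)(\xi)=\varphi(\xi+X/(\eps d))$. I would apply Lemma~\ref{lem:Expansions} (the general-$Z$ version) for complex $Z$. Alternatively, write
\[
\varphi\big(\xi+X/(\eps d)\big)-\varphi\big(\xi+Y/(\eps d)\big)=\int_0^1\nabla\varphi\big(\xi+Z_s/(\eps d)\big)\cdot\frac{X-Y}{\eps d}\,\dd s,
\qquad Z_s=Y+s(X-Y).
\]
We have $|Z_s|\ge c/2$ for $\eps$ small. The far field of $\varphi$ gives $\nabla\varphi(y)=\mrM(f)y/(2\pi|y|^2)+\cO(|y|^{-2})$. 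At $y=\xi+Z_s/(\eps d)$ this is $\cO_{\cS_*}(\eps)$ in the $\xi$-range where the expansion holds, uniformly polynomially in $\xi$; this is the same mechanism behind Lemma~\ref{lem:expansionsR}. The factor $(X-Y)/(\eps d)$ is $\cO(\eps^{M-1})$, so the total is $\cO(\eps^{M})$. This falls short of the claimed $\eps^{M+1}$ by one power. To recover it, I would not treat the constant term separately: the exact log constants differ by $\log|X/Y|=\cO(\eps^M)$. That suggests the claim is meant modulo constants, or uses $\nabla$. I would check it through the explicit expansion, where the $\xi$-dependent terms carry $\eps^n/Z^n$ with $n\ge1$, and $|X^{-n}-Y^{-n}|=\cO(\eps^M)$ then gives $\eps^{n+M}$.

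\textbf{Main obstacle.} The main obstacle is precisely this last step: getting $\eps^{M+1}$ uniformly with the polynomial weights, and handling the $\xi$-independent logarithmic constant, which is only $\cO(\eps^M)$. I would treat that constant as harmless, since only gradients enter the brackets. The cases $\cV$ and $\cQ$ follow identically, term by term in the finite sums over $l$.
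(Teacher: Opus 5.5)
Your proof is correct and is the paper's argument: every claim is read off from the expansions of Lemma~\ref{lem:expansionsR}, and \eqref{eq:TXY} follows by comparing the expansions term by term, each order $n\ge1$ gaining $\eps^n|X_\eps^{-n}-Y_\eps^{-n}|\lesssim\eps^{n+M}$; your handling of the Coriolis term is exactly what is needed, since $\tfrac{d\eps}{2R}\nabla|\cT_{\eps,R}\xi|^2=\tfrac{\eps d}{R}\xi+(1,0)$ gets multiplied by $\Im\cB_{\eps,R}=\cO(\eps)$. Your caveat about the logarithm is also right and corrects the paper's claim that ``the constant term vanishes'': the constants actually differ by $\frac{\mrM(f)}{2\pi}\log|X_\eps/Y_\eps|=\cO(\eps^M)$ (times $N$, resp.\ $N-1$, for $\cV$, resp.\ $\cQ$), so \eqref{eq:TXY} holds only modulo an additive constant, which is harmless because the estimate is only used inside Poisson brackets.
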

\begin{proof}
    Taking the gradient in the expansions of Lemma~\ref{lem:expansionsR} immediately gives~\cref{eq:small_gradients} and thus~\cref{eq:smallA}. Applying those relations to the definition of $\Phi_{\eps,R}$ gives in turn relation~\cref{eq:devPhi}. Instead, the relation \cref{eq:TXY} follows by comparing term by term the expansions in Lemma \ref{lem:expansionsR}, where we gain at least a factor of $\eps|X_{\eps}-Y_{\eps}|$ since the constant term vanishes and $|X_{\eps}|,|Y_{\eps}|\geq c>0$.
\end{proof}
As a consequence of Proposition \ref{prop:small_operator}, we have the following.
\begin{corollary}\label{lem:expA}
    Let $\varphi_1,\widehat{\varphi}_1,\varphi_2,\widehat{\varphi}_2\in \Delta^{-1}\cZ$ and let $n \in \NN$. Then
    \begin{equation}
        \cB_{\eps,R}(\varphi_1 + \eps^n \varphi_2 , \widehat{\varphi}_1 + \eps^n \widehat{\varphi}_2) = \cB_{\eps,R}(\varphi_1,\widehat{\varphi}_1) + \cO(\eps^{n+1}).
    \end{equation}
\end{corollary}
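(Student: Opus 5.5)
The plan is to use that $\cB_{\eps,R}$ is trilinear in its arguments: it is linear in the pair $(\varphi,\widehat{\varphi})$ through $\cQ_{\eps,R}\varphi+\gamma\cT_{\eps,R}\widehat{\varphi}$, and linear in the third slot through $\Delta\widetilde{\varphi}$. Writing $\varphi=\varphi_1+\eps^n\varphi_2$ and $\widehat{\varphi}=\widehat{\varphi}_1+\eps^n\widehat{\varphi}_2$, and recalling that $\cB_{\eps,R}(\varphi,\widehat{\varphi})=\cB_{\eps,R}(\varphi,\widehat{\varphi},\varphi)$, we will expand
\begin{equation}
\cB_{\eps,R}(\varphi,\widehat{\varphi})=\cB_{\eps,R}(\varphi_1,\widehat{\varphi}_1,\varphi_1)+\eps^n\big[\cB_{\eps,R}(\varphi_2,\widehat{\varphi}_2,\varphi_1)+\cB_{\eps,R}(\varphi_1,\widehat{\varphi}_1,\varphi_2)\big]+\eps^{2n}\cB_{\eps,R}(\varphi_2,\widehat{\varphi}_2,\varphi_2).
\end{equation}
The first term is exactly $\cB_{\eps,R}(\varphi_1,\widehat{\varphi}_1)$. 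It then suffices to show that each of the three remaining trilinear terms is $\cO(\eps)$. That bound yields $\cO(\eps^{n+1})$ for the cross terms and $\cO(\eps^{2n+1})\subset\cO(\eps^{n+1})$ for the last term. The case $n=0$ is trivial, since then both sides are $\cO(\eps)$ by \cref{eq:smallA}.

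For the bound, take arbitrary $a,\widehat{a},b\in\Delta^{-1}\cZ$. By \cref{eq:small_gradients} in Proposition~\ref{prop:small_operator}, we have $\nabla^\perp(\cQ_{\eps,R}a+\gamma\cT_{\eps,R}\widehat{a})=\cO_{\cS_*}(\eps)$. In other words, it is bounded by $C\eps(1+|\xi|)^{\widetilde N}$. Meanwhile $\Delta b\in\cZ$, so $|\Delta b(\xi)|\le C'(1+|\xi|)^{k}\e^{-|\xi|^2/4}$. The product is therefore integrable with integral bounded by $C''\eps$, which gives $\cB_{\eps,R}(a,\widehat{a},b)=\cO(\eps)$. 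This is exactly the argument behind \cref{eq:smallA}, applied with mixed slots.

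The only point requiring care is this mixed-slot version of \cref{eq:smallA}. The statement \cref{eq:smallA} is formulated for the diagonal case $\widetilde{\varphi}=\varphi$, but the proof never uses that the third argument coincides with the first, so it transfers verbatim. One should also check that the constants in $\cO$ do not depend on $\eps$ beyond the stated power, uniformly in $R$ in the relevant range where $R$ is bounded away from zero. This follows from the uniformity in the expansions of Lemma~\ref{lem:expansionsR}.
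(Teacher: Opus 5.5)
Your proposal is correct and follows essentially the same route as the paper. The paper's proof is the one-line remark that the statement follows from the definition \cref{def:cA} of $\cB_{\eps,R}$ and Proposition~\ref{prop:small_operator}; your multilinear expansion, combined with the mixed-slot version of \cref{eq:smallA} (the $\cO_{\cS_*}(\eps)$ gradient bound integrated against $\Delta b\in\cZ$), supplies exactly the detail that remark leaves implicit.
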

\begin{proof}
    It directly follows by the definition of $\cB_{\eps,R}$ in \cref{def:cA} and Proposition \ref{prop:small_operator}.
\end{proof}

\subsubsection*{{\rm ii)} \textbf{
Symmetries, commutators and adjoint}}
We finally characterize useful properties of our operators. \begin{lemma}\label{lem:adjoints}
    Let  $\cT_{\eps,R}^*$, $\cV_{\eps,R}^*$ and $\cQ_{\eps,R}^*$ be the $L^2(\RR^2)$-adjoint  of $\cT_{\eps,R}$, $\cV_{\eps,R}$ and $\cQ_{\eps,R}$ respectively.

Then the following holds.
\begin{enumerate}
    \item $\cT_{\eps,R}^*=\cT_{-{\eps,R}}$ and $[\cT_{\eps,R},\Delta]=[\cT^*_{\eps,R},\Delta]=0$. Moreover, if $f$ is even in $\xi_2$ then $\cT_{\eps,R}(f)$ is also even in $\xi_2$.
    \item The adjoint $\cV_{\eps,R}^*$ is given by
        \begin{equation}
            \label{def:Veps*}
            (\cV_{\eps,R}^*f)(\xi)=\sum_{l=1}^Nf\big(Q_l(\xi+\frac{(R,0)}{\eps d})\big), 
        \end{equation}
        and satisfies $[\cV_{\eps,R},\Delta]=[\cV_{\eps,R}^*,\Delta]=0$.
        Moreover, if $f$ is $N$-fold symmetric, then $$\cV_{\eps,R}^* f=N\cT_{\eps,R} f.$$
    \item The operator $\cQ_{\eps,R}$ is self-adjoint, namely $\cQ_{\eps,R}^*=\cQ_{\eps,R}$, and $[\cQ_{\eps,R},\Delta]=0.$ Moreover, if $f$ is even in $\xi_2$ then $\cQ_{\eps,R}(f)$ is also even in $\xi_2$.
    \item The following identities holds true
    \begin{align}
    \label{key:Q}
        &\cQ_{\eps,R}\{|\cT_{\eps,R}\xi|^2,f\}=\{|\cT_{\eps,R}\xi|^2,\cQ_{\eps,R}f\},\\
                \label{key:V}&\cV_{\eps,R}\{|\cT_{\eps,R}\xi|^2,f\}=\{|\xi|^2,\cV_{\eps,R}f\},\\
        \label{key:V*}&\cV^*_{\eps,R}\{|\xi|^2,f\}=\{|\cT_{\eps,R}\xi|^2,\cV^*_{\eps,R}f\}.\\
    \end{align}
\end{enumerate}
\end{lemma}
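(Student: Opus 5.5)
The whole lemma is a chain of changes of variables in $\R^2$. Every operator involved is a finite sum of compositions of $f$ with affine maps $\xi\mapsto Q\xi+b$, where $Q$ is a rotation. Such maps are measure preserving and commute with the Laplacian, and they preserve the Poisson bracket because $\det Q=1$ (this is \cref{eq:change_var_Poisson} with Jacobian one). Set $b:=\frac{1}{\eps d}(R,0)$.

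\textbf{Items 1 and 2.} For $\cT_{\eps,R}$, the substitution $\eta=\xi+b$ in $\langle \cT_{\eps,R}f,g\rangle$ gives $\cT^*_{\eps,R}g(\eta)=g(\eta-b)$, which is $\cT_{-\eps,R}$. Commutation with $\Delta$ holds because translations commute with derivatives. Since $b$ has vanishing second component, $\xi_2\mapsto-\xi_2$ commutes with the translation, so parity in $\xi_2$ is preserved.

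For $\cV_{\eps,R}$, I substitute $\eta=Q_l\xi-b$, i.e. $\xi=Q_{-l}(\eta+b)$, in each summand. This gives $\cV^*g(\eta)=\sum_l g(Q_{-l}(\eta+b))$. Reindexing $l\to N-l$ yields \cref{def:Veps*}. If $f$ is $N$-fold symmetric, each summand equals $f(\eta+b)$, so $\cV^*_{\eps,R}f=N\cT_{\eps,R}f$. The commutation with $\Delta$ follows from rotation and translation invariance of $\Delta$.

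\textbf{Item 3.} The $l$-th summand of $\cQ_{\eps,R}$ is $f(Q_l\xi+(Q_l-I)b)=f(Q_l(\xi+b)-b)$. Its adjoint is $g(Q_{-l}(\eta+b)-b)$, which is the $(N-l)$-th summand. Summing over $l=1,\dots,N-1$ gives self-adjointness. For the parity in $\xi_2$, I use the conjugation $\overline{Q_l\xi}=Q_{-l}\bar\xi$ and $\bar b=b$. Reflecting $\xi_2$ therefore maps the $l$-th term to the $(N-l)$-th term applied to the reflected $f$. For even $f$ the sum is invariant.

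\textbf{Item 4.} This is the only step needing care. The observation is that $|\cT_{\eps,R}\xi|^2=|\xi+b|^2$, and the maps above are isometries in the shifted coordinate. For the $l$-th term of $\cQ$, write $\Phi_l(\xi)=Q_l(\xi+b)-b$. Then $\Phi_l(\xi)+b=Q_l(\xi+b)$, so $|\Phi_l(\xi)+b|^2=|\xi+b|^2$. In other words, $|\cdot+b|^2$ is invariant under $\Phi_l$. Since brackets commute with composition by the symplectic map $\Phi_l$, I get
\[
\{|\cdot+b|^2,f\}\circ\Phi_l=\{|\cdot+b|^2\circ\Phi_l,f\circ\Phi_l\}=\{|\xi+b|^2,f\circ\Phi_l\}.
\]
Summing over $l$ gives \cref{key:Q}.

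For \cref{key:V}, the map is $\Psi_l(\xi)=Q_l\xi-b$, with $|\Psi_l(\xi)+b|^2=|\xi|^2$. The same computation turns $|\cdot+b|^2$ into $|\xi|^2$.

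For \cref{key:V*}, the map is $\xi\mapsto Q_l(\xi+b)$, and $|Q_l(\xi+b)|^2=|\xi+b|^2$, which gives the identity.

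The only point I would double-check is the orientation and sign conventions in the bracket change of variables. This is harmless here since all Jacobians are rotations with determinant one.
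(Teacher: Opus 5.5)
Your proof is correct and follows essentially the paper's route: direct changes of variables for the adjoints, parity via the conjugation $\overline{Q_l\xi}=Q_{-l}\bar\xi$ and reindexing $l\to N-l$, and, for item 4, the invariance of $|\cdot+b|^2$ under the relevant rigid motions. The differences are cosmetic. You invoke the symplectic invariance of the bracket \cref{eq:change_var_Poisson}, where the paper writes out the chain rule $\nabla(f\circ\eta_l)=Q_l^T(\nabla f)\circ\eta_l$ together with $(Q_lv)^\perp=Q_lv^\perp$. You also prove \cref{key:V} directly, whereas the paper obtains it by duality from \cref{key:V*}.
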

\begin{remark}
The identities in \cref{key:Q,key:V*,key:V} reflect the invariance of the vortex polygon with respect to global rotations. In the shifted frame used for the exterior vortices, the function $|\cT_{\eps,R}\xi|^2$ acts as the generating function of rotations around the global origin. Consequently, the first identity states that the polygon-polygon operator $\cQ_{\eps,R}$, describing the interactions between the vortices at the vertices of the polygon, commutes with the generator of global rotations. Analogously, the second and third identities show that the operators $\cV_{\eps,R}$ and $\cV_{\eps,R}^*$ intertwine the rotation of the central vortex with the global rotation of the exterior vortices.
\end{remark}
The proof of Lemma \ref{lem:adjoints} follows by direct computations, and we thus postpone it to Appendix~\ref{app:proofs_adjoints}. Finally, we record a key property of the operator $\cB_{\eps,R}(\cdot,\cdot)$ defined in \cref{def:cA}.
\begin{lemma}
\label{lem:eveneven}
    If $\varphi,\widehat{\varphi}$ are even in $\xi_2$, then 
    \begin{align}
    \label{eq:Reeven}
        \Re(\cB_{\eps,R}(\varphi,\widehat{\varphi}))=0.
    \end{align}
    In general, for any   $\varphi,\widehat{\varphi},\widetilde{\varphi}\in \cS_*$, denoting $w=\Delta\varphi, \widehat{w}=\Delta\widehat{\varphi} $ and $\widetilde{w}=\Delta\widetilde{\varphi}$, we have the expansion       \begin{equation}
    \label{eq:expansionBeps}
    \cB_{\eps,R}(\varphi,\widehat{\varphi},\widetilde{\varphi}) = i\sum_{n=1}^M\left(\frac{\eps}{R}\right)^n\sum_{k=0}^{n-1}(n-k)c_{n,k}\left({\rm S}_{n,n-k}\overline{\mrm^k(w)} + \gamma\overline{\mrm^k(\widehat{w})}\right)\overline{\mrm^{n-k-1}(\widetilde{w})}+\cO(\eps^{M+1}).
\end{equation}

\end{lemma}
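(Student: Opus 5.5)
The first claim follows from a reflection argument, and the expansion \cref{eq:expansionBeps} follows by inserting the expansions of Lemma~\ref{lem:expansionsR} into the definition \cref{def:cA} and integrating by parts.

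\textbf{The parity claim.} Recall that
\[
\cB_{\eps,R}(\varphi,\widehat{\varphi}) = \int \nabla^\perp\big(\cQ_{\eps,R}\varphi+\gamma\cT_{\eps,R}\widehat{\varphi}\big)\,\Delta\varphi \,\dd\xi,
\]
and that $\nabla^\perp = (-\de_2,\de_1)$, so the real part is $-\int \de_2(\cdots)\,\Delta\varphi$.

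By Lemma~\ref{lem:adjoints}, if $\varphi$ and $\widehat{\varphi}$ are even in $\xi_2$, then so are $\cQ_{\eps,R}\varphi$ and $\cT_{\eps,R}\widehat{\varphi}$. Their $\de_2$-derivative is therefore odd in $\xi_2$, while $\Delta\varphi$ is even. The integrand of the real part is odd, hence the integral vanishes. This gives \cref{eq:Reeven}.

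\textbf{The expansion: setting up.} Write $\Phi=\cQ_{\eps,R}\varphi+\gamma\cT_{\eps,R}\widehat{\varphi}$, with $\varphi=\Delta^{-1}w$ and $\widehat\varphi=\Delta^{-1}\widehat w$.

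I would apply \cref{Qexpansion} and \cref{Texpansion}. The constant logarithmic terms disappear under the gradient. What remains is
\[
\Phi = \sum_{n\ge1}\sum_{k=0}^{n}\frac{\eps^n}{R^n}c_{n,k}\,\Re\Big(\xi^{n-k}\big(\mathrm S_{n,n-k}\mrm^k(w)+\gamma\mrm^k(\widehat w)\big)\Big) + \cO_{\cS_*}(\eps^{M+1}).
\]
The terms with $k=n$ are constant in $\xi$ and drop out. For the remaining holomorphic pieces, use the complex-derivative identities
\[
\de_1\Re(a\xi^m)=\Re(ma\xi^{m-1}), \qquad \de_2\Re(a\xi^m)=-\Im(ma\xi^{m-1}).
\]
Consequently, identifying vectors with complex numbers,
\[
\nabla^\perp\Re(a\xi^m) = \big(\Im(ma\xi^{m-1}),\,\Re(ma\xi^{m-1})\big) = i\,\overline{ma\xi^{m-1}}.
\]

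\textbf{The expansion: integrating.} Integrate against $\widetilde w$. This produces
\[
i\,m\,\overline{a}\;\overline{\mrm^{m-1}(\widetilde w)}, \qquad m=n-k,
\]
where I used that the $\mathrm S_{n,k}$ and the $c_{n,k}$ are real (Lemma~\ref{lem:Snk}). Summing over $n$ and $k$ yields \cref{eq:expansionBeps}.

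\textbf{Remainder.} The error term is $\cO(\eps^{M+1})$ because $\nabla$ of an $\cO_{\cS_*}(\eps^{M+1})$ function, paired with $\widetilde w$, stays $\cO(\eps^{M+1})$. This requires polynomial growth against decay of $\widetilde w$, i.e.\ $\widetilde w\in\cZ$. The main care needed is in the sign and conjugation conventions when translating $\nabla^\perp$ into complex notation.
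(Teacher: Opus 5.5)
Your proposal is correct and follows the paper's proof essentially verbatim. The parity claim is the same odd-times-even argument based on Lemma~\ref{lem:adjoints}, and the expansion comes from inserting Lemma~\ref{lem:expansionsR} into \cref{def:cA}, exactly as in the paper's Lemma~\ref{lem:expVel}; the only cosmetic difference is that you obtain $\nabla^\perp\Re(a\xi^m)=i\,\overline{m a\xi^{m-1}}$ via holomorphic derivatives, whereas the paper does the same computation in polar coordinates for a general $Z$, which reduces to your formula when $Z=(R,0)$.
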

The identity in \cref{eq:Reeven} is telling us that, with the radius defined as in \cref{def:R}, changes on the radius are only due to the odd-in-$\xi_2$ part of the solution. In particular, in the approximate solution constructed in the next section, we will show that the even-in-$\xi_2$ symmetry is lost only through viscous effects, meaning that one is able to effectively gain a factor $\cO(\delta)$ on the right-hand side of \cref{def:R}.
\begin{proof}
    From the definition in \cref{def:cA}, we know that 
    \begin{align}
        \Re(\cB_{\eps,R}(\Psi,\widehat{\Psi})) =-\langle \de_2(\cQ_{\eps,R}\Psi+\gamma \cT_{\eps,R}\widehat{\Psi}),\Delta\Psi\rangle.
    \end{align}
    Appealing to Lemma \ref{lem:adjoints}, since $\Psi,\widehat{\Psi}$ are even in $\xi_2$, we know that $\de_2(\cQ_{\eps,R}\Psi)$ and $\de_2(\cT_{\eps,R}\widehat{\Psi})$ are odd in $\xi_2$ whereas $\Delta\Psi$ is even in $\xi_2,$ meaning that the integral above is zero.

    The expansion in \cref{eq:expansionBeps} is a consequence of Lemma \ref{lem:expansionsR}. In fact, we show in Lemma \ref{lem:expVel} a detailed proof that holds for any position parameter $Z.$
\end{proof}


\section{The approximate solutions}\label{sec:App}

The objective of this section is to construct an approximate solution $(\Mapp{\Omega}{M},\Mapp{\widehat{\Omega}}{M})$ to the system \cref{eq:Omega}--\cref{eq:P} at any desired order of precision $M$. Our approach follows the systematic asymptotic framework initiated in \cite{Gallay2011} and subsequently refined in \cite{GaSring,dolce2024long,donati2025fast,zhang2025long}. The new aspect of our derivation is the self-consistent coupling of the modulation parameters: we fix the approximate radius and angular velocity to be those defined in \cref{def:R,def:alpha}, evaluated directly at the $M$-th order profiles $(\Mapp{\Omega}{M},\Mapp{\widehat{\Omega}}{M})$. This technical choice streamlines the construction of higher-order profiles and, more crucially, leads to improved error bounds in the subsequent nonlinear analysis. Since this specific coupling modifies the recursive hierarchy of the expansion, previous results cannot be applied directly as a black box. Consequently, while we follow the general strategy established \cite{dolce2024long}, we provide a self-contained construction to clarify the precise impact of the choice of the modulation parameters in the full expansion.

\subsection{Setting up the construction}

We look for approximate solutions of the equations in Lemma \ref{lem:derivationequation}, using the simplified notation in \cref{eq:OmPhiR}. At every order $M \ge 0$, we denote the approximate solutions by $\Mapp{\Omega}{M},\Mapp{\widehat{\Omega}}{M}$ and we assume the asymptotic expansion 
\begin{align}
    \label{def:Omapp}&\Mapp{\Omega}{M}(\xi,t)=\sum_{k=0}^M\eps^k(t)\Omega_{k}(\xi), \qquad \Mapp{\widehat{\Omega}}{M}(\xi,t)=\sum_{k=0}^M\eps^k(t)\widehat{\Omega}_{k}(\xi)
\end{align}
where the families of maps $(\Omega_k)_{k> 0}$ and $(\widehat{\Omega}_k)_{k> 0}$ must be determined. Recall that $\Omega_0$ and $\widehat{\Omega}_0$ are already defined at relation~\cref{def:G}, a choice that ensures in particular that for any $M$, $$\Mapp{\Omega}{M}(\xi,0) = \Mapp{\widehat{\Omega}}{M}(\xi,0) = \Omega_0(\xi),$$ which coincides with the true solution at time 0, see relation~\cref{eq:Init}.

We then denote by
\begin{equation}
    \Psi_k := \Delta^{-1}\Omega_k , \quad \widehat{\Psi}_k := \Delta^{-1}\widehat{\Omega}_k \quad \text{ and } \quad  \Mapp{\Psi}{M} = \sum_{k=0}^M\eps^k \Psi_k, \quad \Mapp{\widehat{\Psi}}{M} = \sum_{k=0}^M \eps^k\widehat{\Psi}_k,
\end{equation}
which, by linearity, satisfy $\Mapp{\Psi}{M} =  \Delta^{-1}\Mapp{\Omega}{M}$ and $\Mapp{\widehat{\Psi}}{M} =  \Delta^{-1}\Mapp{\widehat{\Omega}}{M}$. 
It is convenient to expand also in $\delta$ as to handle separately viscous effects, since we will be working in the large Reynolds number regime $\delta\ll1$. We will then expand each approximation at $k$-th order with the Eulerian and Navier-Stokes component as 
\begin{align}
\label{def:OmkexpNS}
\Omega_{k}=\Omega_{k,\mathrm{E}}+\delta\Omega_{k,\mathrm{NS}}, \qquad \widehat{\Omega}_k=\widehat{\Omega}_{k,\mathrm{E}}+\delta \widehat{\Omega}_{k,\mathrm{NS}}
\end{align}
    and the same expansion for the associated streamfunctions.
Accordingly, we also split 
\begin{align}
\Mapp{\Omega}{M}=\Mappp{\Omega}{M}{E}+\delta\Mappp{\Omega}{M}{NS}, \qquad \Mapp{\widehat{\Omega}}{M}=\Mappp{\widehat{\Omega}}{M}{E}+\delta\Mappp{\widehat{\Omega}}{M}{NS}.
\end{align}

Then, we fix the radius and angular speed of the rotating frame associated with each approximate solution as the ones given in Lemma \ref{lem:alpha'gives_moments}. Namely, we define $\Mapp{R}{M},\Mapp{\alpha}{M}$ through the relation \cref{def:R,def:alpha} as
    \begin{align}
            \label{def:Rapp} & t (\Mapp{R}{M})'(t) = \frac{d \eps(t)}{\delta } \Re( \cB_{\eps,\Mapp{R}{M}}(\Mapp{\Psi}{M},\Mapp{\widehat{\Psi}}{M}))\\
            \label{def:alphaapp}&t (\Mapp{\alpha}{M})'(t) = \frac{d \eps(t)}{\delta \Mapp{R}{M}}  \Im(\cB_{\eps,\Mapp{R}{M}}(\Mapp{\Psi}{M},\Mapp{\widehat{\Psi}}{M})) \\
            &\Mapp{R}{M}(0)=\mathsf{r}, \quad  \Mapp{\alpha}{M}(0) = 0.
    \end{align}
This choice of $\Mapp{R}{M},\Mapp{\alpha}{M}$ differs from what was done in previous works~\cite{dolce2024long,zhang2025long}, where the modulation parameters are constructed iteratively with an asymptotic expansion. One then needs to justify \emph{a posteriori} that the formulas \cref{def:Rapp,def:alphaapp} hold up to small enough errors. We believe it is more intuitive to fix the approximate modulation parameter from the beginning and verify that it is sufficient to proceed with the inductive construction of the approximate solution. From a practical point of view, it also matches better the technical difficulties that we face. \medskip 

To measure how good of an approximate solution we have constructed, we introduce the remainders
\begin{equation}
\label{def:RM}
    \cR_{M}=\delta(t\de_t-\cL)\Mapp{\Omega}{M}+\{\Mapp{\Phi}{M},\Mapp{\Omega}{M}\}, \qquad \widehat{\cR}_{M}=\delta(t\de_t-\cL)\Mapp{\widehat{\Omega}}{M}+\{\Mapp{\widehat{\Phi}}{M},\Mapp{\widehat{\Omega}}{M}\},
\end{equation}   
where $\Mapp{\Phi}{M},\Mapp{\widehat{\Phi}}{M}$ are an abuse of notation for the use of operators~\cref{def:Xieps} and~\cref{def:Thetaeps} applied at the approximate solutions, namely
\begin{equation}\label{def:Phiapp}
    \Mapp{\Phi}{M} := \Phi_{\eps,\Mapp{R}{M}} (\Mapp{\Psi}{M},\Mapp{\widehat{\Psi}}{M}), \qquad \Mapp{\widehat{\Phi}}{M} := \widehat{\Phi}_{\eps,\Mapp{R}{M}} (\Mapp{\Psi}{M},\Mapp{\widehat{\Psi}}{M}).
\end{equation}

We now state the existence of a suitable choice of approximate solution.
\begin{proposition}\label{prop:construction}
    There exists two families $(\Omega_k)_{k\ge 0}$ and $(\widehat{\Omega}_k)_{k\ge 0}$ of functions belonging to the space $\cZ$ such that for every $k \ge 0$:
    \begin{enumerate}[label=\textbf{C\arabic*)}]

        \item \label{constraint:1}$\mrm^1(\Omega_k)=\mrm^1(\widehat{\Omega}_k)=(0,0)$ and, for $k>0$, $\mrM(\Omega_k)=\mrM(\widehat{\Omega}_k)=0$,
        \item\label{constraint:2}the functions $\Omega_{k,E}$ and $\widehat{\Omega}_{k,E}$ are 
        even in $\xi_2$,
        \item\label{constraint:3} $\widehat{\Omega}_k$ is $N$-fold symmetric.
    \end{enumerate}
    
        Moreover, for every $M \in \NN$, the associated approximate solution defined by~\cref{def:Omapp} satisfies:
    \begin{enumerate}[label=\textbf{C\arabic*)},resume]
            \item\label{constraint:4}there exists a constant $C$ independent of $\delta$ such that for every $t \le C \, T_\adv\delta^{-1}$,
        $$\frac{1}{2} \mathsf{r} \le \Mapp{R}{M}(t) \le \frac{3}{2} \mathsf{r},$$ where $\Mapp{R}{M}$ is given by \cref{def:Rapp},
        \item\label{constraint:5} the remainders $\cR_{M},\widehat{\cR}_M$ defined by~\cref{def:RM} are of order $\cO_{\cZ}(\eps^{M+1}+\delta^2\eps^2)$ and satisfy $\mrm^j(\cR_M)=\mrm^j(\widehat{\cR}_M)=(0,0)$ for $j=0,1$.
    \end{enumerate}
\end{proposition}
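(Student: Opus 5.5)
We construct the profiles by induction on $k$, expanding the remainders $\cR_M,\widehat{\cR}_M$ of \cref{def:RM} in powers of $\eps$. We use that $t\de_t \eps^k = \frac{k}{2}\eps^k$, so that $(t\de_t-\cL)(\eps^k\Omega_k)=\eps^k(\frac{k}{2}-\cL)\Omega_k$.

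\textbf{Expanding the transport term.} Using Lemma~\ref{lem:expansionsR}, Lemma~\ref{lem:eveneven} and Corollary~\ref{lem:expA}, we expand $\Mapp{\Phi}{M}$ and $\Mapp{\widehat{\Phi}}{M}$. The logarithmic constants drop out of the Poisson brackets. For the $\eps^k$ coefficient of $\{\Mapp{\Phi}{M},\Mapp{\Omega}{M}\}$ we get $\Lambda\Omega_k + F_k$ for the exterior vortex. Here $F_k$ depends only on $\Omega_j,\widehat{\Omega}_j$ with $j<k$, on the moments of these profiles, and on $\Mapp{R}{M}$. The central vortex gives $\gamma\Lambda\widehat{\Omega}_k+\widehat{F}_k$, up to the normalization of $\widehat{\Psi}$, which is the same structure. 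The key point is the extra factor $\eps$ in front of every configurational term, from \cref{eq:small_gradients}. It guarantees that $\Omega_k$ enters only through $\Lambda$ at order $\eps^k$.

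\textbf{Eulerian and viscous orders.} We then split $\Omega_k=\Omega_{k,E}+\delta\Omega_{k,NS}$ and impose two equations:
\[
\Lambda\Omega_{k,E}=-F_{k,E},\qquad \Lambda\Omega_{k,NS}=-F_{k,NS}-\Big(\tfrac{k-2}{2}-\cL\Big)\Omega_{k-2,E}.
\]
The viscous term has a shift because $\delta(t\de_t-\cL)$ acting on $\eps^{k-2}$ terms is balanced at order $\delta\eps^{k-2}$. One must align the bookkeeping so that $\delta\eps^{k}$ against $\eps^{k}$ is consistent.

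\textbf{Solvability.} The terms of order $\delta^2$ coming from $\delta(t\de_t-\cL)(\delta\Omega_{k,NS})$ are simply left in the remainder. They produce the $\cO_\cZ(\delta^2\eps^2)$ error, since the first nontrivial $\Omega_{k,NS}$ occurs at $k=2$. Solvability follows from Proposition~\ref{prop:Lambda}(2), which requires checking $\cP_0 h=0$ and $\mrm^1(h)=0$ for the right-hand sides $h$.

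The $\cP_0$ part is the delicate point. The radial parts of $(\frac k2-\cL)\Omega_{k-2,E}$ cannot be inverted by $\Lambda$. We handle them instead by adding a radial correction to $\Omega_k$ that solves $(\frac k2-\cL)\cP_0\Omega_k=\cP_0(\cdot)$, using Lemma~\ref{lem:Linvert} for $k\ge1$.

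The first moments vanish by the choice \cref{def:Rapp,def:alphaapp}. The computation in the proof of Lemma~\ref{lem:alpha'gives_moments} shows that, with these $R,\alpha$, the $\xi_1,\xi_2$ moments of $\{\Mapp{\Phi}{M},\Mapp{\Omega}{M}\}$ cancel exactly, given that $\mrm^1(\Omega_j)=0$ inductively. The moments of $\cL\Omega_j$ vanish by \cref{eq:Lmom}. The mean vanishes because every term is a bracket or a divergence.

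This gives \ref{constraint:1} and the moment part of \ref{constraint:5}. The mean and first moments of $\Omega_k$ itself can be fixed by adding multiples of $G$ and $\de_iG$, which lie in the kernel of $\Lambda$.

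\textbf{Symmetries.} For \ref{constraint:2} we argue by parity. Lemma~\ref{lem:adjoints} shows that $\cQ$ and $\cT$ preserve evenness in $\xi_2$, and Lemma~\ref{lem:eveneven} shows that $\Re\cB=0$ at Eulerian level. Hence $F_{k,E}$ is odd in $\xi_2$, because it is a bracket of even functions. Proposition~\ref{prop:Lambda} then returns an even $\Omega_{k,E}$.

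For \ref{constraint:3}, the $\widehat{F}_k$ are built from $\cV$, which by \cref{Vexpansion} produces only modes $\ell N$, and from the radial term $|\xi|^2$. Since $\Lambda$ commutes with rotations, $N$-fold symmetry propagates.

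\textbf{Radius bound and remainder estimate.} For \ref{constraint:4}, Lemma~\ref{lem:eveneven} gives $\Re\cB=\cO(\delta\eps)$, so $t R'=\cO(\eps^2)$. Integrating with $\eps^2=\delta t/T_\adv$ gives $R-\mathsf{r}=\cO(\delta t/T_\adv)$, which is small for $t\le C T_\adv\delta^{-1}$ with $C$ small.

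Finally, truncating at order $M$ leaves $\cO_\cZ(\eps^{M+1})$ from the tails of the expansions of Lemma~\ref{lem:expansionsR}. The order-$\delta^2$ leftovers give the $\cO_\cZ(\delta^2\eps^2)$ term.

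\textbf{Main obstacle.} I expect the hardest step to be verifying that the self-consistent $\Mapp{R}{M},\Mapp{\alpha}{M}$ admit $\eps$-expansions compatible with the hierarchy. They depend on the full $\Mapp{\Psi}{M}$, not on a truncation, so one must show that their contributions at order $\eps^k$ involve only lower profiles. This is what Corollary~\ref{lem:expA} provides. One must also check that they do not destroy the parity in \ref{constraint:2} at Eulerian order.
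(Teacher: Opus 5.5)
Your architecture is the paper's: expand the remainder in $\eps$ and $\delta$, invert $\Lambda$ at order $\delta^0$ and again at order $\delta$, obtain the first moments of the remainder from the choice of $R,\alpha$, and get parity from Lemma~\ref{lem:eveneven}. Your proof of C4 via $\Re\cB=\cO(\delta\eps)$ is fine and simpler than the paper's; just add the continuity argument keeping $R\in[\mathsf{r}/2,3\mathsf{r}/2]$ so that the expansions stay uniform. The gap is in the viscous level of the hierarchy. You note yourself that $t\de_t\eps^k=\frac k2\eps^k$, so $\delta(t\de_t-\cL)(\eps^k\Omega_{k,E})=\delta\eps^k(\frac k2-\cL)\Omega_{k,E}$ sits at order $\delta\eps^k$, exactly where $\delta\eps^k\Lambda\Omega_{k,NS}$ sits. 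There is no shift by two, and the equation must be $\Lambda\Omega_{k,NS}=-F_{k,NS}-(\frac k2-\cL)\Omega_{k,E}$, as in Proposition~\ref{prop:iterative}. With your shifted rule, the coefficient $\delta\eps^k\big[(\frac k2-\cL)\Omega_{k,E}-(\frac{k-2}{2}-\cL)\Omega_{k-2,E}\big]$ survives in $\cR_M$, and this is not $\cO_\cZ(\eps^{M+1}+\delta^2\eps^2)$, so C5 fails. Concretely, at $k=2$ you get $\Lambda\Omega_{2,NS}=-F_{2,NS}+\cL G=0$, because $\cR_0$ has no $\delta$-part. Hence $\Omega_{2,NS}=0$ and $\cR_2=\delta\eps^2(1-\cL)\Omega_{2,E}+\cO_\cZ(\eps^3)$, with $(1-\cL)\Omega_{2,E}\neq 0$ for $\gamma\neq\gamma_N^*$. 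The paper instead takes $\Omega_{2,NS}=-\Lambda^{-1}(1-\cL)\Omega_{2,E}$. Your own remark that the first nontrivial $\Omega_{k,NS}$ occurs at $k=2$ contradicts your rule.

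The radial step also needs to be located and placed correctly. The $\Lambda^{-1}$-part $\Omega_{k,E}$ lies in $(\ker\Lambda)^\perp$ and $\cL$ preserves angular modes, so $(\frac k2-\cL)\Omega_{k,E}$ has no radial part. The actual obstruction is $\cP_0F_{k,NS}$, which can be nonzero from $k=4$ on; for example, $\{\Psi_{2,E},\Omega_{2,NS}\}+\{\Psi_{2,NS},\Omega_{2,E}\}$ is even in $\xi_2$ and has a radial component. The paper's fix is $\Omega_{k,E_r}=(\cL-\frac k2)^{-1}\cP_0F_{k,NS}$, added at order $\delta^0$. Being radial, it is annihilated by $\Lambda$, so the $\eps^k\delta^0$ balance is untouched, while $\delta\eps^k(\frac k2-\cL)\Omega_{k,E_r}=-\delta\eps^k\cP_0F_{k,NS}$. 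If you put it inside $\delta\Omega_{k,NS}$, it would act only at order $\delta^2$ and cancel nothing. Finally, do not fix moments by adding multiples of $G$ and $\de_iG$. At order $\delta\eps^k$ this creates $\frac{k+1}{2}c_i\,\de_iG\in\ker\Lambda$, which is orthogonal to the range of the skew-adjoint $\Lambda$ and cannot be absorbed. The step is also unnecessary: elements of $(\ker\Lambda)^\perp$ have zero mean and zero first moments, and $\Omega_{k,E_r}$ is radial with $\mrM(\Omega_{k,E_r})=-\frac2k\mrM(F_{k,NS})=0$.
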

The rest of Section~\ref{sec:App} is devoted to the proof of Proposition~\ref{prop:construction}. The constraints \ref{constraint:1} to \ref{constraint:3} are technical constraints ensuring that we can manipulate correctly the functions we construct, and that the approximate solution has important properties related to symmetries. The constraint \ref{constraint:4} is a consequence of the even symmetry of the Eulerian part, and it is necessary to ensure that we can safely expand our operators. The constraint \ref{constraint:5} is the quantitative control of how close to the approximate solution is  the real solution, which is the whole point of the construction.

\begin{remark}\label{rem:dividebygamma}
    In the case $\gamma =0$, the choice of the family $(\widehat{\Omega}_k)_{k \ge 0}$ is irrelevant. Therefore, while computing the $\widehat{\Omega}_k$, we will assume $\gamma \neq 0$. This assumption does not need to be made to construct the family $(\Omega_k)_{k \ge 0}$, hence our construction is perfectly valid for $\gamma =0$, even if the expression $1/\gamma$ appears in the formula for $\widehat{\Omega}_k$.
\end{remark}

The construction will be done iteratively, meaning that after having constructed $\Omega_0,\ldots,\Omega_M$, and $\widehat{\Omega}_0,\ldots,\widehat{\Omega}_M$, satisfying constraints~\ref{constraint:1} to~\ref{constraint:3}, and such that the associated approximate solution $\Mapp{\Omega}{M}$ and $\Mapp{\widehat{\Omega}}{M}$ satisfy constraints~\ref{constraint:4}, \ref{constraint:5} at order $M$, then we construct $\Omega_{M+1}$ and $\widehat{\Omega}_{M+1}$, satisfying constraints~\ref{constraint:1} to~\ref{constraint:3} such that the next order approximate solution  $\Mapp{\Omega}{M+1}$ and $\Mapp{\widehat{\Omega}}{M+1}$ satisfy constraints~\ref{constraint:4}, \ref{constraint:5} at order $M+1$.

The rest of the present section is organized as follows. In Section~\ref{sec:first_orders} and~\ref{sec:second_order} we present the computations of the first orders, giving both insights and numerically exploitable results on the actual deformation of vortices. Then, in Section~\ref{sec:iterative}, we construct iteratively the whole family $(\Omega_k)_{k\in \NN}$, leading to the construction of an approximate solution at every order. Then we compute the first corrections to the point-vortex dynamics in Section~\ref{sec:first_corrections}. In Sections~\ref{sec:func_rel} and~\ref{sec:angmom}, we establish some important properties of the Eulerian part of the approximate solution such as the existence of an approximate functional relationship and remarkable relations for its angular momentum.

\subsection{The zero-th and first order}\label{sec:first_orders}
We recall that the choice of the zero-th order is dictated by the initial data, which is the Lamb-Oseen vortex
\begin{equation}
    \Omega_0= \widehat{\Omega}_0 = G,
\end{equation}
Therefore, $\Mapp{\Omega}{0} = \Mapp{\widehat{\Omega}}{0} = G$. Since $\mrm^1(G) = 0$, we have that $\Omega_0$ and $\widehat{\Omega}_0$ satisfy constraints \ref{constraint:1} to \ref{constraint:3}. Moreover, since $G$ is radially symmetric, $\Psi_0,\widehat{\Psi}_0$ are radially symmetric as well and thus even in $\xi_2$. By Lemma \ref{lem:eveneven}, we readily deduce that $t(\Mapp{R}{0})'=0$ and therefore 
\[
\Mapp{R}{0}(t)=\mathsf{r}.
\]
With those expressions in hand, we can compute and expand the zero-th order remainders.
\begin{lemma}
\label{lem:R0}
    For any $M\geq 2$ we have $\cR_0,\widehat{\cR}_0=\cO_{\cZ}(\eps^2)$ with explicit expansion
    \begin{align}
\label{eq:RG}\cR_0&=\sum_{n=2}^M\frac{\eps^nc_{n,0}}{\mathsf{r}^n}({\rm S}_{n,n}+\gamma)\{\Re(\xi^n),G\}+\cO_{\cZ}(\eps^{M+1}), \\
\label{eq:JG}\widehat{\cR}_0&=N\sum_{\substack{\ell \leq M/N, \\ \ell \in \mathbb{N}\setminus\{0\}}}\frac{\eps^{\ell N}c_{\ell N,0}}{\mathsf{r}^{\ell N}}\big\{\Re(\xi^{\ell N}),G\big\}+\cO_{\cZ}(\eps^{M+1}).
    \end{align}
\end{lemma}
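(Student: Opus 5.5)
We compute the remainders directly from the definition \cref{def:RM} with $M=0$, where $\Mapp{\Omega}{0}=\Mapp{\widehat{\Omega}}{0}=G$ and $\Mapp{R}{0}=\mathsf{r}$.

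\textbf{Diffusive part.} Since $G$ is time independent, $t\de_t G=0$. Since $\cL G=0$ because $G$ spans $\mathrm{Ker}(\cL)$, the diffusive part $\delta(t\de_t-\cL)G$ vanishes identically. Hence
\[
\cR_0=\{\Mapp{\Phi}{0},G\}, \qquad \widehat{\cR}_0=\{\Mapp{\widehat{\Phi}}{0},G\}.
\]

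\textbf{Terms that drop out of $\cR_0$.} We expand $\Phi_{\eps,\mathsf{r}}(\Psi_0,\Psi_0)$ term by term using \cref{def:Xieps}.
\begin{itemize}
\item The term $\Psi_0$ is radial, so $\{\Psi_0,G\}=0$.
\item The term involving $\xi_2\Re(\cB)$ vanishes, since $\Re(\cB_{\eps,\mathsf{r}}(\Psi_0,\Psi_0))=0$ by Lemma~\ref{lem:eveneven} ($\Psi_0$ is even in $\xi_2$).
\item The Coriolis term $\Im(\cB)|\cT_{\eps,\mathsf{r}}\xi|^2$ is treated as follows. We have $|\cT_{\eps,\mathsf{r}}\xi|^2=|\xi|^2+\tfrac{2\mathsf{r}}{\eps d}\xi_1+\text{const}$. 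The radial part and the constant bracket to zero against $G$, so only the linear term $\xi_1$ survives, giving a contribution $-\Im(\cB)\,\{\xi_1,G\}$.
\end{itemize}

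\textbf{Expansion of the operator terms.} For $\cQ_{\eps,\mathsf{r}}\Psi_0+\gamma\cT_{\eps,\mathsf{r}}\Psi_0$ we apply Lemma~\ref{lem:expansionsR} with $f=G$. We have $\mrM(G)=1$, and $\mrm^k(G)=0$ for $k\ge1$, since $\xi^k$ has angular mode $k$ and $G$ is radial. So only the $k=0$ terms survive. The logarithmic constants drop out of the bracket, leaving
\[
\sum_{n=1}^M \frac{\eps^n}{\mathsf{r}^n}c_{n,0}({\rm S}_{n,n}+\gamma)\Re(\xi^n).
\]

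\textbf{Cancellation at order $n=1$.} The $n=1$ term is $c_{1,0}({\rm S}_{1,1}+\gamma)\tfrac{\eps}{\mathsf{r}}\xi_1$, and its bracket with $G$ must cancel against the Coriolis contribution. By \cref{eq:expansionBeps} with all moments of $G$ vanishing except the zeroth, only $n=1$, $k=0$ survives:
\[
\cB_{\eps,\mathsf{r}}(\Psi_0,\Psi_0)=i\frac{\eps}{\mathsf{r}}c_{1,0}({\rm S}_{1,1}+\gamma)+\cO(\eps^{M+1}).
\]
Hence $\tfrac{d\eps}{2\mathsf{r}}\Im(\cB)\cdot\tfrac{2\mathsf{r}}{\eps d}=\Im(\cB)$, which exactly cancels the $n=1$ term, up to $\cO_\cZ(\eps^{M+1})$. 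This yields \cref{eq:RG}.

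This cancellation is the main point to check. It must hold by construction, since $\alpha'$ was defined precisely to fix $\mrm^1$. I expect the main obstacle to be bookkeeping of signs and constants in $|\cT_{\eps,\mathsf{r}}\xi|^2$ versus $c_{1,0}$, and confirming that $\Psi_0$ used in $\cB$ carries no extra normalization.

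\textbf{The central remainder.} For $\widehat{\cR}_0$ we use \cref{def:Thetaeps} together with \cref{Vexpansion} at $f=G$.
\begin{itemize}
\item $\gamma\Psi_0$ is radial, so it brackets to zero against $G$.
\item The Coriolis term is proportional to $|\xi|^2$, which is radial, so it also drops.
\item In \cref{Vexpansion}, the second sum involves $\mrm^n(G)=0$ for $n\ge1$, so it vanishes.
\item In the last sum only the terms with $n=\ell N$ survive, since $\mrm^{n-\ell N}(G)=0$ unless $n=\ell N$.
\end{itemize}
The surviving terms give $N\sum_{\ell}(-1)^{\ell N}\tfrac{\eps^{\ell N}}{\mathsf{r}^{\ell N}}c_{\ell N,0}\Re(\xi^{\ell N})$. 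Bracketing with $G$ gives \cref{eq:JG}, after reconciling the sign $(-1)^{\ell N}$ with the stated formula, either from the convention in \cref{Vexpansion} or by absorbing it in $c_{n,0}$.

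\textbf{Order of the remainders and error terms.} Both remainders are $\cO_\cZ(\eps^2)$: the lowest surviving orders are $n=2$ and $\ell N\ge 2$, and $\{\Re(\xi^n),G\}\in\cZ$. The $\cO_{\cS_*}(\eps^{M+1})$ errors in the streamfunctions become $\cO_\cZ(\eps^{M+1})$ after bracketing with $G$, since $\nabla G$ decays like a Gaussian.
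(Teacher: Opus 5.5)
Your computation of $\cR_0$ is the paper's argument: $(t\de_t-\cL)G=0$, then the expansions of $\cQ_{\eps,\mathsf r}\Psi_0+\gamma\cT_{\eps,\mathsf r}\Psi_0$ and of $\cB_{\eps,\mathsf r}(\Psi_0,\Psi_0)$ using that only $\mrM(G)$ is nonzero. The order-$\eps$ term cancels through $\{|\cT_{\eps,\mathsf r}\xi|^2,G\}=\frac{2\mathsf r}{\eps d}\{\xi_1,G\}$ and $\Im(\cB_{\eps,\mathsf r}(\Psi_0,\Psi_0))=\frac{\eps}{\mathsf r}c_{1,0}({\rm S}_{1,1}+\gamma)$. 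That part is complete; you also handle the $\Psi_0$ and $\xi_2\Re(\cB)$ terms, which the paper leaves implicit. The central-vortex part follows the same route through \eqref{Vexpansion}.

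The step that fails is your proposed ``reconciliation'' of the factor $(-1)^{\ell N}$ in $\widehat\cR_0$. You cannot absorb it into $c_{n,0}$, which is fixed by \eqref{def:cnk}. It is also not a convention: \eqref{Vexpansion} itself carries $(-1)^n$, and a direct computation confirms the sign.

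For large $|y|$, $\Psi_0(y)=\frac{1}{2\pi}\log|y|+{\rm const}$ up to exponentially small terms, and by radiality $\cV_{\eps,\mathsf r}\Psi_0(\xi)=\sum_{l=1}^N\Psi_0\big(\xi-Q_l\tfrac{(\mathsf r,0)}{\eps d}\big)$. Expand $\log|1-\eps d\,\xi/(\mathsf r Q_l)|$ and use that $\sum_{l}Q_l^{-n}$ equals $N$ when $N$ divides $n$ and vanishes otherwise. This gives
\[
\cV_{\eps,\mathsf r}\Psi_0(\xi)={\rm const}-\sum_{\ell\ge1}\frac{1}{2\pi\ell}\Big(\frac{\eps d}{\mathsf r}\Big)^{\ell N}\Re(\xi^{\ell N})+\cO_{\cS_*}(\eps^{M+1}).
\]
The coefficient $-\frac{1}{2\pi\ell}(\eps d/\mathsf r)^{\ell N}$ equals your $N(-1)^{\ell N}c_{\ell N,0}\,\eps^{\ell N}/\mathsf r^{\ell N}$ and is always negative. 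The printed coefficient $Nc_{\ell N,0}\,\eps^{\ell N}/\mathsf r^{\ell N}$ instead has sign $(-1)^{\ell N-1}$.

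So your formula is the correct one. As stated, \eqref{eq:JG} holds when $\ell N$ is even, in particular for every even $N$. For odd $N$, the terms with odd $\ell$ have the wrong sign; for $N=3$ the leading term is $-3c_{3,0}\frac{\eps^3}{\mathsf r^3}\{\Re(\xi^3),G\}$. The paper's one-line justification via \cref{lem:expansionsR} drops this factor, and the same slip reappears in $\widehat\cH_3$ for $N=3$ in Section~\ref{sec:first_corrections}.

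State the result with $(-1)^{\ell N}$ rather than forcing agreement. Nothing else in the lemma or its later use depends on this sign: the $\cO_\cZ(\eps^2)$ bound, oddness in $\xi_2$, and vanishing mass and first moments of $\widehat\cH_{\ell N}$ are all unaffected.
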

\begin{proof}
    We start by observing that since $G$ is a self-similar solution of the heat equation, we have $(t\de_t-\cL)(G)=0$. Thus, for $\cR_0$ we see that
\begin{align}
 \cR_0=\{\Mapp{\Phi}{0},\Omega_0\}=\Big\{\cQ_{\eps,\Mapp{R}{0}}\Psi_0+\gamma\cT_{\eps,\Mapp{R}{0}}\widehat{\Psi}_0-\frac{\eps d}{2\Mapp{R}{0}}\Im(\cB_{\eps,\Mapp{R}{0}}(\Psi_0,\widehat{\Psi}_0))|\cT_\eps\xi|^2,\Omega_0\Big\}.
\end{align}
Since $\mrm^k(G)=0$ for any $k\geq1$, and $\Mapp{R}{0}=\mathsf{r}$,
applying \cref{lem:expansionsR}, we have
\begin{align}
\{\cQ_{\eps,\Mapp{R}{0}}\Psi_{0},\Omega_0\}=\frac{\eps c_{1,0}}{\mathsf{r}} \mathrm{S}_{1,1}\{\Re(\xi),G\}+\sum_{n=2}^M\frac{\eps^nc_{n,0}}{\mathsf{r}^n}{\rm S}_{n,n}\{\Re(\xi^n),G\}+\cO_{\cZ}(\eps^{M+1}),
\end{align}
and
\begin{align}
\gamma\{\cT_{\eps,\Mapp{R}{0}}\widehat{\Psi}_{0},\Omega_0\}=\gamma\frac{\eps c_{1,0}}{\mathsf{r}}\{\Re(\xi),G\}+\gamma\sum_{n=2}^M\frac{\eps^nc_{n,0}}{\mathsf{r}^n}\{\Re(\xi^n),G\}+\cO_{\cZ}(\eps^{M+1}).
\end{align}
We now turn to the term involving $\cB_{\eps,\Mapp{R}{0}}$. Using Lemma~\ref{lem:expansionsR} combined with the fact that $\mrm^k(G) = 0$ for every $k > 0$, we get that
\begin{equation}
     \frac{\eps d}{2 R} \Im(\cB_\eps(\Psi_0,\widehat{\Psi}_0))
      = \frac{\eps^2 d}{2 \mathsf{r}^2} c_{1,0}(S_{1,1}+\gamma) + o(\eps^\infty),
\end{equation}
and since
\begin{align}
\{|\cT_{\eps,\Mapp{R}{0}}\xi|^2,\Omega_0\}=\{|\xi+\frac{1}{\eps d}(\mathsf{r},0)|^2,G\}=\frac{2\mathsf{r}}{\eps d}\{\xi_1,G\}=\frac{2\mathsf{r}}{\eps d}\{\Re(\xi),G\},
\end{align}
we obtain that
\begin{equation}
    \cR_0=\sum_{n=2}^M\frac{\eps^nc_{n,0}}{\mathsf{r}^n}({\rm S}_{n,n}+\gamma)\{\Re(\xi^n),G\}+\cO_{\cZ}(\eps^{M+1}),
\end{equation}
where the term of order $\eps$  has canceled out.

Turning our attention to $\widehat{\cR}_0$, the formula \cref{eq:JG} directly follows by recalling the definition \cref{def:Thetaeps}, using \cref{lem:expansionsR} and the radial symmetry of $G$.
\end{proof}

Since, $\cR_0, \widehat{\cR}_0 = \cO(\eps^2)$, one can choose, as in \cite{Gallay2011,dolce2024long,donati2025fast,zhang2025long}, the first order to vanish exactly:
\begin{equation}
    \Omega_1 = \widehat{\Omega}_1 = 0,
\end{equation}
leading in particular to $\Mapp{\Omega}{1} = \Mapp{\widehat{\Omega}}{1} = \Mapp{\Omega}{0} = \Mapp{\widehat{\Omega}}{0} = \Omega_0$. Indeed, this choice of $\Omega_1$ and $\widehat{\Omega}_1$ satisfies constraints~\ref{constraint:1}~to~\ref{constraint:3}. In addition, it leads to identical formulas for the remainders: $\cR_1 = \cR_0$ and $\widehat{\cR}_1 = \widehat{\cR}_0$, so by Lemma~\ref{lem:R0}, constraints~ \ref{constraint:4}, \ref{constraint:5} are satisfied at order $M=1$, which concludes the construction of the first order.

\begin{remark}
\label{rem:special}
    Note that if $\gamma=-{\rm S}_{2,2}$ then by Lemma~\ref{lem:R0}, $\cR_0 = \cO_{\cZ}(\eps^3)$, meaning that, as we did for the first order, one can take $\Omega_2=0$ and satisfy all the constraints at order $2$. This explains the fact observed in Section~\ref{sec:corrections} that for the special value $\gamma_N^* = -S_{2,2} = \frac{(N-1)(N-5)}{12}$, the exterior vortices do not show any $2$-fold symmetric deformation. By the same argument, if $N>2$, for any value of $\gamma$, the choice $\widehat{\Omega}_k=0$ for $k=2,\dots,N-1$ satisfies all constraints since in that case $\widehat{\cR}_{N-1} = \ldots = \widehat{\cR}_0 = \cO_{\cZ}(\eps^N)$.
\end{remark}

\subsection{Construction of the second order}\label{sec:second_order}

As a propaedeutical way to understanding the iterative scheme, and in order to get the first non-trivial correction explicitly, we now give in detail the construction of the second order. The procedure is clearly analogous to the one first proposed in \cite{Gallay2011} and extended in \cite{dolce2024long,zhang2025long}. However, here  we first need to guarantee some \emph{a priori} properties of the approximate position in \cref{def:Rapp}. We  begin by computing an expression of $\Mapp{R}{2}$.
\begin{lemma}\label{lem:computeR2}
    Assume that $\Omega_2,\widehat{\Omega}_2$ satisfy constraints~\ref{constraint:1} to \ref{constraint:3}. Let $T$ be the first time such that \ref{constraint:4} fails. Then, for every $t \in [0,T]$ we have that,
    \begin{equation}
        t(\Mapp{R}{2})'(t) = \cO( \eps^6(t)), \qquad \Mapp{R}{2} = \mathsf{r}(1+ \cO(\eps^6(t))), \qquad \delta t (\Mapp{\alpha}{2} - \alpha_0)'(t) =  \cO(\eps^6(t)).
    \end{equation}
\end{lemma}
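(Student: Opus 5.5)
The plan is to insert the expansion $\Mapp{\Psi}{2}=\Psi_0+\eps^2\Psi_2$, $\Mapp{\widehat{\Psi}}{2}=\widehat{\Psi}_0+\eps^2\widehat{\Psi}_2$ into the definitions \cref{def:Rapp,def:alphaapp} and evaluate $\cB_{\eps,\Mapp{R}{2}}$ with Lemma~\ref{lem:eveneven}. On $[0,T]$ the constraint \ref{constraint:4} holds, so $\Mapp{R}{2}\in[\mathsf{r}/2,3\mathsf{r}/2]$. This bounds $R$ away from zero and infinity, so the expansion \cref{eq:expansionBeps} applies with constants uniform in $t$. All moments then reduce to moments of $w=\Omega_0+\eps^2\Omega_2$, $\widehat w=\widehat{\Omega}_0+\eps^2\widehat{\Omega}_2$, and $\widetilde w=w$.

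\textbf{Radius.} Split $\Omega_2=\Omega_{2,E}+\delta\Omega_{2,NS}$, and likewise for $\widehat{\Omega}_2$. Since $\cB$ is trilinear, expand it. The purely Eulerian part is even in $\xi_2$ by \ref{constraint:2} and $\Omega_0=G$ is radial, so its real part vanishes by \cref{eq:Reeven}. Every surviving term in $\Re\cB$ therefore carries at least one NS factor, i.e.\ a factor $\delta\eps^2$. Next I locate the lowest power of $\eps$ in \cref{eq:expansionBeps} for such a term. The moments satisfy $\mrm^k(G)=0$ for $k\ge1$, and $\mrm^0,\mrm^1$ of $\Omega_2,\widehat{\Omega}_2$ vanish by \ref{constraint:1}. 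So a nonzero contribution needs either $k\ge2$ with $\mrm^{k}(\Omega_2)$ (or $\widehat{\Omega}_2$) and $\mrm^{n-k-1}(\widetilde w)$ nonzero, or $k=0$ paired with $n-1\ge2$.

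\textbf{Order count.} The case $k=0$ gives $\mrm^0(w)=1$ times $\mrm^{n-1}(\widetilde w)=\eps^2\mrm^{n-1}(\Omega_2)$ with $n\ge3$. Together with the prefactor $\eps^n$, this is at least $\eps^3\cdot\eps^2$. I need one more power, and I expect to get it from the symmetry of the $n=3$ term. The relevant coefficient is $(\mathrm S_{3,3}+\gamma)$ times the second moment of $\Omega_{2,NS}$. Because $\Omega_2$ is built by inverting $\Lambda$ on $\{\Re(\xi^2),G\}$, and similarly for the $\delta$-correction, I expect the mode-2 part of $\Omega_{2,NS}$ to be of the form $\sin 2\theta$, i.e.\ odd in $\xi_2$. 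Its second moment is then purely imaginary. The product with the real constants $c_{n,k}$, $\mathrm S_{n,k}$ (Lemma~\ref{lem:Snk}) then lands in $\Im\cB$, not in $\Re\cB$. The next term is $n=4$ with $\mrm^3(\Omega_2)$, which vanishes because $\Omega_2$ is supported on angular mode $2$. Alternatively, if one works directly with the $\eps^2\delta$ contribution, the $k=2$, $n-k-1=2$ pairing gives $\eps^4\cdot\eps^4$. Collecting everything, the claim I aim for is $\Re\cB=\cO(\delta\eps^5)$. Multiplying by $\eps d/\delta$ gives $t(\Mapp{R}{2})'=\cO(\eps^6)$. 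The parity and mode bookkeeping in this step is the main obstacle, and this is where I would be most careful. Integrating and using $\eps^6(t)\propto t^3$, i.e.\ $\int_0^t s^{-1}\eps^6(s)\,ds=\eps^6(t)/3$, then gives $\Mapp{R}{2}=\mathsf r(1+\cO(\eps^6))$.

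\textbf{Angular speed.} Identify $\alpha_0$ as the point-vortex contribution, namely the $n=1$, $k=0$ term $\frac{d\eps}{\delta R}\cdot\frac{\eps}{R}c_{1,0}(\mathrm S_{1,1}+\gamma)$ evaluated at $R=\mathsf r$. The remaining terms in $\Im\cB$ come from two sources. The first is the $\cO(\eps^6)$ change of $R$ inside the $n=1$ term. The second is the higher-$n$ terms, which by the same moment counting start at $\eps^5$ or beyond. Multiplying by $\delta\cdot\frac{d\eps}{\delta R}$ then yields $\delta t(\Mapp{\alpha}{2}-\alpha_0)'=\cO(\eps^6)$.
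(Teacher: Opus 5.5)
Your skeleton is the paper's proof. You split $\cB_{\eps,\Mapp{R}{2}}$ into Eulerian and Navier--Stokes pieces, kill the real part of the Eulerian piece with \cref{eq:Reeven}, and use \ref{constraint:1} together with $\mrm^k(G)=0$ for $k\ge1$. This shows that every term of \cref{eq:expansionBeps} containing $\delta\eps^2\Omega_{2,\mathrm{NS}}$ or $\delta\eps^2\widehat{\Omega}_{2,\mathrm{NS}}$ needs $n\ge3$. That alone gives $\Re\cB=\cO(\eps^3\cdot\delta\eps^2)=\cO(\delta\eps^5)$, hence $t(\Mapp{R}{2})'=\frac{d\eps}{\delta}\Re\cB=\cO(\eps^6)$. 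Your angular-speed argument and the integration $\int_0^t s^{-1}\eps^6(s)\,\dd s=\eps^6(t)/3$ also match the paper. So there is no ``one more power'' to find: $\eps^3\cdot\delta\eps^2$ is exactly the required size. Likewise, the pairing $k=2$, $n-k-1=2$ corresponds to $n=5$, not to $\eps^4\cdot\eps^4$.

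The parity detour you add to gain that nonexistent extra power should be deleted, because it is false. You dropped the prefactor $i$ in \cref{eq:expansionBeps}. The $n=3$, $k=0$ term is $3i\,c_{3,0}({\rm S}_{3,3}+\gamma)(\eps/R)^3\,\delta\eps^2\,\overline{\mrm^2(\Omega_{2,\mathrm{NS}})}$. If $\mrm^2(\Omega_{2,\mathrm{NS}})$ is purely imaginary, then $i\,\overline{\mrm^2(\Omega_{2,\mathrm{NS}})}=\mrm^2_2(\Omega_{2,\mathrm{NS}})$ is real. So this term sits in $\Re\cB$, not $\Im\cB$; it is precisely the $\delta\eps^5$ contribution that produces $r_6$ in Lemma~\ref{lem:expansionBtot} and Proposition~\ref{prop:deviation}.

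It is also genuinely nonzero. We have $\mrm^2_2(h)=4\pi\langle h,2\xi_1\xi_2G\rangle_{\cY}$, and $2\xi_1\xi_2G$ is a multiple of $\cH_2=-\Lambda\Omega_{2,\mathrm{E}}$. Since $\Lambda$ is skew-adjoint in $\cY$ and $1-\cL\ge1$ there, one gets $\mrm^2_2(\Omega_{2,\mathrm{NS}})\propto\langle(1-\cL)\Omega_{2,\mathrm{E}},\Omega_{2,\mathrm{E}}\rangle_{\cY}\neq0$ whenever $\Omega_{2,\mathrm{E}}\neq0$.

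Moreover, the $\sin 2\theta$ and pure mode-$2$ structure you invoke comes from the specific choice in Proposition~\ref{prop:Om2}. The lemma assumes only \ref{constraint:1}--\ref{constraint:3}; it is used precisely to show that every such choice satisfies \ref{constraint:4}. Also, \ref{constraint:2} says nothing about the parity of $\Omega_{2,\mathrm{NS}}$. With that paragraph removed, your argument is complete and coincides with the paper's.
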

\begin{proof}
We start by decomposing $\Omega_2 = \Omega_{2,\mathrm{E}} + \delta\, \Omega_{2,\mathrm{NS}}$, and the approximate solution as \begin{equation}
    \Mapp{\Omega}{2} = \underbrace{\Omega_0 + \eps^2 \Omega_{2,\mathrm{E}}}_{\Mappp{\Omega}{2}{E}} + \eps^2\delta \, \Omega_{2,\mathrm{NS}}
\end{equation}
and similary, $\Mapp{\widehat{\Omega}}{2} = \Mappp{\widehat{\Omega}}{2}{E} + \delta\widehat{\Omega}_{2,\mathrm{NS}}$.
Recalling that $\cB_{\eps,\Mapp{R}{2}}$ is actually a map in three variables (see its full definition~\cref{def:cA}), linear with respect to the third variable, but affine with respect to the other two, we compute that
\begin{align}
    &\cB_{\eps,\Mapp{R}{2}}(\Mapp{\Psi}{2},\Mapp{\widehat{\Psi}}{2}) = \cB_{\eps,\Mapp{R}{2}}(\Mappp{\Psi}{2}{E},\Mappp{\widehat{\Psi}}{2}{E}) \\
     &\qquad + \eps^2\delta \Big( \cB_{\eps,\Mapp{R}{2}}(\Psi_{2,\mathrm{NS}},\widehat{\Psi}_{2,\mathrm{NS}},\Mappp{\Psi}{2}{E}) + \cB_{\eps,\Mapp{R}{2}}(\Mappp{\Psi}{2}{E},\Mappp{\widehat{\Psi}}{2}{E},\Psi_{2,\mathrm{NS}})\Big)  \\
    &\qquad + \eps^4\delta^2\cB_{\eps,\Mapp{R}{2}}(\Psi_{2,{\rm NS}},\Psi_{2,{\rm NS}}).
\end{align}
Using \cref{eq:Reeven} in Lemma~\ref{lem:eveneven}, by the $\xi_2$ symmetry of the purely Eulerian part we get that
\begin{equation}
    \Re(\cB_{\eps,\Mapp{R}{2}}(\Mappp{\Psi}{2}{E},\Mappp{\widehat{\Psi}}{2}{E})) = 0.
\end{equation}
For the remaining terms we use \cref{eq:expansionBeps} and we observe the following: since $\mrm^k(\Omega_{2,NS})=(0,0)$ for $k\geq 0$ and $\mrm^1(\Omega_{\rm app,E})=(0,0),$ the associated sum in \cref{eq:expansionBeps} starts from $n=3$. Thus, applying that expansion with $M=3$, we see that
\begin{equation}
    \Re(\cB_{\eps,\Mapp{R}{2}}(\Mapp{\Omega}{2},\Mapp{\widehat{\Omega}}{2})) = \cO(\delta\eps^5)
\end{equation}
from which we infer  
\begin{equation}
    t (\Mapp{R}{2})'(t) = \frac{d \eps}{\delta} \Re(\cB_{\eps,\Mapp{R}{2}}(\Mapp{\Omega}{2},\Mapp{\widehat{\Omega}}{2})) = \cO(\eps^6).
\end{equation}
Integrating over time, it yields that
\begin{equation}
    \Mapp{R}{2}(t) = \mathsf{r} + \cO(\eps^6).
\end{equation}
To control the angular velocity, we do not have the same cancellations for the Eulerian part, but we observe that 
\begin{align}
    & \cB_{\eps,\Mapp{R}{2}}(\Mappp{\Psi}{2}{E},\Mappp{\widehat{\Psi}}{2}{E})=\cB_{\eps,\Mapp{R}{2}}(\Psi_0,\widehat{\Psi}_0) \\
     &\qquad + \eps^2\Big( \cB_{\eps,\Mapp{R}{2}}(\Psi_{2,\mathrm{E}},\widehat{\Psi}_{2,\mathrm{E}},\Psi_0) + \cB_{\eps,\Mapp{R}{2}}(\Psi_0,\widehat{\Psi}_0,\Psi_{2,\mathrm{E}})\Big)  \\
    &\qquad + \eps^4\cB_{\eps,\Mapp{R}{2}}(\Psi_{2,{\rm E}},\Psi_{2,{\rm E}}).
\end{align}
Thus, by using again \cref{eq:expansionBeps} in Lemma \ref{lem:eveneven}, we observe that the first term is of order $\eps$, whereas for the other ones  we again start from the order $n=3$, meaning that 
\begin{equation}
    \cB_{\eps,\Mapp{R}{2}}(\Mapp{\Omega}{2},\Mapp{\widehat{\Omega}}{2}) = i \frac{\eps}{\Mapp{R}{2}} c_{1,0} (S_{1,1}+\gamma) + \cO(\eps^5).
\end{equation}
Thus, using the information on $\Mapp{R}{2}$, we infer 
\begin{equation}
    \delta t (\Mapp{\alpha}{2})'(t) = \frac{d \eps}{\Mapp{R}{2}(t)} \Im(\cB_{\eps,\Mapp{R}{2}}(\Mapp{\Omega}{2},\Mapp{\widehat{\Omega}}{2})) = \delta t \alpha_0'(t) + \cO(\eps^6).
\end{equation}
\end{proof}
Please note that the fact that we cannot recover a $\delta$ factor in $(\Mapp{\alpha}{2})'$ means that the difference $|\Mapp{\alpha}{2} - \alpha_0|$ may become of order 1 after a time $t \ll T_\adv \delta^{-1}$. The explicit computation of this difference is delayed in Section~\ref{sec:first_corrections}, since it is not relevant to the construction of the approximate solution. We now infer from Lemma~\ref{lem:computeR2} that constraint \ref{constraint:4} is satisfied at the second order regardless of the choice of $\Omega_2,\widehat{\Omega}_2$ as long as it satisfies the first three constraints.
\begin{corollary}
    Any $\Omega_2,\widehat{\Omega}_2$ satisfying constraints \ref{constraint:1} to \ref{constraint:3} immediately satisfy constraint \ref{constraint:4}.
\end{corollary}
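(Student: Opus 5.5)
The plan is to run a continuity (bootstrap) argument on top of Lemma~\ref{lem:computeR2}. Fix $\Omega_2,\widehat{\Omega}_2$ satisfying \ref{constraint:1}--\ref{constraint:3}. Then $\Mapp{R}{2}$ is defined by the ODE \cref{def:Rapp} with $\Mapp{R}{2}(0)=\mathsf{r}$. The right-hand side of this ODE is smooth in $R$ as long as $R$ stays bounded away from $0$, since the expansions of Lemma~\ref{lem:expansionsR} only require $|R|\geq c>0$. Hence $\Mapp{R}{2}$ exists and is continuous on some maximal interval.

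Let $T$ be the first time at which $\frac12\mathsf{r}\le \Mapp{R}{2}\le\frac32\mathsf{r}$ fails. If no such time exists before $C\,T_\adv\delta^{-1}$, we are done. Otherwise, by continuity and because $\Mapp{R}{2}(0)=\mathsf{r}$, we have $T>0$, and at $t=T$ one of the equalities $\Mapp{R}{2}(T)=\frac12\mathsf{r}$ or $\Mapp{R}{2}(T)=\frac32\mathsf{r}$ holds.

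On $[0,T]$, Lemma~\ref{lem:computeR2} applies and gives
\begin{equation}
|\Mapp{R}{2}(t)-\mathsf{r}|\le C_*\,\mathsf{r}\,\eps^6(t)
\end{equation}
with a constant $C_*$ independent of $\delta$ and $t$. The point to check is that $C_*$ is uniform. It only depends on the uniform lower and upper bounds for $R$ (which hold on $[0,T]$ by definition of $T$) and on the fixed profiles $\Omega_k,\widehat{\Omega}_k$. Integrating $t(\Mapp{R}{2})'=\cO(\eps^6)$ in time is harmless, because $\int_0^t s^{-1}\eps^6(s)\,\dd s=\eps^6(t)/3$.

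Now use $\eps^2(t)=\delta t/T_\adv$. For $t\le C\,T_\adv\delta^{-1}$ this gives $\eps^2(t)\le C$. Choosing $C$ small enough that $C_*C^3<\frac12$ then yields
\begin{equation}
|\Mapp{R}{2}(T)-\mathsf{r}|<\tfrac12\mathsf{r}.
\end{equation}
This contradicts the fact that the bound is saturated at $T$, so the constraint \ref{constraint:4} holds up to $C\,T_\adv\delta^{-1}$.

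The only delicate point is this uniformity of the constants in Lemma~\ref{lem:computeR2} with respect to $R$ in the admissible range $[\frac12\mathsf{r},\frac32\mathsf{r}]$. It follows from the explicit dependence on $R$ of the expansions in Lemma~\ref{lem:expansionsR} and \cref{eq:expansionBeps}, which is a power of $\eps/R$ times fixed moments.
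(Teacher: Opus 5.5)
Your proposal is correct and follows the paper's argument. On the interval where \ref{constraint:4} holds, Lemma~\ref{lem:computeR2} gives $|\Mapp{R}{2}-\mathsf{r}|\lesssim \eps^6$, and a continuity argument then forces the first failure time to be at least of order $T_\adv\delta^{-1}$. The paper writes this as a direct lower bound on $T$ from $\tfrac12\mathsf{r}\le C\eps^6(T)$ rather than as a contradiction, which is equivalent.
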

\begin{proof}
     By continuity of $\Mapp{R}{2}$, Lemma~\ref{lem:computeR2} implies that there exists a constant $C$ independent of $\delta$ such that
\begin{equation}
    \frac{R}{2} = |\Mapp{R}{2}(T) - R| \le C\eps^6(T) = C(\nu T)^{3}
\end{equation}
hence 
\begin{equation}
    T \geq \frac{1}{\Gamma\delta}\left( \frac{R}{2C}\right)^{1/3}.
\end{equation}
So there exists a constant $C'$ independent of $\delta$ such that for every $t \le C' \delta^{-1}$,
\begin{equation}
    \frac{1}{2} R \le \Mapp{R}{2}(t) \le \frac{3}{2} R.
\end{equation}
\end{proof}
Let us compute $\cR_2$ in terms of $\Omega_2$ and $\widehat{\Omega}_2$ to understand how constraint \ref{constraint:5} will dictate the construction. Many terms are involved, and the important thing is to isolate the main contributions as follows.
\begin{lemma}\label{lem:computecR2}
We have that
    \begin{equation}
        \cR_2 = \eps^2 \Big( \delta ( 1-\cL) \Omega_2 + \Lambda \Omega_2 + \cH_2\Big) + \cO_\cZ(\eps^3),
    \end{equation}
    where
    \begin{equation}
        \cH_2 := \frac{c_{2,0}}{\mathsf{r}^2} (S_{2,2} + \gamma) \{ \Re(\xi^2), G\}
    \end{equation}
    is the error at order $\cO_\cZ(\eps^2)$ coming from $\cR_0$. Similarly,
    \begin{equation}
        \widehat{\cR}_2 = \eps^2 \Big( \delta ( 1-\cL) \widehat{\Omega}_2 + \gamma\Lambda \widehat{\Omega}_2 + \cH_2\Big) + \cO_\cZ(\eps^3),
    \end{equation}
    where
    \begin{equation}
    \widehat{\cH}_2 := \begin{cases}
        2\frac{c_{2,0}}{\mathsf{r}^2}  \{ \Re(\xi^2), G\} & \text{ if } N = 2 \\
         0 & \text{ if } N > 2,
    \end{cases}
\end{equation}
is the error at order $\cO_\cZ(\eps^2)$ coming from $\widehat{\cR}_0$.
\end{lemma}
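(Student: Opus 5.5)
Since $\Omega_1=0$, we have $\Mapp{\Omega}{2}=G+\eps^2\Omega_2$ and $\Mapp{\widehat{\Omega}}{2}=G+\eps^2\widehat{\Omega}_2$. The plan is to substitute these into the definition \cref{def:RM} and sort every term by its power of $\eps$, keeping only what is of order $\eps^2$.

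\textbf{Time derivative and diffusion.} We have $t\de_t\eps^2=\eps^2$ and $(t\de_t-\cL)G=0$. Hence
\begin{equation}
\delta(t\de_t-\cL)\Mapp{\Omega}{2}=\eps^2\delta(1-\cL)\Omega_2 ,
\end{equation}
exactly, and the same holds for $\widehat{\Omega}_2$.

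\textbf{Bracket term.} We split
\begin{equation}
\{\Mapp{\Phi}{2},\Mapp{\Omega}{2}\}=\{\Mapp{\Phi}{2},G\}+\eps^2\{\Mapp{\Phi}{2},\Omega_2\}.
\end{equation}
For the second piece we use \cref{eq:devPhi}, namely $\{\Phi_{\eps,R}(\varphi,\widehat\varphi),f\}=\{\varphi,f\}+\cO_\cZ(\eps)$. This gives $\eps^2\{\Psi_0,\Omega_2\}+\cO_\cZ(\eps^3)$, since the $\eps^2\Psi_2$ part of $\Mapp{\Psi}{2}$ contributes only at order $\eps^4$.

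The first piece is handled by linearity of $\Phi_{\eps,R}$ in its Eulerian arguments, apart from the nonlinear $\cB$ terms:
\begin{equation}
\{\Mapp{\Phi}{2},G\}=\{\Phi_{\eps,\Mapp{R}{2}}(\Psi_0,\widehat\Psi_0),G\}+\eps^2\{\Delta^{-1}\Omega_2,G\}+\eps^2\{\text{operator terms applied to }\Psi_2,\widehat\Psi_2,G\}.
\end{equation}
The last group is $\cO_\cZ(\eps^3)$ by \cref{eq:small_gradients}. The $\cB$ contributions coming from $\Omega_2$ are controlled by Corollary~\ref{lem:expA}: they perturb $\cB$ by $\cO(\eps^3)$, and this is multiplied either by $\xi_2$ or by $\frac{\eps d}{R}|\cT\xi|^2$, whose bracket with $G$ is of size $\eps^{-1}$. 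The combined effect is therefore $\cO(\eps^3)$.

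Combining $\{\Psi_0,\Omega_2\}+\{\Delta^{-1}\Omega_2,G\}$ produces exactly $\Lambda\Omega_2$, by \cref{def:Lambda2}.

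\textbf{Reduction to $\cR_0$.} It remains to compare $\{\Phi_{\eps,\Mapp{R}{2}}(\Psi_0,\widehat\Psi_0),G\}$ with the zeroth-order remainder $\cR_0=\{\Phi_{\eps,\mathsf{r}}(\Psi_0,\widehat\Psi_0),G\}$. Lemma~\ref{lem:computeR2} gives $\Mapp{R}{2}=\mathsf{r}+\cO(\eps^6)$. Then \cref{eq:TXY} and its analogues for $\cQ,\cV$ show that replacing $\Mapp{R}{2}$ by $\mathsf{r}$ costs only $\cO(\eps^7)$. The same holds for the $\cB$ terms and for $|\cT\xi|^2$, where the $1/\eps$ factor still leaves $\cO(\eps^5)$.

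Lemma~\ref{lem:R0} then shows that the $\eps^2$ coefficient of $\cR_0$ is $\cH_2$. The $\widehat{\cR}_2$ computation is identical except for two changes. First, $\widehat\Phi$ contains $\gamma\widehat\Psi$, so the linear operator acting on $\widehat\Omega_2$ is $\gamma\Lambda$. Second, by \cref{eq:JG} the $\eps^2$ term of $\widehat{\cR}_0$ is nonzero only when $N=2$ (the case $\ell N=2$), which yields the stated $\widehat{\cH}_2$ with the factor $N=2$.

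\textbf{Main obstacle.} The delicate step is the Coriolis-type term $\frac{\eps d}{2R}\Im(\cB)|\cT_{\eps,R}\xi|^2$. Its bracket with $G$ carries a large $\eps^{-1}$ factor, so we must check that every perturbation of $\cB$ is at least $\cO(\eps^3)$, not merely $\cO(\eps^2)$. This is exactly what Corollary~\ref{lem:expA} guarantees, and it is the reason the term $\{\Re\xi,G\}$ cancelled at order $\eps$ in Lemma~\ref{lem:R0}.
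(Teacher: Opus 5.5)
Your proposal is correct and is essentially the paper's argument. The paper splits $\Mapp{\Phi}{2}=\Mapp{\Phi}{0}+(\Mapp{\Phi}{2}-\Mapp{\Phi}{0})$, whereas you split the vorticity $G+\eps^2\Omega_2$ first, but both use the same ingredients (Lemma~\ref{lem:computeR2}, \cref{eq:small_gradients} and \cref{eq:TXY}, Corollary~\ref{lem:expA}, Lemma~\ref{lem:R0}) in the same way. One wording fix: it is $\{|\cT_{\eps,R}\xi|^2,G\}=\frac{2R}{\eps d}\{\xi_1,G\}$ that is of size $\eps^{-1}$, so after the prefactor $\frac{\eps d}{2R}$ the Coriolis factor is $\cO(1)$, which is exactly why an $\cO(\eps^3)$ perturbation of $\cB$ suffices.
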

\begin{proof}
    We compute the remainder $\cR_2$ associated with the second-order approximate function given by the expression $\Mapp{\Omega}{2}(\xi,t) = \Omega_0(\xi) + \eps^2(t)\Omega_2(\xi)$:
    \begin{align}
        \cR_2 = \delta(t \partial_t - \cL)\Mapp{\Omega}{2} + \{ \Mapp{\Phi}{2},\Mapp{\Omega}{2} \}.
    \end{align}
    We look at the first term. Recalling that $t\partial_t \eps^2 = \eps^2$, we get that 
    \begin{equation}
        \delta(t \partial_t - \cL)(\Omega_0 + \eps^2 \Omega_2)  = \eps^2\delta(1-\cL)\Omega_2.
    \end{equation}
    For the second term, we isolate the terms already computed (the ones present in the construction of the previous order) and two different kinds of errors:
    \begin{align}
        \{ \Mapp{\Phi}{2},\Mapp{\Omega}{2} \} = \{ \Mapp{\Phi}{0},\Mapp{\Omega}{0} \} + \eps^2\{ \Mapp{\Phi}{0},\Omega_2 \} + \{ \Mapp{\Phi}{2} - \Mapp{\Phi}{0},\Mapp{\Omega}{2} \}.
    \end{align}
    We now observe that
    \begin{equation}
        \{ \Mapp{\Phi}{0},\Mapp{\Omega}{0} \} = \cR_0 = \eps^2\cH_2 +  \cO_\cZ(\eps^3)
    \end{equation} 
    as obtained in Section~\ref{sec:first_orders}, then, using Proposition~\ref{prop:small_operator} we obtain that
    \begin{equation}
        \eps^2\{ \Mapp{\Phi}{0},\Omega_2 \} = \eps^2 \{ \Psi_0, \Omega_2 \} + \cO_\cZ(\eps^3).
    \end{equation}    
    The last term involves $\Mapp{\Phi}{2} - \Mapp{\Phi}{0}$, which we recall that properly written is
    \begin{equation}
        \Mapp{\Phi}{2} - \Mapp{\Phi}{0} := \Phi_{\eps,\Mapp{R}{2}}(\Mapp{\Psi}{2},\Mapp{\widehat{\Psi}}{2}) - \Phi_{\eps,\Mapp{R}{0}}(\Mapp{\Psi}{0},\Mapp{\widehat{\Psi}}{0}).
    \end{equation}
        Not only are those maps applied in different functions, but they are also computed at different approximate positions. Beware that $\Phi$ is not a linear operator (but an affine one), and the following identity holds:
    \begin{align}
        \Mapp{\Phi}{2} - \Mapp{\Phi}{0} = \,&\eps^2\Big[ ( I +\cQ_{\eps,\Mapp{R}{2}}) \Psi_2 + \gamma \cT_{\eps,\Mapp{R}{2}} \widehat{\Psi}_2\Big] \\
      \label{eq:Phi2Phi01}  &+\xi_2\Re\big( \cB_{\eps,\Mapp{R}{2}}(\Mapp{\Psi}{2},\Mapp{\widehat{\Psi}}{2})- \cB_{\eps,\Mapp{R}{2}}(\Psi_0,\widehat{\Psi}_0) \big)\\
        \label{eq:Phi2Phi02}&-\frac{d \eps}{2 \Mapp{R}{2}} \big| \cT_{\eps,\Mapp{R}{2}} \xi\big|^2\Im\big( \cB_{\eps,\Mapp{R}{2}}(\Mapp{\Psi}{2},\Mapp{\widehat{\Psi}}{2})- \cB_{\eps,\Mapp{R}{2}}(\Psi_0,\widehat{\Psi}_0) \big)\\
        &+ (\Phi_{\eps,\Mapp{R}{2}} - \Phi_{\eps,\Mapp{R}{0}})(\Psi_0,\widehat{\Psi}_0).
    \end{align}
    From Proposition~\ref{prop:small_operator} we have that
    \begin{equation}
        \eps^2\big\{( I + \cQ_{\eps,\Mapp{R}{2}}) \Psi_2 + \gamma \cT_{\eps,\Mapp{R}{2}} \widehat{\Psi}_2\, , \, \Mapp{\Omega}{2} \big\}= \eps^2 \{ \Psi_2 , \Omega_0\} + \cO_\cZ(\eps^3). 
    \end{equation}
    Then, we compute the last term coming from changing the approximate position:
    \begin{align}
        &(\Phi_{\eps,\Mapp{R}{2}} - \Phi_{\eps,\Mapp{R}{0}})(\Psi_0,\widehat{\Psi}_0) = (\cQ_{\eps,\Mapp{R}{2}} - \cQ_{\eps,\Mapp{R}{0}} \big)\Psi_0 + \gamma(\cT_{\eps,\Mapp{R}{2}} - \cT_{\eps,\Mapp{R}{0}})\widehat{\Psi}_0 \\
        &\qquad+ \xi_2\Re(\cB_{\eps,\Mapp{R}{2}}(\Psi_0,\widehat{\Psi}_0) - \cB_{\eps,\Mapp{R}{0}}(\Psi_0,\widehat{\Psi}_0) )\\
        &\qquad- \Im\Big( \frac{d\eps}{2\Mapp{R}{2}}\big| \cT_{\eps,\Mapp{R}{2}} \xi \big|^2\cB_{\eps,\Mapp{R}{2}}(\Psi_0,\widehat{\Psi}_0) - \frac{d\eps}{2\Mapp{R}{0}}\big| \cT_{\eps,\Mapp{R}{0}} \xi \big|^2\cB_{\eps,\Mapp{R}{0}}(\Psi_0,\widehat{\Psi}_0)\Big).
    \end{align}
    Combining Lemma~\ref{lem:computeR2} and Proposition~\ref{prop:small_operator}, we get that all the terms involved in the last expression are of order $\cO(\eps^3)$ at least. Moreover, for the terms in \cref{eq:Phi2Phi01,eq:Phi2Phi02} we can apply Corollary \ref{lem:expA}  and see that indeed  \begin{equation}
        \Mapp{\Phi}{2} - \Mapp{\Phi}{0} = \eps^2 \Psi_2  + \cO_{\cS_*}(\eps^3).
    \end{equation}
Therefore, we conclude that
    \begin{equation}
        \cR_2 = \eps^2\Big(\delta(1-\cL)\Omega_2 + \{\Psi_0,\Omega_2 \} + \{\Psi_2,\Omega_0\} + \cH_2 \Big) + \cO_\cZ(\eps^3),
    \end{equation}
    which gives the desired result by recalling that $\Lambda \Omega = \{ \Psi_0,\Omega\} + \{ \Delta^{-1} \Omega , \Omega_0\}$.

    Similar computations, left to the reader, yield the result regarding $\widehat{\cR}_2$.
\end{proof}
With Lemma \ref{lem:computecR2}, we have a very natural (and, remarkably, somewhat \emph{unique}) choice of $\Omega_2$ and $\widehat{\Omega}_2$ satisfying constraint \ref{constraint:5}. This choice is made so that all $\cO(\eps^2)$ terms in $\cR_2$ and $\widehat{\cR}_2$ vanish. We can now formulate the construction of the approximate solution up to the second order.
\begin{proposition}
\label{prop:Om2}
Let $\Omega_2 =  \Omega_{2,\mathrm{E}} + \delta\Omega_{2,\mathrm{NS}}$ be defined by the following formulas:
    \begin{equation}
        \Omega_{2,\mathrm{E}} := -\Lambda^{-1} \cH_2, \qquad \Omega_{2,\mathrm{NS}} := -\Lambda^{-1} (1-\cL)\Omega_{2,\mathrm{E}}.
    \end{equation}
     Similarly, let $\widehat{\Omega}_2 =  \widehat{\Omega}_{2,\mathrm{E}} + \delta\widehat{\Omega}_{2,\mathrm{NS}}$ be defined by
    \begin{equation}
        \widehat{\Omega}_{2,\mathrm{E}} := -\frac{1}{\gamma}\Lambda^{-1} \widehat{\cH}_2, \qquad \widehat{\Omega}_{2,\mathrm{E}} :=  -\frac{1}{\gamma}\Lambda^{-1} (1-\cL)\widehat{\Omega}_{2,\mathrm{E}}.
    \end{equation}
Then, $\Omega_2$, $\widehat{\Omega}_2$ and the associated second order approximate solution satisfy all constraints \ref{constraint:1} to \ref{constraint:5}.
\end{proposition}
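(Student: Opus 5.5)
We verify the five constraints in order, using Proposition~\ref{prop:explicitsol} and Proposition~\ref{prop:Lambda} for well-posedness of the inversions, and Lemma~\ref{lem:computecR2} for the remainder.

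\textbf{Well-definedness and C1--C3.} Note $\cH_2=\{\varphi,G\}$ with $\varphi=\frac{c_{2,0}}{\mathsf{r}^2}(S_{2,2}+\gamma)\Re(\xi^2)\in\cS_*$ and $\cP_0\varphi=0$. Since $\varphi\in\cY_2$-type angular mode, $\langle\varphi,\de_iG\rangle=0$, so by \cref{eq:KerA} $\cH_2\in\mathrm{Ker}(\Lambda)^\perp\cap\cZ$. Proposition~\ref{prop:Lambda} then gives a unique $\Omega_{2,\mathrm{E}}\in\mathrm{Ker}(\Lambda)^\perp\cap\cZ$; explicitly it lies in the mode $n=2$ by Proposition~\ref{prop:explicitsol}, which gives $\mrM(\Omega_{2,\mathrm{E}})=0$ and $\mrm^1(\Omega_{2,\mathrm{E}})=0$. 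Since $\Re(\xi^2)$ is even in $\xi_2$ and $G$ is radial, $\cH_2$ is odd in $\xi_2$, so $\Omega_{2,\mathrm{E}}$ is even in $\xi_2$, giving C2. Next, $\cL$ preserves each $\cY_n$ and $\cZ$, so $(1-\cL)\Omega_{2,\mathrm{E}}\in\cZ\cap\cY_2$ is orthogonal to $\mathrm{Ker}(\Lambda)$ (zero mean, zero first moments), hence $\Omega_{2,\mathrm{NS}}$ is well defined in $\cY_2\cap\cZ$ with vanishing mass and first moments, proving C1. For the central vortex the same argument applies with $\widehat{\cH}_2$ (zero when $N>2$, so $\widehat{\Omega}_2=0$), and the mode $2$ structure when $N=2$ is $2$-fold symmetric, giving C3. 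When $\gamma=0$ we invoke Remark~\ref{rem:dividebygamma}. (The second displayed definition of $\widehat\Omega_{2,\cdot}$ is read as defining $\widehat{\Omega}_{2,\mathrm{NS}}$.)

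\textbf{C4.} This follows from the corollary of Lemma~\ref{lem:computeR2}, since C1--C3 have been verified.

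\textbf{C5.} This is the main point. By Lemma~\ref{lem:computecR2},
\[
\eps^{-2}\cR_2+\cO_\cZ(\eps)=\delta(1-\cL)\Omega_{2,\mathrm{E}}+\delta^2(1-\cL)\Omega_{2,\mathrm{NS}}+\Lambda\Omega_{2,\mathrm{E}}+\delta\Lambda\Omega_{2,\mathrm{NS}}+\cH_2 .
\]
By construction $\Lambda\Omega_{2,\mathrm{E}}=-\cH_2$ and $\Lambda\Omega_{2,\mathrm{NS}}=-(1-\cL)\Omega_{2,\mathrm{E}}$, so everything cancels except $\delta^2(1-\cL)\Omega_{2,\mathrm{NS}}=\cO_\cZ(\delta^2)$. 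Hence $\cR_2=\cO_\cZ(\eps^3+\delta^2\eps^2)$, and the same holds for $\widehat\cR_2$, where the factor $\gamma$ in front of $\Lambda$ is compensated by the $1/\gamma$ in the definitions. The obstacle I expect here is ensuring that the $\cO_\cZ(\eps^3)$ error in Lemma~\ref{lem:computecR2} is uniform in $\delta$ and on the time interval where C4 holds. This follows because all operator expansions (Lemma~\ref{lem:expansionsR}) depend on $t$ only through $\eps$ and $\Mapp{R}{2}\in[\mathsf{r}/2,3\mathsf{r}/2]$.

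\textbf{Moments of the remainder.} For the moment conditions, integrate $\cR_2$ against $1$ and $\xi_i$. The term $\delta(t\de_t-\cL)\Mapp{\Omega}{2}$ has zero mass, since $\mrM$ is constant and $\langle\cL f,1\rangle=0$. Its first moments also vanish, by \cref{eq:Lmom} and the moment identities from C1. The bracket $\{\Mapp{\Phi}{2},\Mapp{\Omega}{2}\}$ has zero mass as a Poisson bracket. Its first moments vanish exactly because $\Mapp{R}{2}$ and $\Mapp{\alpha}{2}$ are defined by \cref{def:Rapp,def:alphaapp}: repeating the computation \cref{eq:m21} in the proof of Lemma~\ref{lem:alpha'gives_moments} with $\mrm^1(\Mapp{\Omega}{2})=0$ shows that the $\xi_2\Re\cB$ and $|\cT\xi|^2\Im\cB$ terms exactly cancel the interaction terms. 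The central remainder follows from \cref{eq:Pbracket} and the $N$-fold symmetry, exactly as in Lemma~\ref{lem:centre0}.
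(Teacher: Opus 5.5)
Your proposal is correct and follows the paper's own proof. C1--C3 come from the mode-$2$, odd-in-$\xi_2$ structure of $\cH_2$ (and of $\widehat{\cH}_2$) via Proposition~\ref{prop:Lambda}, C4 comes from the corollary to Lemma~\ref{lem:computeR2}, and C5 comes from the cancellation in Lemma~\ref{lem:computecR2}, which leaves only $\eps^2\delta^2(1-\cL)\Omega_{2,\mathrm{NS}}$. One addition is worth keeping: you check explicitly that $\cR_2$ and $\widehat{\cR}_2$ have vanishing mass and first moments, using the $\Re\cB$ and $\Im\cB$ terms already built into $\Phi$ and the $N$-fold symmetry. The paper's proof of this proposition does not check that part of C5 and leaves it to the general argument of Lemma~\ref{lem:mrmOfHm}.
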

\begin{remark}
\label{rem:Om2Ninfinity} Note that the dependence on $N$ in the definition of $\Omega_2$ is only through the constant $c_{2,0}({\rm S}_{2,2}+\gamma)$. As observed in Remark \ref{rem:sheet}, we know that $d/\mathsf{r}=\cO(N^{-1}),$ and therefore, by \cref{def:cnk}, we have $c_{2,0}/\mathsf{r}^2=\cO(N^{-2})$. Moreover ${\rm S}_{2,2}=\cO(N^2)$, meaning that  the $N$-dependent constant hidden in the definition of $\Omega_2$ is indeed of order $\cO(1)$ as announced in Remark \ref{rem:sheet}. 
\end{remark}
\begin{proof}
As explained in Remark~\ref{rem:special}, if $\gamma = -S_{2,2}$ then setting $\Omega_2 = 0$ satisfies all the constraints, and so does setting $\widehat{\Omega}_2 = 0$ in the case $N \ge 3$.

Let $h(\xi) := \{ \Re(\xi^2),G\}(\xi) = -\xi_1\xi_2 G(\xi)$. Since $h$ satisfies that
\begin{itemize}
    \item $h \in \mathrm{Ker}(\Lambda)^{\perp}\cap \cZ$,
    \item $h$ is odd in $\xi_2$,
    \item $h$ is also two-fold symmetric (hence $\widehat{\cH}_2$ is always $N$-fold symmetric)
    \item $M(h) = 0$ and $m^1(h) = (0,0)$,
\end{itemize}
then, using the properties of $\Lambda$ gathered in Proposition \ref{prop:Lambda}, we have that if $\gamma \neq - S_{2,2}$, then $\Omega_{2,\mathrm{E}}$ is well defined, and if $N =2$, then $\widehat{\Omega}_{2,\mathrm{E}}$ is well defined (else, they are well defined by taking them vanishing everywhere). Moreover, we also get that so are $\Omega_{2,\mathrm{NS}}$ and $\widehat{\Omega}_{2,\mathrm{NS}}$, and that $\Omega_2$ and $\widehat{\Omega}_2$ satisfy constraints \ref{constraint:1} to \ref{constraint:3}. With these definitions, we compute using Lemma~\ref{lem:computecR2} that
\begin{align}
    \cR_2 &  = \eps^2 \Big( \delta ( 1-\cL) \Omega_2 + \Lambda \Omega_2 + \cH_2\Big) + \cO_\cZ(\eps^3) \\
    & =  \eps^2 \Big( \delta ( 1-\cL) (\Omega_{2,\mathrm{E}} + \delta\Omega_{2,\mathrm{NS}}) + \Lambda \Omega_{2,\mathrm{NS}}\Big) + \cO_\cZ(\eps^3) \\
    & = \eps^2 \delta^2 (1-\cL) \Omega_{2,\mathrm{NS}} +  \cO_\cZ(\eps^3)  \\
    & = \cO_\cZ(\eps^3 + \eps^2 \delta^2).
\end{align}
Similarly, one can compute that $\widehat{\cR}_2 = \cO_\cZ(\eps^3 + \eps^2 \delta^2)$, and thus constraint \ref{constraint:5} is now fully satisfied at order $M=2$. Note that these estimates still hold if one (or both) of $\Omega_2,\widehat{\Omega}_2$ is vanishing.

This concludes the proof and therefore the construction of the approximate solution at second order.
\end{proof}
\begin{remark}
\label{rem:NSdelta}
Observe that, up to an error of order $\cO(\eps^3 + \eps^2\delta^2)$, Lemma~\ref{lem:computecR2} ensures the existence of a unique choice of $\Omega_2$ satisfying constraint~\ref{constraint:5}. Furthermore, by defining $\Omega_2$ as a higher order polynomial in $\delta$, namely
\begin{equation}
    \Omega_2 = \Omega_{2,\mathrm{E}} + \sum_{k=1}^K \delta^k \Omega_{2,\mathrm{NS},k},
\end{equation}
for some $K \ge 1$, one can uniquely choose $\Omega_{2,\mathrm{NS},1},\ldots,\Omega_{2,\mathrm{NS},K}$ such that $\cR_{2} = \cO_\cZ(\eps^3 + \eps^2 \delta^{K+1})$. Indeed, to define the approximate solution, one could write $(\Lambda+\delta(1-\cL))^{-1}=\Lambda^{-1}(\mathrm{Id}+\delta(1-\cL)\Lambda^{-1})^{-1}$ and then use a truncation of the Neumann series associated with the operator $(\mathrm{Id}+\delta(1-\cL)\Lambda^{-1})^{-1}$, which essentially corresponds to our approach. However, in our functional setting, rigorously inverting $\mathrm{Id}+\delta(1-\cL)\Lambda^{-1}$ presents significant difficulties due to an accumulation of losses in powers of $|\xi|$ as $|\xi|\to \infty.$
\end{remark}

Finally, a few other remarks on the construction of the third-order approximate solution can be found in Section~\ref{sec:first_corrections}. However, instead of performing the complete computations of the third order, we now provide the general procedure to construct every order inductively.

\subsection{The inductive construction}\label{sec:iterative}

Recall from Section~\ref{sec:Func} that $\cP_0$ is the orthogonal projection in $\cY$ onto $\cY_0$ (the average in the angular direction).
We now state how to construct iteratively the approximate solutions at any order.

\begin{proposition}\label{prop:iterative}
    Assume that for some $M \ge 1$, all the maps $\Omega_0,\ldots,\Omega_{M-1}$ and $\widehat{\Omega}_0,\ldots,\widehat{\Omega}_{M-1}$ are constructed satisfying constraints~\ref{constraint:1} to~\ref{constraint:5}. Then by hypothesis, there exists $\cH_{M} = \cH_{M,\mathrm{E}} + \delta\cH_{M,\mathrm{NS}}$ such that
    \begin{equation}
        \cR_{M-1} (\xi,t) = \eps^{M}(t) \cH_{M}(\xi) + \cO_\cZ(\eps^{M+1}(t) + \delta^2 \eps^2(t)),
    \end{equation}
    and $\widehat{\cH}_{M} = \widehat{\cH}_{M,\mathrm{E}} + \delta\widehat{\cH}_{M,\mathrm{NS}}$ such that
    \begin{equation}
        \widehat{\cR}_{M-1}(t,\xi) = \eps^{M}(t) \widehat{\cH}_{M}(\xi) + \cO_\cZ(\eps^{M+1}(t) + \delta^2 \eps^2(t)).
    \end{equation}
    Let $\Omega_{M} = \Omega_{M,\mathrm{E}} + \Omega_{M,\mathrm{E}_r}  + \delta\Omega_{M,\mathrm{NS}}$ be defined by
    \begin{align}
        & \Omega_{M,\mathrm{E}} := - \Lambda^{-1} \cH_{M,\mathrm{E}} \\
        & \Omega_{M,\mathrm{E}_r} := \left(\cL - \frac{M}{2} \right)^{-1} \cP_0(\cH_{M,\mathrm{NS}}),\\
        & \Omega_{M,\mathrm{NS}} := - \Lambda^{-1} \left( (1-\cP_0)\cH_{M,\mathrm{NS}} + \left(\frac{M}{2}-\cL\right) \Omega_{M,\mathrm{E}}\right).
    \end{align}
    Similarly, let $\widehat{\Omega}_{M} =  \widehat{\Omega}_{M,\mathrm{E}} + \widehat{\Omega}_{M,\mathrm{E}_r} + \delta\widehat{\Omega}_{M,\mathrm{NS}}$ with
    \begin{align}
        & \widehat{\Omega}_{M,\mathrm{E}} := - \frac{1}{\gamma}\Lambda^{-1} \widehat{\cH}_{M,\mathrm{E}} \\
        & \widehat{\Omega}_{M,\mathrm{E}_r} := -\left(\frac{M}{2} - \cL\right)^{-1} \cP_0(\widehat{\cH}_{M,\mathrm{NS}}),\\
        & \widehat{\Omega}_{M,\mathrm{NS}} := - \Lambda^{-1} \left( (1-\cP_0)\widehat{\cH}_{M,\mathrm{NS}} + \left(\frac{M}{2}-\cL\right) \widehat{\Omega}_{M,\mathrm{E}}\right).
    \end{align}
    Then, $\Omega_{M}$, $\widehat{\Omega}_{M}$ and the associated approximate solution at order $M$ satisfy constraints~\ref{constraint:1} to~\ref{constraint:5}.
\end{proposition}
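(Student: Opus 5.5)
The proof mirrors the second-order construction (Lemma~\ref{lem:computecR2} and Proposition~\ref{prop:Om2}) and has four parts. First, expand $\cR_M$ in terms of $\Omega_M$. Second, check that the given formulas kill the $\eps^M$ term. Third, verify the solvability conditions and constraints \ref{constraint:1}--\ref{constraint:3}. Fourth, show that $\Mapp{R}{M}$ still satisfies \ref{constraint:4}.

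\textbf{Expansion of the remainder.} Write $\Mapp{\Omega}{M}=\Mapp{\Omega}{M-1}+\eps^M\Omega_M$ and use $t\de_t\eps^M=\frac M2\eps^M$, since $\eps\propto t^{1/2}$. This gives
\begin{equation}
\cR_M=\cR_{M-1}+\eps^M\delta\Big(\frac M2-\cL\Big)\Omega_M+\eps^M\{\Mapp{\Phi}{M-1},\Omega_M\}+\{\Mapp{\Phi}{M}-\Mapp{\Phi}{M-1},\Mapp{\Omega}{M}\}.
\end{equation}
The bracket $\{\Mapp{\Phi}{M-1},\Omega_M\}$ equals $\{\Psi_0,\Omega_M\}+\cO_\cZ(\eps)$ by Proposition~\ref{prop:small_operator}. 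For $\Mapp{\Phi}{M}-\Mapp{\Phi}{M-1}$ I would argue exactly as in Lemma~\ref{lem:computecR2}. The linear part contributes $\eps^M\Psi_M+\cO(\eps^{M+1})$. The $\cB$ terms are $\cO(\eps^{M+1})$ by Corollary~\ref{lem:expA}. The change of radius $\Mapp{R}{M}-\Mapp{R}{M-1}$ is controlled through \cref{eq:TXY}, provided we show that $\Mapp{R}{M}-\Mapp{R}{M-1}=\cO(\eps^{M})$. This yields
\begin{equation}
\cR_M=\eps^M\big(\cH_M+\Lambda\Omega_M+\delta(\tfrac M2-\cL)\Omega_M\big)+\cO_\cZ(\eps^{M+1}+\delta^2\eps^2).
\end{equation}
The analogous formula holds for $\widehat{\cR}_M$, with $\gamma\Lambda$ in place of $\Lambda$, because $\widehat\Phi$ contains $\gamma\widehat\Psi$.

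\textbf{Cancelling the $\eps^M$ term.} Insert $\Omega_M=\Omega_{M,\mathrm E}+\Omega_{M,\mathrm E_r}+\delta\Omega_{M,\mathrm{NS}}$ and sort by powers of $\delta$.
\begin{itemize}[label=$\bullet$]
\item At order $\delta^0$, $\Lambda\Omega_{M,\mathrm E}=-\cH_{M,\mathrm E}$, and $\Lambda\Omega_{M,\mathrm E_r}=0$ because $\Omega_{M,\mathrm E_r}$ is radial: $\cL$ preserves $\cY_0$ and $\cY_0\subset\mathrm{Ker}\Lambda$.
\item At order $\delta^1$, the radial part gives $(\frac M2-\cL)\Omega_{M,\mathrm E_r}+\cP_0\cH_{M,\mathrm{NS}}=0$. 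The non-radial part gives $\Lambda\Omega_{M,\mathrm{NS}}+(1-\cP_0)\cH_{M,\mathrm{NS}}+(\frac M2-\cL)\Omega_{M,\mathrm E}=0$. Here I need $\cP_0(\frac M2-\cL)\Omega_{M,\mathrm E}=0$, which I would obtain by arranging $\Omega_{M,\mathrm E}\perp\cY_0$.
\item The leftovers are $\delta^2(\frac M2-\cL)\Omega_{M,\mathrm{NS}}$, which is $\cO(\delta^2\eps^2)$ as $M\ge2$, and $\delta(\frac M2-\cL)\Omega_{M,\mathrm{E}_r}$ cross terms, which are absorbed at the next order.
\end{itemize}
Lemma~\ref{lem:Linvert} gives $\Omega_{M,\mathrm{E}_r}\in\cZ$, since $M/2>0$.

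\textbf{Solvability and the constraints.} Applying $\Lambda^{-1}$ via Proposition~\ref{prop:Lambda} needs inputs with zero mean, zero first moments and no $\cY_0$ part. The moment conditions follow from \ref{constraint:5} at order $M-1$, since $\mrm^j(\cR_{M-1})=0$ for $j=0,1$ forces $\mrm^j(\cH_M)=0$. The $\cY_0$ part of $\cH_{M,\mathrm E}$ must vanish; I expect this because Eulerian brackets $\{\cdot,\cdot\}$ built from even-in-$\xi_2$ data have no radial component at the relevant order, and the radial NS part is exactly what is split off into $\Omega_{M,\mathrm E_r}$. Using \ref{constraint:2}, $\cH_{M,\mathrm E}$ is odd in $\xi_2$, so $\Omega_{M,\mathrm E}$ is even in $\xi_2$ by the parity statement in Proposition~\ref{prop:Lambda}. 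The radial $\Omega_{M,\mathrm E_r}$ is even as well, which gives \ref{constraint:2}. Zero mean and first moments follow from \cref{eq:Lmom} and $\langle\cL f,1\rangle=0$, giving \ref{constraint:1}. $N$-fold symmetry of $\widehat{\cH}_M$, hence \ref{constraint:3}, follows from Lemma~\ref{lem:Nfold}-type invariance, since $\Lambda$ and $\cL$ commute with rotations.

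\textbf{Radius bound.} For \ref{constraint:4} I repeat Lemma~\ref{lem:computeR2}. By Lemma~\ref{lem:eveneven} the Eulerian part gives $\Re\cB=0$, and the $\delta$-linear terms start at $n=3$ in \cref{eq:expansionBeps}. Hence $t(\Mapp{R}{M})'=\cO(\eps^6)$, and a continuity argument gives \ref{constraint:4} and $\Mapp{R}{M}-\Mapp{R}{M-1}=\cO(\eps^{M+5})$.

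\textbf{Main obstacle.} The delicate step is closing the exact moment identity $\mrm^j(\cR_M)=0$ and the precise $\cY_0$ bookkeeping. For the moments I would test the equation against $1$ and $\xi_i$, exactly as in Lemma~\ref{lem:alpha'gives_moments}. Because $\Mapp{R}{M}$ and $\Mapp{\alpha}{M}$ are defined by \cref{def:Rapp,def:alphaapp} at $\Mapp{\Omega}{M}$, the same computation, now with a source, forces the first moments of the remainder to vanish.
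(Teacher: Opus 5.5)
Your proposal is correct and follows essentially the paper's route: expand $\cR_M$ around $\cR_{M-1}$, cancel the $\eps^M$ coefficient order by order in $\delta$ (with $\Omega_{M,\mathrm{E}_r}$ absorbing the radial part of the $\delta$-forcing, since $\Lambda$ cannot produce radial functions), and obtain solvability from the vanishing moments of $\cR_{M-1}$ together with the oddness in $\xi_2$ of $\cH_{M,\mathrm{E}}$, which is where $\Re(\cB)=0$ on even Eulerian data enters (Lemmas~\ref{lem:mrmOfHm} and~\ref{lem:Hmodd}). Three minor points: first, nothing of size $\delta\eps^M$ is left to ``absorb at the next order'', since the term $\delta(\frac M2-\cL)\Omega_{M,\mathrm{E}_r}$ is exactly what cancels $\delta\cP_0\cH_{M,\mathrm{NS}}$, and a genuine leftover would violate \ref{constraint:5}; second, the bound $\Mapp{R}{M}-\Mapp{R}{M-1}=\cO(\eps^{M})$ that you rightly flag does not follow from $t(\Mapp{R}{M})'=\cO(\eps^6)$, but it does follow from a Gronwall comparison of the two ODEs \cref{def:Rapp}, whose right-hand sides differ by $\cO(\eps^{M+4})$ at fixed radius and are Lipschitz in $R$ with constant $\cO(\eps^6)$; third, keeping $\gamma\Lambda$ in the hatted equation, as you do, yields a factor $1/\gamma$ in $\widehat{\Omega}_{M,\mathrm{NS}}$ that the printed formula omits.
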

The rest of Section~\ref{sec:iterative} is devoted to the proof of Proposition~\ref{prop:iterative} and to the proof that it implies Proposition~\ref{prop:construction}. It follows the same plan as the construction of the second order, except for one major difference which is the need to define a new Eulerian part $\Omega_{M,\mathrm{E}_r}$ which is radially symmetric, since $\cH_{M}$ does not necessarily lie within $(\mathrm{Ker} \Lambda)^\perp$, a problem that only appears from the fourth order approximation onward.

Let us now begin with the proof of Proposition~\ref{prop:iterative}. By definition of $\cR_{M-1}$ and $\widehat{\cR}_{M-1}$ \cref{def:RM}, since all the involved operators and maps can be expanded as series in $\eps$ and $\delta$, then $\cR_{M-1}$ and $\widehat{\cR}_{M-1}$ can themselves be expanded as series in $\eps$ and $\delta$. Therefore, we have indeed the existence of $\cH_{M-1}$ and $\widehat{\cH}_{M-1}$ as stated is Proposition~\ref{prop:iterative}.

Now let us prove that $\Omega_{M}$ and $\widehat{\Omega}_{M}$ given in Proposition~\ref{prop:iterative} are well defined and satisfy constraints~\ref{constraint:1} to~\ref{constraint:3}.

    \begin{lemma}\label{lem:mrmOfHm}
        We have that $M(\cH_{M,\mathrm{E}}) = M(\widehat{\cH}_{M,\mathrm{E}}) = 0$ and $\mrm^1(\cH_{M,\mathrm{E}}) =  \mrm^1(\widehat{\cH}_{M,\mathrm{E}}) =0$.
    \end{lemma}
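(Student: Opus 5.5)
We will read off the moments of $\cH_M$ from the moments of the remainder $\cR_{M-1}$. The inductive hypothesis \ref{constraint:5} at order $M-1$ gives $\mrM(\cR_{M-1})=0$ and $\mrm^1(\cR_{M-1})=(0,0)$ identically in $t$ (equivalently in $\eps$ and $\delta$). The same holds for $\widehat{\cR}_{M-1}$. The plan is to expand these identities as polynomials in $\eps$ and $\delta$ and isolate the coefficient of $\eps^M\delta^0$. By definition of $\cH_M=\cH_{M,\mathrm{E}}+\delta\cH_{M,\mathrm{NS}}$, this coefficient is exactly $\mrM(\cH_{M,\mathrm{E}})$, respectively $\mrm^1(\cH_{M,\mathrm{E}})$.

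\textbf{Step 1: well-definedness of the coefficient extraction.} The remainder $\cR_{M-1}$ is built from finitely many profiles $\Omega_k=\Omega_{k,\mathrm{E}}+\delta\Omega_{k,\mathrm{NS}}$ (plus the radial $\mathrm{E}_r$ parts). It is obtained by applying the operators $\cQ_{\eps,R}$, $\cT_{\eps,R}$, $\cB_{\eps,R}$ and the modulation $\Mapp{R}{M-1}$, all of which are expanded via Lemma~\ref{lem:expansionsR}. Consequently $\cR_{M-1}$ admits a joint asymptotic expansion in $(\eps,\delta)$ whose coefficients lie in $\cZ$. Taking moments commutes with this expansion, since the $\cO_\cZ$ errors have finite moments of the same order.

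\textbf{Step 2: the Eulerian part sees only $\delta^0$.} The only place where $\delta$ enters without a profile factor is the prefactor $\delta(t\de_t-\cL)$ and the $\mathrm{NS}$ parts. There is one subtlety: $\Mapp{R}{M-1}$ contains $\delta^{-1}$ in \cref{def:Rapp}, but as in Lemma~\ref{lem:computeR2} the Eulerian evenness (\ref{constraint:2} with Lemma~\ref{lem:eveneven}) makes $R$ deviate only through terms carrying $\delta$ that cancel the $\delta^{-1}$. I will therefore argue that the $\delta^0$ coefficient of $\mrM(\cR_{M-1})$ and $\mrm^1(\cR_{M-1})$ at order $\eps^M$ is exactly $\mrM(\cH_{M,\mathrm{E}})$, resp.\ $\mrm^1(\cH_{M,\mathrm{E}})$. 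The error term $\cO_\cZ(\eps^{M+1}+\delta^2\eps^2)$ does not contribute to the coefficient $\eps^M\delta^0$.

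\textbf{Step 3: conclusion and main obstacle.} Since the moments of $\cR_{M-1}$ vanish identically, the relevant coefficient vanishes, which gives the claim; the argument for $\widehat{\cH}_{M,\mathrm{E}}$ is the same. As a sanity check, I will also verify the claim directly. The bracket part $\{\Mapp{\Phi}{M-1},\Mapp{\Omega}{M-1}\}$ has zero mean as a Poisson bracket. Its first moment vanishes because of the choice of $R',\alpha'$ in Lemma~\ref{lem:alpha'gives_moments}: this is precisely the computation \cref{eq:m21} applied with the approximate profiles, with \cref{eq:Pbracket} killing the self-interaction. The diffusive part $\delta(t\de_t-\cL)$ preserves zero mean and first moment of the profiles by \cref{eq:Lmom} and \ref{constraint:1}.

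The step I expect to be delicate is Step 2. There I must make sure that the $\delta^{-1}$ in the modulation equations does not mix the $\mathrm{NS}$ and $\mathrm{E}$ orders, so that a $\delta\cdot\delta^{-1}$ term does not pollute the $\delta^0$ coefficient. I would handle this by using the direct verification of Step 3, which shows that the moments vanish exactly for each fixed $\delta$, combined with the polynomial structure in $\delta$ of the order-$\eps^M$ coefficient. That combination yields the vanishing of each $\delta$-coefficient separately.
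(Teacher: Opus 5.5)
Your proposal is correct and follows the paper's proof. Both show that the mean and first moment of $\cR_{M-1}$ and $\widehat{\cR}_{M-1}$ vanish exactly. The paper rederives this from the structure of $\Phi$ and the choice of $R_{\rm app},\alpha_{\rm app}$, which is your Step~3 check; you can also read it off from \ref{constraint:5}. Both then conclude by separating orders, that is, extracting the $\eps^M\delta^0$ coefficient.

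The $\delta^{-1}$ concern in your Step~2 is harmless. $R_{\rm app}$ is smooth in $\delta$, and the moment identity holds for every value of $(\eps,\delta)$, so each Taylor coefficient vanishes separately.
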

    \begin{proof}
        By definition,
        \begin{equation}
            \mrm^1(\cR_{M-1}) = \delta \mrm^1\big((t\partial_t - \cL) \Mapp{\Omega}{M-1} \big)+ \mrm^1 \big( \{ \Mapp{\Phi}{M-1},\Mapp{\Omega}{M-1} \} \big).
        \end{equation}
       By hypothesis $\Mapp{\Omega}{M-1}$ satisfies constraint~\ref{constraint:1}, hence $\mrm^1(\Mapp{\Omega}{M-1} )= (0,0)$. Then, by the definition~\cref{def:Phiapp} of the operator $\Phi$ and the choice of $\Mapp{R}{M-1}, \Mapp{\alpha}{M-1}$, it is not hard to check that
        \begin{equation}
             \mrm^1 \big( \{ \Mapp{\Phi}{M-1},\Mapp{\Omega}{M-1} \} \big)=(0,0).
        \end{equation}
        Note that this identity is exactly what we used to define $R(t),\alpha(t)$ in Lemma \ref{lem:alpha'gives_moments}. Thus, we conclude that
        \begin{equation}
            \mrm^1(\cR_{M-1}) = (0,0)
        \end{equation}
By the separation of orders in $\eps,\delta$, the statement of the lemma follows by the identity above.   \end{proof}
Now, we check that the Eulerian part of the remainders are indeed in the kernel of $\Lambda.$
    \begin{lemma}
    \label{lem:Hmodd}
        We have that $\cH_{M,\mathrm{E}},\widehat{\cH}_{M,\mathrm{E}} \in (\mathrm{Ker} \Lambda)^\perp \cap \cZ$ and are odd in $\xi_2$.
    \end{lemma}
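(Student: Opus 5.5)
The plan is to isolate the purely Eulerian part of the remainder $\cR_{M-1}$, i.e.\ the terms of order $\delta^0$. Writing $\Mapp{\Omega}{M-1}=\Mappp{\Omega}{M-1}{E}+\delta\Mappp{\Omega}{M-1}{NS}$, the diffusive term $\delta(t\de_t-\cL)\Mapp{\Omega}{M-1}$ carries a factor $\delta$. It therefore contributes only to $\cH_{M,\mathrm{NS}}$, except through $\Omega_{k,\mathrm{E}_r}$. Those pieces are radial, so I would first check where they enter and show that they produce no $\delta^0$ contribution in $\cR_{M-1}$. Consequently $\eps^M\cH_{M,\mathrm{E}}$ is the $\eps^M$ coefficient of $\{\Phi_{\eps,\Mapp{R}{M-1}}(\Mappp{\Psi}{M-1}{E},\Mappp{\widehat{\Psi}}{M-1}{E}),\Mappp{\Omega}{M-1}{E}\}$. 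The $\delta$-corrections of the radius enter here too: by the same argument as in Lemma~\ref{lem:computeR2}, $t(\Mapp{R}{M-1})'$ is $\cO(\eps^{6})$ with the leading contribution coming from $\delta$-parts. Replacing $\Mapp{R}{M-1}$ by its Eulerian expansion therefore only modifies NS coefficients. The $\Re(\cB)$ term vanishes at the Eulerian level by Lemma~\ref{lem:eveneven}.

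\textbf{Parity.} By \ref{constraint:2} and \ref{constraint:3}, $\Mappp{\Psi}{M-1}{E}$ and $\Mappp{\widehat{\Psi}}{M-1}{E}$ are even in $\xi_2$. Lemma~\ref{lem:adjoints} then shows that $\cQ_{\eps,R}\Psi$ and $\cT_{\eps,R}\widehat{\Psi}$ are even in $\xi_2$, and $|\cT_{\eps,R}\xi|^2=|\xi+(R,0)/(\eps d)|^2$ is also even. Hence the Eulerian part of $\Phi$ is even in $\xi_2$. Since $\{f,g\}=\de_1f\de_2g-\de_2f\de_1g$ maps a pair of even functions to an odd one, $\cH_{M,\mathrm{E}}$ is odd in $\xi_2$. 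The same argument applies to $\widehat{\cH}_{M,\mathrm{E}}$, using $\cV_{\eps,R}$. For that I would check directly from \cref{def:Veps} that $\cV_{\eps,R}$ preserves evenness: conjugating by the reflection $\xi_2\mapsto-\xi_2$ sends $Q_l$ to $Q_{-l}$ and fixes $(R,0)$, so the sum is merely reindexed.

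\textbf{Membership in $(\mathrm{Ker}\Lambda)^\perp\cap\cZ$.} That $\cH_{M,\mathrm{E}}\in\cZ$ follows from the expansions of Lemma~\ref{lem:expansionsR}, since every term is a Poisson bracket of a function in $\cS_*$ with an element of $\cZ$. For orthogonality I would use \cref{kerLam}. Oddness in $\xi_2$ kills the $\cY_0$ component and the $\de_1G$ component, both of which are even. The $\de_2G$ component is proportional to $\langle \cH_{M,\mathrm{E}},\xi_2\rangle$, because $\de_2G\, e^{|\xi|^2/4}=-\xi_2/(8\pi)$; this equals $\mrm^1_2(\cH_{M,\mathrm{E}})=0$ by Lemma~\ref{lem:mrmOfHm}. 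The zeroth Fourier mode vanishes by oddness alone, with no extra moment condition needed.

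\textbf{Main obstacle.} The delicate step is the first one: making sure that no $\delta^0$ term of $\cR_{M-1}$ comes from non-Eulerian pieces. Two sources need checking. The first is the modulation parameters, which are defined with the full (non-split) solution and are not polynomial in $\delta$. The second is the radial pieces $\Omega_{k,\mathrm{E}_r}$, which are produced by $\cL$ acting on NS data; these are radial, hence even, so they are harmless for parity. For the radius I would bound $\Mapp{R}{M-1}-\mathsf{r}$ by $\delta$ times powers of $\eps$ up to $\cO(\eps^{M+1})$, via Lemma~\ref{lem:eveneven}, and then absorb the resulting changes of the operators into $\delta\cH_{M,\mathrm{NS}}$ or the error, using \cref{eq:TXY}.
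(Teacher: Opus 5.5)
Your argument is the paper's. The $\delta^0$ part of $\Phi$ and $\widehat{\Phi}$ is even in $\xi_2$, since the $\xi_2\Re\cB$ term drops out by Lemma~\ref{lem:eveneven}. Hence $\cH_{M,\mathrm{E}}$ and $\widehat{\cH}_{M,\mathrm{E}}$ are odd, which kills $\cP_0$ and $\mrm^1_1$, and $\mrm^1_2=0$ comes from Lemma~\ref{lem:mrmOfHm}. Your explicit check that $\cV_{\eps,R}$ preserves evenness supplies a detail the paper leaves implicit.

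The step you single out as the main obstacle, however, is argued incorrectly, and it is also unnecessary.

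\textbf{The radius.} You cannot bound $\Mapp{R}{M-1}-\mathsf{r}$ by $\delta$ times powers of $\eps$. Lemma~\ref{lem:eveneven} gives $\Re\cB=\cO(\delta\eps^5)$, but \cref{def:Rapp} divides by $\delta$, so $t(\Mapp{R}{M-1})'=\cO(\eps^6)$ with no factor $\delta$ left, as in Lemma~\ref{lem:computeR2}. Indeed, Proposition~\ref{prop:deviation} gives $\Mapp{R}{M}=\mathsf{r}(1+r_6\eps^6+\cO(\eps^7))$ with $r_6$ independent of $\delta$ and generically nonzero, since it involves $\mrm^2_2(\Omega_{2,\mathrm{NS}})$. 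So the radius shift does enter the $\delta^0$ coefficients $\cH_{M,\mathrm{E}}$ for large $M$, and it cannot be absorbed into $\delta\cH_{M,\mathrm{NS}}$.

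\textbf{The radial pieces.} The $\Omega_{k,\mathrm{E}_r}$ are $\delta^0$ parts of the approximate solution. They do contribute to $\cH_{M,\mathrm{E}}$ through brackets with the non-radial part of $\Phi$. The diffusive term, by contrast, is $\cO(\delta)$ on every piece, these included, so your ``except through $\Omega_{k,\mathrm{E}_r}$'' is also off.

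\textbf{Why none of this matters.} Lemma~\ref{lem:adjoints} and \cref{eq:Reeven} hold for every real radius. So $\Phi_{\eps,R}$ evaluated on the Eulerian profiles is even whatever the $\delta$-independent part of the radius is. The $\Omega_{k,\mathrm{E}_r}$, being radial, belong to the even Eulerian part guaranteed by \ref{constraint:2}. Replace your elimination argument with this observation. That is exactly what the paper does: it never removes the radius correction or the radial pieces, it only uses that they preserve parity.
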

    \begin{proof}
        From the definition of the operators $\Phi,\widehat{\Phi}$ in \cref{def:Xieps,def:Thetaeps} and Lemma \ref{lem:adjoints}, we know that the only term that does not respect the even symmetry in $\xi_2$ is the one involving $\Re(\cB_{\eps,R}(\Psi,\widehat{\Psi}))$. However, this term vanishishes on the purely Eulerian part of the approximate solution thanks to \cref{eq:Reeven}. Then,  when we set $\delta=0$ to define $\cH_{M,\mathrm{E}},\widehat{\cH}_{M,\mathrm{E}}$ we see that we need to only look at $\Mapp{\Phi}{M-1}, \Mapp{\widehat{\Phi}}{M-1}$ in \cref{def:Phiapp} when computed on $\Mappp{\Psi}{M-1}{E}, \Mappp{\widehat{\Psi}}{M-1}{E}$, and then take the Poisson bracket with $\Mappp{\Omega}{M-1}{E}, \Mappp{\widehat{\Omega}}{M-1}{E}$. Since by the constraint  \cref{constraint:2} the Eulerian part is even in $\xi_2$, we get that $\cH_{M,\mathrm{E}},\widehat{\cH}_{M,\mathrm{E}}$ are odd in $\xi_2$. This automatically tells us that $\cP_0(\cH_{M,\mathrm{E}})=\cP_0(\widehat{\cH}_{M,\mathrm{E}})=0$ and we can conclude that $\cH_{M,\mathrm{E}},\widehat{\cH}_{M,\mathrm{E}} \in (\mathrm{Ker} \Lambda)^\perp$ by using Lemma \ref{lem:mrmOfHm}. The fact that they are in $\cZ$ is a trivial consequence of properties of the approximate solution.    \end{proof}
    By combining Lemmas~\ref{lem:mrmOfHm} and \ref{lem:Hmodd} with the definitions of $\Omega_M$ and $\widehat{\Omega}_M$, the properties of the operators $\Lambda$ and $\cL$ in Proposition \ref{prop:Lambda} and Lemma \ref{lem:Linvert}, we conclude that $\Omega_M$ and $\widehat{\Omega}_M$ satisfy constraint~\ref{constraint:1} to~\ref{constraint:3}. We now compute the remainder.
\begin{lemma} With this choice of $\Omega_{M+1}$ and $\widehat{\Omega}_{M+1}$ one has that
    \begin{equation}
        \cR_{M} = \cO_\cZ(\eps^{M+1} + \eps^2 \delta^2), \qquad \widehat{\cR}_{M} = \cO_\cZ(\eps^{M+1} + \eps^2\delta^2).
    \end{equation}
\end{lemma}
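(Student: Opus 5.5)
The plan is to repeat the argument of Lemma~\ref{lem:computecR2} at general order $M$. Writing $\Mapp{\Omega}{M}=\Mapp{\Omega}{M-1}+\eps^M\Omega_M$, I will split the remainder as
\begin{align}
\cR_M=\cR_{M-1}+\eps^M\delta\Big(\frac{M}{2}-\cL\Big)\Omega_M+\eps^M\{\Mapp{\Phi}{M-1},\Omega_M\}+\{\Mapp{\Phi}{M}-\Mapp{\Phi}{M-1},\Mapp{\Omega}{M}\},
\end{align}
using $t\partial_t\eps^M=\frac{M}{2}\eps^M$. By the hypothesis of Proposition~\ref{prop:iterative}, the first term equals $\eps^M\cH_M+\cO_\cZ(\eps^{M+1}+\delta^2\eps^2)$. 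By \cref{eq:devPhi}, the third term equals $\eps^M\{\Psi_0,\Omega_M\}+\cO_\cZ(\eps^{M+1})$.

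The fourth term is the delicate one, because $\Phi$ is affine and is evaluated at two different radii $\Mapp{R}{M}$ and $\Mapp{R}{M-1}$. I will expand it as in the second-order proof. The linear part gives $\eps^M((I+\cQ)\Psi_M+\gamma\cT\widehat\Psi_M)$, which by Proposition~\ref{prop:small_operator} equals $\eps^M\Psi_M+\cO(\eps^{M+1})$. The $\cB$-differences are $\cO(\eps^{M+1})$ by Corollary~\ref{lem:expA}.

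For the change of radius, I need a bound $\Mapp{R}{M}-\Mapp{R}{M-1}=\cO(\eps^{M})$, with a gain of $\eps$ that is tolerable given the factor $\frac{1}{\eps}$ multiplying $|\cT\xi|^2$. I would prove it by comparing \cref{def:Rapp} at the two orders. The real part of $\cB$ on Eulerian parts vanishes by \cref{eq:Reeven} together with \ref{constraint:2}, so only viscous parts contribute. Corollary~\ref{lem:expA} then gives $t(\Mapp{R}{M}-\Mapp{R}{M-1})'=\cO(\eps^{M+2})$ up to a Lipschitz term in the difference itself, and a Gronwall argument on $[0,T]$ with \ref{constraint:4} closes it. Once this is in place, \cref{eq:TXY} shows that the position-change contributions in $\cQ$ and $\cT$ are $\cO(\eps^{M+1})$.

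The Coriolis-type term $\frac{d\eps}{2R}\Im\cB\,|\cT\xi|^2$ is the main obstacle, since $|\cT\xi|^2\sim\frac{R^2}{\eps^2d^2}$. Its bracket with $\Mapp{\Omega}{M}$ reduces to $\frac{R}{\eps d}\{\xi_1,\cdot\}$ plus bounded terms. I would handle it by verifying that the leading $\Im\cB$ coefficient $c_{1,0}(S_{1,1}+\gamma)$ is unchanged between orders $M-1$ and $M$, so that only differences of size $\cO(\eps^{M+1})\cdot\eps^{-1}\cdot\eps=\cO(\eps^{M+1})$ survive. The point is that the $\eps$-expansion of $\cB$ only picks up the new profiles through moments that start at order $\eps^{M+1}$, which uses $\mrm^0=\mrm^1=0$ from \ref{constraint:1}.

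Collecting terms, I obtain
\begin{align}
\cR_M=\eps^M\big(\cH_M+\delta(\tfrac M2-\cL)\Omega_M+\Lambda\Omega_M\big)+\cO_\cZ(\eps^{M+1}+\delta^2\eps^2).
\end{align}
I then plug in the definitions and sort by powers of $\delta$.
\begin{itemize}
\item At order $\delta^0$, $\Lambda\Omega_{M,\mathrm E}=-\cH_{M,\mathrm E}$, and $\Lambda\Omega_{M,\mathrm E_r}=0$ because $\Omega_{M,\mathrm E_r}$ is radial.
\item At order $\delta^1$, $\Lambda\Omega_{M,\mathrm{NS}}+(\frac M2-\cL)\Omega_{M,\mathrm E}+\cH_{M,\mathrm{NS}}$ leaves only $(\frac M2-\cL)\Omega_{M,\mathrm E_r}+\cP_0\cH_{M,\mathrm{NS}}$, which vanishes by the definition of $\Omega_{M,\mathrm E_r}$.
\item The remaining term is $\delta^2(\frac M2-\cL)\Omega_{M,\mathrm{NS}}=\cO(\delta^2)$, multiplied by $\eps^M\le\eps^2$.
\end{itemize}
The argument for $\widehat\cR_M$ is identical, with $\gamma\Lambda$ and the $N$-fold symmetry from Lemma~\ref{lem:adjoints}.
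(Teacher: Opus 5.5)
Your proposal is correct and is essentially the paper's proof. It uses the same telescoping of $\cR_M$ against $\cR_{M-1}$; you put $\Mapp{\Omega}{M}$ rather than $\Mapp{\Omega}{M-1}$ in the last bracket, which is immaterial. It then identifies the $\eps^M$ coefficient as $\cH_M+\delta(\frac M2-\cL)\Omega_M+\Lambda\Omega_M$ and sorts by powers of $\delta$ in the same way. For the central vortex this sorting needs a factor $\frac1\gamma$ in $\widehat\Omega_{M,\mathrm{NS}}$, as at order two; that factor is missing in the statement of Proposition~\ref{prop:iterative}.

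The one step where you are more careful than the paper is the Gronwall comparison of \cref{def:Rapp} at orders $M$ and $M-1$, which gives $\Mapp{R}{M}-\Mapp{R}{M-1}=\cO(\eps^{M})$ (in fact $\cO(\eps^{M+2})$). The paper only invokes Lemma~\ref{lem:computeR2}, which gives closeness to $\mathsf r$ at order $\eps^6$ and so bounds the radius-change terms only by $\cO(\eps^7)$. That is insufficient once $M\ge 7$, so your comparison is what actually makes those terms $\cO(\eps^{M+1})$ at every order.
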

\begin{proof}
By the definition of $\cR_M$, we have
    \begin{align}
        \cR_{M} & =\delta(t\de_t-\cL)\Mapp{\Omega}{M}+\{\Mapp{\Phi}{M},\Mapp{\Omega}{M}\} \\
        & = \cR_{M-1} + \delta(t\de_t-\cL) (\eps^{M}\Omega_{M}) + \{\Mapp{\Phi}{M},\Mapp{\Omega}{M}\} - \{\Mapp{\Phi}{M-1},\Mapp{\Omega}{M-1}\} \\
        & = \cR_{M-1} + \eps^{M}\Big(\delta(t\de_t+\frac{M}{2}-\cL) \Omega_{M} + \{\Mapp{\Phi}{M},\Omega_{M}\}\Big) + \{\Mapp{\Phi}{M}-\Mapp{\Phi}{M-1},\Mapp{\Omega}{M-1}\}.
    \end{align}
    We now compute term by term, only at the required precision. By hypothesis, \begin{equation}
        \cR_{M-1} = \eps^{M} \cH_{M} +\cO_\cZ(\eps^{M+1} + \eps^2\delta^2).
    \end{equation} 
    Using Proposition~\ref{prop:small_operator}, we have that 
    \begin{equation}
        \eps^{M} \{\Mapp{\Phi}{M},\Omega_{M}\} = \eps^{M} \{ \Mapp{\Psi}{M} , \Omega_{M} \} + \cO_\cZ(\eps^{M+1}) = \eps^{M} \{ \Psi_0 , \Omega_{M} \} + \cO_\cZ(\eps^{M+1}).
    \end{equation}
    Then, using again Proposition~\ref{prop:small_operator}, and Lemma \ref{lem:computeR2}, proceeding as done for $\cR_2$ (see the proof of Lemma \ref{lem:computecR2}), we deduce that 
    \begin{equation}
         \{\Mapp{\Phi}{M}-\Mapp{\Phi}{M-1},\Mapp{\Omega}{M-1}\}=\eps^{M}\{\Psi_M,\Omega_0\}+\cO_{\cZ}(\eps^{M+1}).
    \end{equation}
    Gathering all the previous relations leads to
    \begin{equation}
        \cR_{M} = \eps^{M} \Big( \cH_{M} + \delta \Big(t\partial_t + \frac{M}{2}- \cL\Big)\Omega_{M} + \Lambda \Omega_{M}\Big) + \cO_{\cZ}(\eps^{M+1}+\eps^2\delta^2),
    \end{equation}
    which is the expression that allows us to determine that the choice made in the statement of Proposition~\ref{prop:iterative} is correct, or if one is searching for such a choice, allows one to find it. Remarkably, this equation does not depend on $\widehat{\Omega}_{M}$, meaning that we can choose $\Omega_{M}$ independently of the choice of $\widehat{\Omega}_{M}$. The main Eulerian part is killed by the choice of $\Omega_{M,\mathrm{E}}$ as
    \begin{equation}
        \cH_{M,\mathrm{E}} + \Lambda (\Omega_{M,\mathrm{E}}+\Omega_{M,\mathrm{E}_r}) = 0.
    \end{equation}
    Then, since $$\eps^{M}\delta^2 (t \partial_t + \frac{M}{2} - \cL) \Omega_{M,\mathrm{NS}} = \cO_\cZ(\eps^2 \delta^2),$$ and since $\partial_t \Omega_{M,\mathrm{E}} = 0$, we only need to check that
    \begin{equation}\label{eq:27}
        \cH_{M,\mathrm{NS}}  + \left(\frac{M}{2} - \cL\right) (\Omega_{M,\mathrm{E}}+\Omega_{M,\mathrm{E}_r}) + \Lambda\Omega_{M,\mathrm{NS}} = 0.
    \end{equation}
By construction, we have that
\begin{equation}
    \cH_{M,\mathrm{NS}} + \left(\frac{M}{2} - \cL\right)\Omega_{M,\mathrm{E}_r} = (1-\cP_0)\cH_{M,\mathrm{NS}}
\end{equation}
and thus relation~\cref{eq:27} holds true.
\end{proof}

\subsection*{Conclusion of the proof of Proposition~\ref{prop:construction}.} 
\begin{proof}
    We have constructed the $M=0$ order functions $\Omega_0,\widehat{\Omega}_0$ (and actually up to the $M=2$ order), so there only remains to apply Proposition~\ref{prop:iterative} iteratively.
\end{proof}

\subsection{First non trivial corrections}\label{sec:first_corrections}

Although it is not necessary to compute the third order $\Omega_3,\widehat{\Omega}_3$ maps, as mentioned in Section~\ref{sec:corrections}, the third order can be the leading order correction; hence, let us give it explicitly.

Mostly due to the fact that the zero-th and first order are trivial, the computation of the third order is actually very similar to the computations of the second order, although estimating all the error terms is slightly more complicated. One can also directly apply Proposition~\ref{prop:construction} to obtain the expression of the third-order term $\Omega_3$ and $\widehat{\Omega}_3$, but to do so, one has to compute the $\cO_\cZ(\eps^3)$ term in $\cR_2$ and $\widehat{\cR}_2$. Going back to the computation of $\cR_2$ and making finer use of the properties of $\Omega_2$ and the operators $\cQ_{\eps,\cdot}$ and $\cT_{\eps,\cdot}$ one can show that $\cR_2$ contains no other term of order $\cO_\cZ(\eps^3)$ than the term coming from $\cR_0$, namely
    \begin{equation}
        \cH_3 := \frac{c_{3,0}}{R^3} ({\rm S}_{3,3} + \gamma) \{ \Re(\xi^3), G\} , \qquad 
    \widehat{\cH}_3 := \begin{cases}
        3\frac{c_{3,0}}{R^3}  \{ \Re(\xi^3), G\} & \text{ if } N = 3 \\
         0 & \text{ if } N \neq 3.
    \end{cases}
    \end{equation}
Therefore, Proposition~\ref{prop:construction} gives that 
\begin{equation}
    \Omega_{3,\mathrm{E}} := -\Lambda^{-1} \cH_3, \qquad \widehat{\Omega}_{3,\mathrm{E}} := -\frac{1}{\gamma}\Lambda^{-1} \widehat{\cH}_3
\end{equation}
and
\begin{equation}
    \Omega_{3,\mathrm{NS}} := -\Lambda^{-1} \left(\frac{3}{2}-\cL\right)\Omega_{3,\mathrm{E}}, \qquad \widehat{\Omega}_{3,\mathrm{E}} :=  -\frac{1}{\gamma}\Lambda^{-1} \left(\frac{3}{2}-\cL\right)\widehat{\Omega}_{3,\mathrm{E}}.
\end{equation}
Things get more complicated at the fourth order, as new non trivial error terms at order $\eps^4$ come from the nonlinear interactions of the second order term with itself.

One can now wonder what is the first non trivial term for the associated approximate radius $\Mapp{R}{M}$ and angular velocity $(\Mapp{\alpha}{M})'$ of the rotating frame.
\begin{proposition}\label{prop:deviation}
   There exists $r_6 \in \R$ and $\alpha_4 \in \R$, such that for any $M \ge 5$, and any $t \le C \delta^{-1} T_\adv$,
    \begin{equation}
        \Mapp{R}{M}(t) = \mathsf{r}(1 + r_6 \eps^6(t) + \cO(\eps^7(t)))
    \end{equation}
    and
    \begin{equation}
        (\Mapp{\alpha}{M})'(t) = \mathsf{a}\big(1 + 
        \alpha_4\eps^4(t)  + \cO(\eps^5(t) + \delta \eps^4(t))\big),
    \end{equation}
    where we recall that $\mathsf{a}=\frac{\Gamma}{4\pi \mathsf{r}^2}(N-1+2\gamma)$ is the angular velocity of the point-vortex polygonal vortex crystal configuration defined in~\cref{eq:zjt}. The constant $r_6$ and $\alpha_4$ are given by:
    \begin{equation}
        r_6 = -\frac{d}{3\mathsf{r}^4}\Big( 3c_{3,0} \big( {\rm S}_{3,3}  + \gamma\big) \mrm_2^{2}(\Omega_{2,\mathrm{NS}}) + c_{3,2} \big( {\rm S}_{3,1} \mrm_2^2( \Omega_{2,\mathrm{NS}}) + \gamma \mrm_2^2( \widehat{\Omega}_{2,\mathrm{NS}})\big) \Big)
    \end{equation}
    and
    \begin{equation}
      \label{def:alha4}  \alpha_4 =\frac{1}{\mathsf{r}^2c_{1,0}(S_{1,1}+\gamma)} \Big(3c_{3,0}({\rm S}_{3,3}+\gamma) \mrm_1^2(\Omega_2) + c_{3,2} ({\rm S}_{3,1} \mrm_1^2(\Omega_2)+\gamma \mrm_1^2(\widehat{\Omega}_2)\big) \Big).
    \end{equation}
\end{proposition}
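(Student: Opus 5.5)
We start from the defining relations \cref{def:Rapp,def:alphaapp} and expand $\cB_{\eps,\Mapp{R}{M}}(\Mapp{\Psi}{M},\Mapp{\widehat{\Psi}}{M})$ using the expansion \cref{eq:expansionBeps} of Lemma~\ref{lem:eveneven}. The expansion will be carried to order $\eps^5$, which is why $M\ge5$ is assumed.

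Write $\Mapp{\Omega}{M}=G+\eps^2\Omega_2+\eps^3\Omega_3+\dots$. By constraint \ref{constraint:1}, $\mrm^0(\Omega_k)=0$ for $k>0$ and $\mrm^1(\Omega_k)=0$ for all $k$, while $\mrm^k(G)=0$ for $k\ge1$. Hence the moments of the approximate solution satisfy
\[
\mrm^0=1,\qquad \mrm^1=0,\qquad \mrm^2=\eps^2\mrm^2(\Omega_2)+\eps^3\mrm^2(\Omega_3)+\dots,
\]
and the same holds for the hatted profiles. In \cref{eq:expansionBeps} a term of index $(n,k)$ carries the moments $\mrm^k$ and $\mrm^{n-k-1}$, so it contributes at order $\eps^{n}$ times the $\eps$-orders of those two moments.
\begin{itemize}
\item The $n=1$ term gives $i\frac{\eps}{R}c_{1,0}(\mathrm{S}_{1,1}+\gamma)$.
\item $n=2$ needs a first moment, so it vanishes.
\item $n=3$ with $(k,n-k-1)\in\{(0,2),(2,0)\}$ gives
\[
i\frac{\eps^3}{R^3}\Big(3c_{3,0}(\mathrm{S}_{3,3}+\gamma)\overline{\mrm^2}+c_{3,2}\big(\mathrm{S}_{3,1}\overline{\mrm^2(\Omega)}+\gamma\overline{\mrm^2(\widehat\Omega)}\big)\Big),
\]
using that $\mathrm{S}_{n,k}$ is real (Lemma~\ref{lem:Snk}). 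Since $\mrm^2=O(\eps^2)$, this contributes at order $\eps^5$.
\item The remaining terms are $O(\eps^6)$, for instance $n=4$ with a second moment, or $n=3$ with $\eps^3\mrm^2(\Omega_3)$, both of which occur at order $\eps^6$.
\end{itemize}

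For the angle, insert this expansion into \cref{def:alphaapp}. Take $\Im(i\,\overline{z})=\Re z$, use $\mrm^2_1(\Omega_2)=\Re\,\mrm^2(\Omega_2)$, and use $\delta t=\eps^2 d^2/\Gamma$ to convert $t\alpha'$ into $\alpha'$. The leading term reproduces $\mathsf a$; this is the identity already exhibited in Lemma~\ref{lem:computeR2}, together with checking $\frac{d^2c_{1,0}(\mathrm{S}_{1,1}+\gamma)}{\mathsf r^2}\frac{\Gamma}{d^2}=\mathsf a$ via $\mathrm{S}_{1,1}$ from Appendix~\ref{app:constants}. Dividing the $\eps^5$ term by the leading one gives $\alpha_4\eps^4$ with $\alpha_4$ as in \cref{def:alha4}. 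Splitting $\Omega_2=\Omega_{2,E}+\delta\Omega_{2,NS}$ shows that the correction is $\cO(\delta\eps^4)$, whichever convention for $\alpha_4$ is used. Replacing $R$ by $\Mapp{R}{M}=\mathsf r(1+\cO(\eps^6))$ changes things only at relative order $\eps^6$.

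For the radius, Lemma~\ref{lem:eveneven} kills the contribution of the purely Eulerian part by $\xi_2$-parity, as in Lemma~\ref{lem:computeR2}. What remains of $\Re\,\cB$ is the part linear in $\delta$. At order $\eps^5$ it is the same $n=3$ expression with the even Eulerian second moments replaced by $\delta\,\Im\,\mrm^2(\Omega_{2,NS})=\delta\,\mrm^2_2(\Omega_{2,NS})$, and similarly for $\widehat\Omega_{2,NS}$. The sign is set by $\Re(i\bar z)=\Im z$. The error terms are $\cO(\delta\eps^6)$ together with $\cO(\delta^2\eps^4)$-type products, which are smaller after division by $\delta$. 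This yields
\[
t(\Mapp{R}{M})'=\frac{d\eps}{\delta}\,\delta\eps^5\,K+\cO(\eps^7),
\]
where $K$ is the bracket in $r_6$ divided by $\mathsf r^3$. Since $t\partial_t\eps^6=3\eps^6$, integrating from $t=0$ gives $R=\mathsf r+\frac{d K}{3}\eps^6+\cO(\eps^7)$, which matches $r_6$, including its $\frac{d}{3\mathsf r^4}$ prefactor.

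The main obstacle is bookkeeping rather than analysis. One must make sure no other $\eps^5$ contributions survive, in particular from $\Omega_3$, $\Omega_4$ and the radial $\mathrm E_r$ parts, which have vanishing or higher-order moments. One must also verify the signs and conjugations in \cref{eq:expansionBeps}, and track the self-consistent dependence on $\Mapp{R}{M}$ on the time range $t\le C\delta^{-1}T_\adv$ guaranteed by \ref{constraint:4}. I would first establish the crude bounds $R=\mathsf r(1+\cO(\eps^6))$ by a bootstrap exactly as in Lemma~\ref{lem:computeR2}, and then refine.
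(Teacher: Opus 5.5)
Your plan is essentially the paper's proof. The paper isolates the expansion of $\cB_{\eps,\Mapp{R}{M}}(\Mapp{\Psi}{M},\Mapp{\widehat{\Psi}}{M})$ in Lemma~\ref{lem:expansionBtot}, using the Euler/Navier--Stokes splitting, the vanishing mass and first moments of the corrections, and the $\xi_2$-parity of the Eulerian part. As a result only $n=1$ and $n=3$ survive up to order $\eps^5$, and the real part carries a factor $\delta$. The proposition then follows by inserting this into \cref{def:Rapp,def:alphaapp}, bootstrapping $\Mapp{R}{M}\approx\mathsf r$, and integrating with $t\partial_t\eps^6=3\eps^6$.

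Three bookkeeping points need fixing.

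\emph{The sign of $r_6$.} Since $\Re(i\bar z)=\Im z$, your computation gives $t(\Mapp{R}{M})'=+\frac{d}{\mathsf r^3}\big(\cdots\big)\eps^6+\cO(\eps^7)$. Hence it gives $r_6=+\frac{d}{3\mathsf r^4}\big(\cdots\big)$, which is the opposite of the stated formula, so you should not claim an exact match. The paper is itself inconsistent here: Lemma~\ref{lem:expansionBtot} states $-\delta$, while the final display of its proof has $+\delta$. Your sign is the one that \cref{eq:expansionBeps} actually produces, so the minus sign in $r_6$ appears to be a typo, and you should flag it.

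\emph{The $\delta^2$ terms.} Make explicit that these products only appear with $n\ge5$. The Navier--Stokes pieces have zero mass and first moments, so one needs $k\ge2$ and $n-k-1\ge2$; hence these terms are $\cO(\delta^2\eps^9)$, as in the paper. A bare $\cO(\delta^2\eps^4)$ error in $\Re\,\cB$ would contribute $\cO(\delta\eps^5)$ to $t(\Mapp{R}{M})'$, which is not $\cO(\eps^7)$ when $\eps^2<\delta$.

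\emph{The check of $\mathsf a$.} Since $c_{1,0}=d/(2\pi)$ already contains one factor of $d$, the identity to verify is $\frac{d\,c_{1,0}(\mathrm{S}_{1,1}+\gamma)}{\mathsf r^2}\cdot\frac{\Gamma}{d^2}=\mathsf a$, not the version with $d^2$ in the numerator.
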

\begin{proof}
        Plugging Lemma~\ref{lem:expansionBtot} into the definition~\cref{def:Rapp} of $\Mapp{R}{M}$, we have that
    \begin{equation}
        t (\Mapp{R}{M})'(t) = 3\mathsf{r}^4\frac{r_6\eps^6}{(\Mapp{R}{M})^3}  + \cO(\eps^7).
    \end{equation}
    Denoting by $T$ the maximal time for which $|\Mapp{R}{M}(t) - r| \le \eps$, we have that for $t \le T$,
    \begin{equation}
        t (\Mapp{R}{M})'(t) = 3r_6\eps^6  + \cO(\eps^7).
    \end{equation}
    Recalling that $\eps^2(t) = \frac{\nu t}{d^2}$, by dividing by $t$ and integrating,
    \begin{equation}
        (\Mapp{R}{M})(t) = \mathsf{r}(1 + r_6\eps^6  + \cO(\eps^7)).
    \end{equation}
    The bootstrap assumption is therefore satisfied for as long as $\eps$ is small enough, namely for $t \le c \delta^{-1} T_\adv$ for some small constant $c$. 
    Plugging this into Lemma~\ref{lem:expansionBtot} and the definition~\cref{def:alphaapp} of $\Mapp{\alpha}{M}$ gives that
    \begin{equation}
    \label{eq:alphaappMproof}
        \delta t(\Mapp{\alpha}{M})'(t) = d\frac{\eps^2}{\mathsf{r}^2} c_{1,0} (S_{1,1}+\gamma)\Big( 1 + \alpha_4 \eps^6 
        + \cO(\eps^7 + \delta \eps^6)\Big).
    \end{equation}
    Recalling that $\eps^2 = \delta \frac{t}{T_\adv}$, we conclude that
\begin{equation}
    (\Mapp{\alpha}{M})'(t) = \mathsf{a}(1 + \alpha_4 \eps^4 + \cO(\eps^5 + \delta \eps^4))
\end{equation}
by noticing that $\mathsf{a} =\frac{\Gamma}{4\pi \mathsf{r}^2}(N-1+2\gamma) = \frac{d}{T_\adv \mathsf{r}^2} c_{1,0}(S_{1,1}+\gamma).$ 
\end{proof}

\begin{remark}\label{rem:delta23}
    Please note that, by integrating in time the formula in Proposition \ref{prop:deviation} we get that
    $|\Mapp{\alpha}{M}(t) - \mathsf{a}t| \lesssim \mathsf{a} t \eps^4(t)$. In particular, defining
    \begin{equation}
        \zeta^{(M)}_{\app,j}(t) = \Mapp{R}{M}(t)\Big(\cos\Big(\frac{2\pi j}{N}+\Mapp{\alpha}{M}(t)\Big) ,\sin\Big(\frac{2\pi j}{N}+\Mapp{\alpha}{M}(t) \Big) \Big),
    \end{equation}
    we have that
    \begin{equation}
        \frac{1}{\mathsf{r}}|\zeta^{(M)}_{\app,j}(t) - z_j(t)| \lesssim \frac{1}{\mathsf{r}}|\Mapp{R}{M}(t)-\mathsf{r}|+ | \Mapp{\alpha}{M}(t) - \mathsf{a}t| \lesssim  \eps^6(t)+ \mathsf{a}t \eps^4(t).
    \end{equation}
    Then, since $\eps^6(t)\ll \mathsf{a}t\eps^4(t)$ whenever $\nu/d^2\ll \Gamma/\mathsf{r}^2$, for $\delta$ sufficiently small we arrive at
        \begin{equation}
        \frac{1}{\mathsf{r}}|\zeta^{(M)}_{\app,j}(t) - z_j(t)| =\cO\big(\frac{t}{T_{\rm adv}} \eps^4(t)\big),
    \end{equation}
where we also used that $\mathsf{a}$ is proportional to $1/T_{\rm adv}$.
    The error above is of order $\cO(1)$ as soon as $t$ becomes of order $\cO(T_\adv \delta^{-2/3})$. This means that although our solution remains a sum of concentrated vortices for a time of order $T_\adv\delta^{-\sigma}$, its angular position is completely out of phase with respect to the point vortex dynamics after a time $\cO(T_\adv \delta^{-2/3})$.

    Moreover, the constant $\alpha_4$ can be easily computed numerically\footnote{Besides  the definition of the constants, we only need to compute numerically $\mrm^2_1(\Omega_{2,\rm E})$ as in Proposition \ref{prop:Om2}. To define $\Omega_{2,\rm E}=-\Lambda^{-1}\cH_2$ we make use of the considerations leading to Proposition \ref{prop:explicitsol}. Namely, looking at $\cH_2$ in Lemma \ref{lem:computecR2}, it is enough to solve $(A+\Delta^{-1})f_2=\Re(\xi^2)$. To do so, we discretize the integral operator $T_2$ in \cref{def:Tn} and cut the integral in its definition, invert the matrix associated to ${\rm Id}-T_2$ and obtain an approximation for $f_2$.}. When $N=2$ and $\gamma=0$, we observe that $\alpha_4\approx 8.7362$ which is in agreement with the corresponding correction to the vertical speed for a dipole computed in \cite{HaNaFu2018} (and in agreement as well with the numerical value found in \cite{dolce2024long}). This is not surprising because the second order correction for the dipole and two equal co-rotating vortices are exactly the same except for a sign, meaning that the dipole slows down whereas the likely signed vortex pair \emph{increase} their angular speed. In the case $N=6$, $\gamma=10$ (and $\gamma=-10$) in Figure \ref{fig:deformation_sim} we get that $\alpha_4\approx 513.496$ (and $\alpha_4=-1011.1373$). 
    
For the special value of $\gamma=\gamma_N^*$ in \cref{eq:gammaN*} and $N>2$, we know that $\Omega_2=\widehat{\Omega}_2=0$ and therefore the correction to the Helmholtz-Kirchhoff angular speed starts at order $\eps^5$, meaning that $\alpha_4=0$ and the next coefficient $\alpha_5$ can be explicitly defined with the formulas provided in Lemma \ref{lem:expansionBtot}.

    Finally, when $N\to \infty$, we know that $c_{n,k}=\cO(L_N^n)=\cO(N^{-n})$ whereas ${\rm S}_{3,j}=\cO(N^2)$ and ${\rm S}_{1,1}=\cO(N)$, meaning that $\alpha_4=\cO(N^{-1})$ in view of Remark \ref{rem:Om2Ninfinity}. For general $N\geq 3$ and $\gamma=0$, see Figure \ref{fig:alpha4_combined}.
    \begin{figure}[htbp]
    \centering
    \begin{minipage}{0.34\textwidth}
        \centering
        \small 
        \vspace{-23pt}
        \begin{tabular}{ccc}
            \toprule
            $N$ & $d$ & $\alpha_4$ \\
            \midrule
            3 & 1 & 7.7655 \\
            4 & 1 & 2.9121 \\
            5 & 1 & 0 \\
            6 & 1 & 4.8535 \\
            7 & 0.8678 & 13.2101 \\
            8 & 0.7654 & 20.9846 \\
            9 & 0.6840 & 27.2031 \\
            10 & 0.6180 & 31.8650 \\
            \bottomrule
        \end{tabular}
    \end{minipage}
    \hfill
    \begin{minipage}{0.65\textwidth}
        \centering
        \includegraphics[width=.75\textwidth, valign=c]{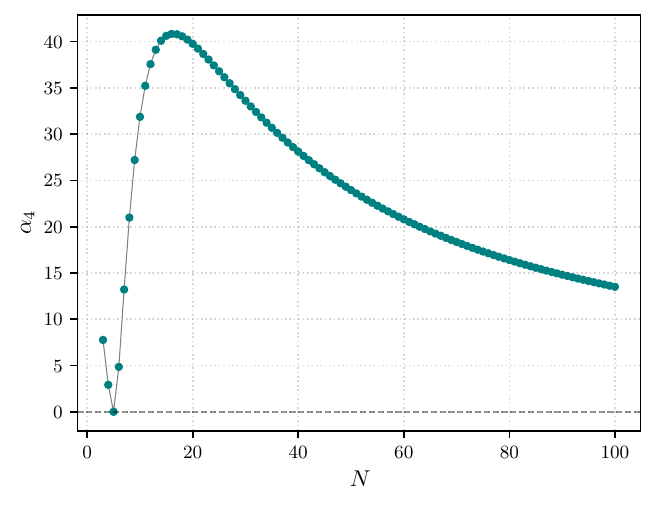}
    \end{minipage}
    \caption{Values of the distance parameter $d$ in \cref{def:d} and $\alpha_4$ in \cref{def:alha4} computed for $N=3,\dots,10$ (left) and plot of $\alpha_4$ as a function of $N$ (right). For $N=5$, we have $\gamma_5^*=0$ and note the behavior $\alpha_4=\cO(N^{-1})$ for large values of $N$.}
    \label{fig:alpha4_combined}
\end{figure}
\end{remark}\label{rem:direction_of_deformation}

\FloatBarrier
Finally, let us conclude this section with the following remark on the direction of deformation.
\begin{remark}
    As observed in Figure~\ref{fig:deformation_sim}, we expect that the direction of deformation changes depending on the sign of $\gamma - \gamma_N^* = \gamma + S_{2,2}$. Gathering the expressions of $\Omega_2$ and $\cH_2$ obtained in Section~\ref{sec:second_order}, one can compute (similarly to what was done in \cite{Gallay2011,dolce2024long,donati2025fast}) that there exists a map $w : \R_+ \to \R_+$ such that
    \begin{equation}
        \Omega_2(\xi) = \frac{c_{2,0}}{R^2} (S_{2,2} + \gamma) w(r)\cos(2\theta),
    \end{equation}
    where $(r,\theta)$ denotes the polar coordinates of $\xi$. With that expression, we see that $\Omega_2$ changes sign with the quantity $\gamma - \gamma_N^*$ and that it results in $\Omega_0 + \eps^2\Omega_2$ changing its direction of deformation with the sign of $\Omega_2$ (since it is a $\pi$-periodic map in $\theta$).
\end{remark}
\subsection{The functional relationship}\label{sec:func_rel}
A key point to justify the validity of the approximate solution, is the construction of an approximate functional relationship between vorticity and streamfunction for the Eulerian part of the solution, arising when setting $\delta=0$ in the expansion done in the previous sections. In particular, this will be a crucial property that allow us to apply the Arnold's energy method in the nonlinear problem. In order for that Eulerian part of the solution to make sense in itself, we consider in this section $\eps$ to be a parameter independent of $\delta$, and to define $\Mapp{\Omega}{M}$ and $\Mapp{\widehat{\Omega}}{M}$ through formulas~\cref{def:Omapp} as a series in this parameter $\eps$.

First of all, by construction (constraint~\ref{constraint:5}), we know that 
\begin{align}
\label{def:OmE}
    &\lim_{\delta \to 0}\Mapp{\Omega}{M}=\Mappp{\Omega}{M}{E}=\Omega_0+\sum_{k=2}^M\eps^2\Omega_{k,E}\\
    \label{def:OmhE}&\lim_{\delta \to 0}\Mapp{\widehat \Omega}{M}=\Mappp{\widehat \Omega}{M}{E}=\Omega_0+\sum_{k=2}^M\eps^2\widehat{\Omega}_{k,E}.
\end{align}
Then, we recall that by \cref{def:Xieps,def:Thetaeps} that
\begin{align}
    \Mapp{\Phi}{M}=\,&(I+\cQ_{\eps,\Mapp{R}{M}})\Mapp{\Psi}{M}+\gamma\cT_{\eps,\Mapp{R}{M}}\Mapp{\widehat \Psi}{M}+\xi_2\Re(\cB_{\eps,\Mapp{R}{M}}(\Mapp{\Psi}{M},\Mapp{\widehat{\Psi}}{M}))\\
    &-\frac{d \eps}{2 \Mapp{R}{M}}\Im(\cB_{\eps,\Mapp{R}{M}}(\Mapp{\Psi}{M},\Mapp{\widehat{\Psi}}{M}))|\cT_{\eps,\Mapp{R}{M}}\xi|^2\\
    \Mapp{\widehat \Phi}{M}=\,&\gamma \Mapp{\widehat{\Psi}}{M}+\cV_{\eps,\Mapp{R}{M}}\Mapp{\Psi}{M}-\frac{d \eps}{2 \Mapp{R}{M}}\Im(\cB_{\eps,\Mapp{R}{M}}(\Mapp{\Psi}{M},\Mapp{\widehat{\Psi}}{M}))|\xi|^2.
\end{align}
In the expression above, we now keep only the Eulerian part of the approximate solution but we keep the radius $\Mapp{R}{M}$ fixed. By Lemma \ref{lem:eveneven} and  the constraint \cref{constraint:2}, note that
\begin{equation}
    \Re(\cB_{\eps,\Mapp{R}{M}}(\Mappp{\Psi}{M}{E},\Mappp{\widehat{\Psi}}{M}{E}))=0.
\end{equation}
Hence, by also shifting the streamfunctions by a constant (that will not be seen in the Poisson brackets), we  define
\begin{align}
\label{def:PhiappE}\Mappp{\Phi}{M}{E}
    &\coloneqq(I+\cQ_{\eps,\Mapp{R}{M}})\Mappp{\Psi}{M}{E}+\gamma\cT_{\eps,\Mapp{R}{M}}\Mappp{\widehat \Psi}{M}{E}\\
   &\qquad -\frac{\eps d}{2 \Mapp{R}{M}} \Im(\cB_{\eps,\Mapp{R}{M}}(\Mappp{\Psi}{M}{E},\Mappp{\widehat \Psi}{M}{E}))|\cT_{\eps,\Mapp{R}{M}}\xi|^2-\frac{1}{2\pi}\log(\eps)\\
\label{def:PhihappE}\Mappp{\widehat \Phi}{M}{E}
&\coloneqq\gamma\Mappp{\widehat \Psi}{M}{E}+\cV_{\eps,\Mapp{R}{M}}\Mappp{\Psi}{M}{E}-\frac{\eps d}{2 \Mapp{R}{M}}\Im(\cB_{\eps,\Mapp{R}{M}}(\Mappp{\Psi}{M}{E},\Mappp{\widehat \Psi}{M}{E}))|\xi|^2-\frac{1}{2\pi}\log(\eps).
\end{align}
Then, by following the same proof of Proposition \ref{prop:iterative} with $\delta$ set to $0$ in the definition of the errors $\cR_{M},\widehat{\cR}_M$, it is not hard to show inductively that 
\begin{align}
    \cR_{M,{\rm E}}\coloneqq\{\Mappp{\Phi}{M}{E},\Mappp{\Omega}{M}{E}\}=\cO_{\cZ}(\eps^{M+1}), \qquad     \widehat{\cR}_{M,{\rm E}}\coloneqq\{\Mappp{\widehat \Phi}{M}{E},\Mappp{\widehat \Omega}{M}{E}\}=\cO_{\cZ}(\eps^{M+1}).
\end{align}
Hence, from the identities above we are very close to being an exact solution of the Euler equation in a moving frame, for which one can expect that there is (at least locally) a functional relationship between $\Mappp{\Omega}{M}{E}$ and $\Mappp{\Phi}{M}{E}$.
\begin{remark}
Once the even symmetry in $\xi_2$ is guaranteed, the modulation parameter $\Mapp{R}{M}$ is actually ``free", in the sense that  $\mrm_{1}^1(\cR_{M,{\rm E}})=0$ being $\cR_{M,{\rm E}}$ odd in $\xi_2$. Thus, $\Mapp{R}{M}$  enters in the construction of the Eulerian part only as a multiplicative constant. This reflects a scaling symmetry of the Eulerian configuration, where a change in the radius corresponds to a scaling of the angular speed.
\end{remark}

 Unfortunately, it is not possible to justify an exact functional relationship up to errors like $\Mappp{\Phi}{M}{E}=-F(\Mappp{\Omega}{M}{E})+\cO_{\cS_*}(\eps^{M+1})$. Nonetheless,  in \cite{dolce2024long} the authors introduced  a way of defining an approximate functional relationship that suffices for the scope of the result. The key idea is to define everything as a Taylor expansion around the Lamb-Oseen vortex $\Omega_0$. Indeed, at leading order $M=0$ we know that $\Mappp{\Omega}{0}{E} =\Mappp{\widehat{\Omega}}{0}{E} =\Omega_0$ and $\Mappp{\Phi}{0}{E}=\gamma^{-1}\Mappp{\widehat \Phi}{0}{E}=\Psi_0$ where $\Omega_0,\Psi_0$ are given in \cref{def:G}, \cref{def:UG} respectively. Moreover, by the definition in \cref{def:G} and \cref{def:UG} we also deduce that
\begin{equation}
    \Psi_0+F_0(\Omega_0)=0, \qquad \gamma\Psi_0+\widehat{F}_0(\Omega_0)=0
\end{equation}
with
\begin{equation}
\label{def:F0}
    F_0(s)=\frac{1}{4\pi}\Big(\gamma_E-{\rm Ein}(\log\big(\frac{1}{4\pi s}\big)\Big), \quad \widehat{F}_0(s)=\gamma F_0(s), \qquad 0< s\leq \frac{1}{4\pi},
\end{equation}
Note that $A(|\xi|)=F'_0(\Omega_0(|\xi|))$ with $A$ being defined in \cref{def:A}. The functional relationship at leading order is then our starting point to construct an approximate relationship inductively at order $M$. To accomplish this task, we proceed as in \cite{dolce2024long} and we introduce the following definition.
\begin{definition}
    We say that a smooth function $F:(0,+\infty)\to \RR$ belongs to the  class $\mathsf{K}$ if $F(\Omega_0)\in \cS_*(\RR^2)$.
\end{definition}
This class is introduced because it contains all the necessary information we can hope to propagate from $F_0$, since we clearly have $F_0\in \mathsf{K}$.
Then, a useful property that we need for functions in this class is contained in the following lemma, whose proof can be found in \cite[Lemma 3.19]{dolce2024long}.
\begin{lemma}
\label{lem:FK}
    Let $F:(0,+\infty)\to \RR$ be in the class $\mathsf{K}$. Then, for all $k\in \NN$, the $k$-th order derivative of $F$ has the property that $F^{(k)}(\Omega_0)\Omega_0^k\in\cS_*(\RR^2)$. Moreover, if $\cH\in \cS_{*}(\RR^2)$, there exists an $F\in \mathsf{K}$ such that $F(\Omega_0)=\cH$ if and only if $\{\cH,\Omega_0\}=0$, namely $\cH$ is radially symmetric.
\end{lemma}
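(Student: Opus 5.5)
The plan is to handle the two assertions of Lemma~\ref{lem:FK} separately, both by working in the radial variable $r=|\xi|$. Since $\Omega_0(\xi)=\frac{1}{4\pi}e^{-r^2/4}$ is a strictly decreasing function of $r$, it is a diffeomorphism from $(0,\infty)$ onto $(0,\frac{1}{4\pi})$. Its inverse is
\[
r(s)=2\sqrt{\log\big(1/(4\pi s)\big)}.
\]
Consequently $F(\Omega_0)$ is radial for every $F$, and $F$ restricted to $(0,\frac1{4\pi})$ is determined by the radial profile $g(r):=F(\Omega_0(r))$. The condition $F\in\mathsf{K}$ says exactly that $g$ defines an element of $\cS_*(\RR^2)$. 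The only delicate region is $r\to\infty$, equivalently $s\to 0^+$.

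\textbf{Derivative property.} I will argue by induction on $k$, using the chain rule. Since $\frac{\dd}{\dd r}\Omega_0=-\frac r2\Omega_0$, we get
\[
g'(r)=-\tfrac r2\,\Omega_0F'(\Omega_0),
\]
which gives $\Omega_0F'(\Omega_0)=-\frac{2}{r}g'(r)$. Differentiating $g$ $k$ times produces
\[
g^{(k)}=\sum_{j=1}^k p_{k,j}(r)\,\Omega_0^jF^{(j)}(\Omega_0),
\]
where the $p_{k,j}$ are polynomials and the top coefficient is $p_{k,k}=(-r/2)^k$. Solving this triangular system gives $\Omega_0^kF^{(k)}(\Omega_0)$ as a combination of $g',\dots,g^{(k)}$ with coefficients that are polynomials in $r$ divided by powers of $r$. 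For $r\ge 1$ these are of polynomial growth. Every derivative of $g$ has polynomial growth because $g\in\cS_*$, so the result has polynomial growth. To also control all $\xi$-derivatives, I note that the same identity differentiated again stays in this class.

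\textbf{Behaviour near $r=0$.} This is the main obstacle: the division by $r$ and smoothness at the origin. I will handle it by writing things in terms of $\rho=r^2$. Since $g$ is smooth and radial, $g(r)=h(r^2)$ with $h$ smooth. Moreover $\Omega_0=\frac1{4\pi}e^{-\rho/4}$, so $F(s)=h(-4\log(4\pi s))$. Differentiating in $s$ then only produces factors $1/s$, without any $1/r$. This gives
\[
s^kF^{(k)}(s)=\sum_{j\le k} a_{k,j}\,h^{(j)}(\rho),
\]
with constant coefficients $a_{k,j}$. This formula is smooth in $\xi$ and has polynomial growth, and it is cleaner than the $r$-version, so I will use it throughout.

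\textbf{The equivalence.} If $\cH=F(\Omega_0)$, then $\cH$ is radial, and for a radial function $\{\cH,\Omega_0\}=0$ because both gradients are parallel to $\xi$. Conversely, suppose $\{\cH,\Omega_0\}=0$. Writing the bracket in polar coordinates gives $-\frac r2\Omega_0\,\partial_\theta\cH\cdot\frac1r=0$, so $\partial_\theta\cH=0$ for $r>0$ and $\cH$ is radial. Since $\cH$ is smooth, $\cH(\xi)=h(|\xi|^2)$ with $h$ smooth on $[0,\infty)$ (a classical Whitney-type fact for smooth even functions). I then define
\[
F(s):=h\big(4\log\tfrac{1}{4\pi s}\big)\quad\text{for } 0<s\le\tfrac1{4\pi},
\]
and extend it smoothly to $s>\frac1{4\pi}$ using a smooth extension of $h$ to negative arguments, so that $F$ is smooth on $(0,\infty)$. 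By construction $F(\Omega_0)=\cH\in\cS_*$, hence $F\in\mathsf{K}$.
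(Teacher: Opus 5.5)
Your proposal is correct and follows essentially the paper's route: the paper cites \cite{dolce2024long} for the derivative bound, which is a chain-rule computation based on $\partial_r\Omega_0=-\frac r2\Omega_0$, and calls the solvability a straightforward consequence of the change of variables $s=\Omega_0(r)$. Your switch to $\rho=|\xi|^2$, together with Whitney's lemma and a smooth extension of $h$ to $\rho<0$, is a useful refinement because it makes $F$ genuinely smooth across $s=\frac{1}{4\pi}$, where the naive inverse $r(s)$ has a square-root singularity; the one step you should spell out is that every $h^{(m)}$ has polynomial growth, which follows from $h^{(m)}(r^2)=\big(\frac{1}{2r}\frac{\dd}{\dd r}\big)^m g(r)$ for $r\ge 1$.
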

Note that the solvability of $F(\Omega_0)=\cH$ is not contained in \cite[Lemma 3.19]{dolce2024long}, but this is a straightforward consequence of the change of variables $s=\Omega_0(r)$. In addition, since $\cH$ may grow polynomially at infinity, $F$ can have logarithmic singularity at $s=0$ (which is the case for $F_0$).

We finally introduce the Taylor polynomial up to order $M$ associated to a given smooth function $f(\eps,s):(0,1)\times \RR\to \RR$ as 
\begin{equation}
    \Pi_M f=\sum_{k=0}^M\frac{\eps^k}{k!}\frac{\dd^k f}{\dd \eps^k}\bigg|_{\eps=0}.
\end{equation}
This allow us to find a functional relations in the form $F^{(M)}=F_0+\eps^2F_2+\dots \eps^MF_M$, with $F_i\in \mathsf{K}$, for which the functional relationship holds for the associated Taylor polynomial at order $M$, safely computed at $\Omega_0$. Note that it is important to compute everything at $\Omega_0$ since it is not true that $\Omega_{\app,E}$ is always positive in the whole domain (but this is true locally), which is a condition required to work in class $\mathsf{K}$. We now have all the ingredients to conclude the construction of the functional relationship and obtain the desired properties.
\begin{proposition}
\label{prop:functional}
    Given any integer $M\geq2$, let $\Mappp{\Omega}{M}{E},\Mappp{\widehat{\Omega}}{M}{E} $ and $\Mappp{\Phi}{M}{E},\Mappp{\widehat \Phi}{M}{E}$ be given as in \cref{def:OmE}, \cref{def:OmhE} and \cref{def:PhiappE}, \cref{def:PhihappE} respectively. Then, there exists $\{F_i,\widehat{F}_i\}_{i=2}^M\in \mathsf{K}$ such that for $F^{(M)}=F_0+\sum_{k=2}^M\eps^kF_k$ and $\widehat{F}^{(M)}=\widehat{F}_0+\sum_{k=2}^M\eps^k\widehat{F}_k$ one has 
    \begin{align}
\Pi_{M}\big(\Mappp{\Phi}{M}{E}+F^{(M)}(\Mappp{\Omega}{M}{E})\big)=0, \qquad \Pi_{M}\big(\Mappp{\widehat \Phi}{M}{E}+\widehat{F}(\Mappp{\widehat \Omega}{M}{E})\big)=0
    \end{align}
    Moreover, there exists $\sigma_1\in (0,1)$, $C>0$ and $\widetilde{N}\in \NN$ depending on $M$ such that, for $\eps>0$ sufficiently small, the functions 
    \begin{equation}
        \Theta^{(M)}\coloneqq \Mappp{\Phi}{M}{E}+F^{(M)}(\Mappp{\Omega}{M}{E}), \qquad\widehat{\Theta}^{(M)}\coloneqq \Mappp{\widehat\Phi}{M}{E}+{\widehat F}^{(M)}(\Mappp{\widehat \Omega}{M}{E})
    \end{equation}
    are well defined for all $|\xi|\leq 2\eps^{-\sigma_1}$ and satisfy the bound
    \begin{align}
    \label{bd:keyapp}
        \big|\nabla \Theta^{(M)}(\xi)\big|+ \big|\nabla \widehat{\Theta}^{(M)}(\xi)\big|\leq C\eps^{M+1}(1+|\xi|)^{\widetilde{N}}, \qquad \text{for all }|\xi|\leq 2\eps^{-\sigma_1}.
    \end{align}
\end{proposition}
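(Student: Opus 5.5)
The plan is to build $F_k,\widehat F_k$ inductively in $k$, in the spirit of \cite[Proposition 3.20]{dolce2024long}, by matching powers of $\eps$ in the Taylor expansion at fixed $\xi$. Write $\Mappp{\Omega}{M}{E}=\Omega_0+\sum_{k\ge2}\eps^k\Omega_{k,E}$ and expand $\Mappp{\Phi}{M}{E}=\Psi_0+\sum_{k\ge2}\eps^k\Phi_k+\cO_{\cS_*}(\eps^{M+1})$ using Lemma~\ref{lem:expansionsR} and \cref{eq:expansionBeps}. The shift $-\frac{1}{2\pi}\log\eps$ cancels the logarithmic constants, $\Mapp{R}{M}=\mathsf{r}+\cO(\eps^6)$, and the $\cO(\eps)$ term cancels exactly as in Lemma~\ref{lem:R0}. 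Each $\Phi_k$ is a polynomially growing smooth function that depends only on $\Omega_{j,E}$, $\widehat\Omega_{j,E}$ with $j\le k$, and on the explicit polynomials coming from the expansion. Expanding $F^{(M)}(\Mappp{\Omega}{M}{E})$ in $\eps$ at $\Omega_0$, the coefficient of $\eps^k$ is
\[
F_k(\Omega_0)+F_0'(\Omega_0)\Omega_{k,E}+\mathcal{N}_k,
\]
where $\mathcal{N}_k$ is a finite sum of products $F_j^{(l)}(\Omega_0)\,\Omega_{i_1,E}\cdots\Omega_{i_l,E}$ with $j<k$ and $j+i_1+\dots+i_l=k$. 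By Lemma~\ref{lem:FK} these products lie in $\cS_*$, because $F_j^{(l)}(\Omega_0)\Omega_0^l\in\cS_*$ and $\Omega_{i,E}/\Omega_0\in\cS_*$ since $\Omega_{i,E}\in\cZ$.

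At order $k$ the requirement is therefore $F_k(\Omega_0)=-\big(\Phi_k+A\,\Omega_{k,E}+\mathcal{N}_k\big)=:-\cH_k$, using $A=F_0'(\Omega_0)$. By Lemma~\ref{lem:FK}, this is solvable in $\mathsf K$ if and only if $\{\cH_k,\Omega_0\}=0$. The main step is to check this condition. I would obtain it from the Eulerian remainder identity $\cR_{M,\mathrm E}=\{\Mappp{\Phi}{M}{E},\Mappp{\Omega}{M}{E}\}=\cO_\cZ(\eps^{M+1})$, whose $\eps^k$ coefficient vanishes. Assume inductively that $\Pi_{k-1}(\Mappp{\Phi}{}{E}+F^{(k-1)}(\Mappp{\Omega}{}{E}))=0$. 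Write $\Phi=-F(\Omega)+\Theta$, where the Taylor expansion of $\Theta$ starts at $\eps^k$ with coefficient $\cH_k$ (taking $F_k=0$ provisionally). Then $\{\Phi,\Omega\}=-\{F(\Omega),\Omega\}+\{\Theta,\Omega\}$. The first bracket vanishes identically. The $\eps^k$ coefficient of the second is $\{\cH_k,\Omega_0\}$, and this must equal zero. The expansions are formal, but at fixed $\xi$ they are genuine Taylor expansions, so the argument is legitimate. The same argument applies verbatim to $\widehat\Theta$, using $\widehat{\cR}_{M,\mathrm E}$ and $\widehat F_0=\gamma F_0$. The $N$-fold symmetry plays no role here.

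For the bound \cref{bd:keyapp}, I would use the exact Taylor remainder in $\eps$ at fixed $\xi$. Since $\Pi_M\Theta^{(M)}=0$, we have $\nabla\Theta^{(M)}(\xi)=\frac{\eps^{M+1}}{(M+1)!}\partial_\eps^{M+1}\nabla\Theta^{(M)}(\tilde\eps,\xi)$ for some $\tilde\eps\in(0,\eps)$. Bounding this derivative requires evaluating $F_k^{(l)}$ at $\Mappp{\Omega}{M}{E}$, which needs $\Mappp{\Omega}{M}{E}(\xi)$ to be comparable to $\Omega_0(\xi)$, say $\tfrac12\Omega_0\le \Mappp{\Omega}{M}{E}\le 2\Omega_0$. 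Since $|\Omega_{k,E}|\le C(1+|\xi|)^{\widetilde N}\Omega_0$, this holds when $\eps^2(1+|\xi|)^{\widetilde N}\ll1$, that is for $|\xi|\le 2\eps^{-\sigma_1}$ with $\sigma_1$ small depending on $M$. On this region Lemma~\ref{lem:FK} gives $|F_k^{(l)}(s)|\lesssim s^{-l}(1+|\xi|)^{\widetilde N}$, possibly with logarithmic losses that are absorbed into the polynomial weight. This yields the claimed polynomial bound.

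I expect the main obstacle to be the solvability step, namely the radial symmetry of $\cH_k$. It requires carefully identifying the $\eps^k$ coefficient of $\{\Phi,\Omega\}$ with $\{\cH_k,\Omega_0\}$, which in turn means controlling the $\Mapp{R}{M}$-dependence and the $\Im\cB$ term (a $\delta$-independent scalar) consistently within the Eulerian expansion.
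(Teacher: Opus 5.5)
Your proposal is correct and follows essentially the same route as the paper, which itself runs the argument of \cite[Proposition 3.20]{dolce2024long}. Both build $F_k,\widehat F_k$ order by order through Lemma~\ref{lem:FK}, read off the radial solvability condition from the vanishing Taylor coefficients of $\cR_{M,\mathrm E}=\{\Mappp{\Phi}{M}{E},\Mappp{\Omega}{M}{E}\}=\cO_\cZ(\eps^{M+1})$ (your identity $\{\Phi,\Omega\}=\{\Theta,\Omega\}$ is a compact form of the paper's $\{\cE_{M+1},\Omega_0\}=0$), and obtain \cref{bd:keyapp} from the Taylor remainder in $\eps$ on the region $|\xi|\le 2\eps^{-\sigma_1}$ where $\Mappp{\Omega}{M}{E}$ is comparable to $\Omega_0$.

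One cosmetic correction: a shift of $-\frac{1}{2\pi}\log\eps$, as you and the paper write, does not cancel the logarithmic constants. The normalization that removes the $\log\eps$ terms, so that $\Pi_M$ makes sense, is $+\frac{N-1+\gamma}{2\pi}\log\eps$ for $\Mappp{\Phi}{M}{E}$ and $+\frac{N}{2\pi}\log\eps$ for $\Mappp{\widehat\Phi}{M}{E}$, with the remaining $\eps^0$ constant absorbed into $F_0,\widehat F_0$; this is harmless because constants are invisible in the Poisson brackets.
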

\begin{proof}
    The proof can be reduced to the same arguments done in \cite[Proof of Proposition 3.20]{dolce2024long}. Indeed, first of all we observe that the construction for the ``hats" $\widehat{\cdot}$ variables is the same and we avoid detailing this case. Then, we know that by Proposition \ref{prop:iterative} that we can expand
    \begin{equation}
\Mappp{\Omega}{M+1}{E}=\Mappp{\Omega}{M}{E}+\eps^{M+1}{\Omega}_{M+1,{\rm E}},
    \end{equation}
    Thus, in view of the definitions in \cref{def:PhiappE} and \cref{def:PhihappE}, we can expand the Eulerian streamfunction as 
    \begin{equation}
        \Mappp{\Phi}{M+1}{E}=\Mappp{\Phi}{M}{E}+\eps^{M+1}\Phi_{M+1,{\rm E}}.
    \end{equation}
  Dropping from now on all the subscripts ``$\rm app, E$'', by construction  we know that 
    \begin{equation}
        \{\Phi^{(M+1)},\Omega^{(M+1)}\}=\cO_{\cZ}(\eps^{M+2}).
    \end{equation}
    We can then proceed by induction on $M$ as done in \cite[Proof of Proposition 3.20]{dolce2024long}. The case $M=0$ is covered, $M=1$ is trivial and we can assume by induction that we are given $$F^{(M)}=F_0+\sum_{k=2}^M\eps^k F_i\in \mathsf{K}$$ for some fixed $M\geq 1$. To construct the function $F^{(M+1)}=F^{(M)}+\eps^{M+1}F_{M+1}$, we take $\Omega^{(M+1)}$ and $\Phi^{(M+1)}$  as defined before. We can then run the same arguments in \cite{dolce2024long} (just based on Poisson bracket's identities and properties of Taylor polynomials) to deduce that there exists a function $\cH_{M+1}\in \cS_{*}(\RR^2)$ such that the following holds true:
    \begin{equation}
        \Pi_{M+1}(\Phi^{(M+1)}+F^{(M+1)}(\Omega^{(M+1)}))=\eps^{M+1}\big(\underbrace{\cH_{M+1}+\Phi_{M+1}+F_0'(\Omega_0)\Omega_{M+1}}_{\cE_{M+1}}+F_{M+1}(\Omega_0)\big).
    \end{equation}
    and
    \begin{align}
    \label{eq:solvability}
        0=\{\cH_{M+1},\Omega_0\}+\{\Phi_{M+1},\Omega_0\}+\{\Phi_0,\Omega_{M+1}\}=\{\cE_{M+1},\Omega_0\},
    \end{align}
    where in the last identity we used that $\Phi_0=-F_0(\Omega_0)$ and $\{F_0'(\Omega_0)\Omega_{M+1},\Omega_0\}=\{\Omega_{M+1},F_0(\Omega_0)\}.$
    The identity \cref{eq:solvability} guarantees that we satisfy the solvability condition in Lemma \ref{lem:FK} to define $F_{M+1}\in\mathsf{K}$ such that $F_{M+1}(\Omega_0)=-\cE_{M+1}$, so that we obtain $\Pi_{M+1}(\Phi^{(M+1)}+F^{(M+1)}(\Omega^{(M+1)}))=0$ as desired.

    The proof of the property \cref{bd:keyapp} is the same of \cite{dolce2024long}. This directly follows by a Taylor expansion in $\eps$ of $\Theta^{(M)}$ at $\Omega_0$, since by definition we know that $\Pi_M \Theta^{(M)}=0$. Thus, cutting the Taylor expansion with the integral remainder it is not hard to prove \cref{bd:keyapp}.
\end{proof}
\subsection{Eulerian approximation and total angular momentum}\label{sec:angmom}
To end our analysis of the approximate solution, we need to deduce a property of the Eulerian part of the approximate solution related to evolution of the {total angular momentum}, which is defined through the operator \cref{def:L}.
First of all, in analogy with \cref{eq:omSS} we define 
\begin{align}
\label{def:omappM}
\omega_{\rm app}^{(M)}(x,t)=\frac{\Gamma}{\nu t}\Big(\sum_{j=1}^N\Omega_{\rm app}^{(M)}(\xi_j(x),t)+\gamma\widehat{\Omega}_{\rm app}^{(M)}(\widehat{\xi}(x),t)\Big).
\end{align}
Then, by the definition of $\cR_{M},\widehat{\cR}_M$ in \eqref{def:RM}, following the derivation of the equations for $\Omega,\widehat{\Omega}$ in Lemma \ref{lem:derivationequation} it is not hard to see that $\omega_{\rm app}^{(M)}$ solves the following forced $2D$ Navier-Stokes equations
\begin{align}
\label{eq:NSappx}
    \begin{cases}
        \de_t\omega_{\rm app}^{(M)}+\{\psi_{\rm app}^{(M)}-\frac12(\alpha_{\rm app}^{(M)})'|x|^2,\omega_{\rm app}^{(M)}\}=\nu \Delta\omega+f_M,\\
    \omega_{\rm app}^{(M)}|_{t=0}=\omega^{in}
    \end{cases},
\end{align}
where $\omega^{in}$ as in \cref{eq:omegain} and
\begin{align}\label{def:fM}
    f_M(x,t)=\frac{\Gamma}{\nu t^2}\delta^{-1}\Big(\sum_{j=1}^N\cR_M(\xi_j(x),t)+\gamma\widehat{\cR}_M(\widehat{\xi}(x),t)\Big).
\end{align}
Then, as in the proof of Lemma \ref{lem:NSmom}, a direct computation shows that 
\begin{align}
\label{eq:angmomf}\mathrm{L}(\omega_{\rm app}^{(M)}(t))
=\Gamma N\big(\mathsf{r}^2+4\nu t \big(1+\frac{\gamma}{N}\big)\big)+\int_0^t \mathrm{L}(f_M(\tau))\, \dd \tau.
\end{align}
Using this exact evolution, we are able to deduce the following property that will be crucial in the nonlinear section.
\begin{lemma}
\label{lem:keyLEuler}
    Let $\Omega_{\rm app}^{(M)},\widehat{\Omega}_{\rm app}^{(M)}$ be the ones constructed in Proposition \ref{prop:iterative}. Then
    \begin{align}
    \label{eq:keyEuler}
      \frac{(R_{\rm app}^{(M)})^2-\mathsf{r}^2}{d^2}+\eps^2 \big(\mathrm{L}(\Omega_{\rm app,E}^{(M)}-\Omega_0)+\frac{\gamma}{N}\mathrm{L}(\widehat{\Omega}_{\rm app,E}^{(M)}-\Omega_0)\big)=\cO(\delta^{-1}\eps^{M+3}+\delta \eps^4).
    \end{align}
\end{lemma}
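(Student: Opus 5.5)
The plan is to compute $\mathrm{L}(\omega_{\rm app}^{(M)}(t))$ in two ways and compare them. On one side we use the exact evolution \cref{eq:angmomf}. On the other we compute it directly from the ansatz \cref{def:omappM}.

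\textbf{Direct computation.} Change variables $x=\sqrt{\nu t}\,Q_j(\xi+\frac{1}{\eps d}(R,0))$ as in \cref{eq:x_in_xi_j}, with $R=R_{\rm app}^{(M)}$. We have $|x|^2=\nu t\,|\xi+\frac{R}{\eps d}|^2$. Using $\mrM(\Omega_{\rm app}^{(M)})=1$ and $\mrm^1(\Omega_{\rm app}^{(M)})=0$ (constraint \ref{constraint:1}), this gives
\begin{equation}
\mathrm{L}(\omega_{\rm app}^{(M)})=\Gamma N\Big(R^2+\nu t\,\mathrm{L}(\Omega_{\rm app}^{(M)})+\frac{\gamma}{N}\nu t\,\mathrm{L}(\widehat{\Omega}_{\rm app}^{(M)})\Big).
\end{equation}
Since $\mathrm{L}(\Omega_0)=4$, we write $\mathrm{L}(\Omega_{\rm app}^{(M)})=4+\mathrm{L}(\Omega_{\rm app}^{(M)}-\Omega_0)$, and similarly for the hat term. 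The constant parts then reproduce exactly the term $4\nu t(1+\gamma/N)$ in \cref{eq:angmomf}. Dividing by $\Gamma N d^2$ and using $\nu t/d^2=\eps^2$, we obtain the identity
\begin{equation}
\frac{R^2-\mathsf{r}^2}{d^2}+\eps^2\Big(\mathrm{L}(\Omega_{\rm app}^{(M)}-\Omega_0)+\frac{\gamma}{N}\mathrm{L}(\widehat{\Omega}_{\rm app}^{(M)}-\Omega_0)\Big)=\frac{1}{\Gamma N d^2}\int_0^t\mathrm{L}(f_M(\tau))\,\dd\tau .
\end{equation}
The left-hand side differs from \cref{eq:keyEuler} only by replacing the full profiles with their Eulerian parts. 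That replacement costs $\eps^2\delta\,\mathrm{L}(\Omega_{\rm app,NS}^{(M)})$. The NS part starts at order $\eps^2$ and its $\eps^2$ coefficient has zero mean, so this cost is $\cO(\delta\eps^4)$.

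\textbf{Forcing term.} We compute $\mathrm{L}(f_M)$ with the same change of variables. Since $\mrM(\cR_M)=0$ and $\mrm^1(\cR_M)=0$ by \ref{constraint:5}, the $R^2/(\eps d)^2$ and cross terms vanish, and
\begin{equation}
\mathrm{L}(f_M)=\frac{\Gamma}{t\delta}\big(N\,\mathrm{L}(\cR_M)+\gamma\,\mathrm{L}(\widehat{\cR}_M)\big).
\end{equation}
If we only insert $\cR_M=\cO_\cZ(\eps^{M+1}+\delta^2\eps^2)$, the $\delta^2\eps^2$ part gives $\int_0^t\delta\eps^2\,\dd\tau/\tau\sim\delta\eps^2$. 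That is too large, and this is the main obstacle.

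To beat it, we should show that the moment $\mathrm{L}$ of the $\delta^2$ remainder actually vanishes, or carries an extra $\eps^2$. The $\delta^2$ remainder is, at leading order, $\eps^2\delta^2(1-\cL)\Omega_{2,\rm NS}$ (and its analogues at higher orders). Its moment is
\begin{equation}
\mathrm{L}\big((k/2-\cL)g\big)=(k/2+1)\,\mathrm{L}(g)-4\mrM(g),
\end{equation}
using $\langle\cL g,|\xi|^2\rangle=-\mathrm{L}(g)+4\mrM(g)$. This vanishes whenever $g=\Omega_{k,\rm NS}$ is orthogonal to radial functions. I would check that the radial parts are placed in the Eulerian radial corrections $\Omega_{k,\mathrm{E}_r}$, so that $\cP_0\Omega_{k,\rm NS}=0$ because they lie in the range of $\Lambda^{-1}$ on $\mathrm{Ker}(\Lambda)^\perp$. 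We also need the Poisson-bracket part of $\cR_M$ to have zero $\mathrm{L}$ up to $\cO(\eps^{M+1})$. This should follow from the angular-momentum cancellation in \cref{key:Q,key:V,key:V*} and $\langle\{\psi,\omega\},|x|^2\rangle=0$ applied in physical variables, and it is the delicate bookkeeping step.

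\textbf{Conclusion.} Granting these cancellations, the remaining forcing is $\cO(\delta^{-1}\eps^{M+1})$ times the moment, integrated with $\dd\tau/\tau$. This yields $\cO(\delta^{-1}\eps^{M+1})$, and after normalization $\cO(\delta^{-1}\eps^{M+3})$ once the $d^2$ and $\eps^2$ factors are tracked. Together with the $\cO(\delta\eps^4)$ error from discarding the NS parts, this gives \cref{eq:keyEuler}.
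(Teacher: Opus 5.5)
Your strategy matches the paper's: compute $\mathrm{L}(\omega_{\rm app}^{(M)})$ once from the exact law \cref{eq:angmomf} and once directly by change of variables, using $\mrM(\Omega_{\rm app}^{(M)})=1$, $\mrm^1_1(\Omega_{\rm app}^{(M)})=0$ and $\mathrm{L}(\Omega_0)=4$. Your direct computation and the $\cO(\delta\eps^4)$ cost of discarding the Navier--Stokes parts are correct. The gap is in the forcing term, where a scaling error creates an obstacle that does not exist. With $x=\sqrt{\nu t}\,Q_j(\xi+\frac{R}{\eps d})$ one has $\dd x=\nu t\,\dd\xi$ and $|x|^2=\nu t\,|\xi+\frac{R}{\eps d}|^2$. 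The prefactor $\frac{\Gamma}{\nu t^2\delta}$ in \cref{def:fM} is therefore multiplied by $(\nu t)^2$, and
\[
\mathrm{L}(f_M)=\frac{\Gamma\nu}{\delta}\big(N\mathrm{L}(\cR_M)+\gamma\mathrm{L}(\widehat{\cR}_M)\big),
\]
not $\frac{\Gamma}{t\delta}(\cdots)$; your expression is not even dimensionally consistent. Since $\frac{\nu}{d^2}\int_0^t\eps^k(\tau)\,\dd\tau=\frac{2}{k+2}\eps^{k+2}(t)$, the crude bound $\cR_M,\widehat{\cR}_M=\cO_\cZ(\eps^{M+1}+\delta^2\eps^2)$ inserted into $\frac{\nu}{\delta d^2}\int_0^t(\mathrm{L}(\cR_M)+\frac{\gamma}{N}\mathrm{L}(\widehat{\cR}_M))\,\dd\tau$ directly gives $\cO(\delta^{-1}\eps^{M+3}+\delta\eps^4)$. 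The $\delta^2\eps^2$ part of the remainder contributes exactly the admissible $\delta\eps^4$, with no cancellation needed, and this is all the paper does.

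As written, your argument is incomplete, because it rests on cancellations you only ``grant''. Vanishing of $\mathrm{L}$ is plausible for the terms $\delta^2\eps^k(k/2-\cL)\Omega_{k,\rm NS}$, since $\cP_0\Omega_{k,\rm NS}=0$. However, the claim that the Poisson-bracket part of $\cR_M$ has zero angular momentum up to $\cO(\eps^{M+1})$ is not established, and it is not needed. Your conclusion is also internally inconsistent. With your factor $\Gamma/(t\delta)$, the $\eps^{M+1}$ part gives only $\cO(\delta^{-1}\eps^{M+1})$, as you yourself write. The extra $\eps^2$ you recover ``once the $d^2$ and $\eps^2$ factors are tracked'' is precisely the missing factor $\nu t/d^2=\eps^2$. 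Correct the Jacobian and the proof closes in one line.
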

\begin{remark}
Note that, by Proposition~\ref{prop:deviation}, $(R_{\mathrm{app}}^{(M)})^2 = \mathsf{r}^2(1+\cO(\eps^6))$. Moreover, since $\mathrm{L}(g) = \mathrm{L}(\cP_0 g)$ for any function $g$, where $\cP_0$ denotes the angular average, the explicit form of the second and third-order approximations defined in Sections~\ref{sec:second_order} and~\ref{sec:first_corrections} implies that $\mathrm{L}(\cP_0(\Omega^{(M)}_{\mathrm{app,E}}-\Omega_0))$ and $\mathrm{L}(\cP_0(\widehat{\Omega}^{(M)}_{\mathrm{app,E}}-\Omega_0))$ are of order $\cO(\eps^4)$. Thus, the left-hand side of~\cref{eq:keyEuler} consists of terms of order $\cO(\eps^6)$ and higher, indicating exact cancellations at each order up the ones needed to validate \cref{eq:keyEuler}. One could directly verify this condition by hand, but this would be  tedious given the number of terms involved in the iterative construction. Instead, we deduce the identity~\cref{eq:keyEuler} from the exact evolution of the total angular momentum in the 2D forced Navier-Stokes equations and the fact that our approximate radius fixes $\mrm^1_1(\Omega_{\mathrm{app}}^{(M)}) = 0$.
\end{remark}
\begin{proof}
        We omit the superscript $(M)$ in this proof. By \cref{def:omappM} we know that
    \begin{align}
        \mathrm{L}(\omega_{\rm app}(t))=\frac{\Gamma}{\nu t}\Big(\sum_{j=1}^N\int_{\RR^2} |x|^2\Omega_{\rm app}(\xi_j(x),t)\dd x+\gamma \int_{\RR^2}|x|^2\widehat{\Omega}_{\rm app}(\widehat{\xi}(x),t)\Big)\dd x.
    \end{align}
    Then, by the definition of $\xi_j$ in \cref{def:xi_j} and $\widehat{\xi}$ in \cref{def:xi0} we can change variables in each integral as 
    \begin{align}
        x=Q_j\big(\sqrt{\nu t}\xi_j+(R_{\rm app}(t),0)\big), \qquad x=\sqrt{\nu t}\, \widehat{\xi}.
    \end{align}
    Since $|Q_jv|=|v|$ for any vector $v$, the integrals of each term in $\xi_j$ are equal and we therefore rename the dummy variables $\xi_j,\widehat{\xi}$ as $\xi$, to obtain that
    \begin{align}
    \label{eq:Ang1}
        \mathrm{L}(\omega_{\rm app}(t))=\Gamma N \nu t\Big( \int_{\RR^2}|\xi+\frac{1}{\sqrt{\nu t}}(R_{\rm app},0)|^2\Omega_{\rm app}(\xi,t)\dd \xi+\frac{\gamma}{N}\mathrm{L}(\widehat{\Omega}_{\rm app}(t))\Big).
    \end{align}
    Note that
\begin{align}
    \int_{\RR^2}&|\xi+\frac{1}{\sqrt{\nu t}}(R_{\rm app},0)|^2\Omega_{\rm app}(\xi,t)\dd \xi
\\
\label{eq:AngOmapp}
&=\mathrm{L}(\Omega_{\rm app}(t))+2\frac{1}{\sqrt{\nu t}}R_{\rm app}(t)\, \mrm^1_1(\Omega_{\rm app}(t))+\frac{R_{\rm app}^2}{\nu t}\mrM(\Omega_{\rm app}(t)).
\end{align}
Hence, since $\mrM(\Omega_{\rm app})=1$ and $\mrm^1_1(\Omega_{\rm app})=0$, combining the identity above with  \cref{eq:Ang1,eq:angmomf}, we see that
\begin{align}
\label{eq:Ang2}
    \mathsf{r}^2+4\nu t\big(1+\frac{\gamma}{N}\big)+\frac{1}{\Gamma N}\int_0^t \mathrm{L}(f_M)\dd \tau=\nu t \big(\mathrm{L}(\Omega_{\rm app}(t))+\frac{\gamma}{N}\mathrm{L}(\widehat{\Omega}_{\rm app}(t))\big)+R_{\rm app}^2.
\end{align}
 Considering only the Eulerian part, since $\Omega_0=\frac{1}{4\pi}\e^{-|\xi|^2/4}$, we have
\begin{align}
    \mathrm{L}(\Omega_{\rm app,E})=\mathrm{L}(\Omega_0)+\mathrm{L}(\Omega_{\rm app,E}-\Omega_0)=4+\mathrm{L}(\Omega_{\rm app,E}-\Omega_0)
\end{align}
and analogously for $\widehat{\Omega}_{\rm app,E}$.
Thus, the term $4\nu t(1+\gamma/N)$ cancels out with the Lamb-Oseen part of the approximate solution and  we  arrive at 
\begin{align}
    &R_{\rm app}^2-\mathsf{r}^2+(\eps d)^2\big(\mathrm{L}(\Omega_{\rm app,E}-\Omega_0)+\frac{\gamma}{N}\mathrm{L}(\widehat{\Omega}_{\rm app,E}-\Omega_0)\big)\\
    &\qquad=\frac{1}{\Gamma N}\int_0^t\mathrm{L}(f_M)\dd \tau -\delta(\eps d)^2\big(\mathrm{L}( \Omega_{\rm app,NS})+\frac{\gamma}{N}\mathrm{L}( \widehat{\Omega}_{\rm app,NS})\big).
\end{align}
By the definition~\cref{def:fM} of $f_M$, since $\mrM(\cR_{M})=\mrM(\widehat{\cR}_{M})=\mrm_1^1(\cR_{M})=\mrm_1^1(\widehat{\cR}_{M})=0$, following the same computations in \cref{eq:Ang1,eq:AngOmapp} we deduce that 
\begin{align}
    \frac{1}{\Gamma N}\int_0^t\mathrm{L}(f_M)\dd \tau =\nu\delta^{-1}\int_0^t\Big(\mathrm{L}(\cR_M)+\frac{\gamma}{N}\mathrm{L}(\widehat{\cR}_M)\Big)\dd \tau=(\eps d)^2\cO(\delta^{-1}\eps^{M+1}+\delta \eps^2),
\end{align}
where in the last identity we used that $\cR_M,\widehat{\cR}_M=\cO_{\cZ}(\eps^{M+1}+\delta^2\eps^2).$ Having that $\Omega_{\rm app, NS},\widehat{\Omega}_{\rm app,NS}=\cO_{\cZ}(\eps^2),$ we finally get
\begin{align}
    \frac{R_{\rm app}^2-\mathsf{r}^2}{d^2}+\eps^2 \big(\mathrm{L}(\Omega_{\rm app,E}-\Omega_0)+\frac{\gamma}{N}\mathrm{L}(\widehat{\Omega}_{\rm app,E}-\Omega_0)\big)=\cO(\delta^{-1}\eps^{M+3}+\delta \eps^4),
\end{align}
which proves the desired result.
\end{proof}
\section{The nonlinear corrections}
\label{sec:nonlinear}
The goal of this section is to show that the approximate solutions constructed in Section \ref{sec:App} are close to the solutions $\Omega,\widehat{\Omega}$ in \cref{eq:Omega}, \cref{eq:P} with initial data \cref{eq:Init}. With the simplified notation in \cref{def:Xieps,def:Thetaeps,eq:simplified} the equations we need to study are 
\begin{equation}
\label{eq:NL0}
t\de_t\Omega=\cL\Omega-\frac{1}{\delta}\{\Phi_{\eps,R}(\Psi,\widehat{\Psi}),\Omega\},\qquad t\de_t\widehat{\Omega}=\cL \widehat{\Omega}-\frac{1}{\delta}\{\widehat{\Phi}_{\eps,R}(\Psi,\widehat{\Psi}),\widehat{\Omega}\}.
\end{equation}We then split the solution as 
\begin{align}
\label{def:NLsplitting}
&\Omega=\Omega_{\app}^{(M)}+\delta  w,   \qquad \widehat{\Omega}=\widehat \Omega_{\app}^{(M)}+\delta \widehat{w},\\
    &\Psi=\Psi_{\app}^{(M)}+ \delta \varphi,   \qquad \widehat{\Psi}=\widehat{\Psi}_{\app}^{(M)}+\delta \widehat{\varphi}.
\end{align}
The main objective is to control the solution on a time interval $[0,T_{\adv}\delta^{-\sigma})$ for a given fixed $\sigma\in [0,1)$. To ensure that the remainders $\cR_{M},\widehat{\cR}_M$ are small enough, we will  assume that on the whole time interval $[0,T_{\adv}\delta^{-\sigma})$
\begin{align}
\label{fixM}
    \delta^{-2}\eps^{M+1}(t)\ll  \eps^2(t) \quad \Longrightarrow \quad M>\frac{5-\sigma}{1-\sigma}.
\end{align}
Therefore, given any $\sigma\in[0,1)$, the order of the approximate solution is fixed as a number $M\in \NN$ satisfying the condition above. We will think to $\sigma$ and therefore $M$ as fixed parameters from now on. Thus, we will always omit the superscript $(M)$ in the rest of this section.

\subsection{Preliminary considerations on the moments}
Before writing down the nonlinear system, we need to specify the radius and angular speed. Unfortunately, 
given the complicated dependence of the operators with respect to changes in the radius, we are not able to fix exactly the first order moments with the choices made in \cref{lem:alpha'gives_moments}. Thus, we choose $$R(t)=R_{\app}(t)$$ with $R_{\app}(t)$ defined in \cref{def:Rapp}. Since this is fixed, from now on we avoid tracking the explicit dependence on all the operators on $R_{\rm app}$.
\begin{remark}
\label{rem:RRapp}
    With the choice of $R(t)=R_{\rm app}(t)$, in view of Proposition \ref{prop:deviation} we automatically satisfy the expansion provided in \cref{eq:expRa}.
\end{remark}
Then, as announced in Remark \ref{rem:commentRalpha}, we slightly modify the definition of \cref{def:alpha} to 
    \begin{align}
    \label{def:alphaNL}
t\alpha'(t)&\coloneqq\frac{d\eps(t)}{\delta (R_{\rm app}+\eps d\, \mrm_1^1(\Omega))(t)}\Im(\cB_{\eps}(\Psi(t),\widehat{\Psi}(t))),
    \end{align}
    where we need to guarantee that $\mrm_1^1(\Omega(t))$ remains sufficiently small on our time-scale. As one can directly see from \cref{eq:m21}, we know that the choice above guarantees that 
\begin{align}
    \mrm_2^{1}(\Omega(t))=0 \quad \Longrightarrow \quad \mrm_2^1(w(t))=0, \qquad \text{for all } t\geq0,
\end{align}
where we used that $\mrm^1(\Omega_{\rm app}(t))=(0,0).$ Another consequence of the zero first order moment condition for the approximate solution,  is that $\mrm^1_1(\Omega(t))=\delta \mrm^1_1(w(t))$ and we can thus rewrite \cref{def:alphaNL} as 
    \begin{align}
    \label{def:alphaNL1}
t\alpha'(t)&\coloneqq t\alpha_{\rm app}'(t)+\mathfrak{b}_{\eps}(\Psi_{\rm app},\widehat{\Psi}_{\rm app},\varphi,\widehat{\varphi})(t)
    \end{align}
    where, suppressing the explicit $t$ dependence, we define 
    \begin{align}
    \label{def:frakb}
        \mathfrak{b}_{\eps}(\Psi_{\rm app},\widehat{\Psi}_{\rm app},\varphi,\widehat{\varphi}):=\frac{d \eps}{R_{\rm app}}\Im\Big(&\cB_{\eps}(\Psi_{\rm app},\widehat{\Psi}_{\rm app},\varphi))+\cB_\eps(\varphi,\widehat{\varphi},\Psi_{\rm app}+ \delta \varphi)\\
        &-\frac{\eps d \mrm_1^1(w)}{R_{\rm app}+\delta\eps d \mrm_1^1(w)}\cB_{\eps}(\Psi_{\rm app}+\delta \varphi,\widehat{\Psi}_{\rm app}+\delta \widehat{\varphi})\Big)
    \end{align}

Having lost one modulation parameter, we can only hope to obtain some bound on the first order moment $\mrm_1^1(w)$. To this end, we crucially use the property of the Eulerian part of the approximate solution stated in Lemma \ref{lem:keyLEuler} to obtain the following. 
\begin{lemma}
\label{lem:keymom}
    Let $\Omega,\widehat{\Omega}$ be as in \cref{def:NLsplitting}. Then, there exists a constant $C>0$ such that
\begin{align}
\label{bd:m11priori}
    |\mrm_1^1(w(t))|\leq C\big(\delta^{-2}\eps^{M+2}+\eps^3+\eps (\mathrm{L}( w)+\frac{\gamma}{N}\mathrm{L}( \widehat{w}))\big)(t)
\end{align}
\end{lemma}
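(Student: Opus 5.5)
We compare the exact angular momentum identity for the true solution with the analogous identity for the approximate solution, and read off $\mrm_1^1(w)$ from the cross term in the rotated-coordinate expansion.

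\textbf{Step 1: the identity for the true solution.} We start from Lemma~\ref{lem:NSmom}, which gives $\mathrm{L}(\omega(t))=\Gamma N(\mathsf{r}^2+4\nu t(1+\gamma/N))$ exactly. This is unaffected by working in the rotating frame, by the remark after \cref{eq:NSrot}.

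We expand $\mathrm{L}(\omega)$ with the ansatz \cref{eq:omSS}, using $Z=(R_{\rm app},0)$ and the change of variables from the proof of Lemma~\ref{lem:keyLEuler}. Exactly as in \cref{eq:Ang1} and \cref{eq:AngOmapp}, since $\mrM(\Omega)=1$, this gives
\begin{equation}
\mathsf{r}^2+4\nu t\big(1+\tfrac{\gamma}{N}\big)=R_{\rm app}^2+2\sqrt{\nu t}\,R_{\rm app}\,\mrm_1^1(\Omega)+\nu t\big(\mathrm{L}(\Omega)+\tfrac{\gamma}{N}\mathrm{L}(\widehat{\Omega})\big).
\end{equation}
We then insert the splitting \cref{def:NLsplitting}. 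Because $\mrm_1^1(\Omega_{\rm app})=0$, we have $\mrm_1^1(\Omega)=\delta\,\mrm_1^1(w)$.

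\textbf{Step 2: subtracting the approximate identity.} We subtract \cref{eq:Ang2}, the identity satisfied by the approximate solution, which carries the forcing term $\frac{1}{\Gamma N}\int_0^t\mathrm{L}(f_M)$. The terms $R_{\rm app}^2$, $\mathsf{r}^2$, $4\nu t(1+\gamma/N)$ and $\nu t\,\mathrm{L}(\Omega_{\rm app})$ all cancel. What remains is
\begin{equation}
2\sqrt{\nu t}\,R_{\rm app}\,\delta\,\mrm_1^1(w)+\nu t\,\delta\big(\mathrm{L}(w)+\tfrac{\gamma}{N}\mathrm{L}(\widehat w)\big)=-\frac{1}{\Gamma N}\int_0^t\mathrm{L}(f_M)\,\dd\tau .
\end{equation}
We divide by $2\sqrt{\nu t}\,R_{\rm app}\,\delta=2\eps d R_{\rm app}\delta$ and use \ref{constraint:4}, which gives $R_{\rm app}\ge \mathsf{r}/2$.

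\textbf{Step 3: estimating the pieces.} The $w$-term yields $\eps\,(\mathrm{L}(w)+\frac{\gamma}{N}\mathrm{L}(\widehat w))$ up to a constant.

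The forcing term is bounded as in the proof of Lemma~\ref{lem:keyLEuler}. There it was shown that the integral equals $(\eps d)^2\,\cO(\delta^{-1}\eps^{M+1}+\delta\eps^2)$, using the vanishing of $\mrM$ and $\mrm_1^1$ of $\cR_M,\widehat{\cR}_M$ together with \ref{constraint:5}. Dividing by $\eps\delta$ gives $\cO(\delta^{-2}\eps^{M+2}+\eps^3)$, which matches \cref{bd:m11priori}.

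\textbf{Main obstacle.} The main point needing care is the bookkeeping in Step 2. In particular, one must check that \cref{eq:Ang2} was derived with the same $R_{\rm app}$ and the same normalization, and that the true solution with modulation \cref{def:alphaNL} still satisfies Lemma~\ref{lem:NSmom}. The latter holds because the Coriolis term does not change $\mathrm{L}$.

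One could alternatively invoke Lemma~\ref{lem:keyLEuler} directly, but subtracting the two exact identities avoids introducing an extra $\delta\eps^4$ error from the Navier–Stokes parts of the approximate solution.
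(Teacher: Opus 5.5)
Your argument is correct and is essentially the paper's. The paper combines the exact identity from Lemma~\ref{lem:NSmom} with Lemma~\ref{lem:keyLEuler}, which is itself derived from the forced angular-momentum identity for $\omega_{\rm app}$, so subtracting the two identities directly is the same computation without the Eulerian/Navier--Stokes split; this makes the two $\mathrm{L}(\Omega_{\rm app,NS})$ contributions that the paper bounds separately cancel, but it does not improve the bound, since the $\delta^2\eps^2$ part of the remainders already contributes $\cO(\eps^3)$ through the forcing integral.
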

\begin{proof}
We proceed as in the proof of Lemma \ref{lem:keyLEuler}, but now our $\omega(t)$ solves the 2D Navier-Stokes equation \cref{eq:NSrot} without external force, meaning that Lemma \ref{lem:NSmom} holds true. Thus, arguing as done to obtain the identities \cref{eq:Ang1,eq:AngOmapp}, we see that
\begin{align}
\label{eq:Ang2}
    \mathsf{r}^2+4\nu t\big(1+\frac{\gamma}{N}\big)=\nu t \big(\mathrm{L}(\Omega(t))+\frac{\gamma}{N}\mathrm{L}(\widehat{\Omega}(t))\big)+2\sqrt{\nu t}R_{\rm app}\, \mrm^1_1(\Omega(t))+R_{\rm app}^2.
\end{align}

Since $\mrm_1^1(\Omega)=\delta \mrm^1_1(w)$, using Lemma \ref{lem:keyLEuler} we  get
\begin{align}
\mrm_{1}^1(w)=\,&\frac{d^2}{2\delta \eps dR_{\rm app}}\Big(\frac{\mathsf{r}^2-R_{\rm app}^2}{d^2}-\eps^2\big(\mathrm{L}(\Omega_{\rm app,E}-G)+\frac{\gamma}{N}\mathrm{L}(\widehat{\Omega}_{\rm app,E}-G)\big)\Big)\\
    &-\frac{ \eps d}{2R_{\rm app}}\big(\mathrm{L}( \Omega_{\rm app,NS}+w)+\frac{\gamma}{N}\mathrm{L}( \widehat{\Omega}_{\rm app,NS}+\widehat{w})\big)\\
    =\,&\cO(\delta^{-2}\eps^{M+2}+\eps^3)-\frac{ \eps d}{2R_{\rm app}}\big(\mathrm{L}(w)+\frac{\gamma}{N}\mathrm{L}( \widehat{w})\big),
\end{align}
where in the last identity we also used that $R_{\rm app}\approx \mathsf{r}$ by the constraint \cref{constraint:4} and $\Omega_{\rm app,NS}=\cO_{\cZ}(\eps^2).$
\end{proof}

\subsection{The nonlinear system}
Having fixed $R=R_{\rm app}$ and $\alpha$ as in \cref{def:alphaNL}, we are ready to write down the nonlinear system for $w,\widehat{w}$. To simplify the notation, we avoid keeping track of the subscripts $\alpha,R$ since are fixed from now on, and we will write $\mathfrak{b}_\eps$ to actually denote the integral operator defined in \cref{def:frakb}. 

Then, we observe that 
\begin{align}
&\Phi_{\eps}(\Psi,\widehat{\Psi})=\Phi_{\eps}(\Psi_{\app},\widehat{\Psi}_{\app})+\delta((I+\cQ_{\eps})(\varphi)+ \gamma \cT_\eps(\widehat{\varphi}))-\frac{\delta}{2}\mathfrak{b}_\eps|\cT_\eps\xi|^2,\\
&\widehat{\Phi}_{\eps}(\Psi,\widehat{\Psi})=\widehat{\Phi}_{\eps}(\Psi_{\app},\widehat{\Psi}_{\app})+  \delta\big(\gamma\widehat{\varphi}+\cV_{\eps}(\varphi)\big)-\frac{\delta}{2}\mathfrak{b}_\eps|\xi|^2.
\end{align}
recalling the definitions of $\Phi_{\rm app}, \widehat{\Phi}_{\rm app}$ in \cref{def:Phiapp} 
and $\Phi_{\rm app, E}, \widehat{\Phi}_{\rm app,E}$ in \cref{def:PhiappE,def:PhihappE} we get
\begin{align}
&\Phi_{\eps}(\Psi_{\app},\widehat{\Psi}_{\app})=\Phi_{\rm app}=\Phi_{\rm app, E}+\delta \Phi_{\rm app, NS}, \\
&\widehat{\Phi}_{\eps}(\Psi_{\app},\widehat{\Psi}_{\app})=\widehat{\Phi}_{\rm app}=\widehat{\Phi}_{\rm app, E}+\delta \widehat{\Phi}_{\rm app, NS}.
\end{align}
Moreover, by construction $\Phi_{\rm app, NS}, \widehat{\Phi}_{\rm app, NS}=\cO_{\cS_*}(\eps^2)$. 
Finally, recalling the definitions of $\cR_M,\widehat{\cR}_M$ in \cref{def:RM},
by a direct computation we find that $w,\widehat{w}$ satisfy 
\begin{align}
   \label{eq:w} &(t\de_t-\cL)w+\frac{1}{\delta}\big(\Lambda_{\app}(w)+\Upsilon_{\app}(\widehat{w})\big)=\frac{1}{2\delta}\mathfrak{b}_\eps\{|\cT_{\eps} \xi|^2,\Omega_{\app,E}\}+\delta^{-2}\cR_M+\cN(w,\widehat{w})\\
     \label{eq:hw}&(t\de_t-\cL)\widehat{w}+\frac{1}{\delta}\big(\widehat{\Upsilon}_{\app}(w)+\widehat{\Lambda}_{\app}(\widehat{w})\big)=\frac{1}{2\delta}\mathfrak{b}_\eps\{|\xi|^2,\widehat{\Omega}_{\app,E}\}+\delta^{-2}\widehat{\cR}_M+\widehat{\cN}(w,\widehat{w})
     \end{align}
where we define:
\begin{enumerate}
    \item The linear operators arising from the linearization at the Eulerian approximate solution  are 
\begin{align}
   \label{def:Lambdaapp} &\Lambda_{\app}(f)=\{\Phi_{\rm app, E},f\}+\{(I+\cQ_{\eps})\Delta^{-1}f,\Omega_{\rm app, E}\}, \\
\label{def:Upsilon}&\Upsilon_{\rm app}(f)=\gamma \{\cT_\eps\Delta^{-1}f,
     \Omega_{\rm app, E}\},\\
\label{def:hatLambdaapp}&\widehat{\Lambda}_{\app}(f)=\{\widehat{\Phi}_{\rm app, E},f\}+\gamma\{\Delta^{-1}f,\widehat{\Omega}_{\rm app, E}\}, \\
\label{def:hatUpsilon}&\widehat{\Upsilon}_{\rm app}(f)= \{\cV_{\eps}\Delta^{-1}f,\widehat{\Omega}_{\rm app, E}\}.
\end{align}
\item The errors associated to the Navier-Stokes part of the approximate solution and the nonlinear interactions for the outer vortices are 
\begin{align}
\label{def:N}\cN(w,\widehat{w})&=\sum_{j=1}^3\cN_j(w,\widehat{w})\\
\label{def:N1}\cN_1(w,\widehat{w})&=-\{\Phi_{\rm app,NS},\, w\}-\{(I+\cQ_{\eps})\varphi+\gamma\cT_{\eps}\widehat{\varphi},\, \Omega_{\rm app,NS}\}\\
\label{def:N2}\cN_2(w,\widehat{w})&=-\{(I+\cQ_{\eps})\varphi+\gamma \cT_{\eps}\widehat{\varphi},\, w\}\\
\label{def:N3}\cN_3(w,\widehat{w})&=\frac{1}{2}\mathfrak{b}_\eps\{|\cT_{\eps}\xi|^2,\Omega_{\rm app, NS}+w\}.
\end{align}
For the central vortex  we have
\begin{align}
\label{def:hN}
\widehat{\cN}(w,\widehat{w})&=\sum_{j=1}^3\widehat{\cN}_j(w,\widehat{w})\,\\
\label{def:hN2}\widehat{\cN}_1(w,\widehat{w})&=-\{\widehat{\Phi}_{\rm app,NS},\, \widehat{w}\}-\{\gamma\widehat{\varphi}+\cV_{\eps}\varphi,\, \widehat{\Omega}_{\rm app,NS}\}\\
\label{def:hN3}\widehat{\cN}_2(w,\widehat{w})&=-\{\gamma\widehat{\varphi}+ \cV_{\eps}\varphi,\, \widehat{w}\}\\
\label{def:hN4}\widehat{\cN}_3(w,\widehat{w})&=\frac{1}{2}\mathfrak{b}_\eps\{|\xi|^2,\widehat{\Omega}_{\rm app, NS}+\widehat{w}\}.
\end{align}
\end{enumerate}
The equations \cref{eq:w,eq:hw} are solved with zero initial data. Indeed,  $\Omega_{\rm app},\widehat{\Omega}_{\rm app}\to \Omega_0$ as $t\to 0$ and therefore, by the uniqueness of the solution \cite{GaGa2005}, we know that $w,\widehat{w}\to 0$ as $t\to0$. Regarding the first order moments, it is straightforward to see that
\begin{align}
    \mrM(w(t))=\mrM(\widehat{w}(t))=0, \text{ for all } t\geq0.
\end{align}
 Then, the $N$-fold symmetry of $\widehat{\Omega},\widehat{\Omega}_{\rm app}$ ensures that
\begin{align}
\label{momhw}
    \widehat{w}(Q_l\xi)=\widehat{w}(\xi) \text{ for any } l=1,\dots N,\quad \Longrightarrow\quad 
    \mrm^1(\widehat{w}(t))=(0,0) \text{ for all } t\geq0,
\end{align}
where in the last identity we used  $N\geq 2$. The choice of the angular speed  guarantees that $\mrm^1_2(w(t))=0$ whereas by Lemma \ref{lem:keymom} we have the a priori bound \cref{bd:m11priori} for  $\mrm_1^1(w(t))$. 
We are then ready to state the main theorem of this section, which implies the main result of the paper Theorem~\ref{theo:main} as we explain below.
\begin{theorem}
\label{th:mainNL}
    Fix $\sigma\in [0,1)$, let $M\in \NN$ be such that \cref{fixM} is satisfied and $w,\widehat{w}$ be the solutions to \cref{eq:w,eq:hw} with zero initial data. Then, there exists positive constants $C>1,\delta_0\in (0,1)$ such that for any $\Gamma,\nu$ satisfying  $\nu/\Gamma=\delta\leq \delta_0$ the following holds true:
    \begin{align}
    \label{bd:mainNLbd}
\|w(t)\|_{\cX_\eps}+\|\widehat{w}(t)\|_{\widehat{\cX}_\eps}\leq C\eps^2(t)\quad \text{ for all } t\in[0,T_{\adv}\delta^{-\sigma}),
    \end{align}
    where $\eps,T_{\adv}$ are defined in \cref{def:adpar},  and the functions spaces $\cX_{\eps},\widehat{\cX}_{\eps}\hookrightarrow L^1(\RR^2)$ are defined in \cref{def:Xeps}. 
\end{theorem}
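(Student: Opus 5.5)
The plan is a bootstrap on a weighted energy functional, following the Arnold-type strategy of \cite{GaSring,dolce2024long}. The spaces $\cX_\eps,\widehat{\cX}_\eps$ (defined later) should be weighted $L^2$ spaces: a Gaussian weight $\e^{|\xi|^2/4}$ far from the core, $|\xi|\ge \eps^{-\sigma_1}$, and near the core a weight built from $(F^{(M)})'(\Omega_{\rm app,E})$ and $(\widehat F^{(M)})'(\widehat\Omega_{\rm app,E})$ supplied by Proposition~\ref{prop:functional}. On top of this we take the energy
\begin{equation}
E(t)=\tfrac12\int \big(W_\eps w^2+\widehat W_\eps \widehat w^2\big)\,\dd\xi-\tfrac12\langle (I+\cQ_\eps)\varphi+2\gamma\cT_\eps\widehat\varphi, w\rangle-\tfrac{\gamma}{2}\langle\widehat\varphi,\widehat w\rangle .
\end{equation}
We assume the bootstrap hypothesis $\|w\|_{\cX_\eps}+\|\widehat w\|_{\widehat\cX_\eps}\le C\eps^2$ on a maximal interval $[0,T_*]$ and aim to improve the constant $C$.

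\textbf{Key step 1: the singular linear terms cancel.} We must show that the $\delta^{-1}$ terms $\Lambda_{\app},\Upsilon_{\app},\widehat\Lambda_{\app},\widehat\Upsilon_{\app}$ contribute to $\frac{d}{dt}E$ only at size $\cO(\eps^{M+1})$ times polynomial weights near the core, and at Gaussian-small size far away.
\begin{itemize}
\item The weights are chosen so that $\{\Phi_{\rm app,E},\cdot\}$ is skew modulo the functional relationship error \cref{bd:keyapp}.
\item The cross terms cancel using the self-adjointness of $\cQ_\eps$ and the adjoint relation $\cV_\eps^*=N\cT_\eps$ on $N$-fold symmetric functions (Lemma~\ref{lem:adjoints}). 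The latter is why $\widehat w$ enters with the factor $\gamma/N$.
\item The Coriolis parts are handled by the commutation identities \cref{key:Q,key:V,key:V*}.
\item The quadratic part of $E$ must be coercive, with $E\gtrsim \|w\|^2_{\cX_\eps}+\|\widehat w\|^2_{\widehat\cX_\eps}$ up to the moments. This uses the bound \cref{bd:Tn} on $T_n$, i.e.\ the coercivity of $A-(-\Delta)^{-1}$ on functions orthogonal to $\de_i G$, which is where $\mrM(w)=\mrm_2^1(w)=0$ and $\mrm^1(\widehat w)=0$ enter.
\item The remaining $\mrm_1^1(w)$ direction is controlled by Lemma~\ref{lem:keymom}, giving $|\mrm_1^1(w)|\lesssim \delta^{-2}\eps^{M+2}+\eps^3+\eps\,\mathrm{L}(w,\widehat w)$.
\end{itemize}

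\textbf{Key step 2: the diffusion and the forcing.} The diffusion $(t\de_t-\cL)$ gives a damping $-\frac12\|\cdot\|^2$, which is available because of the vanishing low moments, minus commutator errors of size $\cO(\eps)$ from the $\eps$-dependent weights.

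The forcing is estimated term by term:
\begin{itemize}
\item The remainder $\delta^{-2}\cR_M=\cO(\delta^{-2}\eps^{M+1}+\eps^2)$ is $\ll\eps^2$ thanks to \cref{fixM}. The $\eps^2$ part is harmless since it is $\cO(\eps^2)$ with a $\delta$-independent constant.
\item $\cN_1$ is linear in $w$ with coefficient $\cO(\eps^2)$.
\item $\cN_2$ is quadratic, bounded by $\eps^{-\kappa}\|w\|^2$ with small loss, hence $\ll \eps^2\|w\|$.
\item The term $\frac{1}{2\delta}\mathfrak b_\eps\{|\cT_\eps\xi|^2,\Omega_{\rm app,E}\}$ is the dangerous one, because $\mathfrak b_\eps$ is only $\cO(\eps\|w\|)$ while it carries a factor $\delta^{-1}$. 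Since $\{|\cT_\eps\xi|^2,\cdot\}$ contains $\frac{2R}{\eps d}\de_2$, its leading part is proportional to $\de_2\Omega_{\rm app,E}$. This lies approximately in the kernel of $\Lambda_{\app}$ (infinitesimal rotation of the configuration), so its pairing with the energy gradient $W_\eps w-\varphi-\dots$ reduces to a multiple of $\mrm_2^1(w)=0$ plus $\cO(\eps^{M})$ errors.
\end{itemize}

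\textbf{Closing the argument.} Collecting the estimates gives
\begin{equation}
tE'\le -cE+C\big(\eps^4+\delta^{-2}\eps^{M+1}\eps^2\big),
\end{equation}
and integrating yields $E\lesssim \eps^4$. This improves the bootstrap, provided the loss terms of size $\delta^{-1}\eps^{M+1}$ times the energy are small on $[0,T_\adv\delta^{-\sigma})$, which again follows from \cref{fixM}. The region $|\xi|\ge\eps^{-\sigma_1}$, where the functional relationship is unavailable, is treated with the Gaussian weight as in \cite{dolce2024long}.

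The main obstacle is key step 1: making the energy structure exact enough for the coupled exterior/central system while the modulation parameter $R$ is frozen at $R_{\app}$. The plan is to rely on the self-adjointness and intertwining identities, together with the kernel property of the Coriolis term.
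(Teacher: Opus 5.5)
Your architecture is the paper's: an Arnold-type form built on the functional relationship, coercive up to $\mrm^1_1(w)$, with $\mrm^1_1(w)$ fed back through Lemma~\ref{lem:keymom} and a bootstrap. However, the step you single out as dangerous is argued with the wrong mechanism, and it does not close for $\sigma$ near $1$.

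\textbf{The Coriolis forcing term.} The forcing enters $tE'$ as $\frac{\mathfrak b_\eps}{2\delta}\langle\mathbf k,\cM_\eps\mathbf w\rangle$, where $\mathbf k=(\{|\cT_\eps\xi|^2,\Omega_{\rm app,E}\},\{|\xi|^2,\widehat\Omega_{\rm app,E}\})$ generates a rigid rotation of the whole crystal, central vortex included. What must be small is $\cM_\eps\mathbf k$; one then moves $\cM_\eps$ across by self-adjointness. Membership in $\ker\Lambda_{\rm app}$ is not the relevant property: radial functions lie in $\ker\Lambda$ yet pair nontrivially with $(A+\Delta^{-1})w$. Also, with the Arnold weight the pairing of $\partial_2G$ does not reduce to $\mrm^1_2(w)$; it vanishes because $(A+\Delta^{-1})\partial_2G=0$.

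More seriously, the piece $\frac{2R_{\rm app}}{\eps d}\partial_2\Omega_{\rm app,E}$ that you keep is \emph{not} in $\ker\cM_\eps$ up to $\cO(\eps^M)$. Expanding with Lemma~\ref{lem:expansionsR} and the functional relationship, on ${\rm I}_\eps$ the component $(\cM_\eps(\partial_2\Omega_{\rm app,E},0))_1$ is a nonzero multiple of $\eps^3({\rm S}_{2,2}+\gamma)\xi_1\xi_2$ up to $\cO(\eps^4)$. This is cancelled only by the $\{|\xi|^2,\cdot\}$ components of $\mathbf k$, which you discard. Left uncancelled it produces $\delta^{-1}|\mathfrak b_\eps|\eps^2\|\mathbf w\|$. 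With your bound $\mathfrak b_\eps=\cO(\eps\|w\|)$ this is $\delta^{-1}\eps^7$, which is not $o(\eps^4)$ once $\sigma>1/3$. Even the sharp bound $|\mathfrak b_\eps|\lesssim\eps^6$ of Lemma~\ref{lem:beta} only rescues $\sigma<2/3$.

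The missing ingredient is Lemma~\ref{lem:keykernel}: on the inner regions, $\cM_\eps\mathbf k=(\{|\cT_\eps\xi|^2,\Theta\},\{|\xi|^2,\widehat\Theta\})$. It follows from the intertwining identities \eqref{key:Q}--\eqref{key:V*} together with Proposition~\ref{prop:functional}; this, not the treatment of $\cJ_{\rm app}$, is where those identities enter. It gives the bound $\delta^{-1}|\mathfrak b_\eps|\eps^M\|\mathbf w\|_{\boldsymbol\cX_\eps}$.

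This identity, like the cancellation $\langle\cJ_{\rm app}\mathbf w,\cM_\eps\mathbf w\rangle\approx0$, holds only for the exact form $\frac12\langle\mathbf w,\cM_\eps\mathbf w\rangle$. Your displayed $E$ is not of that form. With $\varphi=\Delta^{-1}w$ its nonlocal terms have the wrong sign. The central self-interaction must carry $\frac{\gamma^2}{2N}$, matching the local weight $\frac{\gamma}{2N}\widehat A_\eps$, rather than $\frac{\gamma}{2}$.

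\textbf{The weight outside the core.} A Gaussian weight $W=\e^{|\xi|^2/4}$ on $|\xi|\ge\eps^{-\sigma_1}$ fails beyond $t\sim T_{\rm adv}$. There $\Phi_{\rm app,E}$ contains the strain $\eps^2c_{2,0}({\rm S}_{2,2}+\gamma)\Re(\xi^2)/R_{\rm app}^2$ of the other vortices; the order-$\eps$ part is removed by the rotating frame. Hence the transport contribution $\delta^{-1}\langle\{W,\Phi_{\rm app,E}\}w,w\rangle$ has size $\delta^{-1}\eps^2\int|\xi|^2Ww^2=\frac{t}{T_{\rm adv}}\int|\xi|^2Ww^2$. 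Diffusion only supplies about $\frac1{16}\int|\xi|^2Ww^2$. This is the obstruction that confines Gaussian-weighted arguments to times of order $T_{\rm adv}$ up to logarithms.

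So the far-field $\delta^{-1}$ terms are not ``Gaussian-small''; they must be neutralised by the structure of the weight. The paper takes $A_\eps$ constant on ${\rm II}_\eps$, so that $\{A_\eps,\Phi_{\rm app,E}\}=0$ there. On ${\rm III}_\eps$ it takes the radial weight $\exp(|\xi|^{2\varsigma}/4)$ with $\varsigma=\sigma_1/\sigma_2<1$; radiality kills the rotation about the vortex centre and leaves a residual of size $\delta^{-1}\eps^{1+\sigma_2}\|\rho_\eps\mathbf w\|^2_{\boldsymbol\cX_\eps}$.

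Absorbing this residual needs more than a $-\frac12\|\mathbf w\|^2$ damping. It requires the full lower bound of Proposition~\ref{prop:D}, $D_\eps\ge\kappa_D(\|\nabla\mathbf w\|^2+\|\rho_\eps\mathbf w\|^2+\|\mathbf w\|^2)-C|\mrm^1_1(w)|^2$ in $\boldsymbol\cX_\eps$. The same bound is needed for the nonlinear term $\cN_2$, where $|\nabla A_\eps|\sim|\xi|A_\eps$ and the pointwise velocity bound \eqref{bd:nablaLinfty} needs $\|\nabla\mathbf w\|$. It is also needed for the $w$-part of $\cN_3$.
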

The proof of this theorem follows the strategy put forward in \cite{GaSring} and adapted to reach time-scales arbitrary close to the diffusive one in \cite{dolce2024long}. The main ideas and difficulties in the proof are explained in Section \ref{sec:ideas} as a preparation for the technical proof, which is the content of the rest of the paper.

During the proof, we need to propagate a control of the first order moment $\mrm^1_1(w)$ that depends on $w$ as well. Thus, we are going to prove Theorem \ref{th:mainNL} under the bootstrap assumption that there exists a maximal time $T\leq T_{\rm adv}\delta^{-\sigma}$ such that
\begin{align}
\label{bootstrap}\tag{B} \|w(t)\|_{\cX_\eps}+\|\widehat{w}(t)\|_{\widehat{\cX}_\eps}\leq 2C_\star\eps^2(t) \qquad \text{ for all }t\in [0,T),
\end{align}
where $C_\star>0$ is a constant fixed during the proof \footnote{We can take $C_\star=\sqrt{KC_1}$ with $K$ in Proposition \ref{prop:E} and $C_1$ in Proposition \ref{prop:FAN}.}.
This hypothesis can be easily verified by a short time estimate for $T=1$ and with $2$ replaced by $1/2$. By continuity, we can assume that \cref{bootstrap} holds true up to some time $T>1$. If  under the bootstrap assumption \cref{bootstrap} we are able to show that
        \begin{align} \|w(t)\|_{\cX_\eps}+\|\widehat{w}(t)\|_{\widehat{\cX}_\eps}\leq C_\star\eps^2(t) \qquad \text{ for all }t\in [0,T),
\end{align} 
then by a standard continuity argument we can take $T=T_{\rm adv}\delta^{-\sigma}$ and thus prove Theorem \ref{th:mainNL}. Then the proof of Theorem \ref{theo:main} readily follows.
\begin{proof}[Proof of Theorem \ref{theo:main}]
  Assume that the result in Theorem \ref{th:mainNL} holds true. By rescaling, the bound \cref{bd:mainNLbd} readily implies \cref{bd:mainNLintro} in Theorem \ref{theo:main}. The proof of \cref{eq:expRa} is then a consequence of Remark \ref{rem:RRapp} and Corollary \ref{cor:alpha} below, which combines the bounds on $w,\widehat{w}$ with the definition of $\alpha'$ in \cref{def:alphaNL1} and the expansion for $\alpha'_{\rm app}$ established in Proposition \ref{prop:deviation}.
\end{proof}
\subsubsection*{\textbf{Further conventions and notations}}
In this section, we introduce the following conventions. Given an $L^2(\RR^2)$-based Hilbert space $X$, we denote 
\begin{equation}
    \widehat{X}=\{f\in X\, :\, f \text{ is $N$-fold symmetric}\}.
\end{equation}
Then, for $f\in X$ and $\widehat{f}\in \widehat{X}$ we write 
\begin{align}
    \label{vec1}&{\mathbf f}=(f,\widehat{f}\,)\in X\times\widehat{X}=:{\mathbf X}, \\
    \label{vec2}&\nabla{\mathbf f}=(\nabla f,\nabla \widehat{f}\,),\\
    \label{vec3}&\cL{\mathbf f}=(\cL f,\cL\widehat{f}\,),\\
    \label{vec4}&\langle {\mathbf f}, {\mathbf g}\rangle_{\mathbf{X}}=\langle f,g\rangle_{X} +\langle \widehat{f},\widehat{g}\rangle_{\widehat{X}},\\
    \label{vec5}&\mrm^k_j({\mathbf f})=(\mrm^k_j( f),\mrm^k_j({\widehat{f}})).
\end{align}
For  a given $N$-fold symmetric function $g$, we write $g{\mathbf f}=(gf,g\widehat{f})$. When $X=L^2(\RR^2)$, we do not write the subscript in the inner products and norms. 

\subsection{Main difficulties and key ideas}
\label{sec:ideas}
Looking at the equations \cref{eq:w,eq:hw}, we isolated explicitly the most dangerous terms containing a factor $\delta^{-1}$.
The key idea to control those terms, is to exploit the Arnold's strategy in the viscous setting as recently revisited in \cite{GaSarnold}, which was first applied to control a viscous vortex ring over a very long-time scale in \cite{GaSring} and then in 2D situations in \cite{dolce2024long,zhang2025long}. To apply this strategy, one has to understand  properties of the operator \begin{align}
\label{def:Japp}
    \cJ_{\rm app}\coloneqq\begin{pmatrix}
        \Lambda_{\app} & \Upsilon_{\app}\\\widehat{\Upsilon}_{\app}
        &\widehat{\Lambda}_{\app}
\end{pmatrix}.
\end{align}
In particular, one would like to solve the following problem:
\begin{quote}
\centering
    Find a functional space where the operator $\cJ_{\rm app}$ 
is skew-adjoint. 
\end{quote}
For our purposes, it is in fact enough that the operator $\cJ_{\rm app}$ is skew-adjoint up to errors of order $\cO(\eps^{M+1})$. Here, we have to crucially exploit Proposition \ref{prop:functional}, that provides us with an approximate functional relationship between $\Phi_{\rm app,E}$ and $\Omega_{\rm app,E}$  in the region $|\xi|\leq 2\eps^{-\sigma_1}$.  This allow us to heuristically approximate
\begin{align}
\label{eq:Jappapp}
    \cJ_{\rm app}\approx \begin{pmatrix}
       \big\{ \big(F'(\Omega_{\rm app,E})+(I+\cQ_{\eps})\Delta^{-1}\big)(\cdot),\Omega_{\rm app,E}\big\}& \gamma \big\{\cT_{\eps}\Delta^{-1}(\cdot),\Omega_{\rm app,E} \big\}
       \\[6pt]
       \big\{\cV_{\eps}\Delta^{-1}(\cdot),\widehat{\Omega}_{\rm app,E} \big\}
        &\big\{ \big(\widehat{F}'(\widehat{\Omega}_{\rm app,E})+\gamma\Delta^{-1}\big)(\cdot),\widehat{\Omega}_{\rm app,E}\big\}
\end{pmatrix}.
\end{align}
Then, exploiting the fact that $\widehat{w}$ is $N$-fold symmetric, we can restrict our attention to the action of the second column above to $N$-fold symmetric functions. Appealing to Lemma~\ref{lem:adjoints}, we know that 
\begin{align}
 \cT_{\eps}\Delta^{-1}f=\frac{1}{N}\cV^*_{\eps}\Delta^{-1}f \quad \text{for any } N\text{-fold symmetric } f.
\end{align}
To construct a functional space where $\cJ_{\rm app}$ is skew-adjoint, looking at its structure  we define 
\begin{align}
\label{def:Happ}
\cM_{\app}=\begin{pmatrix}
       F'(\Omega_{\rm app,E})+(I+\cQ_{\eps})\Delta^{-1}& \frac{\gamma}{N} \cV^*_{\eps}\Delta^{-1}
       \\[6pt]
       \frac{\gamma}{N}\cV_{\eps}\Delta^{-1}
        &\frac{\gamma}{N}(\widehat{F}'(\widehat{\Omega}_{\rm app,E})+\gamma\Delta^{-1}).
\end{pmatrix}
\end{align}
By Lemma~\ref{lem:adjoints}, we know that $\cM_{\rm app}$ 
is self-adjoint in $\mathbf{L^2}$.
Moreover, from the definition of $\cM_{\rm app}$, one has that $\cJ_{\rm app}$ is approximately skew-adjoint with respect to the $\mathbf{L^2}$ inner product $$\langle {\mathbf f},\,\cM_{\rm app}({\mathbf g})\rangle.$$ 
Namely, if we assume that \cref{eq:Jappapp} holds up to errors of order $\eps^{M+1}$, by a direct computation we get
\begin{align}
\label{heuristic1}
    \big\langle \cJ_{\rm app}({\mathbf f}), \cM_\app ({\mathbf f})\big\rangle\approx \cO(\eps^{M+1}).
\end{align}
Therefore, if $\cM_{\rm app}$ is a positive operator, we would have a nice inner-product space where we can work (since $\cM_{\rm app}$ is self-adjoint). Unfortunately, $\cM_{\rm app}$ can have non-trivial elements in its kernel or even be negative on some functions. Indeed, even by formally setting $\eps \to0$, one has that $\cM_{\rm app}$ reduces to
\begin{align}
\label{def:M0}
\cM_0\coloneqq\begin{pmatrix}
       F'_0(\Omega_{0})+\Delta^{-1}& 0
       \\
       0
        &\frac{\gamma^2}{N}(F'_0(\Omega_{0})+\Delta^{-1}).
\end{pmatrix}
\end{align}
For the operator above, one has that $\de_1\Omega_0,\de_2\Omega_0$ are in the kernel\footnote{This is also why $\de_i\Omega_0=\de_iG$ are in the kernel of $\Lambda$ as stated in Proposition \ref{prop:Lambda}}, from which one can deduce that $\cM_0$ is at most non-negative (see \cite{GaSarnold}). This implies that we cannot use $\cM_{0}$ to define an inner-product space. On the other hand, one can avoid problems related to these elements in the kernel by assuming that the first order moments are sufficiently small. Indeed,  from  \cite{GaSarnold} it is possible to deduce that
\begin{align}
\label{bd:coercivityM0}
    \langle {\mathbf f}, \cM_0 ({\mathbf f})\rangle \geq \kappa(\|\sqrt{F'(\Omega_0)}f\|^2_{L^2}+\|\sqrt{F'(\Omega_0)}\widehat{f}\|^2_{L^2}\big)-C(|\mrM({\mathbf f})|^2+|\mrm_1^1({\mathbf f})|^2+|\mrm_2^1({\mathbf f})|^2)
\end{align}
where $\kappa,C>0$ are fixed constant.  Therefore, if we can impose the zero first order moments condition, we would indeed have a nice inner-product space defined through $\cM_{0}$. Moreover,  once things are well behaved for $\cM_{0}$, following \cite{GaSring,dolce2024long} one can essentially see $\cM_{\rm app}$ as a perturbation of $\cM_{0}$. Regarding the first order moments, one possibility is to exploit the modulation of some parameters to set them to zero. The modulation can however bring new error terms that can be very large on long time-scales, and usually one has to exploit symmetries to handle them \cite{dolce2024long} in the time-scale of interest. To  avoid issues related to the modulation parameters (as the radius in our case), the bound \cref{bd:coercivityM0} is enough if one is able to  propagate the smallness of the first-order moments by some other arguments,  as done for instance in \cite{GaSring} by using some approximate conservation law in the system or in \cite{zhang2025long} by controlling \emph{pseudo-momenta}. 

In our case, we have seen in Lemma \ref{lem:keymom} that with a good enough approximate position we can guarantee the smallness of $\mrm_1^1(w)$ (provided $w,\widehat{w}$ decay sufficiently fast an infinity). However, the modulation of the angular speed used to guarantee $\mrm_2^1(w)=0$ is generating  the potentially dangerous term 
\begin{equation}
\label{eq:Kernel}
    \frac{\mathfrak{b_\eps}}{2\delta}\begin{pmatrix}
    \{|\cT_{\eps} \xi|^2,\Omega_{\app,E}\}\\
        \{|\xi|^2,\widehat{\Omega}_{\app,E}\}.
    \end{pmatrix}
\end{equation}
The element above is related to the rotation of the Eulerian part of the whole approximate vortex polygon around the center, because the derivative $\{|\cT_{\eps} \xi|^2,\cdot\}$ is just an angular derivative with respect to the global origin in the reference frame of an exterior vortex. But the whole Eulerian approximate configuration is symmetric with respect to rotation around the global origin. This symmetry is reflected in the fact that the element \cref{eq:Kernel} is approximately in the kernel of $\cM_{\rm app}$. Namely, as we show more precisely in Lemma \ref{lem:keykernel}, we expect that 
\begin{equation}
\label{eq:heuristic1}
    \cM_{\rm app}\begin{pmatrix}
        \{|\cT_{\eps} \xi|^2,\Omega_{\app,E}\}\\
        \{|\xi|^2,\widehat{\Omega}_{\app,E}\}
    \end{pmatrix}\approx \cO(\eps^{M+1}).
\end{equation}
This is also a consequence of the existence of the functional relationship between $\Omega_{\rm app,E}$, $ \widehat{\Omega}_{\rm app,E}$ and $\Phi_{\rm app,E}$, $ \widehat{\Phi}_{\rm app,E}$ and the commutation properties in Lemma \ref{lem:adjoints} (themselves related to symmetries in the problem). We note that a property analogous to the one above was in fact the starting point in the construction of the ``non-trivial pseudo-momenta" in \cite{zhang2025long}.

The rest of this section is devoted in making the arguments above rigorous, which requires the introduction of many technical tools developed in \cite{GaSring,GalWay2002,dolce2024long}. In particular, we will use some of the results in \cite{dolce2024long} as a black box and we only  focus on the novelties and difficulties of our problem.

\subsection{The functional setting and basic properties}
\label{sec:functional}
We introduce two parameters and a constant as follows 
$$\sigma_1\in (0,1), \qquad \sigma_2>1, \qquad \varsigma\coloneqq \frac{\sigma_1}{\sigma_2}<1.$$ Here $\sigma_1$ is a sufficiently small parameter, depending only on the order $M$, that is implicitly updated during the proof. A first condition on this parameter is that Proposition \ref{prop:functional} holds true (namely one can start with $\sigma_1$ as the parameter introduced in Proposition \ref{prop:functional}). Instead, $\sigma_2>1$ is sufficiently large, used to exploit the decay at infinity of some parts of the solution. Then, by Proposition \ref{prop:functional} in the region $|\xi|\leq 2\eps^{-\sigma_1}$ we have the existence of the functional relationship between $\Omega_{\rm app,E}$ and $\Phi_{\rm app,E}$. Therefore, for the exterior vortices we define three time-dependent (through the parameter $\eps$) regions
\begin{align}
    {\rm I}_{\eps}&=\{\xi\in\RR^2\, :\, |\xi|<2\eps^{-\sigma_1}, \, F'(\Omega_{\rm app, E}(\xi))<\exp(\eps^{-2\sigma_1/4})\}\\
    {\rm II}_{\eps}&=\{\xi\in\RR^2\, :\, \xi \notin {\rm I}_{\eps},\, |\xi|<\eps^{-\sigma_2}\}\\
    {\rm III}_{\eps}&=\{\xi\in\RR^2\, :\, |\xi|>\eps^{-\sigma_2}\}.
\end{align}
For the central vortex, we define ${\rm \widehat I}_\eps, {\rm \widehat{II}}_\eps$ by replacing $F'(\Omega_{\rm app,E})$ with  $\widehat{F}'(\widehat{\Omega}_{\rm app,E})$.
The weight functions are then defined as 
\begin{align}
    A_{\eps}(\xi)=\begin{cases}
        F'(\Omega_{\rm app,E}(\xi)) &\text{in } {\rm I}_\eps,\\
    \exp(\eps^{-2\sigma_1}/4)&\text{in } {\rm II}_\eps,\\
    \exp(|\xi|^{2\varsigma}/4)&\text{in } {\rm III}_\eps.
    \end{cases}\qquad  \widehat{A}_{\eps}(\xi)=\begin{cases}
        \widehat{F}'(\widehat{\Omega}_{\rm app,E}(\xi)) &\text{in } {\rm \widehat{I}}_\eps,\\
    \exp(\eps^{-2\sigma_1}/4)&\text{in } {\rm \widehat{II}}_\eps,\\
    \exp(|\xi|^{2\varsigma}/4)&\text{in } {\rm III}_\eps.
    \end{cases}
\end{align}
We introduce the weighted $L^2$ space 
\begin{align}
\label{def:Xeps}
    \cX_{\eps}=\big \{f\in L^2(\RR^2)\, :\, \|f\|_{\cX_\eps}=\sqrt{\langle f,A_\eps f\rangle}<\infty\big \}.
\end{align}
The space $\widehat{\cX}_\eps$ is defined as $\cX_{\eps}$ with $A_{\eps}$ replaced by $\widehat{A}_\eps$ and restricted to $N$-fold symmetric functions. Note that when defining $\widehat{\cX}_\eps$ as a function space acting only on $N$-fold symmetric functions, we are implicitly using the $N$-fold symmetry of $\widehat{\Omega}_{\rm app,E}$. Then, we denote 
\begin{align}
  \label{vec6}  &\mathbf{A_\eps w}=(A_{\eps}f,\widehat{A}_\eps \widehat{f}), \qquad \boldsymbol{\cX}_\eps=\cX_\eps\times\widehat{\cX}_\eps,\\
    \label{vec7}&\|{\mathbf f}\|_{\boldsymbol{\cX}_\eps}^2=\|f\|^2_{\cX_\eps}+\|\widehat{f}\|^2_{\widehat{\cX}_\eps}.
\end{align}
Moreover, since as $\eps\to 0$ we formally have $A_{\eps}(\xi)\to A_0(\xi)=F_0'(\Omega_0)=A(\xi)$ and $\widehat{A}_{\eps}(\xi)\to \widehat{F}_0'(\Omega_0)=\gamma A(\xi)$ with $A$ defined in \cref{def:A}, we denote$$\cX_0\coloneqq L^2(A(\xi)\dd \xi)\qquad \widehat{\cX}_0=\{f\in  \cX_0\, :\, f \text{ is }N-\text{fold symmetric}\}.$$

The important  properties of these spaces are summarized in the following. 
\begin{lemma}
\label{lem:Aeps}
For any $\eps>0$ sufficiently small, the following holds true:
\begin{enumerate}[label=\roman*)]
    \item $\cY\hookrightarrow\cX_{\eps},\widehat{\cX}_{\eps}\hookrightarrow L^p$ for any $p\in [1,2]$.
    \medskip
    \item $\exp(|\xi|^{2\varsigma}/4)\lesssim A_{\eps}(\xi),\widehat{A}_\eps(\xi)\lesssim A_0(\xi)$ for all $\xi\in \RR^2$.
        \medskip
    \item $\|A_\eps-A_0\|_{C^1({\rm I}_\eps)}+\|\widehat{A}_\eps-\widehat{A}_0\|_{C^1({\rm \widehat I}_\eps)}\lesssim \eps^{\varsigma_1}A_0$ for any $\varsigma_1<2$.
\end{enumerate}

\end{lemma}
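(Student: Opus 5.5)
The plan is to prove the three items in the order ii), iii), i), since the embeddings follow from the pointwise weight bounds. Everything rests on a quantitative comparison of $F'(\Omega_{\rm app,E})$ with $A_0=F_0'(\Omega_0)$ on ${\rm I}_\eps$, and the same argument applies verbatim to $\widehat{A}_\eps$ with $\widehat{F}$ and $\widehat{\Omega}_{\rm app,E}$.

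\textbf{Step 1: comparison on ${\rm I}_\eps$.} On ${\rm I}_\eps$ we have $|\xi|\le 2\eps^{-\sigma_1}$ and $\Omega_{\rm app,E}=\Omega_0(1+\eps^2 g_\eps)$. Here $g_\eps=\sum_{k\ge2}\eps^{k-2}\Omega_{k,E}/\Omega_0$ has polynomial growth, since each $\Omega_{k,E}\in\cZ$. Hence $|\eps^2 g_\eps|\lesssim \eps^{2}(1+|\xi|)^{\widetilde N}\lesssim \eps^{2-\widetilde N\sigma_1}$, which is small once $\sigma_1$ is small; this is where $\sigma_1$ gets updated.

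I would write $F'=F_0'+\sum_{k\ge2}\eps^kF_k'$ and expand $F_0'(\Omega_0(1+\eta))-F_0'(\Omega_0)$ by Taylor's formula with integral remainder. Lemma~\ref{lem:FK} gives $F_0^{(j)}(\Omega_0)\Omega_0^{j}\in\cS_*$, and more precisely $F_0^{(j)}(s)s^{j-1}\sim F_0'(s)$ up to logarithmic factors, since $F_0'(s)\sim 1/(4\pi s\log(1/s))$ near $s=0$ by \cref{def:F0}. This yields
\[
|F'(\Omega_{\rm app,E})-A_0|\lesssim \eps^2(1+|\xi|)^{\widetilde N}A_0 .
\]
For the $C^1$ bound I would differentiate. $\nabla\Omega_{\rm app,E}$ again carries a factor $\Omega_0$ times a polynomial, which produces the same structure with $F''$ in place of $F'$. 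Absorbing $(1+|\xi|)^{\widetilde N}\le \eps^{-\widetilde N\sigma_1}$ then gives $\eps^{\varsigma_1}A_0$ for any $\varsigma_1<2$, after shrinking $\sigma_1$ as needed. This proves iii).

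\textbf{Step 2: the pointwise bounds ii).} On ${\rm I}_\eps$, Step 1 gives $A_\eps\approx A_0$. Moreover $A_0\gtrsim e^{|\xi|^2/4}/|\xi|^2\ge e^{|\xi|^{2\varsigma}/4}$ up to constants, because $\varsigma<1$. On ${\rm II}_\eps$, points lie either where $|\xi|\ge 2\eps^{-\sigma_1}$ or where $F'(\Omega_{\rm app,E})\ge \exp(\eps^{-2\sigma_1}/4)$.
\begin{itemize}
\item For the upper bound, in the first case $A_0(\xi)\gtrsim \exp(\eps^{-2\sigma_1})\ge A_\eps$. In the second case, Step 1 holds near the boundary of ${\rm I}_\eps$, so $A_0\gtrsim \exp(\eps^{-2\sigma_1}/4)$ there. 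Since $A_0$ is radially increasing, the bound extends outward.
\item For the lower bound, $|\xi|<\eps^{-\sigma_2}$ gives $|\xi|^{2\varsigma}<\eps^{-2\sigma_1}$, so $\exp(|\xi|^{2\varsigma}/4)\le \exp(\eps^{-2\sigma_1}/4)=A_\eps$.
\end{itemize}
On ${\rm III}_\eps$ both bounds are immediate, since $e^{|\xi|^{2\varsigma}/4}\lesssim A_0$.

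\textbf{Step 3: the embeddings i).} The bound $\cY\hookrightarrow\cX_\eps$ follows from $A_\eps\lesssim A_0\lesssim e^{|\xi|^2/4}$. For $L^2$ we use $A_\eps\gtrsim 1$. For $L^1$, Cauchy--Schwarz gives $\|f\|_{L^1}\le \|f\|_{\cX_\eps}\,\|A_\eps^{-1/2}\|_{L^2}$, and $\int e^{-|\xi|^{2\varsigma}/4}\,d\xi<\infty$ is independent of $\eps$. Interpolation covers $p\in(1,2)$.

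The main obstacle I expect is Step 1's control of $F'$ near the outer edge of ${\rm I}_\eps$, where $\Omega_0$ is tiny. There the logarithmic singularities of the $F_k$ at $s=0$ and the polynomial growth must be balanced against the relative smallness $\eps^2(1+|\xi|)^{\widetilde N}$. This is handled exactly as in \cite{dolce2024long}, whose argument I would follow.
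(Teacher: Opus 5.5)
Your proposal is correct and follows essentially the same route as the argument the paper defers to (\cite[Proposition 4.4]{dolce2024long}): a relative comparison $F'(\Omega_{\rm app,E})=A_0\bigl(1+\cO(\eps^{2}(1+|\xi|)^{\widetilde N})\bigr)$ on the whole ball $|\xi|\le 2\eps^{-\sigma_1}$, obtained from $\Omega_{\rm app,E}=\Omega_0(1+\eta)$ and Lemma~\ref{lem:FK}, then the region-by-region pointwise bounds, and finally Cauchy--Schwarz plus interpolation for the embeddings. One caveat: since $\widehat F_0=\gamma F_0$, the ``verbatim'' transfer actually gives $\widehat A_\eps\approx\gamma A_0$ on $\widehat{\rm I}_\eps$, so item ii) for the hat weight tacitly requires $\gamma>0$ (or normalizing $\widehat F'$ by $\gamma$), an imprecision already present in the paper's own definitions rather than in your argument.
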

These properties can be proved as in \cite[Proposition 4.4]{dolce2024long}, and we thus omit the proof here.
\begin{remark}
\label{rem:momXeps}
    By the exponential growth of $A_\eps$, we know that for a function $f\in \cX_\eps$  we can bound any moment at a fixed order $n\in \NN$ since
\begin{align}
\|(1+|\xi|)^nf\|_{L^1}\lesssim\|f\|_{\cX_\eps}.
\end{align}
Therefore, under the bootstrap hypothesis \cref{bootstrap}, we know that for any $T\leq T_{\rm adv}\delta^{-\sigma}$ we get
\begin{align}
\label{bd:m11boot}
    |\mrm_1^1(w(t))|\lesssim \eps^3(t)\quad \text{ for any } t\leq T.
\end{align}
\end{remark}
We also recall the following  estimates.
\begin{lemma}
\label{lem:bounds}
    Let $f\in \cX_{\eps}$ be such that $,|\nabla f|\in \cX_{\eps}$ and let $\phi=\Delta^{-1}f$. Then, for all $q\in (2,\infty)$ there exists a constant $C>0$ such that 
    \begin{align}
    \label{bd:LinftyLq}&\|(1+|\cdot|)^{2}\phi\|_{L^\infty}+\|(1+|\cdot|)^{-1}\phi\|_{L^q}+\|(1+|\cdot|)^{2}\nabla \phi\|_{L^q}\leq C\|f\|_{\cX_{\eps}}\\
\label{bd:nablaLinfty}&\|(1+|\cdot|)^{3}\nabla \phi\|_{L^\infty}\leq C\big(\|\nabla f\|_{\cX_{\eps}}^\frac12+\|f\|_{\cX_{\eps}}^\frac12\big)\|f\|_{\cX_{\eps}}^\frac12.
\end{align}
Moreover, if $\mrM(f)=\mrm^1_2(f)=0$ and $|Z|>c>0$ then
\begin{align}
\label{bd:InnerLq}
\|\mathbbm{1}_{\rm I_\eps}\cT_{\eps,Z }\phi\|_{L^q}+\eps^{-1}\|\mathbbm{1}_{\rm I_\eps}(1+|\cdot|)^{-1}\nabla \cT_{\eps,Z }\phi\|_{L^q}\leq C\eps(\eps\|f\|_{\cX_\eps}+ |\mrm_1^1(f)|).
    \end{align}
    The statements remain true true with $(\cX_\eps,{\rm I}_\eps)\to (\widehat{\cX}_\eps,{\rm \widehat{I}}_\eps)$ and $\cT_{\eps,Z}\to \cV_{\eps,Z},\cQ_{\eps,Z}$ (possibly with a different constant $C>0$). 
\end{lemma}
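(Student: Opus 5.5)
The first two bounds in \cref{bd:LinftyLq,bd:nablaLinfty} do not see the polygon at all. They follow from the Biot--Savart kernel together with the exponential weight lower bound $A_\eps\gtrsim \exp(|\xi|^{2\varsigma}/4)$ of Lemma~\ref{lem:Aeps}(ii). That lower bound gives
\[
\|(1+|\xi|)^n f\|_{L^1}+\|(1+|\xi|)^n f\|_{L^p}\lesssim \|f\|_{\cX_\eps}\quad\text{for every } n \text{ and every } p\in[1,2].
\]
With these weighted $L^1\cap L^2$ bounds, the proof is the same as \cite[Lemma~4.5]{dolce2024long}. Split $\nabla\phi=\frac{1}{2\pi}\int \frac{(\xi-\eta)}{|\xi-\eta|^2}f(\eta)\,\dd\eta$ into the near region $|\eta-\xi|\le|\xi|/2$ and the far region. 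In the near region $|\eta|\sim|\xi|$, so the polynomial weight transfers to $f$. Hardy--Littlewood--Sobolev then gives the $L^q$ bound, and an interpolation $\|\nabla\phi\|_{L^\infty}\lesssim\|f\|_{L^{p}}^{1/2}\|f\|_{W^{1,p}}^{1/2}$ with $p$ slightly above $2$ gives \cref{bd:nablaLinfty}. In the far region the kernel is bounded by $|\xi|^{-1}$ and we use the weighted $L^1$ norm.

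The decay exponents $2$ and $3$ in $(1+|\cdot|)^{2}\phi$ and $(1+|\cdot|)^{3}\nabla\phi$ presuppose vanishing moments. I read the statement as implicitly assuming $\mrM(f)=0$ and $\mrm^1(f)$ controlled, as for $w,\widehat w$. Under that reading, expand $\ln|\xi-\eta|$ at $\eta=0$ to second order. The mean and first moments are killed, and the Taylor remainder is bounded by $|\eta|^2|\xi|^{-2}$ times weighted $L^1$ norms. Without the moment conditions one only gets the logarithmic-type bounds $(1+|\cdot|)^{-1}\phi\in L^q$, and I would state the lemma accordingly for the first term. This part is routine.

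The substantive part is \cref{bd:InnerLq}. For $\xi\in{\rm I}_\eps$ we have $|\xi|\le 2\eps^{-\sigma_1}$, while the translation is $Z/(\eps d)$ with $|Z|>c$, so $|\xi+Z/(\eps d)|\gtrsim \eps^{-1}$. Split $f=f\mathbbm 1_{|\eta|\le \eps^{-1}c/(4d)}+f\mathbbm 1_{|\eta|>\eps^{-1}c/(4d)}$.

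For the tail part, the weight gives $\|(1+|\eta|)^nf\mathbbm 1_{|\eta|\gtrsim\eps^{-1}}\|_{L^1\cap L^2}\lesssim e^{-c\eps^{-2\varsigma}}\|f\|_{\cX_\eps}$. Its potential and gradient are then controlled crudely, and a logarithmic singularity near the shifted origin is harmless because it lies outside ${\rm I}_\eps$ or carries the superpolynomially small factor. This contribution is $o(\eps^\infty)\|f\|_{\cX_\eps}$.

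For the main part, the kernel is smooth, so I Taylor expand $\ln|\xi+Z/(\eps d)-\eta|$ in $(\xi,\eta)$ around $0$ exactly as in Lemma~\ref{lem:expansionsR}. Constant terms are irrelevant to $\nabla$, and for $\phi$ itself the constant vanishes by $\mrM(f)=0$. The order-$\eps$ terms are $\frac{\eps}{R}c_{1,k}\Re(\xi^{1-k}\mrm^k(f))$. The $k=0$ piece carries $\mrM(f)=0$, and the $k=1$ piece is $\eps\,\mrm^1(f)$ with $\mrm^1_2(f)=0$, which gives the term $\eps|\mrm^1_1(f)|$. All order-$\eps^n$ terms with $n\ge2$ are bounded by $\eps^n|\xi|^{n-k}\|(1+|\eta|)^n f\|_{L^1}\lesssim\eps^n(1+|\xi|)^{n}\|f\|_{\cX_\eps}$, and the Taylor remainder is handled the same way.

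Taking $L^q({\rm I}_\eps)$ norms, the polynomial growth $(1+|\xi|)^n$ on $|\xi|\le2\eps^{-\sigma_1}$ costs at most $\eps^{-(n+2/q)\sigma_1}$. Here is the main obstacle: $\sigma_1$ must be small enough that $\eps^{n-(n+2/q)\sigma_1}\lesssim \eps^2$ for every $n\ge2$, including the Taylor remainder. Cutting the expansion at order $n_0$ with $n_0(1-\sigma_1)\ge 2+2\sigma_1/q$ makes the remainder term fine, and $\sigma_1$ is already declared to be updated as needed. For the gradient the lowest term is $\eps\nabla\Re(\xi\overline{\mrm^1})$, which is constant. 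After multiplying by the weight $(1+|\xi|)^{-1}$ and taking $L^q$ with $q>2$, it is integrable, which explains the extra $\eps^{-1}$ normalization in \cref{bd:InnerLq}.

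The same argument applies verbatim to $\cV_{\eps,Z}$ and $\cQ_{\eps,Z}$, since they are finite sums of rotated translates by $(Q_l-I)Z/(\eps d)$ and $Z/(\eps d)$, all of size $\gtrsim\eps^{-1}$. The coefficients ${\rm S}_{n,k}$ and the $N$-fold moment cancellations only improve the bound. The hatted case is identical with $\widehat A_\eps$.
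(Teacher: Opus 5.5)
\textbf{There is a gap in your treatment of the potential term in \cref{bd:InnerLq}.} Your approach (Biot--Savart splitting for the global bounds, Taylor expansion of the far field on ${\rm I}_\eps$) is the kind of argument the paper relies on. The paper itself gives no proof beyond citing \cite[Section A.4]{dolce2024long} and calling the extension to $\mrm_1^1(f)\neq0$ straightforward.

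You require $\sigma_1$ so small that $\eps^{n-(n+2/q)\sigma_1}\lesssim\eps^2$ for every $n\ge2$. At $n=2$ the exponent is $2-(2+2/q)\sigma_1<2$ for every $\sigma_1>0$, so this condition can never be met. The order-$\eps$ piece has the same problem. You bound it by $\eps|\mrm_1^1(f)|$, but it is the constant $c_{1,1}\frac{\eps}{R}\Re(\mrm^1(f))$, and a constant $c$ has $\|\mathbbm{1}_{{\rm I}_\eps}c\|_{L^q}=|c|\,|{\rm I}_\eps|^{1/q}$, where $|{\rm I}_\eps|^{1/q}$ (about $\eps^{-2\sigma_1/q}$) blows up.

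No smaller $\sigma_1$ or longer expansion fixes this, because the potential part of \cref{bd:InnerLq} is false as written. Take $f=-\de_1G$. Then $\mrM(f)=\mrm^1_2(f)=0$, $\mrm^1_1(f)=1$, $\|f\|_{\cX_\eps}\lesssim\|f\|_{\cY}$, and $\phi(x)=-\frac{x_1}{2\pi|x|^2}(1-e^{-|x|^2/4})$. Hence $\cT_{\eps,R}\phi=-\frac{\eps d}{2\pi R}\big(1+\cO(\eps^{1-\sigma_1})\big)$ on ${\rm I}_\eps$. The left-hand side of \cref{bd:InnerLq} is therefore $\gtrsim\eps\,|{\rm I}_\eps|^{1/q}\gg\eps$, while the right-hand side is $\cO(\eps)$. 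Vanishing first moments do not help: $f=\de_1^2G-\de_2^2G$ produces the nonzero constant $c_{2,2}\frac{\eps^2}{R^2}\Re(\mrm^2(f))$ on ${\rm I}_\eps$.

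\textbf{What your expansion does prove.}
\begin{itemize}
\item The gradient part of \cref{bd:InnerLq} holds exactly as stated, but your bookkeeping there is wrong. At order $\eps$ the only surviving term is the constant above, which $\nabla$ kills. The first nonzero gradient term is the constant vector $c_{2,1}\frac{\eps^2}{R^2}\nabla\Re(\xi\,\mrm^1(f))$, of size $\eps^2|\mrm_1^1(f)|$. The weight $(1+|\cdot|)^{-1}\in L^q$ ($q>2$) and the prefactor $\eps^{-1}$ turn this term into $\eps|\mrm_1^1(f)|$. A genuinely order-$\eps$ gradient, as you wrote, would give $\cO(|\mrm^1_1(f)|)$ and break the bound.
\item The potential part holds either with an extra factor $|{\rm I}_\eps|^{1/q}$, or with no loss if the weight $(1+|\cdot|)^{-1}$ is also placed on $\cT_{\eps,Z}\phi$. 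Either version suffices for Lemma~\ref{lem:MepsBeps}, where the factor $(1+|\xi|)$ can be moved onto $g$ at no cost.
\end{itemize}

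\textbf{On \cref{bd:LinftyLq,bd:nablaLinfty}.} Your reinterpretation needs $\mrm^1(f)=0$ exactly, not merely controlled, since otherwise $\phi\sim-\frac{\xi\cdot\mrm^1(f)}{2\pi|\xi|^2}$. The same $f=-\de_1G$ violates all three positively weighted bounds. Requiring $\mrm^1(f)=0$ excludes exactly the functions $w$ with $\mrm_1^1(w)\neq0$ that the lemma is meant for. What is used downstream holds without first-moment conditions: $\phi\in L^\infty\cap L^q$ when $\mrM(f)=0$, and $\nabla\phi\in L^q\cap L^\infty$.

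\textbf{On the $L^\infty$ interpolation.} You use $\|f\|_{W^{1,p}}$ with $p>2$, which $\|f\|_{\cX_\eps}+\|\nabla f\|_{\cX_\eps}$ does not control. Use instead $\|\nabla\phi\|_{L^\infty}\lesssim\|f\|_{L^{4/3}}^{1/2}\|f\|_{L^4}^{1/2}$ together with $\|f\|_{L^4}\lesssim\|f\|_{L^2}^{1/2}\|\nabla f\|_{L^2}^{1/2}$.
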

The proof with $\mrm_1^1(f)=0$ can be found in \cite[Section A.4]{dolce2024long}, and the extension to $\mrm^1_1(f)\neq 0$ is a straightforward consequence of that proof.

Finally, following the discussion in Section \ref{sec:ideas}, we modify the definition of the operator $\cM_{\rm app}$ and we introduce
\begin{align}
\label{def:Meps}
\cM_{\eps}=\begin{pmatrix}
       A_\eps+(I+\cQ_{\eps})\Delta^{-1}& \frac{\gamma}{N} \cV^*_{\eps}\Delta^{-1}
       \\[6pt]
       \frac{\gamma}{N}\cV_{\eps}\Delta^{-1}
        &\frac{\gamma}{N}(\widehat{A}_\eps+\gamma\Delta^{-1})
\end{pmatrix}.
 \end{align}
 Note that  $\cM_\eps=\cM_{\rm app}$ in the interior region $\boldsymbol{\rm I}_\eps$.
 It is also often convenient to isolate the ``lower-order entries'' of $\cM_\eps$  given by 
\begin{align}
\label{def:Beps}
\cK_{\eps}=\begin{pmatrix}
      \cQ_{\eps}\Delta^{-1}& \frac{\gamma}{N} \cV^*_{\eps}\Delta^{-1}
       \\[6pt]
       \frac{\gamma}{N}\cV_{\eps}\Delta^{-1}
        &0
\end{pmatrix},
 \end{align}
 so that we have the following splitting 
     \begin{align}
    \label{eq:splitMeps}
        \cM_{\eps}=\begin{pmatrix}
       A_\eps+\Delta^{-1}& 0
       \\[6pt]
       0
        &\frac{\gamma}{N}(\widehat{A}_\eps+\gamma\Delta^{-1})
\end{pmatrix}+\cK_{\eps}.
    \end{align}
We can then exploit  Lemmas \ref{lem:Aeps} and \ref{lem:bounds} to obtain the following useful properties.
\begin{lemma}
\label{lem:MepsBeps}
Let $\cM_{\eps},\cK_{\eps}:\boldsymbol{\cX}_\eps\to \boldsymbol{\cX}_\eps$ be the operators defined in \cref{def:Meps,def:Beps}. Then, $\cM_{\eps},\cK_{\eps}$ are self-adjoint with respect to the standard $L^2$-inner product. Moreover, under the hypotheses of Lemmas \ref{lem:Aeps} and \ref{lem:bounds}, there exists a constant $C>0$ such that
  \begin{align}
  \label{bd:MB0}&|\langle {\mathbf g},\cM_{\eps}({\mathbf f})\rangle|\leq C \|{\mathbf f}\|_{\boldsymbol{\cX}_\eps}\|{\mathbf g}\|_{\boldsymbol{\cX}_\eps}\\
  \label{bd:MB}&|\langle {\mathbf g},\cK_{\eps}({\mathbf f})\rangle|+\eps^{-1}|\langle {\mathbf g},\nabla\cK_{\eps}({\mathbf f})\rangle|\leq C\eps(\eps \|{\mathbf f}\|_{\boldsymbol{\cX}_\eps}+|\mrm_1^1(f)|)\|{\mathbf g}\|_{\boldsymbol{\cX}_\eps},
\end{align}
for any ${\mathbf g}\in \boldsymbol{\cX}_{\eps}$ and any ${\mathbf f}=(f,\widehat{f})$ with $\mrM(f)=\mrm^1_2(f)=0$ as in Lemma \ref{lem:bounds}.
\end{lemma}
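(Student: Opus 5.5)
Self-adjointness follows directly from Lemma~\ref{lem:adjoints}. In the splitting \cref{eq:splitMeps}, the diagonal part consists of multiplication by the real weights $A_\eps,\widehat{A}_\eps$ plus $\Delta^{-1}$, and both are symmetric in $L^2$. In $\cK_\eps$, the entry $\cQ_\eps\Delta^{-1}$ is symmetric because $\cQ_\eps^*=\cQ_\eps$ and $[\cQ_\eps,\Delta]=0$. The off-diagonal entries $\frac{\gamma}{N}\cV_\eps^*\Delta^{-1}$ and $\frac{\gamma}{N}\cV_\eps\Delta^{-1}$ are $L^2$-adjoint to each other, since $(\cV_\eps\Delta^{-1})^*=\Delta^{-1}\cV_\eps^*=\cV_\eps^*\Delta^{-1}$ by the commutation $[\cV^*_\eps,\Delta]=0$. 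One point needs care: $\cV_\eps\Delta^{-1}$ must map into $N$-fold symmetric functions so that $\cM_\eps$ acts on $\boldsymbol{\cX}_\eps$. This holds because $\cV_\eps f(Q_k\xi)$ is a reindexing of the sum over $l$. Hence $\cM_\eps$ and $\cK_\eps$ are self-adjoint.

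For \cref{bd:MB0}, I will bound each piece separately. The weight part gives $|\langle g,A_\eps f\rangle|\le \|f\|_{\cX_\eps}\|g\|_{\cX_\eps}$ by Cauchy--Schwarz in the weighted inner product. For the $\Delta^{-1}$ part, I will pair $g$ against $\phi=\Delta^{-1}f$ and use \cref{bd:LinftyLq}: $\|(1+|\xi|)^2\phi\|_{L^\infty}\lesssim\|f\|_{\cX_\eps}$. Together with $\|g\|_{L^1}\lesssim\|g\|_{\cX_\eps}$ from Lemma~\ref{lem:Aeps}, this gives $|\langle g,\Delta^{-1}f\rangle|\lesssim \|f\|_{\cX_\eps}\|g\|_{\cX_\eps}$. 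The weighted $L^\infty$ bound requires $\mrM(f)=0$; when the mean is nonzero I will instead use the $L^q$ bound on $(1+|\cdot|)^{-1}\phi$ paired with $(1+|\cdot|)g\in L^{q'}$, which is controlled by the exponential weight. The $\cK_\eps$ entries are handled by the same argument once \cref{bd:MB} is available.

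For \cref{bd:MB}, the plan is to exploit that the operators in $\cK_\eps$ evaluate $\Delta^{-1}f$ at distance $\sim 1/\eps$, where its nonconstant part is small. I split $\langle g,\cQ_\eps\Delta^{-1}f\rangle$ into the region ${\rm I}_\eps$ and its complement. On ${\rm I}_\eps$, \cref{bd:InnerLq} (with $\cQ_\eps$, valid since $\mrM(f)=\mrm^1_2(f)=0$) gives the $L^q$ bound $\eps(\eps\|f\|_{\cX_\eps}+|\mrm^1_1(f)|)$. Hölder then closes the estimate using $\|g\|_{L^{q'}}\lesssim\|g\|_{\cX_\eps}$ from Lemma~\ref{lem:Aeps}. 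The gradient term is handled the same way, using the extra factor $\eps^{-1}$ and the weight $(1+|\cdot|)$, which is absorbed into $g$.

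Outside ${\rm I}_\eps$ we have $|\xi|\gtrsim\eps^{-\sigma_1}$, because $F'(\Omega_{\rm app,E})\approx A_0$ is large only far out. There $A_\eps\ge \exp(c\,\eps^{-2\sigma_1\varsigma})$, so $\|\mathbbm{1}_{{\rm I}_\eps^c}(1+|\cdot|)^n g\|_{L^1}\lesssim \eps^{\infty}\|g\|_{\cX_\eps}$. Combined with the crude bound $\|(1+|\cdot|)^{-1}\cQ_\eps\Delta^{-1}f\|_{L^\infty}\lesssim |\log\eps|\,\|f\|_{\cX_\eps}$, this contribution is negligible.

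The $\cV_\eps$ and $\cV^*_\eps$ entries are treated identically by the final sentence of Lemma~\ref{lem:bounds}. For $\cV_\eps^*\Delta^{-1}\widehat f$ with $\widehat f$ $N$-fold symmetric, I rewrite it as $N\cT_\eps\Delta^{-1}\widehat f$. Here $\mrm^1(\widehat f)=0$ automatically, which is why only $\mrm^1_1(f)$ enters the final bound.

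The main obstacle is the outer-region bookkeeping. I need to check that ${\rm I}_\eps$ really contains a ball of radius $\gtrsim\eps^{-\sigma_1}$, so that the super-polynomial gain from $A_\eps$ beats the logarithmic growth of the translated streamfunctions. If this fails, I would fall back on the $L^q$ bounds of \cref{bd:LinftyLq} applied to the translated function.
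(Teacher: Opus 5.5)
Your proposal is correct and follows essentially the paper's proof: self-adjointness from Lemma~\ref{lem:adjoints}, H\"older with Lemmas~\ref{lem:Aeps} and~\ref{lem:bounds} for the $\Delta^{-1}$ part, and for $\cK_\eps$ a split into ${\rm I}_\eps$, handled by \cref{bd:InnerLq}, and ${\rm I}_\eps^c$, handled by the super-polynomial size of $A_\eps$. As in the paper, you also tacitly need $\mrM(\widehat f)=0$ for the $\cV_\eps^*\Delta^{-1}\widehat f$ entry, since otherwise $\cT_\eps\Delta^{-1}\widehat f$ carries the constant $\frac{\mrM(\widehat f)}{2\pi}\log\frac{R}{\eps d}$. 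The obstacle you flag is not actually needed: up to a null set ${\rm I}_\eps^c={\rm II}_\eps\cup{\rm III}_\eps$, and by definition $A_\eps=\exp(\eps^{-2\sigma_1}/4)$ on ${\rm II}_\eps\subset\{|\xi|<\eps^{-\sigma_2}\}$ while $A_\eps=\exp(|\xi|^{2\varsigma}/4)\ge\exp(\eps^{-2\sigma_1}/4)$ on ${\rm III}_\eps$, so the $\cO(\eps^\infty)$ gain is immediate; for the gradient term there, pair with the $L^q$ bound on $\nabla\Delta^{-1}f$ rather than an $L^\infty$ bound, because $\nabla\Delta^{-1}f$ need not be bounded for $f\in\cX_\eps$.
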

\begin{proof}
    The fact that $\cM_{\eps},\cK_{\eps}$ are self-adjoint in $L^2$  is a direct consequence of Lemma~\ref{lem:adjoints}. To prove the bound \cref{bd:MB}, we first observe that by Lemma \ref{lem:bounds} and the H\"older inequality with exponents $1/p+1/q=1$ and $q>2$, we have 
    \begin{align}
        |\langle {\mathbf g},\Delta^{-1}{\mathbf f}\rangle|\lesssim \|{\mathbf g}\|_{L^p}\|\Delta^{-1}{\mathbf f}\|_{L^q}\lesssim\|{\mathbf f}\|_{\boldsymbol{\cX}_\eps}\|{\mathbf g}\|_{\boldsymbol{\cX}_\eps} 
    \end{align}
    where in the last inequality we used also Lemma \ref{lem:Aeps} since $p<2$. Thus, by the splitting \cref{eq:splitMeps} we get 
  \begin{align}
  |\langle {\mathbf g},\cM_{\eps}({\mathbf f})\rangle|\lesssim \|{\mathbf f}\|_{\boldsymbol{\cX}_\eps}\|{\mathbf g}\|_{\boldsymbol{\cX}_\eps}+|\langle {\mathbf g},\cK_{\eps}({\mathbf f})\rangle|.
\end{align}
It remains to control the term involving the operator $\cK_{\eps}$.  Let us consider for instance the term involving $\cQ_{\eps}\Delta^{-1}$, since the others are analogous. Splitting the domain and applying the H\"older inequality, observe that 
\begin{align}
    |\langle g,\cQ_{\eps}\Delta^{-1}f\rangle|\lesssim \|\mathbbm{1}_{{\rm I}_\eps}g\|_{L^p}\|\mathbbm{1}_{{\rm I}_\eps}\cQ_{\eps}\Delta^{-1}f\|_{L^q}+\big\|\mathbbm{1}_{{\rm I}^c_\eps}A^{\frac12}_\eps g\big\|\|\mathbbm{1}_{{\rm I}_\eps^c}A^{-\frac12}_\eps\cQ_{\eps}\Delta^{-1}f\|,\end{align}
    where $1/p+1/q=1$ with $q>2$.
    To bound the first term on the right-hand side above, we use \cref{bd:InnerLq} and Lemma \ref{lem:Aeps}. For the second term, we bound $\cQ_{\eps}\Delta^{-1}f$ in $L^\infty$ thanks to \cref{bd:LinftyLq} and we use the fast decay properties of $A_{\eps}^{-1}$ to get that $\mathbbm{1}_{{\rm I}_\eps^c}A_{\eps}^{-\frac12}(\xi)\lesssim \cO(\eps^\infty)(1+|\xi|)^{-2}$ . Overall, we have  
\begin{align}
    |\langle g,\cQ_{\eps}\Delta^{-1}f\rangle|\lesssim \eps\|g\|_{\cX_\eps}(\eps\|f\|_{\cX_\eps}+|\mrm^1_1(f)|)\lesssim \eps\|g\|_{\cX_\eps}\|f\|_{\cX_\eps},
    \end{align}
    where in the last bound we used Remark \ref{rem:momXeps}.
    For the other terms in $\cK_\eps$ we can perform analogous estimates and therefore we deduce that \cref{bd:MB} holds true.
    To prove the bound involving $\nabla\cK_\eps$, observe that 
    \begin{align}
    |\langle g,\nabla \cQ_{\eps}\Delta^{-1}f\rangle|\lesssim \, &\|\mathbbm{1}_{{\rm I}_\eps}(1+|\cdot|)g\|_{L^p}\|\|\mathbbm{1}_{{\rm I}_\eps}(1+|\cdot|)^{-1}\nabla\cQ_{\eps}\Delta^{-1}f\|_{L^q}\\
    &+\big\|\mathbbm{1}_{{\rm I}^c_\eps}A^{\frac12}_\eps g\big\|\|\mathbbm{1}_{{\rm I}_\eps^c}A^{-\frac12}_\eps\nabla\cQ_{\eps}\Delta^{-1}f\|.\end{align}
    In the first term, we gain an extra factor of $\eps$ thanks to \cref{bd:InnerLq}. For the second term, it is enough to apply H\"older inequality with $1/q+1/p=1/2$ in the term involving $A^{-\frac12}_\eps$ and combine the properties in Lemma \ref{lem:bounds} with the exponential decay of $A_\eps^{-\frac12}$ outside the inner region. Hence, it is not hard to conclude that \cref{bd:MB} holds true.
    \end{proof}
    Finally, to end this section let us formalize the property claimed in \cref{eq:heuristic1} as follows.
    \begin{lemma}
    \label{lem:keykernel}
        Let $\cM_{\eps}$ be defined as in \cref{def:Meps}. Then, for any $\mathbf{f}\in \boldsymbol{\cX}_\eps$
        \begin{align}
    \Big\langle\cM_{\eps}\begin{pmatrix}
        \{|\cT_{\eps} \xi|^2,\Omega_{\app,E}\}\\
        \{|\xi|^2,\widehat{\Omega}_{\app,E}\}
    \end{pmatrix}, \begin{pmatrix}
        \mathbbm{1}_{{\rm I}_\eps}f\\
        \mathbbm{1}_{\widehat{{\rm I}}_\eps}\widehat{f}
\end{pmatrix}\Big\rangle=\Big\langle\begin{pmatrix}
        \{|\cT_{\eps} \xi|^2,\Theta\}\\
        \{|\xi|^2,\widehat{\Theta}\}
    \end{pmatrix},\begin{pmatrix}
        \mathbbm{1}_{{\rm I}_\eps}f\\
        \mathbbm{1}_{\widehat{{\rm I}}_\eps}\widehat{f}
\end{pmatrix}\Big\rangle
        \end{align}
        where $\Theta, \widehat{\Theta}$ are the functions defined in Proposition \ref{prop:functional}.
    \end{lemma}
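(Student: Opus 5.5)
Since the test functions are localized to the interior regions ${\rm I}_\eps$, $\widehat{\rm I}_\eps$, I will compute $\cM_\eps$ applied to the rotation element only where it is tested, and there use $A_\eps=F'(\Omega_{\rm app,E})$ and $\widehat A_\eps=\widehat F'(\widehat\Omega_{\rm app,E})$. Set $\mathbf{h}=(h,\widehat h)$ with $h=\{|\cT_\eps\xi|^2,\Omega_{\rm app,E}\}$ and $\widehat h=\{|\xi|^2,\widehat\Omega_{\rm app,E}\}$. Note $\widehat h$ is $N$-fold symmetric and has zero mean. First, on ${\rm I}_\eps$ the chain rule gives
\[
A_\eps h=\{|\cT_\eps\xi|^2,F(\Omega_{\rm app,E})\}=\{|\cT_\eps\xi|^2,\Theta\}-\{|\cT_\eps\xi|^2,\Phi_{\rm app,E}\},
\]
and similarly $\frac{\gamma}{N}\widehat A_\eps\widehat h=\frac{\gamma}{N}\big(\{|\xi|^2,\widehat\Theta\}-\{|\xi|^2,\widehat\Phi_{\rm app,E}\}\big)$ on $\widehat{\rm I}_\eps$. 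Here I use the definition of $\Theta,\widehat\Theta$ in Proposition~\ref{prop:functional}. The $\frac{\gamma}{N}$ factor on the second component must be matched against the normalisation of the pairing, so I will keep track of it. If needed, I will reinterpret $\widehat\Theta$ with that weight; the statement presumably absorbs this constant.

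The main step is to show that the nonlocal terms exactly cancel $-\{|\cT_\eps\xi|^2,\Phi_{\rm app,E}\}$. Expanding $\Phi_{\rm app,E}$ from \cref{def:PhiappE}, the term $-\frac{\eps d}{2R}\Im\cB\,|\cT_\eps\xi|^2$ and the constant are annihilated by $\{|\cT_\eps\xi|^2,\cdot\}$. This leaves
\[
\{|\cT_\eps\xi|^2,(I+\cQ_\eps)\Psi_{\rm app,E}\}+\gamma\{|\cT_\eps\xi|^2,\cT_\eps\widehat\Psi_{\rm app,E}\}.
\]
The first row of $\cM_\eps\mathbf h$ (off the $A_\eps$ part) is
\[
(I+\cQ_\eps)\Delta^{-1}h+\tfrac{\gamma}{N}\cV_\eps^*\Delta^{-1}\widehat h .
\]
Now $\{|\cT_\eps\xi|^2,\cdot\}=\{|\xi|^2,\cdot\}+\frac{2R}{\eps d}\partial_2$, which is a rotation generator. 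Rotation generators commute with $\Delta^{-1}$ (for functions of zero mean, up to constants), so $\Delta^{-1}h=\{|\cT_\eps\xi|^2,\Psi_{\rm app,E}\}$. Then \cref{key:Q} gives $\cQ_\eps\Delta^{-1}h=\{|\cT_\eps\xi|^2,\cQ_\eps\Psi_{\rm app,E}\}$. Likewise, \cref{key:V*} combined with $\cV^*_\eps=N\cT_\eps$ on $N$-fold symmetric functions gives $\frac{\gamma}{N}\cV^*_\eps\Delta^{-1}\widehat h=\gamma\{|\cT_\eps\xi|^2,\cT_\eps\widehat\Psi_{\rm app,E}\}$. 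Hence the first component of $\cM_\eps\mathbf h$ equals $\{|\cT_\eps\xi|^2,\Theta\}$ on ${\rm I}_\eps$. The second row is handled the same way. I use \cref{key:V} for $\frac{\gamma}{N}\cV_\eps\Delta^{-1}h=\frac{\gamma}{N}\{|\xi|^2,\cV_\eps\Psi_{\rm app,E}\}$, and commutation of $\{|\xi|^2,\cdot\}$ with $\Delta^{-1}$ for the $\frac{\gamma^2}{N}\Delta^{-1}\widehat h$ term. These cancel the $\widehat\Phi_{\rm app,E}$ contributions from \cref{def:PhihappE}, leaving $\frac{\gamma}{N}\{|\xi|^2,\widehat\Theta\}$ on $\widehat{\rm I}_\eps$.

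The main obstacle is justifying the commutation $\Delta^{-1}\{|\cT_\eps\xi|^2,\Omega\}=\{|\cT_\eps\xi|^2,\Delta^{-1}\Omega\}$ precisely, including additive constants. For $\{|\xi|^2,\cdot\}=-2\partial_\theta$ it holds exactly, because the logarithmic kernel is rotation invariant and $\partial_\theta$ of a function has zero mean. For the $\partial_2$ part, translation invariance gives it. Constants are irrelevant anyway, because in the bracket they disappear. The other delicate point is checking the signs and the $\cT_\eps$ versus $\cT_\eps^*$ conventions in \cref{key:Q,key:V,key:V*}, so that the identities apply to $\Psi$ rather than $\Omega$. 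This again reduces to the commutations $[\cQ_\eps,\Delta]=[\cV_\eps,\Delta]=0$ from Lemma~\ref{lem:adjoints}. Finally, pairing with $(\mathbbm 1_{{\rm I}_\eps}f,\mathbbm 1_{\widehat{\rm I}_\eps}\widehat f)$ gives the claim, since only values on the interior regions enter.
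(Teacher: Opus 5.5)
Your proposal is correct and follows the paper's proof essentially step by step. On the interior regions you use $A_\eps=F'(\Omega_{\rm app,E})$ and $\widehat A_\eps=\widehat F'(\widehat\Omega_{\rm app,E})$ to rewrite the local entries as brackets with $F(\Omega_{\rm app,E})$ and $\widehat F(\widehat\Omega_{\rm app,E})$. You then move the rotation generators through the nonlocal entries via \cref{key:Q,key:V,key:V*} together with $\cV^*_\eps=N\cT_\eps$ on $N$-fold symmetric functions. Recognizing $\Phi_{\rm app,E}$ and $\widehat\Phi_{\rm app,E}$ then leaves only $\Theta$ and $\widehat\Theta$.

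Your check that $\Delta^{-1}$ commutes exactly with $\{|\cT_\eps\xi|^2,\cdot\}$, by rotation and translation invariance of the logarithmic kernel, fills in a step the paper leaves implicit. You are also right that the second component comes out as $\frac{\gamma}{N}\{|\xi|^2,\widehat\Theta\}$. The paper's own intermediate display carries this factor before dropping it, and it is harmless for the later bound on ${\rm F}_2$.
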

    \begin{proof}
        We check the identity component wise for the term involving $\cM_{\eps}$. Since we are looking at the first component in the interior region ${\rm I}_\eps$ and the second on $\widehat{{\rm I}}_{\eps}$, we know that $A_{\eps}=F'(\Omega_{\rm app,E})$ and $\widehat{A}_{\eps}=\widehat{F}'(\widehat{\Omega}_{\rm app,E})$. Thus, on the support of the integral we can write
        \begin{align}
    &\cM_{\eps}\begin{pmatrix}
        \{|\cT_{\eps} \xi|^2,\Omega_{\app,E}\}\\
        \{|\xi|^2,\widehat{\Omega}_{\app,E}\}
\end{pmatrix}\\
&=\begin{pmatrix}
        \{|\cT_{\eps} \xi|^2,F(\Omega_{\app,E})\}+(I+\cQ_{\eps})\Delta^{-1}\{|\cT_{\eps} \xi|^2,\Omega_{\app,E}\}+\frac{\gamma}{N}\cV_{\eps}^*\Delta^{-1}\{| \xi|^2,\widehat{\Omega}_{\app,E}\}\\
          \frac{\gamma}{N}\{|\xi|^2,\widehat{F}(\widehat{\Omega}_{\app,E})\}+\frac{\gamma^2}{N}\Delta^{-1}\{| \xi|^2,\widehat{\Omega}_{\app,E}\}+\frac{\gamma}{N}\cV_{\eps}\Delta^{-1}\{| \cT_{\eps}\xi|^2,{\Omega}_{\app,E}\}
    \end{pmatrix}.
        \end{align}
        Appealing to Lemma \ref{lem:adjoints} and using \cref{key:Q,key:V*,key:V}, we have that 
        \begin{align}
        &  \begin{pmatrix}
    (I+\cQ_{\eps})\Delta^{-1}\{|\cT_{\eps} \xi|^2,\Omega_{\app,E}\}+\frac{\gamma}{N}\cV_{\eps}^*\Delta^{-1}\{| \xi|^2,\widehat{\Omega}_{\app,E}\}\\
       \frac{\gamma^2}{N}\Delta^{-1}\{| \xi|^2,\widehat{\Omega}_{\app,E}\}+\frac{\gamma}{N}\cV_{\eps}\Delta^{-1}\{| \cT_{\eps}\xi|^2,{\Omega}_{\app,E}\}    \end{pmatrix}\\
       =\, &
    \begin{pmatrix}
    \{|\cT_{\eps} \xi|^2,(I+\cQ_{\eps})\Delta^{-1}\Omega_{\app,E}+\frac{\gamma}{N}\cV_{\eps}^*\Delta^{-1}\widehat{\Omega}_{\app,E}\}\\
       \frac{\gamma}{N}\{| \xi|^2,\gamma \Delta^{-1}\widehat{\Omega}_{\app,E}+\cV_{\eps}\Delta^{-1}{\Omega}_{\app,E}\}.    \end{pmatrix}
        \end{align}
        Recalling the definition of $\Phi_{\rm app,E},\widehat{\Phi}_{\rm app,E}$ in \cref{def:PhiappE,def:PhihappE}, since  $\cT_{\eps}\Delta^{-1}\widehat{\Omega}_{\rm app,E}=N^{-1}\cV_\eps^*\Delta^{-1}\widehat{\Omega}_{\rm app,E}$, we get that on the support of the integral
        \begin{align}
\cM_{\eps}\begin{pmatrix}
        \{|\cT_{\eps} \xi|^2,\Omega_{\app,E}\}\\
        \{|\xi|^2,\widehat{\Omega}_{\app,E}\}
\end{pmatrix}=\begin{pmatrix}
        \{|\cT_{\eps} \xi|^2,F(\Omega_{\app,E})+\Phi_{\rm app,E}\}\\
        \{|\xi|^2,\widehat{F}(\widehat{\Omega}_{\app,E})+\widehat{\Phi}_{\rm app,E}\}
\end{pmatrix}.
        \end{align}
        The proof of the lemma is then a direct consequence of the definition of $\Theta,\widehat{\Theta}$ in Proposition \ref{prop:functional}.
    \end{proof}
\subsection{Bound on the angular speed correction}
Our first task is to bound the correction to the approximate angular speed $\mathfrak{b}_\eps$ defined in \cref{def:frakb}.  This bound is reminiscent of what was done in \cite[Lemma 4.7]{dolce2024long}, since in that case one fixes the moment $\mrm^1_2(w)$ by a nonlinear correction to the vertical speed of the dipole. In our case, we can obtain better estimates compared to \cite[Lemma 4.7]{dolce2024long} because the approximate angular speed is exactly the one given by the approximate solution. This is reflected in the fact that for $w=\widehat{w}=0$ we get $\mathfrak{b}_\eps=0$, whereas in the dipole case in \cite{dolce2024long} there are errors left due to the iterative construction of the approximate vertical speed. 
\begin{lemma}
\label{lem:beta}
    Let $\mathfrak{b}_\eps(\cdot)$ be defined as in \cref{def:frakb}. Under the bootstrap hypothesis \cref{bootstrap}, there exists a constant $C>0$ such that for all $t\in [0,T)$
    \begin{align}
    |\mathfrak{b}_{\eps}(\Psi_{\rm app},\widehat{\Psi}_{\rm app}, \varphi,\widehat{\varphi})(t)|\leq C \big(\eps^3(|\mrm^1_1(w)|+\eps\|{\mathbf w}\|_{\boldsymbol{\cX}_\eps})(1+\delta \|w\|_{\cX_\eps})\big)(t)\lesssim \eps^6(t).
    \end{align}
\end{lemma}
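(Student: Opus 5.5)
The bound on $\mathfrak{b}_\eps$ will be proved term by term from the definition \cref{def:frakb}, whose three contributions are
\begin{equation}
\cB_\eps(\Psi_{\rm app},\widehat{\Psi}_{\rm app},\varphi),\qquad \cB_\eps(\varphi,\widehat{\varphi},\Psi_{\rm app}+\delta\varphi),\qquad \frac{\eps d\,\mrm^1_1(w)}{R_{\rm app}+\delta\eps d\,\mrm^1_1(w)}\,\cB_\eps(\Psi_{\rm app}+\delta\varphi,\widehat{\Psi}_{\rm app}+\delta\widehat{\varphi}).
\end{equation}
All of them carry the prefactor $d\eps/R_{\rm app}$, and $R_{\rm app}\approx\mathsf{r}$ by \ref{constraint:4}. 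The guiding principle is the moment structure of the perturbation: $\mrM(w)=\mrM(\widehat w)=0$, $\mrm^1_2(w)=0$, $\mrm^1(\widehat w)=0$ by \cref{momhw}, and only $\mrm^1_1(w)$ may be nonzero. Each time an operator from Lemma~\ref{lem:expansionsR} acts on $\varphi$ or $\widehat\varphi$, we therefore expect a gain of $\eps(\eps\|f\|+|\mrm^1_1|)$, exactly as in \cref{bd:InnerLq}.

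For the first term I would write $\cB_\eps(\Psi_{\rm app},\widehat\Psi_{\rm app},\varphi)=\int\nabla^\perp(\cQ_\eps\Psi_{\rm app}+\gamma\cT_\eps\widehat\Psi_{\rm app})\,w\,\dd\xi$. I would expand the gradient via Lemma~\ref{lem:expansionsR}. The order-$\eps$ part is a constant vector (since $\mrm^k(\Omega_{\rm app})=0$ for $k=1$, it comes from the $\Re(\xi)$ term), and integrated against $w$ it gives $\eps\,\mrm^0(w)=0$. The order-$\eps^2$ part is linear in $\xi$, and since $\mrm^1_2(w)=0$ it contributes $\eps^2|\mrm^1_1(w)|$. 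Everything after that is $\cO(\eps^3)$ with polynomial growth, and it is bounded by $\eps^3\|w\|_{\cX_\eps}$ via Remark~\ref{rem:momXeps}. Including the prefactor $\eps$, this term is $\lesssim\eps^3(|\mrm^1_1(w)|+\eps\|w\|_{\cX_\eps})$.

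For the second term I would use $L^2$ self-adjointness: $\cQ_\eps^*=\cQ_\eps$, $\cT_\eps^*=\cT_{-\eps}$, and $\cV^*_\eps$ on $N$-fold symmetric functions, all from Lemma~\ref{lem:adjoints}. This moves the operator onto $\Omega_{\rm app}+\delta w$, and I would integrate by parts so that a derivative falls on $\cQ_\eps\varphi$ or $\cT_\eps\widehat\varphi$. Then Lemma~\ref{lem:MepsBeps}, in its $\nabla\cK_\eps$ form, gives $\lesssim\eps^2(\eps\|\mathbf w\|+|\mrm^1_1(w)|)\,\|\Omega_{\rm app}+\delta w\|_{\cX_\eps}$, and $\|\Omega_{\rm app}\|_{\cX_\eps}\lesssim1$. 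The factor $(1+\delta\|w\|_{\cX_\eps})$ in the statement comes precisely from the $\delta w$ part.

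For the third term, \cref{eq:smallA} gives $\cB_\eps=\cO(\eps)$ on the approximate part. The $\delta\varphi$ corrections are of the same or smaller size, by the previous two steps. The term is therefore $\lesssim\eps\cdot\eps|\mrm^1_1(w)|\cdot\eps$, which is again of the required form.

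The last inequality $\lesssim\eps^6$ then follows from \cref{bootstrap}, which gives $\|\mathbf w\|\lesssim\eps^2$, and from \cref{bd:m11boot}, which gives $|\mrm^1_1(w)|\lesssim\eps^3$; finally $\delta\eps^2\ll1$.

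The main obstacle is the second term. There the operator acts on the unknown $\varphi$, and extracting the extra power of $\eps$ requires the localized inner-region estimate \cref{bd:InnerLq}, together with control of the outer regions through the decay of $A_\eps^{-1/2}$. This is exactly what Lemma~\ref{lem:MepsBeps} packages, so I would first try to reduce that term to it.
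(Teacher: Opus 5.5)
Your proposal is correct and follows the paper's proof step by step. You use the same three-term split of \cref{def:frakb}, and for the first term the same expansion of $\nabla(\cQ_\eps\Psi_{\rm app}+\gamma\cT_\eps\widehat\Psi_{\rm app})$ paired against $w$, using $\mrM(w)=\mrm^1_2(w)=0$. For the second term you likewise rely on Lemma~\ref{lem:MepsBeps}, and for the third on $\cB_\eps(\Psi_{\rm app},\widehat\Psi_{\rm app})=\cO(\eps)$.

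Two small points. In the second term no adjoint manipulation is needed. The derivative already sits on $\cQ_\eps\varphi+\gamma\cT_\eps\widehat\varphi$, so the term is exactly $\langle\partial_1\cK_\eps(\mathbf w),(\Omega_{\rm app}+\delta w,0)\rangle$ once you identify $\gamma\cT_\eps\widehat\varphi=\frac{\gamma}{N}\cV_\eps^*\widehat\varphi$. In the third term you should also state the lower bound $R_{\rm app}+\delta\eps d\,\mrm^1_1(w)\gtrsim\mathsf r$ on the denominator, which follows from \ref{constraint:4} and \cref{bd:m11boot}.
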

\begin{proof}
We bound the term in bracket in \cref{def:frakb}. To this end, note that
     \begin{align}
    \label{eq:AtoBeps}
\Im(\cB_\eps(h,\widehat{h},\widetilde{h}))=\Im(\cB_{\eps}({\mathbf h},\widetilde{h}))=\langle \de_1\cK_\eps(\Delta{\mathbf h}),(\Delta\widetilde{h},0)\rangle.
    \end{align}
Then, by Lemma \ref{lem:expansionsR} we can expand $\de_1\cK_{\eps}(\boldsymbol{\Omega}_{\rm app})$ and use $\mrm^1(\Omega_{\rm app})=(0,0)$ to see that 
    \begin{align}
    |\Im(\cB_{\eps}(\Psi_{\rm app},\widehat{\Psi}_{\rm app}, \varphi))|&\lesssim \eps |\mrM(w)|+\eps^2|\mrm^1_1(w)|+\eps^3\|w\|_{\cX_\eps}\\
    &\lesssim \eps^2(|\mrm^1_1(w)|+ \eps\|w\|_{\cX_\eps}).
    \end{align}
For the second term in bracket in \cref{def:frakb}, we directly combine the identity \cref{eq:AtoBeps} with Lemma \ref{lem:MepsBeps} to get
\begin{align}
    |\Im\cB_{\eps}((\varphi,\widehat{\varphi},\Psi_{\rm app}+\delta \varphi))|\lesssim  \eps^2(|\mrm^1_1(w)|+\eps\|{\mathbf w}\|_{\boldsymbol{\cX}_\eps})(1+\delta \|w\|_{\cX_\eps}).
\end{align}
Finally, for the last term we observe that by the constraint \cref{constraint:4} and \cref{bd:m11boot} we get
\begin{align}
|R_{\rm app}+\delta \eps d \mrm^1_1(w(t))|\gtrsim \mathsf{r}
\end{align}
Since also $|\cB_{\eps}(\Psi_{\rm app},\widehat{\Psi}_{\rm app})|\lesssim \eps$, arguing as for the other terms it is not hard to get 
\begin{align}
    \frac{\eps d|\mrm^1_1(w)|}{|R_{\rm app}+\delta \eps d\mrm^1_1(w)|}|\cB_{\eps}(\Psi_{\rm app}+\delta \varphi,\widehat{\Psi}_{\rm app}+\delta \widehat{\varphi})|\lesssim  \eps^2(|\mrm^1_1(w)|+\eps\|{\mathbf w}\|_{\boldsymbol{\cX}_\eps})(1+\delta \|w\|_{\cX_\eps}).
\end{align}
Hence, the proof follows by plugging the bounds above in \cref{def:frakb} and the bound with $\eps^4$ is a consequence of the bootstrap hypothesis.
\end{proof}
As a corollary of the bound in \cref{lem:beta}, we can verify the formula for the vertical speed in \cref{eq:expRa}.
\begin{corollary}
\label{cor:alpha}
    Under the hypotheses of Lemma \ref{lem:beta}, the formula for $\alpha'$ in \cref{eq:expRa} holds true.
\end{corollary}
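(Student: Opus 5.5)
Starting from \cref{def:alphaNL1}, the correction to the angular speed splits as
\begin{equation}
t\alpha'(t)=t\alpha'_{\rm app}(t)+\mathfrak{b}_\eps(\Psi_{\rm app},\widehat{\Psi}_{\rm app},\varphi,\widehat{\varphi})(t),
\end{equation}
so the task is to compare the relative size of $\mathfrak{b}_\eps/t$ with the expansion of $\alpha'_{\rm app}$ from Proposition~\ref{prop:deviation}. That proposition gives $(\Mapp{\alpha}{M})'=\mathsf{a}\big(1+\alpha_4\eps^4+\cO(\eps^5+\delta\eps^4)\big)$ on $t\le C\delta^{-1}T_\adv$, which contains our interval $[0,T_\adv\delta^{-\sigma})$ once $\delta$ is small. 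It therefore suffices to show $|\mathfrak{b}_\eps(t)|/t\lesssim \mathsf{a}\,\eps^5(t)$, or at least $\mathsf{a}(\eps^5+\delta\eps^4)$.

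Lemma~\ref{lem:beta} together with the bootstrap bound \cref{bootstrap} (or the conclusion of Theorem~\ref{th:mainNL}) gives
\begin{equation}
|\mathfrak{b}_\eps|\lesssim \eps^3\big(|\mrm^1_1(w)|+\eps\|\mathbf w\|_{\boldsymbol{\cX}_\eps}\big)(1+\delta\|w\|_{\cX_\eps}).
\end{equation}
For the first moment I will use \cref{bd:m11boot}, i.e. $|\mrm^1_1(w)|\lesssim\eps^3$, obtained from Lemma~\ref{lem:keymom} with $\delta^{-2}\eps^{M+2}\ll\eps^3$ by \cref{fixM}. With $\|\mathbf w\|_{\boldsymbol{\cX}_\eps}\lesssim\eps^2$ this yields $|\mathfrak{b}_\eps|\lesssim\eps^6$.

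To convert this into a relative error, recall $\eps^2=\delta t/T_\adv$, hence $1/t=\delta/(T_\adv\eps^2)$, and $\mathsf{a}\simeq 1/T_\adv$ with constants depending only on $N,\gamma,d/\mathsf{r}$, which are fixed. Then
\begin{equation}
\frac{|\mathfrak{b}_\eps|}{t}\lesssim \frac{\delta\eps^6}{T_\adv\eps^2}\simeq \mathsf{a}\,\delta\eps^4,
\end{equation}
which is absorbed in the error term $\cO(\eps^5+\delta\eps^4)$ of \cref{eq:expRa}. Adding this to the expansion of $\alpha'_{\rm app}$ gives the claimed formula, and the case $\mathsf{a}=0$ (i.e. $N-1+2\gamma=0$) is interpreted with the absolute error.

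The main obstacle is making sure the bound on $\mathfrak{b}_\eps$ really carries the factor $\eps^3$ rather than $\eps$. This relies on the moment cancellations in Lemma~\ref{lem:beta}: $\mrM(w)=\mrm^1_2(w)=0$ and the $\eps^2$ factor of $\cK_\eps$ from Lemma~\ref{lem:MepsBeps}. It also relies on the smallness of $\mrm^1_1(w)$, which comes from the angular-momentum identity in Lemma~\ref{lem:keyLEuler}. Without the gain on $\mrm^1_1$, only $|\mathfrak{b}_\eps|\lesssim\eps^5$ would follow, which gives a relative error $\delta\eps^3$. That is still $\ll\eps^4$ only when $\delta\ll\eps$, and this holds for $t\gg T_\adv\delta$, so a slightly weaker statement would survive even in that case.
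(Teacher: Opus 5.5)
Your proposal is correct and matches the paper's proof. You split $t\alpha'=t\alpha'_{\rm app}+\mathfrak{b}_\eps$ via \cref{def:alphaNL1}, combine Lemma~\ref{lem:beta} with \cref{bd:m11boot} and \cref{bootstrap} to get $|\mathfrak{b}_\eps|\lesssim\eps^6$, and absorb this into the $\cO(\delta\eps^4)$ relative error of Proposition~\ref{prop:deviation}. The paper performs the same absorption at the level of $\delta t\alpha'$, whose leading term is of size $\eps^2$; this is equivalent to your $|\mathfrak{b}_\eps|/t\lesssim \mathsf{a}\,\delta\eps^4$.
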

\begin{proof}
    Combining the identity \cref{def:alphaNL1} with \cref{eq:alphaappMproof} in the proof of Proposition \ref{prop:deviation}, note that 
    \begin{align}
        \delta t\alpha'= d\frac{\eps^2}{\mathsf{r}^2} c_{1,0} (S_{1,1}+\gamma)\Big( 1 + \alpha_4 \eps^6  
        + \cO(\eps^7 + \delta \eps^6)\Big)+\delta \mathfrak{b}_\eps.
    \end{align}
    Using Lemma \ref{lem:beta}, we can include the factor $\delta\mathfrak{b}_\eps$ in the $\cO(\delta \eps^6)$ term and proceed as in the proof of Proposition \ref{prop:deviation} to prove \cref{eq:expRa} and~\cref{maj:zeta_j}.
\end{proof}
\subsection{Energy and dissipation functionals}
In view of the discussion in Section \ref{sec:ideas} and the definition of the operator $\cM_{\eps}$ in \cref{def:Meps}, we introduce the energy functional that mimics the Arnold quadratic form, given by
\begin{align}
\label{def:Eeps}
    E_{\eps}({\mathbf w})=\frac12\langle {\mathbf w},\cM_{\eps}({\mathbf w})\rangle, \qquad {\mathbf w}\in \boldsymbol{\cX}_\eps,
\end{align}
where we have used the notation \cref{vec1,vec4}.
In the formal limit $\eps\to 0$, thanks to Lemma \ref{lem:Aeps} note that $\cM_{\eps}\to\cM_0$. In view of \cref{bd:coercivityM0}, we then expect that $E_0$ defines a norm equivalent to $\boldsymbol{\cX}_\eps$, provided that $\mrm^k_j({\mathbf w})=(0,0)$ for $k=0,1$ and $j=1,2$. As in \cite{dolce2024long,GaSring}, we are able to extend this property to any sufficiently small $\eps>0$, as stated below.
\begin{proposition}
\label{prop:E}
    If $\eps,\sigma_1>0$ are sufficiently small, there exist a constant $K>1$, independent of $\eps$, such that the following holds true: let ${\mathbf w} \in \boldsymbol{\cX}_\eps$ be such that $\mrM({\mathbf w})=\mrm^1_2({\mathbf w})=(0,0)$ and $\mrm^1_1(\widehat{w})=0$. Then
    \begin{align}
        \label{bd:Elow}
        \|{\mathbf w}\|_{\boldsymbol{\cX}_\eps}^2\leq K(E_{\eps}({\mathbf w})+|\mrm^1_1(w)|^2)
    \end{align}
\end{proposition}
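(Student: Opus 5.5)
We prove \cref{bd:Elow} by treating $\cM_\eps$ as a perturbation of the block-diagonal operator $\cM_0$ in \cref{def:M0}, for which a coercivity estimate is available. By the splitting \cref{eq:splitMeps}, write
\begin{align}
2E_\eps({\mathbf w})=\langle w,(A_\eps+\Delta^{-1})w\rangle+\frac{\gamma}{N}\langle \widehat{w},(\widehat{A}_\eps+\gamma\Delta^{-1})\widehat{w}\rangle+\langle {\mathbf w},\cK_\eps({\mathbf w})\rangle .
\end{align}
The off-diagonal part is handled first. Lemma~\ref{lem:MepsBeps} applies because $\mrM(w)=\mrm^1_2(w)=0$, and it gives
\begin{align}
|\langle {\mathbf w},\cK_\eps({\mathbf w})\rangle|\le C\eps\big(\eps\|{\mathbf w}\|_{\boldsymbol{\cX}_\eps}+|\mrm^1_1(w)|\big)\|{\mathbf w}\|_{\boldsymbol{\cX}_\eps}\le C\eps\|{\mathbf w}\|^2_{\boldsymbol{\cX}_\eps}+C|\mrm_1^1(w)|^2 .
\end{align}
The last step uses Young's inequality. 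For $\eps$ small this error is absorbed by the left-hand side of \cref{bd:Elow}, at the price of the moment term allowed on the right. The coupling between the central vortex and the polygon is therefore harmless, and what remains is two scalar problems.

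Next we prove scalar coercivity for each diagonal block. We follow \cite[Proposition 4.4]{dolce2024long} and \cite{GaSring}, and claim that for $f$ with $\mrM(f)=\mrm^1_2(f)=0$,
\begin{align}
\langle f,(A_\eps+\Delta^{-1})f\rangle\ge \kappa\|f\|^2_{\cX_\eps}-C|\mrm^1_1(f)|^2 .
\end{align}
We split $f$ into $\mathbbm{1}_{{\rm I}_\eps}f$ and $\mathbbm{1}_{{\rm I}_\eps^c}f$.

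On ${\rm I}_\eps$, Lemma~\ref{lem:Aeps}(iii) gives $A_\eps=A_0(1+\cO(\eps^{\varsigma_1}))$. The form therefore differs from the $\cM_0$-form $\langle f,(A_0+\Delta^{-1})f\rangle$ by $\cO(\eps^{\varsigma_1})\|f\|^2_{\cX_\eps}$. We then invoke \cref{bd:coercivityM0}, i.e.\ the Gallay--Šverák bound \cref{bd:Tn} mode by mode. For $n\ge2$ this gives $\|T_n\|\le c/n<1$. For $n=1$ the bound holds on the orthogonal complement of $G'A^{1/2}$, and this complement is controlled by the first moments $\mrm^1_j(f)$. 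For $n=0$, where $T_0$ is not even defined by \cref{def:Tn}, the radial part is controlled using $\mrM(f)=0$ as in \cite{GaSarnold}.

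On ${\rm I}_\eps^c$, the weight $A_\eps\gtrsim \exp(\eps^{-2\sigma_1}/4)$ or $\exp(|\xi|^{2\varsigma}/4)$ is huge. The cross terms $\langle \mathbbm{1}_{{\rm I}_\eps^c}f,\Delta^{-1}f\rangle$ are bounded by the $L^\infty$/$L^q$ bounds of Lemma~\ref{lem:bounds} times $\|A_\eps^{-1/2}\mathbbm{1}_{{\rm I}^c_\eps}\|$, which is $\cO(\eps^\infty)$. The exterior contributes $\gtrsim\|\mathbbm{1}_{{\rm I}_\eps^c}f\|^2_{\cX_\eps}$.

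Truncation by $\mathbbm{1}_{{\rm I}_\eps}$ spoils the exact moment conditions, but only by $\cO(\eps^\infty)\|f\|_{\cX_\eps}$ (Remark~\ref{rem:momXeps}), which is harmless.

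The same argument applies to $\widehat{w}$, with weight $\widehat{A}_\eps\approx\gamma A_0$. In this case all moments $\mrM,\mrm^1_1,\mrm^1_2$ of $\widehat{w}$ vanish, so there is no moment term. Here the block is $\frac{\gamma}{N}(\widehat{A}_\eps+\gamma\Delta^{-1})\approx\frac{\gamma^2}{N}(A_0+\Delta^{-1})$, which is positive regardless of the sign of $\gamma$. (Indeed, the $N$-fold symmetry already kills the modes $n=1,\dots,N-1$.) Summing the two blocks, absorbing the $\cK_\eps$ term, and taking $\eps,\sigma_1$ small gives \cref{bd:Elow} with $K$ independent of $\eps$.

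The main obstacle is the mode $n=1$ on ${\rm I}_\eps$. Coercivity of ${\rm Id}-T_1$ fails in the direction $G'A^{1/2}$, and the true $\mrm^1_1(w)$ is nonzero. We must show that the projection of $A^{1/2}\mathbbm{1}_{{\rm I}_\eps}w$ onto this direction is quantitatively bounded by $|\mrm^1_1(w)|$ plus small multiples of the norm. Otherwise the estimate degenerates. To do this we write the $\xi_1$-component of the $n=1$ part as $\beta\,\de_1G+w^\perp$, note that $\beta$ is determined linearly by $\mrm^1_1(w)$ up to errors involving $\langle w^\perp,\xi_1\rangle$, and apply the spectral gap of ${\rm Id}-T_1$ to $w^\perp$.
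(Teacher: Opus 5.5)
Your proposal is correct and follows the same route as the paper. The paper likewise uses the splitting \cref{eq:splitMeps}, bounds $\langle \mathbf{w},\cK_\eps(\mathbf{w})\rangle$ by Lemma~\ref{lem:MepsBeps}, and obtains coercivity of the two diagonal blocks by citing \cite[Proposition 4.9]{dolce2024long} adapted to $\mrm^1_1(w)\neq 0$; your interior/exterior splitting and your treatment of the $n=1$ direction $G'A^{1/2}$ through $\mrm^1_1(w)$ spell out exactly the details that the paper leaves to that reference.
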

\begin{remark}
    In the bound \cref{bd:Elow}, one cannot hope to easily get a small constant in front of $\mrm^1_1(w)$. This is the reason why we need to bound a priori $\mrm^1_1(w)$ with Lemma \ref{lem:keymom}.
\end{remark}
\begin{proof}
    We first   estimate the quadratic form associated to the operator \cref{def:Beps}.
In view of Lemma \ref{lem:MepsBeps}, we have \begin{align}
\label{bd:wBw}
     |\langle {\mathbf w},\cK_\eps({\mathbf w})\rangle|\lesssim \eps(|\mrm^1_1(w)|+\eps\|{\mathbf w}\|_{\boldsymbol{\cX}_\eps})\|{\mathbf w}\|_{\boldsymbol{\cX}_\eps}.
 \end{align}
 We then have to control 
  \begin{align}
     \langle {\mathbf w},(\cM_\eps-\cK_\eps){\mathbf w}\rangle=\big(\|w\|_{\cX_\eps}+\langle \varphi,w\rangle\big)+\big(\|\widehat{w}\|_{\widehat{\cX}_\eps}+\langle \widehat{\varphi},\widehat{w}\rangle\big).
 \end{align}
For these terms we can argue exactly as done in the proof of \cite[Proposition 4.9]{dolce2024long} (see the bounds for  $E_\eps^1(w)$ in \cite[Proposition 4.9]{dolce2024long}) and easily adapt the proof to include the case where $\mrm^1_1(w)\neq 0$. We omit these details and therefore the proof is over.
 \end{proof}
With this proposition at hand, we know that it is enough to control  $E_\eps({\mathbf w}(t))$ to improve our bootstrap hypothesis \cref{bootstrap} and consequently prove Theorem \ref{th:mainNL}. Before proceeding with the estimates for $E_\eps$, we introduce the \emph{diffusive quadratic form}
 \begin{align}
 \label{def:Deps}
     D_{\eps}({\mathbf w})\coloneqq -\frac12\Big\langle \begin{pmatrix}
         t\de_t A_\eps & 0\\
         0 &\frac{\gamma}{N}t\de_t \widehat{A}_\eps
     \end{pmatrix}{\mathbf w},{\mathbf w}\Big\rangle-\langle \cL({\mathbf w}),\cM_{\eps}({\mathbf w})\rangle.
 \end{align} 
 where we have used the notation \cref{vec1,vec4,vec3,vec6}.
 To state its properties, we define the radially symmetric function 
\begin{align}
    \rho_\eps(\xi)=\begin{cases}
        |\xi|, \qquad &\text{if }|\xi|<\eps^{-\sigma_1}\\
        \eps^{-\sigma_1}\qquad &\text{if }\eps^{-\sigma_1}\leq |\xi|<\eps^{-\sigma_2},\\
        |\xi|^{\varsigma} \qquad &\text{if }|\xi|\geq \eps^{-\sigma_2},
    \end{cases}
\end{align}
Then, in analogy with \cite[Proposition 4.13]{dolce2024long}, we have the following result.
\begin{proposition}
\label{prop:D}
If $\eps,\sigma_1>0$ are sufficiently small and $\sigma_2$ is sufficiently large, there exist $\kappa_D,C>0$, independent of $\eps$, such that the following holds true: let ${\mathbf w} \in \boldsymbol{\cX}_\eps$ be such that $\rho_\eps {\mathbf w} \in \boldsymbol{\cX}_\eps$, $|\nabla w|\in \cX_{\eps}, |\nabla\widehat{w}|\in \widehat{\cX}_\eps$, $\mrM({\mathbf w})=\mrm^1_2({\mathbf w})=(0,0)$ and $\mrm^1_1(\widehat{w})=0$. Then
\begin{equation}
\label{bd:Dlow}
D_{\eps}({\mathbf w})\geq \kappa_D\big(\|\nabla {\mathbf w}\|^2_{\boldsymbol{\cX}_\eps}+\|\rho_\eps {\mathbf w}\|^2_{\boldsymbol{\cX}_\eps}+\| {\mathbf w}\|^2_{\boldsymbol{\cX}_\eps}\big)-C|\mrm^1_1(w)|^2
\end{equation}
\end{proposition}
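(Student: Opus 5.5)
We will follow the strategy of \cite[Proposition 4.13]{dolce2024long}, reducing to the diagonal blocks and treating the coupling operator $\cK_\eps$ as a small perturbation. Using the splitting \cref{eq:splitMeps}, we write
\begin{equation}
D_\eps({\mathbf w})=D^{1}_\eps(w)+\frac{\gamma}{N}\widehat{D}^{1}_\eps(\widehat{w})-\langle \cL{\mathbf w},\cK_\eps{\mathbf w}\rangle .
\end{equation}
Here
\begin{equation}
D^1_\eps(w)=-\tfrac12\langle (t\de_tA_\eps)w,w\rangle-\langle \cL w,(A_\eps+\Delta^{-1})w\rangle ,
\end{equation}
and $\widehat{D}^1_\eps$ is defined analogously with $(\widehat{A}_\eps,\gamma\Delta^{-1})$.

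The diagonal pieces have exactly the structure treated in \cite{dolce2024long}. Integrating by parts in $\langle -\cL w,A_\eps w\rangle$ yields $\|\nabla w\|_{\cX_\eps}^2$, together with a weight term involving $-\tfrac12\Delta A_\eps-\tfrac14\xi\cdot\nabla A_\eps-A_\eps$, plus the time-derivative term. By Lemma \ref{lem:Aeps} iii), in region ${\rm I}_\eps$ these terms are perturbations of size $\eps^{\varsigma_1}A_0$ of the Lamb--Oseen quantities. In regions ${\rm II}_\eps$ and ${\rm III}_\eps$, the explicit choice of $A_\eps$ (constant, then $\exp(|\xi|^{2\varsigma}/4)$) produces the positive weight $\rho_\eps^2A_\eps$, provided $\sigma_1$ is small and $\sigma_2$ is large. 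The nonlocal part $\langle -\cL w,\Delta^{-1}w\rangle=\|\nabla\Delta^{-1}\cdots\|$-type terms is handled as in \cite{GaSring,dolce2024long}. The coercivity of the limiting form $D_0$ on the complement of ${\rm span}(\de_1G,\de_2G)$ and of radial masses then gives
\begin{equation}
D^1_\eps(w)\ge \kappa\bigl(\|\nabla w\|^2_{\cX_\eps}+\|\rho_\eps w\|^2_{\cX_\eps}+\|w\|^2_{\cX_\eps}\bigr)-C\bigl(|\mrM(w)|^2+|\mrm^1(w)|^2\bigr).
\end{equation}
The mass and $\mrm_2^1$ vanish, so only $C|\mrm^1_1(w)|^2$ survives. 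For $\widehat{w}$ all first moments vanish, by hypothesis and by $N$-fold symmetry, so there is no loss. The only change from \cite{dolce2024long} is keeping $\mrm^1_1(w)\neq0$ in the spectral decomposition. We do this by projecting $w$ onto $\de_1G$ with coefficient $\propto\mrm^1_1(w)$ and bounding the cross terms by Young's inequality.

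It remains to control the coupling term. We write $\cL=\Delta+\tfrac12\xi\cdot\nabla+1$ and integrate by parts, moving one derivative onto $\cK_\eps{\mathbf w}$. This gives
\begin{equation}
|\langle \cL{\mathbf w},\cK_\eps{\mathbf w}\rangle|\lesssim |\langle \nabla{\mathbf w},\nabla\cK_\eps{\mathbf w}\rangle|+|\langle (1+|\xi|){\mathbf w},\nabla\cK_\eps{\mathbf w}\rangle|+|\langle {\mathbf w},\cK_\eps{\mathbf w}\rangle| .
\end{equation}
Here we use that $\cK_\eps$ commutes with $\Delta$ (Lemma \ref{lem:adjoints}). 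Lemma \ref{lem:MepsBeps}, applied with ${\mathbf g}=\nabla{\mathbf w}$ or ${\mathbf g}=(1+|\xi|){\mathbf w}$, and the bound $(1+|\xi|)\lesssim 1+\rho_\eps$ in ${\rm I}_\eps$ then give the estimate
\begin{equation}
\lesssim \eps\bigl(\eps\|{\mathbf w}\|_{\boldsymbol{\cX}_\eps}+|\mrm^1_1(w)|\bigr)\bigl(\|\nabla{\mathbf w}\|_{\boldsymbol{\cX}_\eps}+\|\rho_\eps{\mathbf w}\|_{\boldsymbol{\cX}_\eps}+\|{\mathbf w}\|_{\boldsymbol{\cX}_\eps}\bigr).
\end{equation}
Outside ${\rm I}_\eps$ the weight $A_\eps^{-1/2}$ decays super-polynomially, so the polynomial factors are harmless. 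By Young's inequality this term is absorbed into $\tfrac{\kappa}{2}(\cdots)+C|\mrm^1_1(w)|^2$ for $\eps$ small, which yields \cref{bd:Dlow}.

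The main obstacle is the diagonal coercivity with $\mrm^1_1(w)\neq0$. We must check that the extra kernel direction $\de_1G$ only costs $C|\mrm^1_1(w)|^2$, uniformly in $\eps$, once $A_\eps$ is treated as a perturbation of $A_0$ in ${\rm I}_\eps$. In particular, the cross terms between the $\de_1G$ component and the remainder, including through $t\de_tA_\eps$, must be shown to be $\cO(\eps^{\varsigma_1})$ so that they can be absorbed.
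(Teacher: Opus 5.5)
Your proposal is correct and follows the paper's proof. The paper likewise splits $D_\eps$ via \cref{eq:splitMeps}, refers the diagonal part $D^1_\eps$ to \cite[Proposition 4.13]{dolce2024long} (noting only that the argument adapts to $\mrm^1_1(w)\neq0$), and bounds the coupling term $\langle \cL\mathbf{w},\cK_\eps\mathbf{w}\rangle$ by integrating by parts and invoking Lemma \ref{lem:MepsBeps} and Lemma \ref{lem:Aeps}. The only difference is cosmetic: the paper uses the exact identity $\langle \cL\mathbf{w},\cK_\eps\mathbf{w}\rangle=-\langle\nabla\mathbf{w},\nabla\cK_\eps\mathbf{w}\rangle-\frac12\langle\mathbf{w},\xi\cdot\nabla\cK_\eps\mathbf{w}\rangle$, in which the zeroth-order term $\langle\mathbf{w},\cK_\eps\mathbf{w}\rangle$ cancels, and this gives an $\eps^2$ prefactor instead of your $\eps$, though your weaker bound is equally absorbable.
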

\begin{proof}
Using the notation in \cref{def:Beps}, observe that 
\begin{align}
    \langle \cL(\boldsymbol{w}),\cK_{\eps}(\boldsymbol{w})\rangle=-\langle \nabla\boldsymbol{w},\nabla(\cK_{\eps}(\boldsymbol{w}))\rangle-\frac12\langle \boldsymbol{w},\xi\cdot \nabla (\cK_{\eps}(\boldsymbol{w}))\rangle.
\end{align}
Hence, appealing to Lemma \ref{lem:MepsBeps} and using properties of the weights in  Lemma \ref{lem:Aeps}, we deduce that
\begin{align}
     |\langle \cL(\boldsymbol{w}),\cK_{\eps}(\boldsymbol{w})\rangle|\lesssim \eps^2(|\mrm_1^1(w)|+\eps\|\mathbf{w}\|_{\boldsymbol{\cX}_\eps})\big(\|\nabla {\mathbf w}_\eps\|_{\boldsymbol{\cX}_\eps}+\| {\mathbf w}_\eps\|_{\boldsymbol{\cX}_\eps}\big).
\end{align}
For the operator 
\begin{align}
    D_\eps^1(w)=  -\frac12\Big\langle \begin{pmatrix}
         t\de_t A_\eps & 0\\
         0 &\frac{\gamma}{N}t\de_t \widehat{A}_\eps
     \end{pmatrix}{\mathbf w},{\mathbf w}\Big\rangle-\langle \cL({\mathbf w}),(\cM_{\eps}-\cK_\eps)({\mathbf w})\rangle
\end{align}
we can argue as in the proof of \cite[Proposition 4.13]{dolce2024long} (specifically the control of their terms $\cD_{W_\eps}[w]$ and $\cD_{\cL,\eps}[w])$), and the arguments are  straightforward to adapt to the case where $\mrm_1^1(w)\neq0$. Thus, we obtain the desired lower bound provided $\eps$ is sufficiently small.
\end{proof}

\subsection{Energy estimates}
We now have all the ingredients to prove a priori bounds on the energy defined in \cref{def:Eeps}. Before proceeding with the arguments, we remark that we will be heavily using the notations in \cref{vec1,vec2,vec3,vec4,vec5,vec6,vec7} in this section. In particular, recalling the definition of $\cJ_{\rm app}$ in \cref{def:Japp}, we rewrite \cref{eq:w,eq:hw} as 
\begin{align}
\label{eq:vecw}
    (t\de_t-\cL){\mathbf w}+\frac{1}{\delta}\cJ_{\rm app}({\mathbf w})=\frac{\mathfrak{b}_\eps}{2\delta}\begin{pmatrix}
                \{|\cT_\eps \xi|^2,\Omega_{\app,E}\}\\
                \{|\xi|^2,\widehat{\Omega}_{\app,E}\}
            \end{pmatrix}+\delta^{-2}\boldsymbol{\cR}_M+\boldsymbol{\cN}({\mathbf w})
\end{align}
        where $\boldsymbol{\cR}_M=(\cR_M,\widehat{\cR}_M)$         and $\boldsymbol{\cN}({\mathbf w})=(\cN(w,\widehat{w}),\widehat{\cN}(w,\widehat{w}))$ as defined in \cref{def:N,def:hN}.

We are now ready to compute the time-derivative of $E_{\eps}({\mathbf w}(t))$.
\begin{lemma}
\label{lem:energy}
    Let ${\mathbf w}=(w,\widehat{w})$ with $w,\widehat{w}$ solving \cref{eq:w,eq:hw} with zero-initial data and $E_{\eps}({\mathbf w})$ be defined as in \cref{def:Eeps}. Then 
    \begin{align}
        t\de_tE_{\eps}({\mathbf w})+D_{\eps}({\mathbf w})={\rm F}+{\rm A}+{\rm N}
    \end{align}
    where $D_\eps({\mathbf w})$ is defined in \cref{def:Deps} and we define:
    \begin{itemize}[label=$\circ$]
        \item The forcing term
        \begin{align}
           \label{def:F} {\rm F}\coloneqq\frac{\mathfrak{b}_\eps}{2\delta}\Big\langle \begin{pmatrix}
                \{|\cT_\eps \xi|^2,\Omega_{\app,E}\}\\
                \{|\xi|^2,\widehat{\Omega}_{\app,E}\}
            \end{pmatrix}, \cM_{\eps}({\mathbf w}) \Big\rangle+\delta^{-2}\langle \boldsymbol{\cR}_M,\cM_{\eps}{\mathbf w}\rangle.
        \end{align}
        \item The advection term 
        \begin{align}
            \label{def:Adv}{\rm A}\coloneqq  -\frac{1}{\delta}\langle\cJ_{\rm app}({\mathbf w}),\cM_{\eps}({\mathbf w})\rangle.
        \end{align}
        \item The Navier-Stokes and nonlinear error term
        \begin{align}
          \label{def:Nen}  {\rm N}\coloneqq \langle\boldsymbol{\cN}({\mathbf w}),\cM_{\eps}({\mathbf w})\rangle+\frac12\langle {\mathbf w}, [t\de_t,\cK_{\eps}]({\mathbf w})\rangle.
        \end{align}
    \end{itemize}
\end{lemma}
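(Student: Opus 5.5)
The identity follows from differentiating the quadratic form $E_\eps(\mathbf w)=\frac12\langle \mathbf w,\cM_\eps(\mathbf w)\rangle$ along the flow \cref{eq:vecw} and then sorting the terms. The plan is to compute $t\de_t E_\eps(\mathbf w)$ directly, keeping track of the fact that $\cM_\eps$ depends on time through $\eps(t)$ (via $A_\eps,\widehat A_\eps$ and via $\cQ_\eps,\cV_\eps,\cV^*_\eps$, hence via $\cK_\eps$).

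Using the splitting \cref{eq:splitMeps}, the time derivative of the quadratic form has three contributions. The first is the derivative of the diagonal weights, $\frac12\langle \mathrm{diag}(t\de_t A_\eps,\frac{\gamma}{N}t\de_t\widehat A_\eps)\mathbf w,\mathbf w\rangle$. The $\Delta^{-1}$ parts of the diagonal entries do not depend on time. The second is the derivative of the lower-order block, $\frac12\langle \mathbf w,[t\de_t,\cK_\eps]\mathbf w\rangle$. The third is the term $\langle t\de_t\mathbf w,\cM_\eps\mathbf w\rangle$. For this last one, the symmetry of $\cM_\eps$ in $L^2$ (Lemma \ref{lem:MepsBeps}, which rests on Lemma \ref{lem:adjoints}) gives $\frac12\langle t\de_t\mathbf w,\cM_\eps\mathbf w\rangle+\frac12\langle \mathbf w,\cM_\eps t\de_t\mathbf w\rangle=\langle t\de_t\mathbf w,\cM_\eps\mathbf w\rangle$.

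Into this third term I then substitute \cref{eq:vecw}, written as $t\de_t\mathbf w=\cL\mathbf w-\delta^{-1}\cJ_{\rm app}\mathbf w+\frac{\mathfrak b_\eps}{2\delta}(\cdots)+\delta^{-2}\boldsymbol{\cR}_M+\boldsymbol{\cN}(\mathbf w)$. The pieces are collected as follows.
\begin{itemize}
\item The $\cL$ contribution $\langle\cL\mathbf w,\cM_\eps\mathbf w\rangle$, together with the weight-derivative term and a change of sign, gives exactly $-D_\eps(\mathbf w)$ by \cref{def:Deps}.
\item The $\cJ_{\rm app}$ contribution is ${\rm A}$.
\item The $\mathfrak b_\eps$ and $\boldsymbol{\cR}_M$ contributions are ${\rm F}$.
\item The $\boldsymbol{\cN}$ contribution, together with the commutator term, is ${\rm N}$.
\end{itemize}
Moving $D_\eps$ to the left-hand side yields the stated identity.

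The only real care point is justifying these manipulations. Since $\mathbf w\to0$ as $t\to0$ and the solution is smooth with Gaussian-type decay for $t>0$, all pairings are finite; the weights are only exponentially growing like $e^{|\xi|^{2\varsigma}/4}$ with $\varsigma<1$ in region ${\rm III}_\eps$. The derivatives of $A_\eps$ across the moving regions ${\rm I}_\eps,{\rm II}_\eps,{\rm III}_\eps$ should be understood piecewise (or almost everywhere), as in \cite{dolce2024long}. The commutator $[t\de_t,\cK_\eps]$ is simply the operator obtained by differentiating the explicit $\eps$-dependence in $\cQ_\eps,\cV_\eps,\cV^*_\eps$, since $\Delta^{-1}$ is time independent.
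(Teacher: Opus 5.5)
Your proposal is correct and matches the paper's proof. Both use the $L^2$ self-adjointness of $\cM_\eps$ to write $t\de_t E_\eps(\mathbf w)=\langle t\de_t\mathbf w,\cM_\eps\mathbf w\rangle+\frac12\langle\mathbf w,[t\de_t,\cM_\eps]\mathbf w\rangle$, split the commutator into the diagonal weight derivatives (absorbed into $D_\eps$) and $[t\de_t,\cK_\eps]$ (absorbed into ${\rm N}$), and then substitute the evolution equation for $\mathbf w$. One small correction to your side remark: $\cK_\eps$ depends on time through $R_{\rm app}(t)$ as well as $\eps(t)$, since the shifts are $R_{\rm app}(t)/(\eps d)$; this does not affect the identity.
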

\begin{proof}
    Since $\cM_{\eps}$ is self-adjoint on $L^2\times \widehat{L}^2$, one has that 
    \begin{align}
        t\de_t (E_{\eps}({\mathbf w}))&=\frac12\langle t\de_t{\mathbf w},\cM_{\eps}(\mathbf w)\rangle +\frac12\langle {\mathbf w},t\de_t(\cM_\eps({\mathbf w}))\rangle\\
        &=\langle t\de_t{\mathbf w},\cM_{\eps}(\mathbf w)\rangle +\frac12\langle {\mathbf w},[t\de_t,\cM_\eps]({\mathbf w})\rangle.
    \end{align}
Notice that 
\begin{align}
[t\de_t,\cM_{\eps}]=\begin{pmatrix}
        t\de_t A_\eps &0\\
        0 &\frac{\gamma}{N}t\de_t\widehat{A}_\eps
    \end{pmatrix}+ [t\de_t,\cK_{\eps}].
\end{align}
The first term on the right-hand side is included in the definition of $D_{\eps}({\mathbf w})$, and the other one in the definition of ${\rm N}.$ All other terms simply arise by using the equation \cref{eq:vecw} to replace the $t\de_t{\mathbf w}$, and notice that the term with $\cL$ is included in the definition of $D_{\eps}({\mathbf w}).$
 \end{proof}
The goal is to estimate each of the terms in \cref{def:F,def:Adv,def:Nen}. In view of the key property related to symmetries in Lemma \ref{lem:keykernel}, the proof is similar in spirit to \cite{dolce2024long}. Here we slightly simplify the exposition and we highlight the structural properties that one can exploit in a system rather than the scalar case handled in \cite{dolce2024long}. The  strategy in \cite{dolce2024long} was in fact already adapted in \cite{zhang2025long} to handle systems, but in their case they need to decompose the solution to make full use of the pseudo-momenta and choose appropriately the modulation of the angular speed.

Overall, our objective is to prove the following proposition which is at the core of this section.
\begin{proposition}
\label{prop:FAN}     Fix $\sigma\in [0,1)$, let $M\in \NN$ satisfying \cref{fixM} and ${\mathbf w}$ be the solution to \cref{eq:vecw} with zero initial data. Then, for $\delta,\sigma_1>0$ sufficiently small and $\sigma_2>1$ sufficiently large, there exists a positive constant $C_1>1$ such that the following holds true: under the bootstrap hypothesis \cref{bootstrap}, for any $t\in  [0,T)$ with $T\leq T_{\rm adv}\delta^{-\sigma}$, the quantities \cref{def:F,def:Adv,def:Nen} satisfy 
    \begin{align}
    \label{bd:FAN}
        (|{\rm F}|+|{\rm A}|+|{\rm N}|)(t)\leq (C_1\eps^4+\frac{1}{2}D_{\eps}({\mathbf w}))(t).
    \end{align}
\end{proposition}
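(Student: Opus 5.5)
The plan is to estimate the three terms separately, always absorbing quadratic quantities in $\|\nabla{\mathbf w}\|_{\boldsymbol{\cX}_\eps}$, $\|\rho_\eps{\mathbf w}\|_{\boldsymbol{\cX}_\eps}$, $\|{\mathbf w}\|_{\boldsymbol{\cX}_\eps}$ into $\frac12 D_\eps({\mathbf w})$ via Proposition~\ref{prop:D}. The moment term $C|\mrm^1_1(w)|^2$ that appears there is controlled using \cref{bd:m11boot}: it is at most $\eps^6$. Throughout I use the bootstrap \cref{bootstrap}, which gives $\|{\mathbf w}\|_{\boldsymbol{\cX}_\eps}\lesssim\eps^2$. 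I also use that $\delta^{-1}\eps^{k}\lesssim \eps^{k-2}$ does not hold in general; the only available relation is $\eps^2=\delta t/T_\adv\le\delta^{1-\sigma}$, so $\delta^{-1}\le \eps^{-2/(1-\sigma)}$.

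\textbf{Forcing term ${\rm F}$.} The remainder part is bounded by \cref{bd:MB0} and Proposition~\ref{prop:construction} \ref{constraint:5}:
\[
\delta^{-2}|\langle\boldsymbol{\cR}_M,\cM_\eps{\mathbf w}\rangle|\lesssim(\delta^{-2}\eps^{M+1}+\eps^2)\|{\mathbf w}\|_{\boldsymbol{\cX}_\eps}.
\]
By \cref{fixM} this is $\lesssim \eps^2\|{\mathbf w}\|_{\boldsymbol{\cX}_\eps}\le C\eps^4+\frac{\kappa_D}{8}\|{\mathbf w}\|^2_{\boldsymbol{\cX}_\eps}$. The $\mathfrak b_\eps$ part is where $\delta^{-1}$ is dangerous. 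I split the integral into the inner regions ${\rm I}_\eps,\widehat{\rm I}_\eps$ and their complements. On the inner region, Lemma~\ref{lem:keykernel} replaces $\cM_\eps$ applied to the rotation generator by $\{|\cT_\eps\xi|^2,\Theta\}$. Here \cref{bd:keyapp} gives a gradient bound $\eps^{M+1}(1+|\xi|)^{\widetilde N}$, and $|\nabla|\cT_\eps\xi|^2|\lesssim \eps^{-1}(1+|\xi|)$. This yields $\delta^{-1}|\mathfrak b_\eps|\eps^{M}\|{\mathbf w}\|$, which is negligible by Lemma~\ref{lem:beta} and \cref{fixM}. On the complement, $\cM_\eps$ is self-adjoint, so I move it onto $\mathbbm 1_{{\rm I}^c_\eps}{\mathbf w}$. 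The generator $\{|\cT_\eps\xi|^2,\Omega_{\rm app,E}\}$ decays like a Gaussian there, with weight $A_\eps^{-1/2}\lesssim\exp(-c\eps^{-2\sigma_1})$, which gives $\cO(\eps^\infty)$. One caveat: the nonlocal parts $\Delta^{-1}$ and $\cK_\eps$ do not localize, so I will treat them via Lemma~\ref{lem:MepsBeps} and the $L^q$ bounds of Lemma~\ref{lem:bounds}. Combined with $|\mathfrak b_\eps|\lesssim\eps^3(|\mrm^1_1(w)|+\eps\|{\mathbf w}\|)$ from Lemma~\ref{lem:beta}, the result should be $\delta^{-1}\eps^{\infty}$, hence acceptable.

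\textbf{Advection term ${\rm A}$.} This is the main obstacle. The approach is to write $\cJ_{\rm app}{\mathbf w}$ on the inner region using the functional relation. There, $\{\Phi_{\rm app,E},w\}=-\{F(\Omega_{\rm app,E}),w\}+\{\Theta,w\}$, so that $\cJ_{\rm app}{\mathbf w}=\{\cM_\eps{\mathbf w},\boldsymbol\Omega_{\rm app,E}\}$ componentwise, up to the $\Theta$ error. This step uses $\cT_\eps\Delta^{-1}\widehat w=N^{-1}\cV_\eps^*\Delta^{-1}\widehat w$, valid by $N$-fold symmetry and Lemma~\ref{lem:adjoints}. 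Then $\langle\{\cM_\eps{\mathbf w},\Omega\},\cM_\eps{\mathbf w}\rangle=0$ exactly, and the $\Theta$ error contributes $\delta^{-1}\eps^{M+1}\|\rho_\eps{\mathbf w}\|\,\|\nabla{\mathbf w}\|$, again small by \cref{fixM}. Outside ${\rm I}_\eps$ I follow \cite{dolce2024long}. In region ${\rm II}_\eps$, $A_\eps$ is constant, so $\langle\{\Phi_{\rm app,E},w\},A_\eps w\rangle$ vanishes up to boundary terms. In region ${\rm III}_\eps$, the velocity of the approximate solution decays and $\nabla A_\eps$ is controlled by $\rho_\eps A_\eps$. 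The cross terms $\{\Delta^{-1}\cdot,\Omega_{\rm app,E}\}$ tested against $A_\eps w$ are exponentially small there, because of the Gaussian decay of $\nabla\Omega_{\rm app,E}$ against the growth $\exp(|\xi|^{2\varsigma}/4)$ of $A_\eps$. I expect this term to require the most care: the mismatch between $A_\eps$ and $F'(\Omega_{\rm app,E})$ near the boundary of ${\rm I}_\eps$ must be shown to be $\cO(\eps^\infty)$ times $\delta^{-1}$.

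\textbf{Nonlinear term ${\rm N}$.}
\begin{itemize}
\item The $\cN_1$ terms carry the factor $\Phi_{\rm app,NS}=\cO(\eps^2)$ and give $\eps^2\|{\mathbf w}\|(\|\nabla{\mathbf w}\|+\|\rho_\eps{\mathbf w}\|)$.
\item The quadratic $\cN_2$ terms are handled as in \cite{dolce2024long} using \cref{bd:nablaLinfty}. They are bounded by $\|{\mathbf w}\|\,\|{\mathbf w}\|_{\boldsymbol{\cX}_\eps}^{1/2}(\|\nabla{\mathbf w}\|+\|{\mathbf w}\|)^{3/2}$, and the factor $\eps$ from the bootstrap lets them be absorbed.
\item $\cN_3$ uses Lemma~\ref{lem:beta} without a $\delta^{-1}$, giving $\eps^6\cdot\eps^{-1}$ times norms.
\item The commutator $[t\de_t,\cK_\eps]$ differentiates only in $\eps$. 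By Lemma~\ref{lem:expansionsR} it keeps the structure of \cref{bd:MB} and is $\lesssim\eps(|\mrm^1_1(w)|+\eps\|{\mathbf w}\|)\|{\mathbf w}\|$.
\end{itemize}
All these are bounded by $C\eps^4+\frac14 D_\eps({\mathbf w})$ after Young's inequality, which closes \cref{bd:FAN}.
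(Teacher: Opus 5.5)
Most of your plan follows the paper, with one genuine gap in the advection term on the exterior region ${\rm III}_\eps$.

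\textbf{What matches.} Your split of ${\rm F}$ matches the paper: the remainder part, and the $\mathfrak b_\eps$ part handled by Lemma~\ref{lem:keykernel} on ${\rm I}_\eps$ and by the smallness of $A_\eps^{-1/2}$ off ${\rm I}_\eps$. Your treatment of ${\rm N}$ is also essentially the paper's. For ${\rm A}$ you organize things differently: you write $\cJ_{\rm app}{\mathbf w}=\{\cM_\eps{\mathbf w},\boldsymbol\Omega_{\rm app,E}\}$ plus a $\Theta$-error. Once all inner products are taken globally, this reproduces exactly the paper's $\cM_d/\cM_{nl}$, $\cJ_d/\cJ_{nl}$ decomposition.

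\textbf{The gap.} You assert that in ${\rm III}_\eps$ ``the velocity of the approximate solution decays''. This is false. $\Phi_{\rm app,E}$ is the stream function in the rotating frame and contains the Coriolis term $-\frac{\eps d}{2R_{\rm app}}\Im(\cB_\eps)|\cT_\eps\xi|^2$. Its coefficient is of order $\eps^2$ and its gradient grows like $\eps^2|\xi|$; for $|\xi|>\eps^{-\sigma_2}\gg\eps^{-1}$ it dominates the decaying velocity of the vortices. Bounding $\{A_\eps,\Phi_{\rm app,E}\}$ as you propose, with $|\nabla A_\eps|\lesssim|\xi|^{2\varsigma-1}A_\eps$, gives a contribution $\eps^2|\xi|^{2\varsigma}A_\eps=\eps^2\rho_\eps^2A_\eps$. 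After the prefactor $\delta^{-1}$ this becomes
\[
\delta^{-1}\eps^2\|\rho_\eps{\mathbf w}\|^2_{\boldsymbol{\cX}_\eps}=\frac{t}{T_{\rm adv}}\|\rho_\eps{\mathbf w}\|^2_{\boldsymbol{\cX}_\eps}.
\]
This is never small and is as large as $\delta^{-\sigma}$ at the end of the time interval, so it cannot be absorbed into $\frac12D_\eps({\mathbf w})$. The weight $\rho_\eps=|\xi|^\varsigma$ with $\varsigma<1$ cannot pay for a full power of $|\xi|$.

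\textbf{The missing idea.} In ${\rm III}_\eps$ the weight $A_\eps=\exp(|\xi|^{2\varsigma}/4)$ is radial, so $\{A_\eps,|\xi|^2\}=0$ and
\[
\{A_\eps,|\cT_\eps\xi|^2\}=\frac{2R_{\rm app}}{\eps d}\{A_\eps,\xi_1\}.
\]
The rotational part of the Coriolis term therefore drops out exactly, leaving only an $\cO(\eps)$ translation velocity. Combined with $|\xi|^{-1}\le\eps^{\sigma_2}$, this gives at worst a factor $\delta^{-1}\eps^{\sigma_2}$ in front of $\|\rho_\eps{\mathbf w}\|^2_{\boldsymbol{\cX}_\eps}$, which is absorbable once $\sigma_2$ is large (the paper takes $\sigma_2>(M-1)/2$).

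\textbf{Two smaller points.} First, integrate by parts globally, $\langle\{\Phi_{\rm app,E},w\},A_\eps w\rangle=\frac12\langle\{A_\eps,\Phi_{\rm app,E}\}w,w\rangle$, which is legitimate since $A_\eps$ is continuous, before splitting into regions. Done region by region, the boundary terms on $|\xi|=\eps^{-\sigma_2}$ are not individually small; they only cancel in pairs. Second, the ``mismatch between $A_\eps$ and $F'(\Omega_{\rm app,E})$'' you flag is not an obstacle:
\begin{itemize}
\item $A_\eps=F'(\Omega_{\rm app,E})$ exactly on ${\rm I}_\eps$;
\item $A_\eps$ is constant on ${\rm II}_\eps$, so $\{A_\eps,\Phi_{\rm app,E}\}=0$ there;
\item the remaining mixed terms off ${\rm I}_\eps$ are $\cO(\eps^\infty)$, because $A_\eps^{1/2}|\nabla\Omega_{\rm app,E}|$ and $A_\eps^{-1/2}$ are super-polynomially small on ${\rm I}_\eps^c$.
\end{itemize}
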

In the proof of this proposition, it is not hard to deduce that the constant $C_1$ can be explicitly related to $\|\boldsymbol{\cR}_M\|_{\cX_\eps}$ (see in particular the bound \cref{bd:F1}). We can now finally prove Theorem \ref{th:mainNL}.

\begin{proof}[Proof of Theorem \ref{th:mainNL}]
    Combining the bound \cref{bd:FAN} with the energy identity in Lemma \ref{lem:energy}, we deduce that 
    \begin{align}
        t\de_t E_{\eps}({\mathbf w})+\frac12D_{\eps}({\mathbf w})\leq C_1\eps^4.
    \end{align}
    Dividing by $t$ and integrating in time, we see that 
    \begin{align}
        E_{\eps}({\mathbf w})\leq \frac12 C_1\eps^4.
    \end{align}
    Then, by Proposition \ref{prop:E} and Lemma \ref{lem:keymom}, since on the time-scale under consideration $\delta^{-2}\eps^{M+1}\ll \eps^2$, we get  
    \begin{align}
        \|{\mathbf w}\|^2_{\cX_\eps}\leq K(E_{\eps}({\mathbf w})+|\mrm^1_1(w)|^2)\leq \frac12 KC_1\eps^4+
        C(\eps^6+\eps^2 \|{\mathbf w}\|_{\cX_\eps}^2),
    \end{align}
    for a suitable constant $C>0$ independent of $\delta$ and $\eps$.
    Therefore, if $\delta$ is sufficiently small, we can conclude that 
    \begin{align}
        \|{\mathbf w}\|^2_{\cX_\eps}\leq KC_1\eps^4,
    \end{align}
    where we recall that $C_1,K$ are the constants in Proposition \ref{prop:FAN} and Proposition \ref{prop:E} that do not depend on the bootstrap hypothesis \cref{bootstrap}. It is then enough to choose $C_{\star}=\sqrt{KC_1}$ in \cref{bootstrap} and we see that indeed \cref{bootstrap} holds true with $2$ replaced by $1.$ Thus, by a continuity argument this imply that we can take $T=T_{\rm adv}\delta^{-\sigma}$, thus concluding the proof of the theorem.
\end{proof}
The rest of the paper is then dedicated to the proof of the bound \cref{bd:FAN}, which we divide into three steps below.
\subsubsection*{\textbf{Step 1: bound on the forcing term}}
To handle the forcing term ${\rm F}$ in \cref{def:F}, we use the fact that $\cM_{\eps}$ is self-adjoint and we split ${\rm F}={\rm F}_1+{\rm F}_2$ where 
        \begin{align}
           \label{def:F1}{\rm F}_1&\coloneqq\delta^{-2}\langle \cM_{\eps}\boldsymbol{\cR}_M,{\mathbf w}\rangle, \\
            \label{def:F2}
           {\rm F}_2&\coloneqq \frac{\mathfrak{b}_\eps}{\delta}\Big\langle\cM_{\eps} \begin{pmatrix}
                \{|\cT_{\eps} \xi|^2,\Omega_{\app,E}\}\\
                \{|\xi|^2,\widehat{\Omega}_{\app,E}\}
            \end{pmatrix}, {\mathbf w} \Big\rangle.
        \end{align}
        To bound ${\rm F}_1$, by Lemma \ref{lem:MepsBeps}, the bound for $\boldsymbol{\cR}_M$ in Proposition \ref{prop:construction} and using Proposition \ref{prop:D} we readily get 
\begin{align}
\label{bd:F1first}
   |{\rm F}_1|\lesssim (\delta^{-2}\eps^{M+1}+\eps^2)\|{\mathbf w}\|_{\boldsymbol{\cX}_\eps}\leq C\eps^4+\frac{1}{100}(D_{\eps}({\mathbf w})+|\mrm^1_1(w)|^2),
\end{align}
where we used that $\delta^{-2}\eps^{M+1}\ll \eps^2$ and $C>0$ is a constant independent of $\delta,\eps$ and $C_{\star}$ in \cref{bootstrap}. Appealing to the bootstrap hypothesis \cref{bootstrap}, combining the inequality above with Lemma \ref{lem:keymom} and \cref{bd:m11boot}, we conclude that for $\delta$ sufficiently small, there exists a constant $\widetilde{C}_1>0$ independent of $\delta,\eps$ and $C_{\star}$ in \cref{bootstrap} such that
\begin{align}
\label{bd:F1}
    |{\rm F}_1|\leq \widetilde{C}_1\eps^4 +\frac{1}{100}D_{\eps}({\mathbf w}).
\end{align}
 This bound is consistent with \cref{bd:FAN}.

For the term ${\rm F}_2$,  from Lemma \ref{lem:keykernel} we get
\begin{align}
    {\rm F}_2=\frac{\mathfrak{b}_\eps}{\delta}\Big\langle\begin{pmatrix}
                \{|\cT_{\eps} \xi|^2,\Theta\}\\
                \{|\xi|^2,\widehat{\Theta}\}
            \end{pmatrix}, \begin{pmatrix}
                \mathbbm{1}_{{\rm I}_{\eps}} w\\
                \mathbbm{1}_{\widehat{{\rm I}}_{\eps}} \widehat{w}
\end{pmatrix}\Big\rangle+\frac{\mathfrak{b}_\eps}{\delta}\Big\langle\cM_{\eps} \begin{pmatrix}
                \{|\cT_{\eps} \xi|^2,\Omega_{\app,E}\}\\
                \{|\xi|^2,\widehat{\Omega}_{\app,E}\}
            \end{pmatrix}, \begin{pmatrix}
                \mathbbm{1}_{{\rm I}_{\eps}^c} w\\
                \mathbbm{1}_{\widehat{{\rm I}}_{\eps}^c} \widehat{w}
\end{pmatrix}\Big\rangle
\end{align}
In the interior region, we can combine the properties of $\Theta, \widehat{\Theta}$ in Proposition \ref{prop:functional} with properties of the weight in Lemma \ref{lem:Aeps} to see that 
\begin{align}
\Big|\Big\langle\begin{pmatrix}
                \{|\cT_{\eps} \xi|^2,\Theta\}\\
                \{|\xi|^2,\widehat{\Theta}\}
            \end{pmatrix}, \begin{pmatrix}
                \mathbbm{1}_{{\rm I}_{\eps}} w\\
                \mathbbm{1}_{\widehat{{\rm I}}_{\eps}} \widehat{w}
\end{pmatrix}\Big\rangle\Big|\lesssim \eps^{-1}\|\mathbf{A}_{\eps}^{-\frac12}(1+|\xi|)\nabla \boldsymbol{\Theta}\|_{\boldsymbol{\cX}_\eps}\|\mathbf w\|_{\boldsymbol{\cX}_{\eps}}\lesssim \eps^{M}\|\mathbf w\|_{\boldsymbol{\cX}_{\eps}}
\end{align}
Outside the interior region, we can exploit the fact that $\Omega_{\rm app,E},\Omega_{\rm app,E}\in \cZ$ and argue as in the proof of Lemma \ref{lem:MepsBeps} to exploit the exponential decay of $A_{\eps}^{-1}$ and handle the terms involving $\Delta^{-1}$ in $\cM_{\eps}$. Overall, we obtain that 
\begin{align}
    |{\rm F}_2|\lesssim |\mathfrak{b}_{\eps}|\delta^{-1}\eps^{M}\|\mathbf w\|_{\boldsymbol{\cX}_{\eps}}
\end{align}
    Appealing to Lemma \ref{lem:beta} and the bootstrap hypothesis \cref{bootstrap}, we see that 
    \begin{align}
    \label{bd:F2}
        |{\rm F}_2|\lesssim \delta^{-1}\eps^{M+8}\lesssim \delta \eps^9,
    \end{align}
    where in the last bound we used that in the time-scale under consideration $\delta^{-2}\eps^{M+1}\leq \eps^2$. This bound is consistent with \cref{bd:FAN} upon choosing $\delta$ sufficiently small.

\subsubsection*{\textbf{Step 2: bound on the advection term}} To handle this term, we make rigorous the arguments sketched in Section \ref{sec:ideas}. To highlight some crucial cancellation, we split
\begin{align}
\cM_\eps =\cM_{d}+\cM_{nl}, \qquad \cJ_{\rm app}=\cJ_{d}+\cJ_{nl}
\end{align}
where we define the ``diagonal" terms as
\begin{align}
    \cM_d=\begin{pmatrix}
        A_{\eps} & 0\\
        0 & \frac{\gamma}{N}\widehat{A}_\eps
    \end{pmatrix}, \qquad \cJ_d=\begin{pmatrix}
        \{\Phi_{\rm app,E}, \cdot\} & 0\\
        0 & \{\widehat{\Phi}_{\rm app,E}, \cdot\}
    \end{pmatrix}
\end{align}
and the ``nonlocal" terms as
\begin{align}
    \cM_{nl}&=\begin{pmatrix}
        (I+\cQ_{\eps})\Delta^{-1} & \frac{\gamma}{N}\cV_{\eps,R}^*\Delta^{-1}\\
        \frac{\gamma}{N}\cV_{\eps,R}\Delta^{-1} & \frac{\gamma^2}{N}\Delta^{-1}
    \end{pmatrix}, \\
    \cJ_{nl}&=\begin{pmatrix}
        \{(I+\cQ_{\eps})\Delta^{-1}(\cdot), \Omega_{\rm app,E}\} & \frac{\gamma}{N}\{\cV_{\eps,R}^*\Delta^{-1}(\cdot), \Omega_{\rm app,E}\}
        \\
  \{\cV_{\eps,R}\Delta^{-1}(\cdot), \widehat{\Omega}_{\rm app,E}\} & \gamma\{\Delta^{-1}(\cdot),\widehat{\Omega}_{\rm app,E}\}.
    \end{pmatrix}
\end{align}
Hence 
\begin{align}
    {\rm A}=-\frac{1}{\delta}\Big(\langle \cM_{nl}({\mathbf w}),\cJ_{nl}({\mathbf w})\rangle+\langle \cM_{d}({\mathbf w}),\cJ_{d}({\mathbf w})\rangle+\langle \cM_{d}({\mathbf w}),\cJ_{nl}({\mathbf w})\rangle+\langle \cM_{nl}({\mathbf w}),\cJ_{d}({\mathbf w})\rangle\Big)
\end{align}
Then, since $\langle \{f,g\},f\rangle=0$ and $\langle\{f,g\},h\rangle+\langle f,\{h,g\}\rangle=0$ for any $f,g,h$, note that 
\begin{align}
\label{bd:MnlJnl}
    \langle \cM_{nl}({\mathbf w}),\cJ_{nl}({\mathbf w})\rangle =0.
\end{align}
Similarly, since $\langle \{f,g\},hf\rangle=\frac12\langle \{g,h\}f,f\rangle$ we have 
\begin{align}
\label{def:MdJd}
\langle \cM_{d}({\mathbf w}),\cJ_{d}({\mathbf w})\rangle=\frac12\big\langle \{A_{\eps},\Phi_{\rm app,E}\}\,w,w\big\rangle +\frac{\gamma}{2N}\big\langle \{\widehat{A}_{\eps},\widehat{\Phi}_{\rm app,E}\}\,\widehat{w},\widehat{w}\big\rangle.
\end{align}
The bound for this term is analogous to the term ${\rm A}_1$ in \cite{dolce2024long}. However, we have to be a bit more careful for the treatment of the exterior region in view a potentially problematic term related to $\{|\xi|^2,\cdot\}$ and the fact that $\rho_\eps$ only controls $|\xi|^\gamma$ for $\gamma<1$. This problem is easily solved thanks to the radial symmetry of the weight $A_\eps$ in the exterior region.

We claim that
\begin{align}
\label{bd:MdJd}
\big|\langle \cM_{d}({\mathbf w}),\cJ_{d}({\mathbf w})\rangle\big|\lesssim \eps^{M+1-\sigma_1\widetilde{N}}\|{\mathbf w}\|^2_{\boldsymbol{\cX}_\eps}+\eps^{1+\sigma_2}\|\rho_\eps{\mathbf w}\|^2_{\boldsymbol{\cX}_\eps}.
\end{align}
Indeed, consider the first term on the right-hand side of \cref{def:MdJd}. Thanks to Proposition \ref{prop:functional}, in the interior region one has
\begin{align}
    \mathbbm{1}_{{\rm I}_\eps}\{A_\eps,\Phi_{\rm app,E}\}=\mathbbm{1}_{{\rm I}_\eps}\{F'(\Omega_{\rm app,E}),\Phi_{\rm app,E}\}=\mathbbm{1}_{{\rm I}_\eps}\{F'(\Omega_{\rm app,E}),\Theta\}=\mathbbm{1}_{{\rm I}_\eps}\{A_\eps,\Theta\}.
\end{align}
In view of Proposition \ref{prop:functional} and Lemma \ref{lem:Aeps}, we know that there exists $\widetilde{N}$ such that
\begin{align}
    \mathbbm{1}_{{\rm I}_\eps}|\{A_\eps,\Theta\}|\lesssim  \mathbbm{1}_{{\rm I}_\eps}(1+|\xi|)^{\widetilde{N}}\eps^{M+1}A_\eps\lesssim \eps^{M+1-\sigma_1\widetilde{N}}A_\eps,
\end{align}
meaning that 
\begin{align}
\big|\big\langle\mathbbm{1}_{{\rm I}_\eps} \{A_{\eps},\Phi_{\rm app,E}\}\,w,w\big\rangle\big|\lesssim \eps^{M+1-\sigma_1\widetilde{N}}\|w\|^2_{\cX_\eps}\end{align}
In region ${\rm II}_{\eps}$  the weight $A_\eps$ is constant and therefore $\{A_{\eps},\Phi_{\rm app,E}\}=0$ here. Thus, it remains to study the exterior region ${\rm III}_{\eps}.$ Since the weight $A_\eps$ is radially symmetric in this region, note that 
\begin{align}
    \mathbbm{1}_{{\rm III}_\eps}\{A_\eps, |\cT_{\eps}\xi|^2\}=  \mathbbm{1}_{{\rm III}_\eps}\frac{2R_{\rm app}}{\eps d}\{A_\eps,\xi_1\}
\end{align} Therefore, from the definition of $\Phi_{\rm app,E}$ in \cref{def:PhiappE} and since $\mathbbm{1}_{{\rm III}_\eps}|\nabla A_{\eps}|\lesssim \mathbbm{1}_{{\rm III}_\eps}|\xi|^{2\varsigma-1}A_\eps$ we have
\begin{align}
    \mathbbm{1}_{{\rm III}_\eps}|\{A_\eps, \Phi_{\rm app,E}\}|\lesssim \mathbbm{1}_{{\rm III}_\eps}|\xi|^{2\varsigma-1}A_\eps \big(|\nabla (1+\cQ_{\eps})\Psi_{\rm app,E}|+\gamma|\nabla \cT_\eps\widehat{\Psi}_{\rm app,E}|+|\cB_\eps(\Psi_{\rm app, E},{\widehat \Psi}_{\rm app, E})|\big).
\end{align}
Since $|\cB_\eps(\Psi_{\rm app, E},{\widehat \Psi}_{\rm app, E})|=\cO(\eps)$ and $|\xi|^{-1}\lesssim \eps^{\sigma_2} $ in region ${\rm III}_\eps$, appealing to Lemma \ref{lem:bounds} we can control the terms involving the approximate solution and deduce that 
\begin{align}
   \big| \langle\mathbbm{1}_{{\rm III}_\eps}\{A_\eps,\Phi_{\rm app,E}\} w,w \rangle\big||\lesssim  \eps^{\sigma_2}\|\rho_\eps w\|_{\cX_\eps}^2.
\end{align}
Thus, the desired claim \cref{bd:MdJd} is proved since for the second term in \cref{def:MdJd} one can repeat exactly the same arguments above.

It remains to handle the mixed terms. Integrating by parts, note that
\begin{align}
    \langle \cM_{nl}({\mathbf w}),\cJ_d({\mathbf w})\rangle=\,& \big\langle \big\{(I+\cQ_{\eps})\Delta^{-1}w+\frac{\gamma}{N}\cV_{\eps,R}^*\Delta^{-1}\widehat{w},\Phi_{\rm app,E}\big\},w\big \rangle\\
    &+\frac{\gamma}{N}\big\langle \big\{\cV_{\eps,R}\Delta^{-1}w+\gamma\Delta^{-1}\widehat{w},\widehat{\Phi}_{\rm app,E}\big\},\widehat{w}\big \rangle
\end{align}
On the other hand
\begin{align}
    \langle \cM_{d}({\mathbf w}),\cJ_{nl}({\mathbf w})\rangle=\,& \big\langle \big\{(I+\cQ_{\eps})\Delta^{-1}w+\frac{\gamma}{N}\cV_{\eps,R}^*\Delta^{-1}\widehat{w},F(\Omega_{\rm app,E})\big\},\mathbbm{1}_{{\rm I}_\eps}w\big \rangle\\
    &+\frac{\gamma}{N}\big\langle \big\{\cV_{\eps,R}\Delta^{-1}w+\gamma\Delta^{-1}\widehat{w},\widehat{F}(\widehat{\Omega}_{\rm app,E})\big\},\mathbbm{1}_{\widehat{{\rm I}}_\eps}\widehat{w}\big \rangle\\
    &+\langle (\mathbbm{1}_{{\rm I}_\eps^c}\cM_{d}({\mathbf w})_1,\mathbbm{1}_{\widehat{{\rm I}}^c_\eps}\cM_{d}({\mathbf w})_2),\cJ_{nl}({\mathbf w})\rangle.
\end{align}
Thus, by adding the mixed terms we  get
\begin{align}
    \langle &\cM_{d}({\mathbf w}),\cJ_{nl}({\mathbf w})\rangle+\langle \cM_{nl}({\mathbf w}),\cJ_{d}({\mathbf w})\rangle\\
    =\,& \big\langle \big\{(I+\cQ_{\eps})\Delta^{-1}w+\frac{\gamma}{N}\cV_{\eps,R}^*\Delta^{-1}\widehat{w},\Theta\big\},\mathbbm{1}_{{\rm I}_\eps}w\big \rangle+\frac{\gamma}{N}\big\langle \big\{\cV_{\eps,R}\Delta^{-1}w+\gamma\Delta^{-1}\widehat{w},\widehat{\Theta}\big\},\mathbbm{1}_{\widehat{{\rm I}}_\eps}\widehat{w}\big \rangle\\
    &
    +\big\langle \big\{(I+\cQ_{\eps})\Delta^{-1}w+\frac{\gamma}{N}\cV_{\eps,R}^*\Delta^{-1}\widehat{w},\Phi_{\rm app,E}\big\},\mathbbm{1}_{{\rm I}_\eps^c}w\big \rangle\\
    &+\frac{\gamma}{N}\big\langle \big\{\cV_{\eps,R}\Delta^{-1}w+\gamma\Delta^{-1}\widehat{w},\widehat{\Phi}_{\rm app,E}\big\},\mathbbm{1}_{\widehat{{\rm I}}^c_\eps}\widehat{w}\big \rangle\\
    &+\langle (\mathbbm{1}_{{\rm I}_\eps^c}\cM_{d}({\mathbf w})_1,\mathbbm{1}_{\widehat{{\rm I}}^c_\eps}\cM_{d}({\mathbf w})_2),\cJ_{nl}({\mathbf w})\rangle.
\end{align}
We see that from the functional relationship we gain smallness thanks to the presence of $\Theta,\widehat{\Theta}.$ 
Outside the interior regions, we can always gain smallness by the fast growth of the weight $A_\eps$. Therefore, appealing to Proposition \ref{prop:functional} and Lemma \ref{lem:bounds}, it follows that 
\begin{align}
\label{bd:MdJnl}
    \big|\langle \cM_{d}({\mathbf w}),\cJ_{nl}({\mathbf w})\rangle+\langle \cM_{nl}({\mathbf w}),\cJ_{d}({\mathbf w})\rangle\big|\lesssim \eps^{M+1} \|{\mathbf w}\|_{\boldsymbol{\cX}_\eps}^2.
\end{align}
Collecting the bounds \cref{bd:MnlJnl,bd:MdJd,bd:MdJnl}, we arrive at 
\begin{align}
    |{\rm A}|\lesssim \delta^{-1}\big(\eps^{M+1-\sigma_1N} \|{\mathbf w}\|_{\boldsymbol{\cX}_\eps}^2+\eps^{1+\sigma_2} \|\rho_\eps{\mathbf w}\|_{\boldsymbol{\cX}_\eps}^2\big).
\end{align}
Choosing $\sigma_1$ sufficiently small and $\sigma_2$ sufficiently large such that $\sigma_1<(M+1)/(2N)$ and $\sigma_2>(M-1)/2$, in the time-scale under consideration the bound above and Proposition  \ref{prop:D} imply that 
\begin{align}
    |{\rm A}|\leq C\delta^{s} (D_{\eps}({\mathbf w})+|\mrm_1^1(w)|^2),    
\end{align}
where $C>0$ is a large constant and $s\in (0,1)$ is a sufficiently small constant. Hence, in view of \cref{bd:m11boot}, the bound above is consistent with \cref{bd:FAN} if $\delta$ is sufficiently small.

\subsubsection*{\textbf{Step 3: bound on the Navier-Stokes and nonlinear term}}
We finally present the bound for the term ${\rm N}$ in \cref{def:Nen}. First of all, we control the term involving  $[t\de_t,\cK_\eps]$. By the definition of $\cK_\eps$ in \cref{def:Beps}, we know that this operator involves only translations and rotations composed with $\Delta^{-1}$, where the time dependency is only through the functions $\eps(t)$ and $R_{\rm app}(t)$. Therefore, when computing the commutator, we get terms of the type $f(t)Q(\nabla\cT_{\eps})\circ \tilde{Q}$ where $f(t)$ is related to $t\de_t$ derivatives of $\eps(t)$ and $R_{\rm app}(t)$, $Q,\tilde{Q}$ are rotation matrices and there is a shift related to rotations of $(R_{\rm app},0)$. Thus, we can appeal to properties of $R_{\rm app}$,  Lemma \ref{lem:MepsBeps} and use the bootstrap hypothesis \cref{bootstrap} to deduce that
\begin{align}
  |\langle{\mathbf w},[t\de_t,\cK_\eps]{\mathbf w}\rangle|\lesssim \eps^2(|\mrm^1_1(w)|+\eps\|{\mathbf w}\|_{\boldsymbol{\cX}_\eps})\|{\mathbf w}\|_{\boldsymbol{\cX}_\eps}\lesssim \eps^7.
\end{align} 
In the time scale under consideration, this bound is consistent with \cref{bd:FAN} when $\delta$ is sufficiently small.

Then, let us control for instance the term arising from $\cN(w,\widehat{w})$ defined in \cref{def:N}, since the one in \cref{def:hN} can be treated in a completely analogous way. Namely, we aim at controlling
\begin{align}
    \sum_{j=1}^3\langle \cN_j(w,\widehat{w}),(\cM_\eps {
    \mathbf w})_1\rangle=\sum_{j=1}^3 {\rm N}_j, 
\end{align}
where $\cN_1,\cN_2,\cN_3$ are defined in \cref{def:N1,def:N2,def:N3}.
The term ${\rm N}_1$ is similar to the Navier-Stokes error handled in \cite{dolce2024long}. Indeed, it is enough to combine $\Phi_{\rm app, NS}=\cO_{\cS_*}(\eps^2)$  $\Omega_{\rm app, NS}=\cO_{\cZ}(\eps^2)$ with  Lemma \ref{lem:bounds} and Proposition \ref{prop:D} to infer that 
\begin{align}
    |{\rm N}_1|&\lesssim \eps^2 \|{\mathbf w}\|_{\boldsymbol{\cX}_\eps}(\|{\mathbf w}\|_{\boldsymbol{\cX}_\eps}+\|{\nabla\mathbf w}\|_{\boldsymbol{\cX}_\eps})\lesssim \eps^4\sqrt{D_{\eps}({\mathbf w})+|\mrm_1^1(w)|^2}\\
    &\lesssim \eps^6+\eps^2(D_{\eps}({\mathbf w})+|\mrm_1^1(w)|^2)\lesssim \eps^6+\eps^2D_{\eps}({\mathbf w}),
\end{align}
where in the last bound we used the bootstrap hypothesis \cref{bootstrap} (and Remark \ref{rem:momXeps}). This  is again in agreement with \cref{bd:FAN} if  $\delta$ is sufficiently small.

For the nonlinear term ${\rm N}_2$, note that 
\begin{align}
    {\rm N}_2&=-\big\langle\{(I+\cQ_{\eps})\varphi+\frac{\gamma}{N} \cV_\eps^*\widehat{\varphi} ,w\},A_\eps w+(I+\cQ_{\eps})\varphi+\frac{\gamma}{N} \cV_\eps^*\widehat{\varphi} \big\rangle\\
    &=\frac12\big\langle\{(I+\cQ_{\eps})\varphi+\frac{\gamma}{N} \cV_\eps^*\widehat{\varphi} ,A_\eps\}w, w\big\rangle.
\end{align}
Now, we can argue as in the treatment of the nonlinear term in \cite{dolce2024long} and deduce that 
\begin{align}
    |{\rm N}_2|\lesssim \sqrt{E_{\eps}({\mathbf w})+|\mrm_1^1(w)|^2}(D_\eps({\mathbf w})+|\mrm_1^1(w)|^2)\lesssim \eps(D_\eps({\mathbf w})+\eps^6),
\end{align}
where we used the bootstrap hypothesis \cref{bootstrap} in the last inequality.
Finally, for ${\rm N}_3$ we have to be a bit careful for the term involving $w$. We write
\begin{align}
    &\langle \{|\cT_{\eps}\xi|^2,w\},(\cM_\eps(\mathbf{w}))_1\rangle=\langle \{|\cT_{\eps}\xi|^2,w\},A_\eps w\rangle+\langle \{|\cT_{\eps}\xi|^2,w\},(I+\cQ_\eps)\Delta^{-1} w+\frac{\gamma}{N}\cV_\eps^*\Delta^{-1}\widehat{w}\rangle\\
   \label{bdN3} &\qquad =\frac12 \langle \{|\cT_{\eps}\xi|^2,A_\eps\}w,w\rangle-\langle \{|\cT_{\eps}\xi|^2,(I+\cQ_\eps)\Delta^{-1} w+\frac{\gamma}{N}\cV_\eps^*\Delta^{-1}\widehat{w}\},w\rangle.
\end{align}
For the first term, we see that since $A_0$ is radially symmetric, we can write
\begin{align}
    \mathbbm{1}_{{\rm I}_\eps}\{|\cT_{\eps}\xi|^2,A_{\eps}\}=\mathbbm{1}_{{\rm I}_\eps}\{|\xi|^2,A_{\eps}-A_0\}+\mathbbm{1}_{{\rm I}_\eps}
    \frac{2 R_{\rm app}}{\eps d}\de_2A_{\eps}
\end{align}
By Lemma \ref{lem:Aeps}, since $|\xi|\lesssim \eps^{-\sigma_1}$ in ${\rm I}_\eps$, we deduce that 
\begin{align}
       \mathbbm{1}_{{\rm I}_\eps}|\{|\cT_{\eps}\xi|^2,A_{\eps}\}|\lesssim  \mathbbm{1}_{{\rm I}_\eps}(\eps A_\eps+\eps^{-1}|\xi|A_\eps)\lesssim\mathbbm{1}_{{\rm I}_\eps} (\eps +\eps^{-1}\rho_\eps)A_{\eps}.
\end{align}
Similarly, since $A_{\eps}$ is constant on ${\rm II}_{\eps}$ and radial on ${\rm III}_\eps$, we have
\begin{align}
    \mathbbm{1}_{{\rm I}^c_\eps}|\{|\cT_{\eps}\xi|^2,A_{\eps}\}|=\mathbbm{1}_{{\rm I}^c_\eps}
    \frac{2 R_{\rm app}}{\eps d}|\de_2A_{\eps}|\lesssim \eps^{-1+\sigma_2} \mathbbm{1}_{{\rm III}_\eps}\rho_\eps A_{\eps}.
\end{align}
For the remaining terms in \cref{bdN3} (the ones involving the $\Delta^{-1}$) we can combine \cref{bd:m11boot} in Lemma \ref{lem:bounds} with H\"older's inequality and the fast growth of $A_\eps$ to get that 
\begin{align}
     |\langle \{|\cT_{\eps}\xi|^2,w\},(\cM_\eps(\mathbf{w}))_1\rangle|&\lesssim \eps^{-1} \|\mathbf{w}\|_{\cX_\eps}(\|\mathbf{w}\|_{\cX_\eps}+\|\rho_\eps\mathbf{w}\|_{\cX_\eps})
\end{align}
For the terms in ${\rm N}_3$ involving the Navier-Stokes part of the approximate solution, we can simply use $\Omega_{\rm app,NS}=\cO_\cZ(\eps^2)$. Thus, appealing to Lemma \ref{lem:beta}, Proposition \ref{prop:D} and using the bootstrap hypothesis \cref{bootstrap} we finally see that
\begin{align}
    |{\rm N}_3|&\lesssim |\mathfrak{b}_\eps|(\eps \|{\mathbf w}\|_{\boldsymbol{\cX}_\eps}+\eps^{-1}\|{\mathbf w}\|_{\boldsymbol{\cX}_\eps}(\|{\mathbf w}\|_{\boldsymbol{\cX}_\eps}+\|\rho_\eps{\mathbf w}\|_{\boldsymbol{\cX}_\eps}))\\
    &\lesssim \eps^9+\eps^7\sqrt{D_{\eps}({\mathbf w})+|\mrm^1_1(w)|^2}\lesssim \eps^9+\eps^2D_{\eps}({\mathbf w}).
\end{align}
This bound is also in agreement with \cref{bd:FAN} if  $\delta$ is sufficiently small. Therefore, recalling that the terms involving $\widehat{\cN}(w,\widehat{w})$ can be handled similarly, we conclude that the whole ${\rm N}$ satisfy a bound consistent with \cref{bd:FAN} and therefore the proof is over. \qed

\section{Numerical simulations}\label{sec:sim}

The aim of this section is to present numerical simulations with various combinations of parameters $N$ and $\gamma$ and check that the behaviour of the deformation of vortices matches what is predicted by the computation of $\Omega_2$ and $\widehat{\Omega}_2$. 

All the following simulations, as well those presented in Figure~\ref{fig:deformation_sim} are made using the centered Navier-Stokes solver of the free software \href{http://basilisk.fr/}{Basilisk}. We use $\delta = 2\cdot 10^{-4}$, and instead of Dirac masses, we consider the initial vortices to be concentrated Gaussian functions, so that at time 0 we have that, depending on the exact choice of parameters for each simulation, $0.02 \lesssim \eps(0) \lesssim 0.05$ (except when $N \ge 10$ where $d$ becomes small, increasing the value of $\eps(0)$). 
Let us recall from \cite{donati2025fast} that it is expected that the solution of the Navier-Stokes equation with such initial data relaxes quickly to the solution of the problem with Dirac masses as initial datum. Except for the initial time, where vortices are radially symmetric, the pictures shown below are always taken after that transient process is completed.

\subsection{Different situations when \texorpdfstring{$N=2$}{N = 2}}
We start with displaying the simulation of the pair of co-rotating vortices, corresponding to $N=2$ and $\gamma = 0$, in Figure~\ref{fig:pair}. The expected two-fold symmetric deformation is visibly clear.
\begin{figure}[hbtp]
\centering
\begin{subfigure}{0.20\linewidth}
  \includegraphics[width=\linewidth]{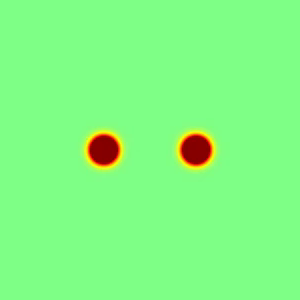}
\end{subfigure}\hspace{1mm}
\begin{subfigure}{0.20\linewidth}
  \includegraphics[width=\linewidth]{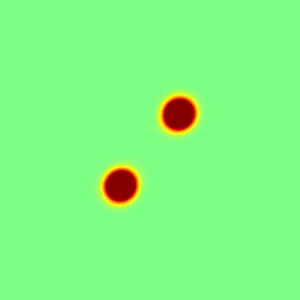}
\end{subfigure}\hspace{1mm}
\begin{subfigure}{0.20\linewidth}
  \includegraphics[width=\linewidth]{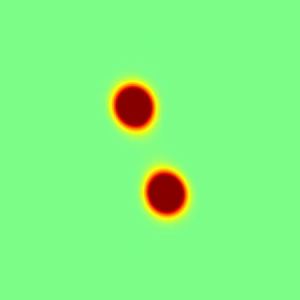}
\end{subfigure}\hspace{1mm}
\begin{subfigure}{0.20\linewidth}
  \includegraphics[width=\linewidth]{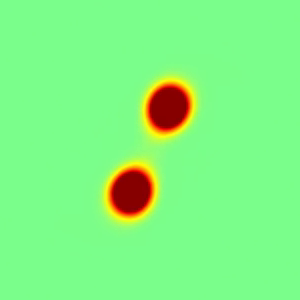}
\end{subfigure}
\caption{The pair of initially Gaussian vortices rotates and deforms.}
\label{fig:pair}
\end{figure}

Taking now $\gamma = 1$, we see a similar behaviour, with the central vortex also deforming heavily due to the strain, see Figure~\ref{fig:tripole}.
\begin{figure}[hbtp]
\centering
\begin{subfigure}{0.20\linewidth}
  \includegraphics[width=\linewidth]{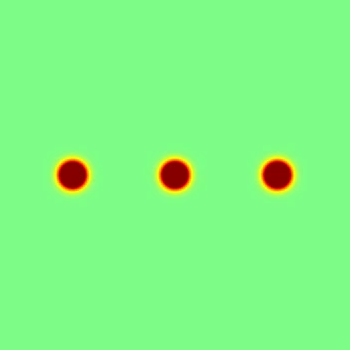}
\end{subfigure}\hspace{1mm}
\begin{subfigure}{0.20\linewidth}
  \includegraphics[width=\linewidth]{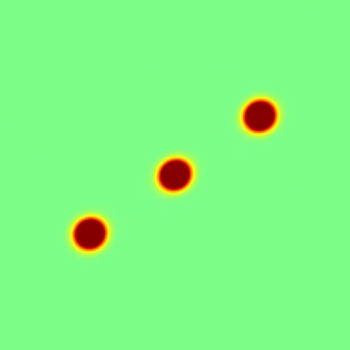}
\end{subfigure}\hspace{1mm}
\begin{subfigure}{0.20\linewidth}
  \includegraphics[width=\linewidth]{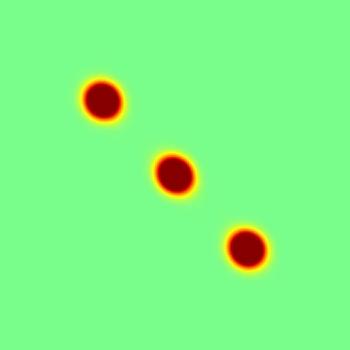}
\end{subfigure}\hspace{1mm}
\begin{subfigure}{0.20\linewidth}
  \includegraphics[width=\linewidth]{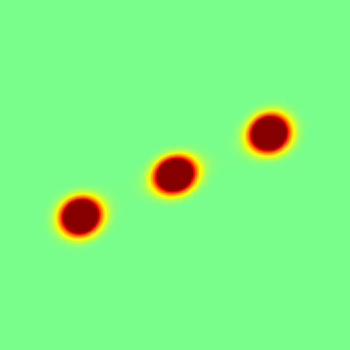}
\end{subfigure}
\caption{Three identical vortices rotate and deform.}
\label{fig:tripole}
\end{figure}

Let us recall that only when $N=2$ can $\widehat{\Omega}_2$ be non-vanishing. Moreover, unlike $\Omega_2$, no value of $\gamma$ can ensure $\widehat{\Omega}_2 = 0$ either (except $\gamma =0$). This means that for $N=2$ and $\gamma \neq 0$, we expect the central vortex to always display 2-fold symmetric deformation. For the outer vortices, the deformation changes direction at the critical value $\gamma_2^* = - \frac{1}{4}$. All three cases are displayed in Figure~\ref{fig:tripoles}.
\begin{figure}[hbtp]
\centering
\begin{subfigure}{0.22\linewidth}
  \includegraphics[width=\linewidth]{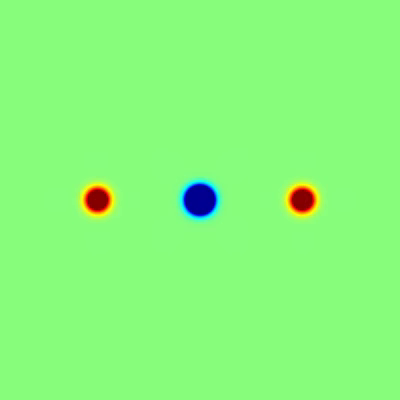}
\end{subfigure}\hspace{4mm}
\begin{subfigure}{0.22\linewidth}
  \includegraphics[width=\linewidth]{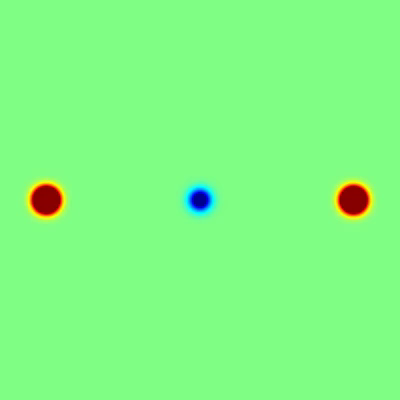}
\end{subfigure}\hspace{4mm}
\begin{subfigure}{0.22\linewidth}
  \includegraphics[width=\linewidth]{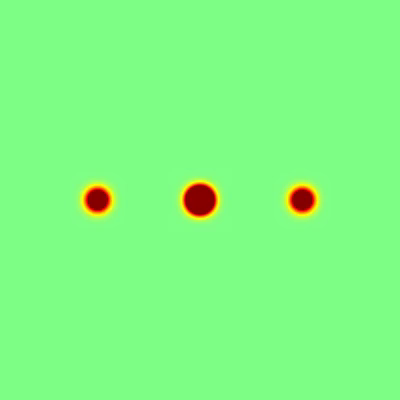}
\end{subfigure}

\vspace{1mm}

\begin{subfigure}{0.22\linewidth}
  \includegraphics[width=\linewidth]{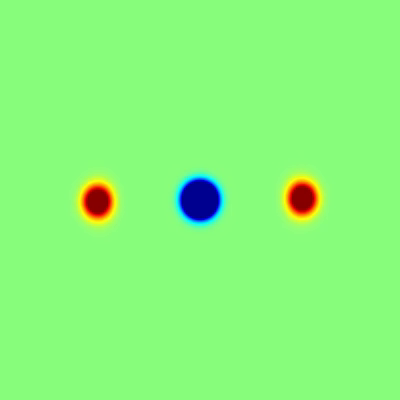}
\end{subfigure}\hspace{4mm}
\begin{subfigure}{0.22\linewidth}
  \includegraphics[width=\linewidth]{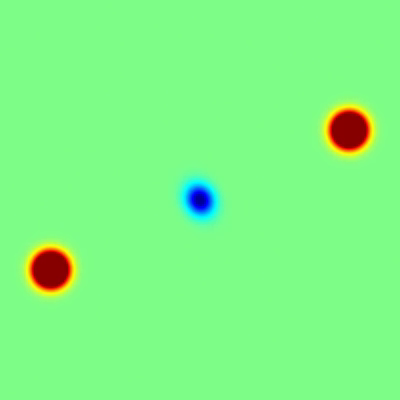}
\end{subfigure}\hspace{4mm}
\begin{subfigure}{0.22\linewidth}
  \includegraphics[width=\linewidth]{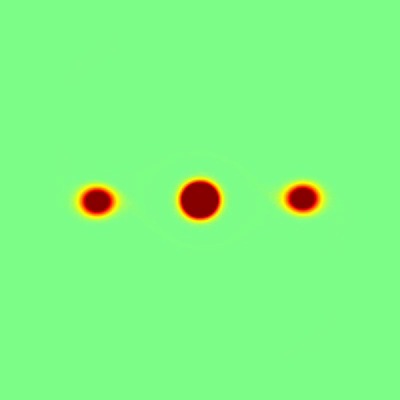}
\end{subfigure}
\caption{Initial configuration (top) evolves in time (bottom). In the cases $\gamma = -3$ (left), $\gamma = \gamma_2^* = - \frac{1}{4}$ (middle) and $\gamma = 3$ (right), after half of a rotation, we see the deformation of the outer vortices in the non-critical cases (left and right) with different orientations depending on the case, and a less visible but present deformation of the central vortex. In the critical case, after just a fraction of rotation, the central vortex has largely deformed while the outer vortices have not.}
\label{fig:tripoles}
\end{figure}

\subsection{The \texorpdfstring{$N=5$}{N = 5} case}

An interesting particular situation is taking $N=5$, as in that case the critical value $\gamma_5^* = 0$. This means that we expect the pentagonal vortex crystal, without central vortex, to not display any clear 2-fold symmetric deformation. This is visible in Figure~\ref{fig:pentagon}. Even after a full rotation of the configuration, and although the diffusive effects have significantly spread the vortices, they remain mostly radially symmetric. Only after the third, and last before vortex merging, rotation the next order deformation becomes clearly visible.
\begin{figure}[hbtp]
\centering
\begin{subfigure}{0.20\linewidth}
  \includegraphics[width=\linewidth]{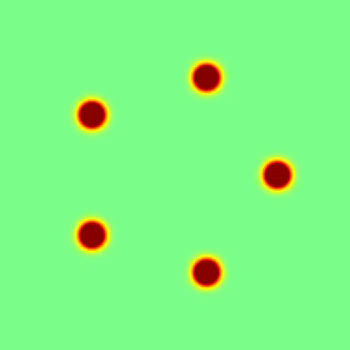}
\end{subfigure}\hspace{1mm}
\begin{subfigure}{0.20\linewidth}
  \includegraphics[width=\linewidth]{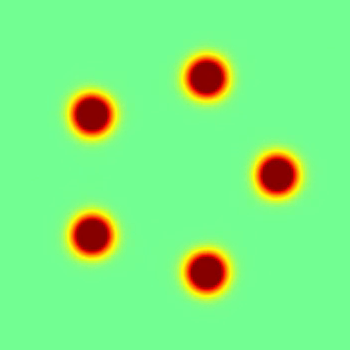}
\end{subfigure}\hspace{1mm}
\begin{subfigure}{0.20\linewidth}
  \includegraphics[width=\linewidth]{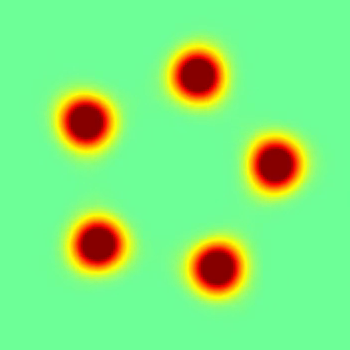}
\end{subfigure}\hspace{1mm}
\begin{subfigure}{0.20\linewidth}
  \includegraphics[width=\linewidth]{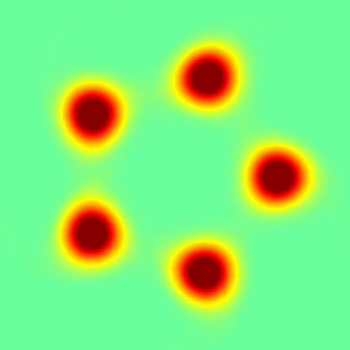}
\end{subfigure}
\caption{The pentagonal vortex crystal does not display any clear 2-fold symmetric deformation. After a very long time, close to the beginning of vortex merging, the 3-fold symmetric deformation starts to be visible. Each picture is taken after a full rotation of the configuration. It is interesting to compare the shape of these vortices with the ones found in \cite{Dritschel1985} for a pentagonal configuration.}
\label{fig:pentagon}
\end{figure}

\subsection{Stability of the polygonal vortex crystal}\label{sec:stability}

Before going further, let us quickly discuss the stability of these polygonal vortex crystal.
For general details on how to compute the stability properties of configurations of point-vortices, we refer the reader to \cite{Rob2013}. The stability of our particular vortex crystal of interest has been studied in \cite{CabSch1999}, which states the following result.
\begin{theorem}[{\cite[Theorem 5.1]{CabSch1999}}]\label{theo:stabCS}
    If the intensity of the central vortex satisfies that
    \begin{equation}
        \begin{split}
            \displaystyle  \frac{N^2-8N+8}{16}< \gamma < \frac{(N-1)^2}{4}& \text{ if $N$ is even,} \vspace{3mm} \\
            \displaystyle  \frac{N^2-8N+7}{16} <\gamma < \frac{(N-1)^2}{4} & \text{ if $N$ is odd,}
        \end{split}
    \end{equation}
    then this equilibrium is nonlinearly stable to perturbations that preserve $A$.
\end{theorem}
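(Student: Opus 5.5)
The plan is to follow the classical route of Cabral and Schmidt: linearize the point-vortex Hamiltonian system around the rigidly rotating polygon, and build a Lyapunov function from conserved quantities. Here $A$ is the angular impulse $\sum_l \Gamma_l |z_l|^2$, which is conserved by \cref{eq:HK}.

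First, I pass to the rotating frame with angular speed $\mathsf{a}$ from \cref{eq:zjt}. In this frame the crystal is a critical point of the augmented Hamiltonian
\begin{equation}
H_{\mathsf{a}} = H + \frac{\mathsf{a}}{2}\,A, \qquad H = -\frac{1}{4\pi}\sum_{j\neq l}\Gamma_j\Gamma_l\log|z_j-z_l|.
\end{equation}
This holds because the criticality condition is exactly the relative-equilibrium equation.

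Next, I restrict to perturbations preserving $A$. Nonlinear (Lyapunov) stability modulo the rotation symmetry then follows from a Dirichlet-type argument: it suffices to show that the Hessian $D^2H_{\mathsf{a}}$ is definite on the tangent space to the level set $\{A=\mathrm{const}\}$, after quotienting out the one-dimensional direction generated by rotations. That direction is a neutral mode coming from the $SO(2)$ invariance.

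To compute the Hessian, I use the $N$-fold symmetry: a discrete Fourier transform in the vertex index $j$ block-diagonalizes $D^2H_{\mathsf{a}}$.

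\begin{itemize}
\item For modes $k=2,\dots,N-2$, the central vortex decouples and each block is $2\times 2$. Its entries are explicit trigonometric sums of the form $\sum_l (1-\cos(2\pi lk/N))/|Q_l-1|^2$. These can be evaluated in closed form using the constants ${\rm S}_{n,k}$ of Lemma~\ref{lem:Snk}; they are quadratic polynomials in $k$.
\item For the modes $k=1$ and $k=N-1$, the central vortex's displacement couples in, giving a larger block.
\item For $k=0$, the block contains the rotation direction and the radial dilation. The dilation is excluded by the constraint that $A$ is preserved.
\end{itemize}

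Requiring definiteness of each block gives inequalities on $\gamma$. The upper bound $(N-1)^2/4$ comes from the coupled $k=1$ block. The lower bound comes from the worst mode, $k=\lfloor N/2\rfloor$. That mode is $k=N/2$ when $N$ is even and $(N\pm1)/2$ when $N$ is odd, which explains the parity split.

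The main obstacle is the sign analysis. Definiteness of the Hessian is only a sufficient condition, and for point-vortex systems $H_{\mathsf{a}}$ is generally indefinite. One must therefore check that within the stated range the relevant reduced form is genuinely definite, after possibly adding a multiple of the linear-momentum Casimir, and not merely spectrally stable. I would handle this by computing each block's determinant and trace explicitly and showing they have fixed sign exactly on the stated interval. Given the disagreement noted in the footnote with \cite{LP05}, the case $\gamma<0$ with $N\le 6$ deserves separate care.

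Since this result is quoted from \cite{CabSch1999}, I would ultimately cite that paper for the detailed block computations.
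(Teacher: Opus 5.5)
Your framework is the classical one and works for $N\ge4$ and $\gamma>0$: critical point of $H+\frac{\mathsf a}{2}A$ in the rotating frame, Dirichlet/energy--momentum test on $\ker dA\cap\ker dP$ modulo rotations, discrete Fourier blocks, and upper bound from the coupled $k=\pm1$ block. But the step you intend to close with is false: the blocks do \emph{not} have a fixed sign on the whole stated interval.

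Take $N\ge3$ and units $\Gamma=\mathsf r=1$, with time rescaled so that the angular speed is $\Omega=\tfrac{N-1}{2}+\gamma$. In the rotating frame write $z_j=e^{2\pi ij/N}(1+\eta_j)$ with $\eta_j=\alpha e^{2\pi ij/N}+\beta e^{-2\pi ij/N}$, and let $\eta_0$ be the displacement of the central vortex. On $\delta P=N\beta+\gamma\eta_0=0$, the Hessian of $H+\frac{\mathsf a}{2}A$ on this block is a positive multiple of the Hermitian form in $(\alpha,\overline{\eta_0})$ with matrix $\begin{pmatrix}N\Omega&-\gamma(\gamma+N)\\-\gamma(\gamma+N)&\gamma(\gamma+N)\Omega/N\end{pmatrix}$. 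Its determinant is $\gamma(\gamma+N)\big(\tfrac{(N-1)^2}{4}-\gamma\big)$. The blocks $2\le k\le N-2$ are positive definite exactly where they are spectrally stable. Hence, on the stated interval, the reduced Hessian is definite if and only if $\gamma\ge0$ ($\gamma=0$ meaning no central vortex).

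For $\gamma<0$, which lies in the interval precisely when $N\le6$, the diagonal entries $N\Omega>0$ and $\gamma(\gamma+N)\Omega/N<0$ already have opposite signs. The equilibrium is spectrally stable there but energetically indefinite, because the central vortex carries symplectic weight $\gamma$. Adding functions of $A$ and $P$ cannot repair this, since they leave the form on $\ker dA\cap\ker dP$ unchanged. Your argument therefore yields the statement only for $\gamma\ge0$, which is exactly the restriction the paper attributes to \cite{LP05}. For $\gamma<0$ a genuinely different mechanism would be needed: for $N=3$ the reduced system has two degrees of freedom, so an Arnold--KAM argument is at least conceivable, but for $4\le N\le6$ there is no general tool. ``Separate care'' does not supply one.

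Your mode bookkeeping also fails for small $N$. For $N=2$ and $N=3$ the mode $k=\lfloor N/2\rfloor$ (resp.\ $k=\tfrac{N\pm1}{2}$) \emph{is} the coupled mode $k=\pm1$. The lower bounds $-\tfrac14$ and $-\tfrac12$ in the statement are what the uncoupled criterion $\gamma>\tfrac{k(N-k)}{4}-\tfrac{N-1}{2}$ gives if the central vortex is held fixed.

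With the central vortex free, the picture changes. For $N=3$ the coupled block only produces the condition $\gamma<1$. For $N=2$ the reduced system has a single degree of freedom, the position of the central vortex relative to the midpoint of the pair, and its quadratic form is definite only when $\gamma<-\tfrac54$. Indeed, the midpoint of a co-rotating pair is a hyperbolic stagnation point in the rotating frame, so a weak central vortex with $0<|\gamma|<\tfrac14$ is linearly unstable.

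Carried out honestly, your block computation would contradict the quoted range for $N=2$ rather than establish it, and it produces the stated lower bound only for $N\ge4$. In the end the statement rests on the citation of \cite{CabSch1999}, which is all the paper offers as well.
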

We mention that according to \cite{LP05}, the equilibrium is nonlinearly stable only under the additional assumption to the hypotheses of Theorem~\ref{theo:stabCS} that $\gamma \ge 0$, which makes a difference in the case $N \le 6$. Notice that changing $\gamma$ changes the angular velocity of the configuration, so in general this equilibrium can only be orbitally nonlinearly stable. If $\gamma =0$, then the Theorem~\ref{theo:stabCS} states that the equilibrium is nonlinearly stable only for $N \le 6$. The case $N=7$ is a degenerate equilibrium (see \cite[Section 7]{CabSch1999}) and for $N\ge 8$, the configuration is unstable. See also the results in \cite{aref2002vortex,Barry12} for other interesting relative equilibria with $N+1$-vortices.

\medskip

In Theorem~\ref{theo:main}, which is the main result of the paper, we prove in particular that the vortices remain close to the point-vortex solution at least for times $t \ll T_\adv \delta^{-2/3}$ (see Remark~\ref{rem:delta23}), and orbitally close for times $t \le T_\adv \delta^{-\sigma}$, for any value of $\gamma$. The mechanism that prevents possible point-vortex instabilities from appearing is the $N$-fold symmetry. With the additional assumption that the perturbation should preserve the $N$-fold symmetry, the point vortex stability problem is completely trivialized, and the configuration is always orbitally stable, for any value of $\gamma$. However, this means that Theorem~\ref{theo:main} strongly relies on the $N$-fold symmetry, and necessarily fails to be true when $\gamma$ is outside of the stability range, if for instance one changes the initial position of the vortices in an unstable $2(N+1)$-dimensional direction. 

\subsection{Larger values of \texorpdfstring{$N$}{N}}

The result and behaviour does not change in principle for larger values of $N$. For $N=10$ for instance, we compute that $\gamma_{10}^* = 3$, and thus for various values of $\gamma$ compared to $\gamma_{10}^*$ we see the deformation of the outer vortices changing direction, see Figure~\ref{fig:N10gneq0}. However, one has to be more careful with instabilities. Recall from Theorem~\ref{theo:stabCS} that for $N=10$ one has a threshold $\gamma_{\rm min} = \frac{7}{4}$, which does not leave much room to observe the deformation for $\gamma < \gamma_{10}^*$ without triggering instabilities. In particular, for $\gamma =0$, we see in Figure~\ref{fig:N18} that the instability kicks in very early, the symmetry is broken and vortices start merging by pairs. Taking even larger values of $N$, the same applies, and in particular, we recover the vortex sheet instability as discussed in Remark~\ref{rem:sheet}.

\begin{figure}[hbtp]
\centering
\begin{subfigure}{0.22\linewidth}
  \includegraphics[width=\linewidth]{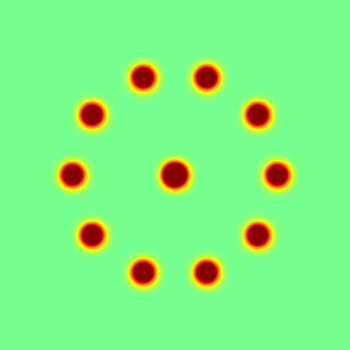}
\end{subfigure}\hspace{6mm}
\begin{subfigure}{0.22\linewidth}
  \includegraphics[width=\linewidth]{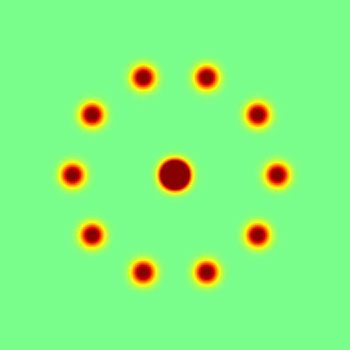}
\end{subfigure}\hspace{6mm} 

\vspace{1mm}

\begin{subfigure}{0.22\linewidth}
  \includegraphics[width=\linewidth]{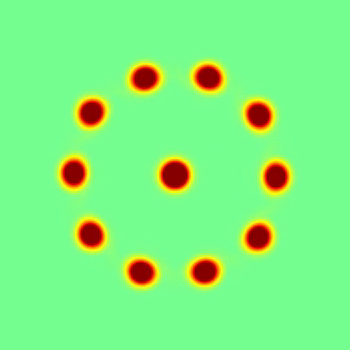}
\end{subfigure}\hspace{6mm}
\begin{subfigure}{0.22\linewidth}
  \includegraphics[width=\linewidth]{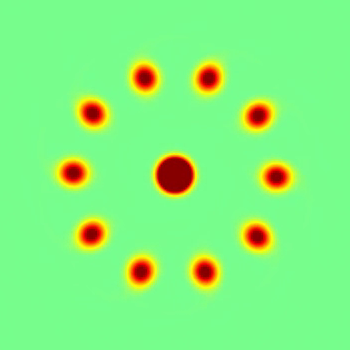}
\end{subfigure}\hspace{6mm}
\caption{With $N=10$, in the case $\gamma =5$ (right) the deformation towards the center is well visible, more than in the case $\gamma = \frac{3}{2}$ which is close to $\gamma^* = 3$, but already below the stability threshold $\gamma_{\rm min} = \frac{7}{4}$.}
\label{fig:N10gneq0}
\end{figure}

\begin{figure}[hbtp]
\centering
\begin{subfigure}{0.20\linewidth}
  \includegraphics[width=\linewidth]{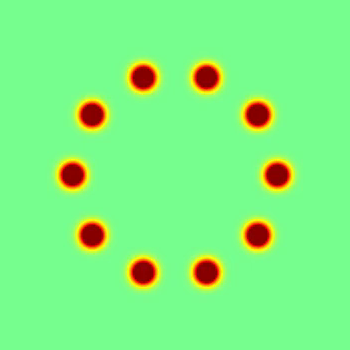}
\end{subfigure}\hspace{0.5mm}
\begin{subfigure}{0.20\linewidth}
  \includegraphics[width=\linewidth]{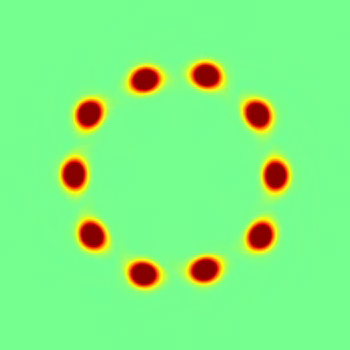}
\end{subfigure}\hspace{0.5mm}
\begin{subfigure}{0.20\linewidth}
  \includegraphics[width=\linewidth]{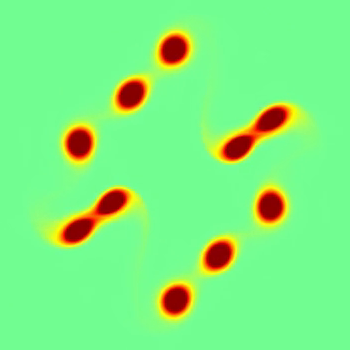}
\end{subfigure}\hspace{0.5mm}
\begin{subfigure}{0.20\linewidth}
  \includegraphics[width=\linewidth]{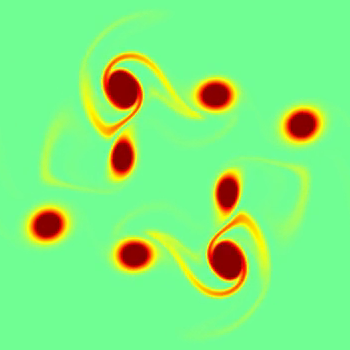}
\end{subfigure}

\vspace{4mm}

\begin{subfigure}{0.20\linewidth}
  \includegraphics[width=\linewidth]{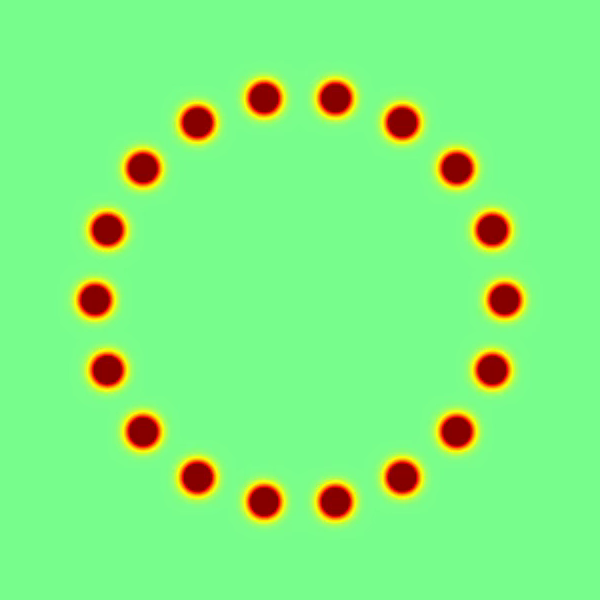}
\end{subfigure}\hspace{0.5mm}
\begin{subfigure}{0.20\linewidth}
  \includegraphics[width=\linewidth]{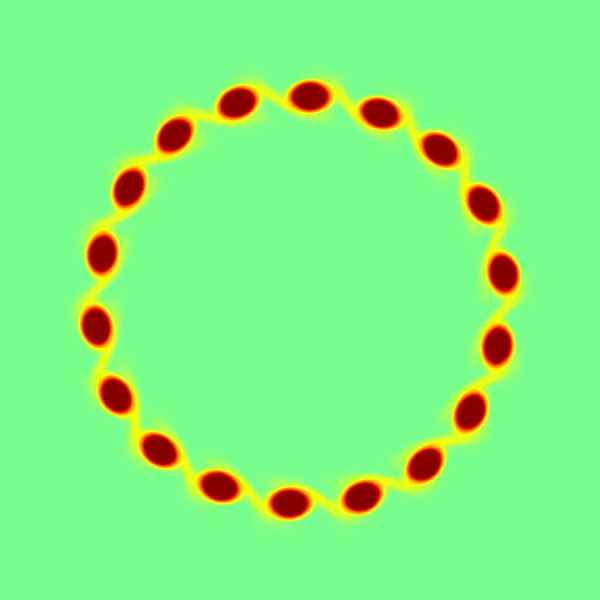}
\end{subfigure}\hspace{0.5mm}
\begin{subfigure}{0.20\linewidth}
  \includegraphics[width=\linewidth]{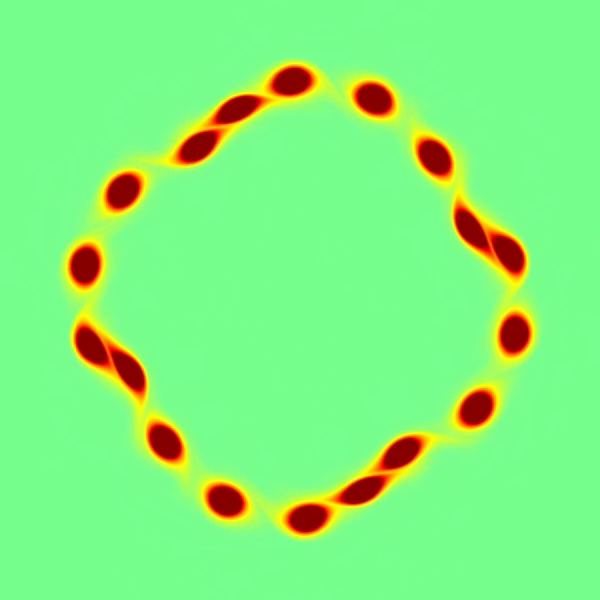}
\end{subfigure}\hspace{0.5mm}
\begin{subfigure}{0.20\linewidth}
  \includegraphics[width=\linewidth]{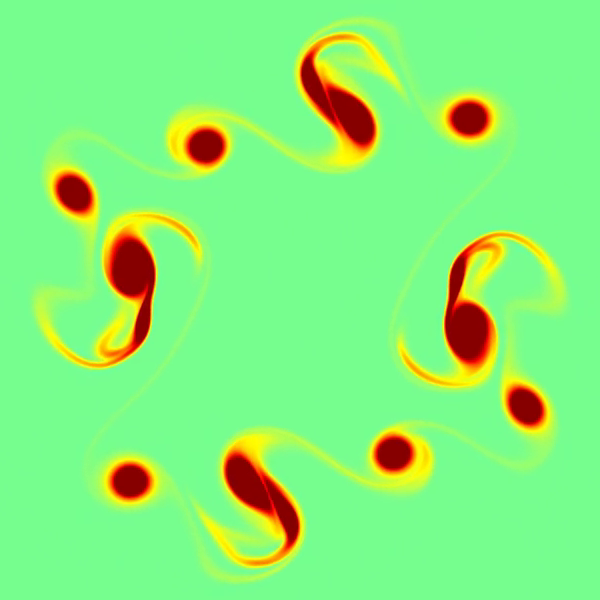}
\end{subfigure}
\vspace{1mm}
\caption{For $N =10$ (top) or $N=18$ (bottom) with $\gamma =0$, the expected deformation happens, then the typical vortex sheet instability develop very fast. Taking more concentrated vortices would delay this instability, and this is related to the joint $N\to \infty$ and $\delta\to0 $ limit discussed in Remark~\ref{rem:sheet}, corresponding essentially to the first two columns above. However, fixing $\delta$ while taking $N \to \infty$ the Kelvin-Helmholtz type instability, visible in the third and fourth column above, is the expected behavior as soon as $t > 0$.}
\label{fig:N18}
\end{figure}

\FloatBarrier

\appendix

\section{Function spaces and operators}\label{app:functional}
In this appendix we give the details of the proofs in Section \ref{sec:Func}.

\subsection{Preliminary: relevant constants and roots of unity}\label{app:constants}
We first establish formulas to compute the constants ${\rm S}_{n,k}$ explicitly, which we recall are defined in \cref{def:Snk}.
\begin{lemma}
\label{lem:Snk}
    For any $0 \le k \le N$, there holds that
    \begin{equation}
\label{eq:Snn}
    {\rm S}_{0,0
    }=N-1, \qquad {\rm S}_{n,0
    }=-\frac{1}{(n-1)!}\Big(\frac{p(z)'}{p(z)}\Big)^{(n-1)}\Big|_{z=1} \in \RR \qquad \text{for } n\geq1
\end{equation}
where $p(z)=\frac{1-z^N}{1-z}$, and
\begin{equation}
   {\rm  S}_{n,k} = \sum_{j=0}^k \binom{k}{j} {\rm  S}_{n-k+j,0}.
\end{equation}
\end{lemma}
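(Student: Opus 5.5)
The plan is to work with the generating polynomial of the nontrivial $N$-th roots of unity. Write $q_l=\e^{2\pi i l/N}$ for $l=1,\dots,N-1$. These are exactly the roots of $z^N-1$ other than $z=1$, so
\[
\prod_{l=1}^{N-1}(z-q_l)=\frac{z^N-1}{z-1}=p(z).
\]
Logarithmic differentiation then gives
\[
\frac{p'(z)}{p(z)}=\sum_{l=1}^{N-1}\frac{1}{z-q_l}.
\]
Differentiating $n-1$ times gives
\[
\Big(\frac{p'}{p}\Big)^{(n-1)}(z)=\sum_{l}(-1)^{n-1}(n-1)!\,(z-q_l)^{-n}.
\]
Evaluating at $z=1$ (legitimate, since $p(1)=N\neq0$) yields
\[
\sum_{l}\frac{1}{(1-q_l)^n}=\frac{(-1)^{n-1}}{(n-1)!}\Big(\frac{p'}{p}\Big)^{(n-1)}(1).
\]
Since $(q_l-1)^{-n}=(-1)^n(1-q_l)^{-n}$, we get ${\rm S}_{n,0}=-\frac{1}{(n-1)!}(p'/p)^{(n-1)}(1)$. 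The case ${\rm S}_{0,0}=N-1$ is immediate, since it is just the number of terms.

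Reality follows because $p$ has real (integer) coefficients, so $p'/p$ is a real rational function on the real axis near $1$ and all its derivatives at $1$ are real. Alternatively, the set $\{q_l\}$ is closed under conjugation, so each ${\rm S}_{n,0}$ equals its own conjugate. The same conjugation argument also shows ${\rm S}_{n,k}\in\RR$ for general $k$.

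For the recursion, write $Q_l^k=((Q_l-1)+1)^k$ and expand with the binomial theorem:
\[
\frac{Q_l^k}{(Q_l-1)^n}=\sum_{m=0}^k\binom{k}{m}(Q_l-1)^{m-n}.
\]
Summing over $l$ gives $\sum_m\binom{k}{m}{\rm S}_{n-m,0}$. Reindexing with $j=k-m$ and using $\binom{k}{k-j}=\binom{k}{j}$ produces exactly $\sum_{j}\binom{k}{j}{\rm S}_{n-k+j,0}$.

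The only point that needs care is the range of indices. This reindexing requires $n-m\ge0$, which holds since $k\le n$ by the definition of ${\rm S}_{n,k}$. Negative powers of $(Q_l-1)$ never appear, so every term on the right-hand side is an ${\rm S}_{\cdot,0}$ already computed above. I expect no real obstacle here: the only subtle step is the sign bookkeeping between $(q_l-1)^{-n}$ and $(1-q_l)^{-n}$.
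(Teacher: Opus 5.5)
Your proof is correct and matches the paper's for ${\rm S}_{0,0}$ and ${\rm S}_{n,0}$: the factorization $p(z)=\prod_{l=1}^{N-1}(z-Q_l)$, the logarithmic derivative, $(n-1)$ differentiations evaluated at $z=1$, and the sign identity $(Q_l-1)^{-n}=(-1)^n(1-Q_l)^{-n}$. For the general formula you expand $Q_l^k=((Q_l-1)+1)^k$ in a single step, whereas the paper splits off one factor at a time to get the Pascal-type recursion ${\rm S}_{n,k}={\rm S}_{n-1,k-1}+{\rm S}_{n,k-1}$ and then asserts the closed form by induction; your version uses the same idea but gives the closed form directly and so dispenses with that induction.
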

\begin{proof}
We rewrite the polynomial $p(z)$ as \begin{equation}
\label{def:pz}
    p(z)=\frac{1-z^N}{1-z}=1+z+\dots+z^{N-1}=(z-Q_1)\dots (z-Q_{N-1}),
\end{equation}
and thus
\begin{equation}
    \frac{p'(z)}{p(z)}=\sum_{l=1}^{N-1}\frac{1}{z-Q_l}.
\end{equation}
Differentiating, we have that
\begin{equation}
    \Big(\frac{p(z)'}{p(z)}\Big)^{(m)}=(-1)^m m!\sum_{l=1}^{N-1}\frac{1}{(z-Q_l)^{m+1}}.
\end{equation}
Therefore, for $n\geq 1 $, we have
\begin{equation}
\label{eq:Snn}
    {\rm S}_{n,0
    }=(-1)^n\sum_{l=1}^{N-1}\frac{1}{(1-Q_\ell)^n }=-\frac{1}{(n-1)!}\Big(\frac{p(z)'}{p(z)}\Big)^{(n-1)}\Big|_{z=1} \in \RR.
\end{equation}
Moreover 
\begin{equation}
\label{eq:S00}
    {\rm S}_{0,0}=N-1.
\end{equation}
To compute the value of ${\rm S}_{n,k}$, we first observe that for $1 \le k \le n$,
\begin{equation}
    {\rm S}_{n,k}=\sum_{l=1}^{N-1}\frac{Q_l^k}{(Q_l-1)^n}=\sum_{l=1}^{N-1}\frac{Q_\ell^{k-1}}{(Q_l-1)^{n-1}}+\sum_{l=1}^{N-1}\frac{Q_\ell^{k-1}}{(Q_l-1)^{n}}={\rm S}_{n-1,k-1}+{\rm S}_{n,k-1}.
\end{equation}
We recognize a Pascal's triangle-type formula, and one can prove by  induction that for every $0 \le k \le n$,
\begin{equation}
   {\rm  S}_{n,k} = \sum_{j=0}^k \binom{k}{j} {\rm  S}_{n-k+j,0}.
\end{equation}
\end{proof}
In view of Lemma \ref{lem:Snk}, we can compute recursively every value of ${\rm S}_{n,k}$ for $0 \le k \le n$. Let us give the result for some relevant values.
\begin{align}
    &{\rm S}_{1,0}=-\frac{p'(1)}{p(1)}=-\frac{N-1}{2}, \\
    &{\rm S}_{1,1}=\frac{N-1}{2}\label{eq:S11}\\
    &{\rm S}_{2,0}=\Big(\frac{p'(1)}{p(1)}\Big)^2-\frac{p''(1)}{p(1)}=-\frac{(N-1)(N-5)}{12}\\
    &{\rm S}_{2,1}={\rm S}_{1,0}+{\rm S}_{2,0}=\frac{1-N^2}{12}\\
    &{\rm S}_{2,2}={\rm S}_{0,0}+2{\rm S}_{1,0}+{\rm S}_{2,0}={\rm S}_{2,0}.\label{eq:S22}
\end{align}

\subsection{Expansion of the configurational operators}

Recall the definition of $\cT_{\eps,Z}$, $\cV_{\eps,Z}$ and $\cQ_{\eps,Z}$ defined in~\cref{def:Teps,def:Veps,def:Qeps}.
We then prove a more general version of Lemma \ref{lem:expansionsR}.
\begin{lemma}\label{lem:Expansions}
    Let $f \in \cZ$,  $\varphi = \Delta^{-1}f$, $0<z_{\min}\leq |Z|$ and $M\in \NN$. Then, the translation operator admits the expansion
    \begin{equation}
        \cT_{\eps,Z}\varphi(\xi) \,=\, \frac{{\rm M}(f)}{2\pi}\log\Big(\frac{|Z|}{\eps d}\Big)
    +\sum_{n=1}^M\sum_{k=0}^{n} \eps^nc_{n,k} \Re\Big(\frac{\xi^{n-k}}{Z^n}{\rm m}^k(f)\Big) + \cO_{\cS_*}\bigl(\epsilon^{M+1}\bigr)\,.
    \end{equation}
    The polygon-center operator admits the expansion
    \begin{equation}
  \begin{split}
      \cV_{\eps,Z}\varphi(\xi) \,=\,& \mrM(\Omega)\frac{N}{2\pi}\log\Big(\frac{|Z|}{\eps d}\Big)
  +N\sum_{n=1}^M (-1)^n\eps^nc_{n,n} \Re\Big(\frac{1}{Z^n}{\rm m}^n(f)\Big)\\
&+N\sum_{n=N}^M\sum_{\substack{\ell \in \mathbb{N}\setminus \{0\}\\
\ell N\leq M}}(-1)^n\eps^n c_{n,n-\ell N} \Re\Big(\frac{\xi^{\ell N}}{Z^n}\mrm^{n-\ell N}(f)\Big)+\cO_{\cS_*}\bigl(\epsilon^{M+1}\bigr)\,.
        \end{split}
\end{equation}
    The polygon-polygon operator admits the expansion
\begin{equation}
  \begin{split}
\cQ_{\eps,Z}\varphi(\xi) \,=&\, \frac{\mrM(\Omega)}{2\pi}\sum_{l=1}^{N-1}\log\Big(\frac{|Z_l|}{\eps d}\Big)
  +\sum_{n=1}^M \sum_{k=0}^n\epsilon^nc_{n,k}{\rm S}_{n,n-k}\Re\Bigg(\frac{\xi^{n-k}}{Z^n}\mrm^k(f)\Bigg)+\cO_{\cS_*}(\eps^{M+1}).
        \end{split}
\end{equation}
\end{lemma}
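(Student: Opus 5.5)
The proof is a multipole (Taylor) expansion of the logarithmic kernel. I treat the three operators in turn; each reduces to a one-variable expansion of $\log|a-b|$ with $|a|\gg|b|$.

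\textbf{Translation operator.} By definition,
\[
\cT_{\eps,Z}\varphi(\xi)=\frac{1}{2\pi}\int_{\RR^2}\log\Big|\xi+\frac{Z}{\eps d}-\eta\Big|\,f(\eta)\,\dd\eta .
\]
I factor out $Z/(\eps d)$ in complex notation:
\[
\log\Big|\frac{Z}{\eps d}+(\xi-\eta)\Big|=\log\frac{|Z|}{\eps d}+\Re\log\Big(1+\frac{\eps d(\xi-\eta)}{Z}\Big).
\]
On the region $|\xi-\eta|\le \eps^{-1/2}$, with $|Z|\ge z_{\min}$, I expand $\Re\log(1+u)=\sum_{n\ge1}\frac{(-1)^{n-1}}{n}\Re u^n$. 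I then write $(\xi-\eta)^n=\sum_k\binom nk\xi^{n-k}(-\eta)^k$ and integrate against $f$. Integrating $\eta^k f$ produces $\mrm^k(f)$, and collecting the numerical factors gives exactly the constants $c_{n,k}$ of \cref{def:cnk}. (The factor $d^n$ in $c_{n,k}$ comes from $(\eps d)^n$.) The remainders have to be estimated in $\cO_{\cS_*}$:
\begin{itemize}
\item the Taylor tail after order $M$ is bounded by $C\eps^{M+1}(1+|\xi|+|\eta|)^{M+1}$ on the near region;
\item the far region $|\eta|\gtrsim\eps^{-1/2}$ contributes $\cO(\eps^\infty)$, because $f\in\cZ$ decays like a Gaussian, while $\log$ and its derivatives grow at most polynomially, including near the singular point $\eta=\xi+Z/(\eps d)$, where $\log$ is integrable and $f$ is exponentially small;
\item the set where $|\xi|$ is comparable to $\eps^{-1/2}$ is covered by polynomial growth in $\xi$, since only $\cS_*$ bounds are needed.
\end{itemize}
The $\de_\eps^m$ derivatives in \cref{def:topo} are handled by differentiating under the integral sign; this is routine.

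\textbf{Polygon--polygon operator.} I write $\cQ_{\eps,Z}\varphi(\xi)=\sum_{l=1}^{N-1}\varphi\big(Q_l\xi+(Q_l-1)Z/(\eps d)\big)$. Each summand is the translation case with $\xi$ replaced by $Q_l\xi$ and $Z$ replaced by $Z_l:=(Q_l-1)Z$, which satisfies $|Z_l|\ge c|Z|$. For the constant term, $\sum_l\log(|Z_l|/(\eps d))$ appears as stated. For the general term, $\Re\big((Q_l\xi)^{n-k}\mrm^k(f)/((Q_l-1)^nZ^n)\big)$ summed over $l$ gives $\Re\big(\mathrm S_{n,n-k}\,\xi^{n-k}\mrm^k(f)/Z^n\big)$. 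This uses that $\mathrm S_{n,n-k}$ is real (Lemma~\ref{lem:Snk}), so it can be moved out of $\Re$.

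\textbf{Polygon--center operator.} Here $\cV_{\eps,Z}\varphi(\xi)=\sum_{l=1}^N\varphi(Q_l\xi-Z/(\eps d))$, i.e.\ the translation case with $Z\to-Z$ and $\xi\to Q_l\xi$. The term $\xi^{n-k}\mrm^k(f)$ acquires the factor $(-1)^n\sum_{l=1}^N Q_l^{\,n-k}$. This sum equals $N$ when $N\mid n-k$ and $0$ otherwise. Keeping the terms with $n-k=0$ and $n-k=\ell N$, $\ell\ge1$, gives exactly the stated two sums.

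The main obstacle is the uniform $\cO_{\cS_*}$ control of the remainder, in particular near the logarithmic singularity and uniformly in the choice of $Z$. I will handle it by the near/far cutoff at scale $\eps^{-1/2}$ described above, together with the Gaussian weight carried by $f\in\cZ$.
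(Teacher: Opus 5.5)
Your proposal is correct and follows essentially the same route as the paper: you factor out $Z/(\eps d)$, expand $\Re\log(1+u)$ where $|u|<1$, and absorb the tails using the Gaussian decay of $f$ together with the polynomial-growth freedom allowed by $\cS_*$. You then reduce $\cQ_{\eps,Z}$ to the translation case with $Z_l=(Q_l-1)Z$ (which produces ${\rm S}_{n,n-k}$) and $\cV_{\eps,Z}$ to it with $Z\to -Z$ (which produces $\sum_{l}Q_l^{\,n-k}\in\{0,N\}$); the only difference is that the paper restricts both $\xi$ and $\eta$ to the ball of radius $r_\eps=z_{\min}/(4\eps d)$ instead of cutting $|\xi-\eta|$ at $\eps^{-1/2}$, which changes nothing of substance.
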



\begin{proof}
    By \cref{def:psiom} and the definition \cref{def:Teps}, we have
\begin{align}
\cT_{\eps,Z}\varphi(\xi) \,&=\, \Phi\big(\xi+\frac{1}{\eps d}Z\big) \,=\,
  \frac{1}{2\pi}\int_{\RR^2}\log\big|\big(\xi+\frac{1}{\eps d}Z\big)-\eta\big|\,f(\eta)\,\dd\eta \\
  \,&=\, \frac{\mrM(f)}{2\pi}\,\log\Big(\frac{|Z|}{\eps d}\Big)
  + \frac{1}{2\pi}\int_{\RR^2}\log\Bigl(\Big|1+\frac{\eps d}{Z}(\xi-\eta)\Big|\Bigr)\,f(\eta)\,\dd\eta\,.    \end{align}
We thus have to study the last integral. Since $\cT_{\eps,z} \varphi \in \cS_{*}(\RR^2)$, it is enough to consider $\xi \in B_{r_\eps}$ with $r_\eps:=|z_{\min}|/(4\eps d)$. Indeed, for $\xi \in B_{r_\eps}^c$ the function is automatically of order $\eps^{M+1}$. This follows because for any $f\in \cS_{*}(\RR^2)$ we know that there exists an $N$ such that for all $|\xi|\geq r_\eps$ we have $|f(\xi)|\leq (1+|\xi|)^{N}\leq (1+|\xi|)^{N+M+1}r_\eps^{-M-1}$, which implies that $f|_{B_{r_\eps}^c}=\cO_{\cS_*}(\eps^{M+1})$. 
Similarly, we can restrict the domain of integration so that $\eta \in B_{r_\eps}$, 
up to negligible errors. Hence, if $\xi,\eta \in B_{r_\eps}$, we have $|(\eps d/Z)(\xi-\eta)|<1$. We can then use the expansion
\begin{equation} \log|1+x|=\Re\log(1+x)=\sum_{n=1}^\infty \frac{(-1)^{n-1}}{n}
  \,\Re(x^n)\, \qquad \text{ valid if }|x| < 1\,,
\end{equation} 
to get
\begin{equation}
 \cT_{\eps,Z}\varphi(\xi)=\frac{\mrM(f)}{2\pi}\,\log\Big(\frac{|Z|}{\eps d}\Big)
  + \sum_{n=1}^M
  \frac{(-1)^{n-1}}{n} \frac{(\eps d)^n}{2\pi}\int_{B_{r_\eps}}\Re\bigl(Z^{-1}(\xi-\eta)\bigr)^n
  \,f(\eta)\,\dd\eta + \cO_{\cS_*}\bigl(\epsilon^{M+1}\bigr)\
\end{equation}
Since $f\in \cZ$, $|Z|^{-1}\leq 1/z_{\min}$ and $\xi\in B_{r_\eps}$, we can replace $\int_{B_{r_\eps}}$ with $\int_{\RR^2}$ up 
to an error of size $\mathcal{O}_{\cS_*}(\eps^{M+1})$, thus completing the proof for the translation operator by expanding the $n$-th power in the integral.

By the definition of $\cV_{\eps,Z}$ and the expansion in Lemma~\ref{lem:expansionsR},  we have
\begin{align}
 \cV_{\eps,Z} \varphi(\xi)&=  \sum_{l=1}^N\cT_{-\eps,Z}(\varphi)(Q_l\xi)\\
\label{eq:expVeps}            &=\mrM(\Omega)\frac{N}{2\pi}\log\Big(\frac{|Z|}{\eps d}\Big)
  +\sum_{n=1}^M\sum_{k=0}^{n} \sum_{l=1}^N(-1)^n\eps^nc_{n,k} \Re\Big(\frac{(Q_l\xi)^{n-k}}{Z^n}{\rm m}^k(f)\Big) + \cO_{\cS_*}\bigl(\epsilon^{M+1}\bigr)\,.
    \end{align}
    Then, we rewrite a part of the sum as
    \begin{align}
\sum_{l=1}^N \Re\Big(\frac{(Q_l\xi)^{n-k}}{Z^n}{\rm m}^k(f)\Big)=\Re\Bigg(\frac{\xi^{n-k}}{Z^n}\mrm^k(f)\sum_{l=1}^NQ_l^{n-k}\Bigg).
    \end{align}
    By the properties of the roots of unity, we know 
\begin{align}
\sum_{l=1}^{N}Q_l^{n-k}=\sum_{l=1}^{N}\e^{\frac{2\pi il}{N}(n-k)}=\begin{cases}
        0  & \text{if } n-k\neq 0 \, \mod N\\
        N & \text{if } n-k= 0 \, \mod N.
    \end{cases}.
\end{align}
Therefore, we can first simplify the sum in \cref{eq:expVeps} by isolating the case $k=n$, which gives the second term on the right-hand side of \cref{Vexpansion}. Then, we need $0\leq n-k=\ell N$ with $\ell \in \mathbb{N}\setminus\{0\}$ since $0\leq k\leq n-1$, meaning that we have a nontrivial contribution only for $n\geq N$. Thus, summing in $\ell$ instead of $k$, we finally get
\begin{align}
\sum_{n=1}^M\sum_{k=0}^{n-1}\sum_{l=1}^N(-1)^n\eps^nc_{n,k}\Re\Big(\frac{(Q_l\xi)^{n-k}}{Z^n}\mrm^k(f)\Big)=N\sum_{n=N}^M\sum_{\substack{\ell \in \mathbb{N}\setminus \{0\}\\
\ell N\leq n}}(-1)^n\eps^n c_{n,n-\ell N} \Re\Big(\frac{\xi^{\ell N}}{Z^n}\mrm^{n-\ell N}(f)\Big),
\end{align}
and the proof for the polygon-center operator is over.

We finally consider the operator $\cQ_{\eps,Z}$, which is more convenient to rewrite as
\begin{equation}
\label{eq:QepsZl}
    \cQ_{\eps,Z}(f)=\sum_{l=1}^{N-1} f\big(Q_l \xi+\frac{1}{\eps(t) d}Z_l\big), \qquad Z_l(t)=(Q_l-I)Z.
\end{equation}

 Using the expansion of $\cT_{\eps,Z}$, we get that
    \begin{align}
            \cQ_{\eps,Z} \varphi(\xi)=\frac{\mrM(\Omega)}{2\pi}\sum_{l=1}^{N-1}\log\Big(\frac{|Z_l|}{\eps d}\Big)
  +\sum_{n=1}^M\sum_{k=0}^{n} \sum_{l=1}^{N-1}\eps^nc_{n,k} \Re\Big(\frac{(Q_l\xi)^{n-k}}{Z_l^n}{\rm m}^k(f)\Big)+
\cO_{\cS_*}\bigl(\epsilon^{M+1}\bigr).
    \end{align}
     Recalling that $Z_l=(Q_l-1)Z$, in view of the properties of ${\rm S}_{n,k}$ in Lemma \ref{lem:Snk}, we readily find that
\begin{align}
    \sum_{l=1}^{N-1} \Re\Big(\frac{(Q_l\xi)^{n-k}}{Z_l^n}{\rm m}^k(f)\Big)=\Re\Bigg(\frac{\xi^{n-k}}{Z^n}\mrm^k(f)\sum_{l=1}^{N-1}\frac{Q_l^{n-k}}{(Q_l-1)^n}\Bigg)={\rm S}_{n,n-k}\Re\Bigg(\frac{\xi^{n-k}}{Z^n}\mrm^k(f)\Bigg),
\end{align}
which concludes the proof.

\end{proof}


\subsection{Expansion of the integral quantities}

We now compute expansions of integral quantities involving the operators $\cT_{\eps,Z}$ and $\cQ_{\eps,Z}$. Indeed, in view of the definitions of the radius and angular speed, these computations are necessary to rigorously establish their asymptotic expansions as stated in \cref{eq:expRa}. The following lemma also establishes the expansion~\cref{eq:expansionBeps}.
\begin{lemma}
\label{lem:expVel}
    Let $f,h\in \cZ$, $\varphi=\Delta^{-1}f$ and assume that $0<z_{\min}\leq |Z|$. Then, for all $M\in \mathbb{N}$ one has the expansions
    \begin{align}
\label{eq:expspeed}&\int_{\RR^2}\nabla^\perp(\cT_{\eps,Z}\varphi)h\,\dd \xi=\sum_{n=1}^M\eps^n\frac{Z^\perp}{|Z|^{2}}\frac{1}{\overline{Z^{n-1}}}\sum_{k=0}^{n-1}(n-k)c_{n,k}\overline{\mrm^k(f)}\overline{\mrm^{n-k-1}(h)}+\cO(\eps^{M+1})    \end{align}
and
\begin{equation}
    \int_{\RR^2}\nabla^\perp(\cQ_{\eps,Z}\varphi)h\,\dd \xi=\sum_{n=1}^M\eps^n\frac{Z^\perp}{|Z|^{2}}\frac{1}{\overline{Z^{n-1}}}\sum_{k=0}^{n-1}(n-k)c_{n,k}{\rm S}_{n,n-k}\overline{\mrm^k(f)}\overline{\mrm^{n-k-1}(h)}+\cO(\eps^{M+1}) .
\end{equation}
\end{lemma}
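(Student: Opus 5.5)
The plan is to plug the expansion of Lemma~\ref{lem:Expansions} directly into the integral and then integrate term by term against $h$. The expansion of $\cT_{\eps,Z}\varphi$ holds up to an $\cO_{\cS_*}(\eps^{M+2})$ error, and that bound covers $\xi$-derivatives. Since $h\in\cZ$ decays like a Gaussian times a polynomial, the remainder gives $\int \nabla^\perp(\cO_{\cS_*}(\eps^{M+2}))\,h=\cO(\eps^{M+2})$. The logarithmic constant term is killed by the gradient. It therefore remains to compute
\begin{equation}
\int_{\RR^2}\nabla^\perp_\xi \Re\Big(\frac{\xi^{n-k}}{Z^n}\mrm^k(f)\Big)\,h(\xi)\,\dd\xi
\end{equation}
for $1\le n\le M+1$ and $0\le k\le n$. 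The terms with $k=n$ vanish because they are constant in $\xi$. This explains why the sum stops at $k=n-1$.

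The key computation is the complex-notation identity for gradients of holomorphic polynomials. For $g(\xi)=\Re(a\,\xi^m)$ with $a\in\mathbb C$ and $\xi$ identified with a complex number, one has $\nabla g\leftrightarrow \overline{m a \xi^{m-1}}$, i.e.\ $\de_1 g-i\,\de_2 g=m a\xi^{m-1}$. Correspondingly, $\nabla^\perp g=(-\de_2 g,\de_1 g)\leftrightarrow i\,\overline{m a\xi^{m-1}}$. Integrating against $h$ gives
\begin{equation}
\int \nabla^\perp\Re(a\xi^{m})h=i\,m\,\bar a\,\overline{\mrm^{m-1}(h)} .
\end{equation}
Taking $m=n-k$ and $a=\mrm^k(f)/Z^n$ yields $i(n-k)\overline{Z^{-n}}\,\overline{\mrm^k(f)}\,\overline{\mrm^{n-k-1}(h)}$. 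Multiplying by $\eps^nc_{n,k}$ and summing gives the formula. To match the stated prefactor, rewrite
\begin{equation}
i\,\overline{Z}^{-n}=\frac{iZ}{|Z|^2}\,\frac{1}{\overline{Z^{n-1}}},
\end{equation}
and use $iZ\leftrightarrow Z^\perp$. The $\cO(\eps^{M+1})$ term collects the $n=M+1$ contribution together with the remainder.

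For $\cQ_{\eps,Z}$ the argument is identical, using the expansion of $\cQ_{\eps,Z}\varphi$ in Lemma~\ref{lem:Expansions}. The coefficients are now $c_{n,k}{\rm S}_{n,n-k}$, and these are real by Lemma~\ref{lem:Snk}, so conjugation leaves them unchanged. The only genuine care needed is at two points. First, one must confirm the sign and conjugation conventions in the identity $\nabla^\perp\Re(a\xi^m)\leftrightarrow i\,\overline{ma\xi^{m-1}}$; this can be checked on $m=1$, $a=1$, where $\nabla^\perp\xi_1=(0,1)\leftrightarrow i$. Second, one must justify that the $\cO_{\cS_*}$ remainder integrates to $\cO(\eps^{M+1})$ uniformly for $|Z|\ge z_{\min}$, which follows from the polynomial weights in Definition~\ref{def:topo} and $h\in\cZ$. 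I expect the bookkeeping of conjugates to be the main, though minor, obstacle.
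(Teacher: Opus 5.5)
Your proposal is correct and follows the paper's proof. Both substitute the expansion of Lemma~\ref{lem:Expansions}, differentiate each harmonic polynomial term $\Re(\xi^{n-k}\mrm^k(f)/Z^n)$, and integrate against $h$. The paper derives $\nabla^\perp\Re(a\xi^m)=i\,m\,\overline{a\,\xi^{m-1}}$ in polar coordinates, while you obtain the same formula from $\de_1 g-i\,\de_2 g=(a\xi^m)'$; either way the conjugations and the rewriting $i\,\overline{Z}^{\,-n}=\frac{Z^\perp}{|Z|^2}\frac{1}{\overline{Z^{n-1}}}$ come out as you state.
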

\begin{proof}
  Appealing to \cref{lem:Expansions}, we know that 
    \begin{align}
\nabla^\perp\cT_{\eps,Z}\varphi=\sum_{n=1}^M\sum_{k=0}^n\eps^nc_{n,k}\nabla^\perp\Re\bigg(\frac{\xi^{n-k}}{Z^n}\mrm^k(f)\bigg)+\cO_{\cS_*}(\eps^{M+1}).
    \end{align}
    To compute the orthogonal gradient, we pass to polar coordinates $\xi=r\e^{i\theta}$ (keeping the complex number notation) and we note that
    \begin{equation}
        \nabla^\perp =i(\de_1+i\de_2)\to i\e^{i\theta}(\de_r+i\frac{1}{r}\de_\theta).
    \end{equation}
    Then, denoting 
    \begin{align}
        Z=|Z|\e^{i\phi_Z}, \qquad \mrm^k(f)=|\mrm^k(f)|\e^{i\phi_k},
    \end{align}
we get    \begin{align}
    \Re\bigg(\frac{\xi^{n-k}}{Z^n}\mrm^k(f)\bigg)\to \frac{|\mrm^k(f)|}{2|Z|^n}r^{n-k}\big(\e^{i(n-k)\theta +i(\phi_k-n\phi_Z)}+\e^{-i(n-k)\theta -i(\phi_k-n\phi_Z)}\big).
    \end{align}
    Thus, applying the $\nabla^\perp$ we see that the right-hand side becomes 
    \begin{align}
    &i(n-k)\frac{|\mrm^k(f)|}{2|Z|^n}r^{n-k-1}\big(\e^{i(n-k+1)\theta +i(\phi_k-n\phi_Z)}+\e^{-i(n-k-1)\theta -i(\phi_k-n\phi_Z)}\big)\\
    &-i(n-k)\frac{|\mrm^k(f)|}{2|Z|^n}r^{n-k-1}\big(\e^{i(n-k+1)\theta +i(\phi_k-n\phi_Z)}-\e^{-i(n-k-1)\theta -i(\phi_k-n\phi_Z)}\big)\\
    &=i(n-k)\frac{|\mrm^k(f)|}{|Z|^n}r^{n-k-1}\e^{-i(n-k-1)\theta -i(\phi_k-n\phi_Z)}.
    \end{align}
    Going back to Cartesian coordinates, note that
    \begin{align}
        r^{n-k-1}\e^{-i(n-k-1)\theta}\to \overline{\xi^{n-k-1}}, \qquad |\mrm^k(f)|\e^{-i\phi_k}\to \overline{\mrm^{k}(f)}
    \end{align}
    and
    \begin{align}
        i\frac{1}{|Z|^n}\e^{in\phi_Z}=i\frac{|Z|}{|Z|^{n+1}}\e^{in\phi_Z}\to \frac{Z^\perp}{|Z|^{2}}\frac{1}{\overline{Z^{n-1}}}.
    \end{align}
    Therefore, we have computed that 
    \begin{align}
\nabla^\perp\Re\bigg(\frac{\xi^{n-k}}{Z^n}\mrm^k(f)\bigg)=(n-k)\frac{Z^\perp}{|Z|^{2}}\frac{1}{\overline{Z^{n-1}}}\overline{\mrm^{k}(f)}\overline{\xi^{n-k-1}}.
    \end{align}
    Since $h\in \cZ$, we then easily deduce that 
    \begin{align}
        \int_{\RR^2}(\nabla^\perp\cT_{\eps,Z}\varphi)h\, \dd\xi=\sum_{n=1}^M\eps^n\frac{Z^\perp}{|Z|^{2}}\frac{1}{\overline{Z^{n-1}}}\sum_{k=0}^{n-1}(n-k)c_{n,k}\overline{\mrm^k(f)}\overline{\mrm^{n-k-1}(h)}+\cO(\eps^{M+1}),
    \end{align}
    which proves \cref{eq:expspeed}.
    The proof for $\cQ_{\eps,Z}$ is now straightforward using the same arguments.
\end{proof}

\subsection{Commutators and adjoints}\label{app:proofs_adjoints}
We now prove Lemma~\ref{lem:adjoints}. Most of the properties hold for any vector $Z$, but we rely on the specific choice $Z=(R,0)$ to prove the preservation of parity in $\xi_2$ for $\cQ_{\eps,Z}$. Therefore, we present the proof for a general $Z$, explicitly restricting to $Z=(R,0)$ only when necessary.
\begin{proof}[Proof of Lemma~\ref{lem:adjoints}]
    The identity for $\cT_{\eps,Z}^*$ is a direct computation using the definition of $\cT_{\eps,Z}$. Regarding the commutation property with $\Delta$, we simply exploit that a translation commutes with spatial derivatives.

By the definition of $\cV_{\eps,Z}$, since $Q_l$ are rotation matrices we have 
\begin{align}
    \langle \cV_{\eps,Z}g,f\rangle&=\int_{\RR^2}\sum_{l=1}^N g\big(Q_{l}\xi-\frac{1}{\eps d}Z\big) f(\xi)\dd \xi\\ &=\int_{\RR^2}g\big(\eta) \Big(\sum_{l=1}^N f\big(Q_{-l}(\eta+\frac{1}{\eps d}Z)\big)\Big)\dd \xi=\langle g,\cV_{\eps,Z}^* f\rangle.
\end{align}
The commutation properties with $\Delta$ follow by the fact that $\cV_{\eps,Z},\cV_{\eps,Z}^*$ are compositions with a rotation operator $({\sf R}f)(\xi)=f(Q_l\xi)$ and translations ${\sf T}(f)=f(\xi+z)$ for a fixed vector $z\in \RR^2$. Indeed, since clearly $[{\sf R},\Delta]=0$ and $[{\sf T},\Delta]=0$ then also $[\sf{R}\sf{T},\Delta]=[\sf{T}\sf{R},\Delta]=0$ as desired. 
The property about $N$-fold symmetric functions is a direct consequence of the first identity in \cref{def:Veps*}.

For $\cQ_{\eps,Z}^*$, it is enough to observe that 
\begin{equation}
 Q_l\xi+\frac{1}{\eps d}Z_l=\eta\quad \Longrightarrow\quad    \xi=Q_{-\ell}\eta+\frac{1}{\eps d}Z_{-l},
\end{equation}
which readily imply that $\cQ_{\eps,Z}$ is self-adjoint in $L^2(\RR^2)$ by the identification of $Q_{-l}=Q_{N-l}$. The commutation property with the Laplacian follows since $\cQ_{\eps,Z}$ is a combination of translations and rotations. Finally, the property for even functions can be seen as follows: identify $(\xi_1,-\xi_2)$ with the complex conjugation $\overline{\xi}$, $Z$ with the real number $R$ and $Q_l=\exp(2\pi i l/N)$. Then, observe that 
\begin{align}
    Q_{N-l} \overline{\xi}+(\eps d)^{-1}(Q_{N-l}-1)R= Q_{-l} \overline{\xi}+(\eps d)^{-1}(Q_{-l}-1)R=\overline{Q_{l} \xi+(\eps d)^{-1}(Q_{l}-1)R}.
\end{align}
Hence, if $f$ is even we can combine the identity above with the fact that we are summing over $l=1,\dots, N-1$ to conclude the proof. 

It remains to show \cref{key:Q,key:V*,key:V}. First of all, we denote $Z_\eps=Z/(\eps d)$ and we introduce 
\begin{equation}
    \eta_l(\xi)=Q_l(\xi+Z_\eps)-Z_\eps.
\end{equation}
In this way, we can rewrite 
\begin{align}
    \cQ_{\eps, Z}f(\xi)=\sum_{l=1}^{N-1}f(\eta_l(\xi))
\end{align}
Then we observe that 
\begin{align}
    \frac12\{|\cT_{\eps,Z}\xi|^2,f\}=(\xi+Z_\eps)^\perp \cdot \nabla f.
\end{align}
Thus 
\begin{align}
    \frac12(\cQ_{\eps,Z}\{|\cT_{\eps,Z}\xi|^2,f\})(\xi)&=\sum_{l=1}^{N-1}(\eta_l(\xi)+Z_\eps)^\perp \cdot (\nabla f)(\eta_l(\xi))\\
    \label{eq:keyQV0}&=\sum_{l=1}^{N-1}(Q_l(\xi+Z_\eps))^\perp \cdot (\nabla f)(\eta_l(\xi)),
\end{align}
    where in the last identity we used the definition of $\eta_l(\xi).$ On the other hand 
    \begin{align}
            \frac12\{|\cT_{\eps,Z}\xi|^2,\cQ_{\eps,Z}f\}&=(\xi+Z_\eps)^\perp \cdot \nabla (\cQ_{\eps,Z}f)=\sum_{l=1}^N(\xi+Z_\eps)^\perp \cdot \nabla (f(\eta_l(\xi)))\\
    \label{eq:keyQV1}&=\sum_{l=1}^N(\xi+Z_\eps)^\perp \cdot Q_l^T (\nabla f)(\eta_l(\xi)).
    \end{align}
    Then, by the property of the dot product $v\cdot (M^T w)=Mv\cdot  w$ and $Q_lv^\perp=(Q_l v)^\perp$ since $Q_l$ is a rotation matrix, we see that indeed \cref{key:Q} holds true by combining \cref{eq:keyQV0} with \cref{eq:keyQV1}.

    To prove  \cref{key:V*}, we denote $y_l(\xi)=Q_l(\xi+Z_\eps)$ and, in view of \cref{def:Veps}, we have that 
    \begin{align}
        \frac12\cV_{\eps,R}^*(\{|\xi|^2,f\})=\sum_{l=1}^N y_l(\xi)^\perp \cdot (\nabla f)(y_l(\xi)).
    \end{align}
    Instead
    \begin{align}
          \frac12\{|\cT_{\eps,Z}\xi|^2,  \cV_{\eps,R}^*f\}&=\sum_{l=1}^N(\xi+Z_\eps)^\perp \cdot \nabla (f(y_l(\xi)))=\sum_{l=1}^N(\xi+Z_\eps)^\perp \cdot Q_l^T (\nabla f)(y_l(\xi))\\
      &\sum_{l=1}^N(Q_l(\xi+Z_\eps))^\perp \cdot  (\nabla f)(y_l(\xi))=\sum_{l=1}^N y_l(\xi)^\perp \cdot (\nabla f)(y_l(\xi)),
    \end{align}
    meaning that \cref{key:V*} is proved.
    Finally, \cref{key:V} directly follows by taking the $L^2$ adjoint of the identity in \cref{key:V*} (or it can be proved directly as before).
\end{proof}

\section{Viscous deviations from the point-vortex dynamics}\label{app:cA}
Let the family of approximate solutions be constructed as in Section~\ref{sec:App}. We establish the following result, which provides the necessary tools to compute the first-order perturbations to $\Mapp{R}{M}$ and $\Mapp{\alpha}{M}$ relative to the point-vortex dynamics.
\begin{lemma}\label{lem:expansionBtot}
    For every $M \ge 5$ and $\eps$ small enough, there holds that
    \begin{align}
         \Im(&\cB_{\eps,\Mapp{R}{M}}(\Mapp{\Psi}{M},\Mapp{\widehat{\Psi}}{M})) = \frac{\eps}{\Mapp{R}{M}} c_{1,0} (S_{1,1}+\gamma) \\& + \frac{\eps^5}{(\Mapp{R}{M})^3} \Big(3c_{3,0}({\rm S}_{3,3}+\gamma) \mrm_1^2(\Omega_2) + c_{3,2} ({\rm S}_{3,1} \mrm_1^2(\Omega_2)+\gamma \mrm_1^2(\widehat{\Omega}_2)\big) \Big) + \cO(\eps^6 + \delta \eps^5)
    \end{align}
    and
    \begin{multline}
        \Re(\cB_{\eps,\Mapp{R}{M}}(\Mapp{\Psi}{M},\Mapp{\widehat{\Psi}}{M})) \\ = -\delta\frac{\eps^5}{(\Mapp{R}{M})^3} \Big( 3c_{3,0} \big( {\rm S}_{3,3}  + \gamma\big) \mrm_2^{2}(\Omega_{2,\mathrm{NS}}) + c_{3,2} \big( {\rm S}_{3,1} \mrm_2^2( \Omega_{2,\mathrm{NS}}) + \gamma \mrm_2^2( \widehat{\Omega}_{2,\mathrm{NS}})\big) \Big) + \cO(\delta \eps^6).
    \end{multline}
\end{lemma}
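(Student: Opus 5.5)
The plan is to plug the expansion \cref{eq:expansionBeps} of Lemma~\ref{lem:eveneven} into $\cB_{\eps,\Mapp{R}{M}}(\Mapp{\Psi}{M},\Mapp{\widehat{\Psi}}{M})$ with $R=\Mapp{R}{M}$ real. Here $w=\widehat{w}=\widetilde{w}$ are the approximate vorticities $\Mapp{\Omega}{M}$, $\Mapp{\widehat{\Omega}}{M}$, and the expansion is truncated at order $\eps^5$. Everything then reduces to knowing the moments $\mrm^k(\Mapp{\Omega}{M})$ and $\mrm^k(\Mapp{\widehat{\Omega}}{M})$ for $k\le 4$, up to the needed powers of $\eps$.

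\textbf{Moment bookkeeping.} By \ref{constraint:1}, $\mrM=1$ and $\mrm^1=0$ for both approximate vorticities. Since $\Omega_1=0$, we have $\mrm^k(\Mapp{\Omega}{M})=\mrm^k(G)+\eps^2\mrm^k(\Omega_2)+\eps^3\mrm^k(\Omega_3)+\cO(\eps^4)$. Moreover $\mrm^k(G)=0$ for $k\ge1$, because $G$ is radial and $\xi^k$ carries angular mode $k$. Only the $k$-th angular Fourier mode of $\Omega_j$ contributes to $\mrm^k$. From Section~\ref{sec:second_order}, $\Omega_2$ is a pure mode-2 function, and from Section~\ref{sec:first_corrections}, $\Omega_3$ is pure mode~3. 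For $\widehat{\Omega}_2$, the $N$-fold symmetry forces $\widehat{\Omega}_2=0$ unless $N=2$, so the formula stays valid in all cases.

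\textbf{Collecting terms.} In a term $(n,k)$ the product $\overline{\mrm^k(w)}\,\overline{\mrm^{n-k-1}(\widetilde w)}$ involves two moments of total index $n-1$. Moments $\mrm^j$ with $j\ge2$ are $\cO(\eps^2)$. The leading term is $n=1$, $k=0$, giving $i\frac{\eps}{R}c_{1,0}(S_{1,1}+\gamma)$. For $n=2$ one moment is $\mrm^1$, which vanishes. For $n=3$, the survivors are $(k=0,\ \mrm^0\cdot\mrm^2(\widetilde w))$ with weight $3c_{3,0}(S_{3,3}+\gamma)$, and $(k=2,\ \mrm^2(w)\cdot\mrm^0)$ with weight $c_{3,2}(S_{3,1}\mrm^2(\Omega)+\gamma\mrm^2(\widehat\Omega))$. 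Each carries an $\eps^2$, hence contributes at order $\eps^5$. For $n=4$, the terms involve $\mrm^3$ (order $\eps^3$) or pairs of $\mrm^2$'s and $\mrm^1$'s, so they are at least $\eps^4\cdot\eps^3=\eps^7$ or vanish. For $n=5$ every term is at least $\cO(\eps^5\cdot\eps^4)$ except $\mrm^4$ terms, which are $\eps^5\cdot\eps^4$ as well. The $\mrm^3$ contribution at $n=4$ is $\cO(\eps^7)$ and must be checked carefully, since it is the only place $\eps^3$-order profiles enter.

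\textbf{Real and imaginary parts.} Writing $\overline{\mrm^2}=\mrm^2_1-i\mrm^2_2$, the prefactor $i$ sends $\mrm^2_1$ into the imaginary part and $\mrm^2_2$ into the real part with a minus sign. Up to the normalization convention relating $\eps^5/R^3$ (the paper's $R^3$ vs.\ $R^5$ and the $d$-factors in $c_{n,k}$ must be tracked), this produces both formulas. For the imaginary part, replacing $\Omega_2$ by $\Omega_{2,\mathrm E}$ costs $\cO(\delta\eps^5)$. For the real part, the Eulerian profiles are even in $\xi_2$ by \ref{constraint:2}, so $\mrm^2_2(\Omega_{2,\mathrm E})=0$, which is consistent with \cref{eq:Reeven}. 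Only $\delta\,\mrm^2_2(\Omega_{2,\mathrm{NS}})$ survives, and all subsequent contributions carry $\delta$, hence are $\cO(\delta\eps^6)$. The $\delta^2$ terms are absorbed as well.

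The main obstacle is justifying the error in the real part as $\cO(\delta\eps^6)$ rather than $\cO(\eps^6)$. This requires checking that every contribution to $\Re\cB$ contains a non-Eulerian factor, including the odd-in-$\xi_2$ parts $\mrm^3_2,\mrm^4_2$ of higher profiles and the $\eps^{M+1}$ remainder of \cref{eq:expansionBeps}. I would handle this by applying \cref{eq:Reeven} to the full Eulerian part exactly, as in Lemma~\ref{lem:computeR2}. I would then expand only the bilinear cross terms that carry a factor $\delta$.
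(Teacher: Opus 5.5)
Your proposal follows essentially the same route as the paper. You split $\cB_{\eps,\Mapp{R}{M}}(\Mapp{\Psi}{M},\Mapp{\widehat{\Psi}}{M})$ into the Eulerian--Eulerian piece, the two $\delta$-cross terms and the $\delta^2$ term. You kill the real part of the Eulerian--Eulerian piece exactly with \cref{eq:Reeven}. You then read off the $n=1$ and $n=3$ contributions from \cref{eq:expansionBeps} using $\mrM=1$, $\mrm^1=0$ and $\mrm^j=\cO(\eps^2)$ for $j\ge 2$.

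Two smaller remarks. First, you do not need the angular-mode structure of $\Omega_3$: the crude bound $\mrm^j=\cO(\eps^2)$ already makes every $n\ge4$ term $\cO(\eps^6)$, which is what the paper uses. Also, $(\eps/R)^3\cdot\eps^2$ gives $\eps^5/R^3$ directly, so there is no $R^3$ versus $R^5$ issue to track.

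Second, a sign correction. $i\,\overline{\mrm^2}=i(\mrm^2_1-i\,\mrm^2_2)=\mrm^2_2+i\,\mrm^2_1$, so $\mrm^2_2$ enters the real part with a \emph{plus} sign, not a minus. A direct check confirms this: $-\int\partial_2\Re(\xi^3)\,\Omega_{2,\mathrm{NS}}\,\dd\xi=3\,\mrm^2_2(\Omega_{2,\mathrm{NS}})$. The paper's own proof also ends with $+\delta\,\eps^5(\Mapp{R}{M})^{-3}(\dots)$, even though an intermediate line there carries a stray minus. So the minus sign in the displayed statement (and hence the sign of $r_6$) appears to be a misprint, and you should not force your computation to reproduce it.
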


\begin{proof}
    We first split the Euler and Navier-Stokes part, with the cross-interactions being denoted with the three-variable definition of $\cB_{\eps,\Mapp{R}{M}}$:
\begin{multline}
        \cB_{\eps,\Mapp{R}{M}}(\Mapp{\Psi}{M},\Mapp{\widehat{\Psi}}{M}) = \cB_{\eps,\Mapp{R}{M}}(\Mappp{\Psi}{M}{E},\Mappp{\widehat{\Psi}}{M}{E})  + \delta \cB_{\eps,\Mapp{R}{M}}( \Mappp{\Psi}{M}{E},\Mappp{\widehat{\Psi}}{M}{E} ,\Mappp{\Psi}{M}{NS}) \\ + \delta \cB_{\eps,\Mapp{R}{M}}( \Mappp{\Psi}{M}{NS},\Mappp{\widehat{\Psi}}{M}{NS} ,\Mappp{\Psi}{M}{E})  + \delta^2 \cB_{\eps,\Mapp{R}{M}}(\Mappp{\Psi}{M}{NS},\Mappp{\widehat{\Psi}}{M}{NS}).
    \end{multline}
We recall the expansion~\cref{eq:expansionBeps} to be:
\begin{equation}
    \label{eq:expansionBepsbis}
    \cB_{\eps,\Mapp{R}{M}}(\varphi,\widehat{\varphi},\widetilde{\varphi}) = i\sum_{n=1}^M\left(\frac{\eps}{\Mapp{R}{M}}\right)^n\sum_{k=0}^{n-1}(n-k)c_{n,k}\left({\rm S}_{n,n-k}\overline{\mrm^k(w)} + \gamma\overline{\mrm^k(\widehat{w})}\right)\overline{\mrm^{n-k-1}(\widetilde{w})}+\cO(\eps^{M+1}),
\end{equation}
Now, we recall from the construction of the first orders that
    \begin{align}
        & \Mappp{\Omega}{M}{NS} = \cO(\eps^2), \quad \Mappp{\widehat{\Omega}}{M}{NS}= \cO(\eps^2), \\
        &  \mrM(\Mappp{\Omega}{M}{NS}) = \mrM(\Mappp{\widehat{\Omega}}{M}{NS}) = 0,\\ 
        &  \mrm^1(\Mappp{\Omega}{M}{NS}) = \mrm^1(\Mappp{\Omega}{M}{E}) = \mrm^1(\Mappp{\widehat{\Omega}}{M}{NS}) = \mrm^1(\Mappp{\widehat{\Omega}}{M}{E}) = 0.
    \end{align}
We deduce from plugging these identities into the expansion~\cref{eq:expansionBepsbis} that
\begin{equation}\label{eq:delta2Term}
    \delta^2 \cB_\eps(\Mappp{\Psi}{M}{NS},\Mappp{\widehat{\Psi}}{M}{NS},\Mappp{\Psi}{M}{NS}) = \cO(\delta^2 (\eps^9+\eps^{M+1})),
\end{equation}
since to produce no vanishing terms, it must hold that $n-k-1 \ge 2$ and $k \ge 2$. This  can happen only if $n \ge 5$, while we also have two factors $\eps^2$ from $\Mappp{\Omega}{M}{NS}$ itself. Following the same reasoning, since $M \ge 5$, we have that
\begin{equation}\label{eq:deltaTerms}
        \delta \cB_{\eps,\Mapp{R}{M}}( \Mappp{\Psi}{M}{E},\Mappp{\widehat{\Psi}}{M}{E} ,\Mappp{\Psi}{M}{NS}) = \cO(\delta \eps^5), \quad  \delta \cB_{\eps,\Mapp{R}{M}}( \Mappp{\Psi}{M}{NS},\Mappp{\widehat{\Psi}}{M}{NS} ,\Mappp{\Psi}{M}{E}) = \cO(\delta \eps^5)
    \end{equation}
    with 3 powers of $\eps$ coming from $\eps^n$ and 2 from $\Mappp{\Omega}{M}{NS}$ or $\Mappp{\widehat{\Omega}}{M}{NS}$.
    
    We now compute the imaginary part of the purely Eulerian part $\cB_{\eps,\Mapp{R}{M}}(\Mappp{\Psi}{M}{E},\Mappp{\widehat{\Psi}}{M}{E})$. To obtain an expansion of this expression, we plug into~\eqref{eq:expansionBepsbis} the fact that $\Mappp{\Omega}{M}{E}$ and $\Mappp{\widehat{\Omega}}{M}{E}$ are even in $\xi_2$ by construction (constraint~\ref{constraint:2}), meaning that for every $k \ge 1$,
    \begin{equation}\label{eq:eulerian_part_even}
        \mrm^k_2(\Mappp{\Omega}{M}{E}) = \mrm^k_2(\Mappp{\widehat{\Omega}}{M}{E}) = 0.
    \end{equation}
    This gives the following expansion:
    \begin{multline}
        \Im(\cB_{\eps,\Mapp{R}{M}}(\Mappp{\Psi}{M}{E},\Mappp{\widehat{\Psi}}{M}{E})) \\ =\sum_{n=1}^M \left(\frac{\eps}{\Mapp{R}{M}}\right)^n \sum_{k=0}^{n-1} (n-k)c_{n,k} \Big( S_{n,n-k} \mrm_1^k( \Mappp{\Omega}{M}{E}) + \gamma \mrm_1^k( \Mappp{\widehat{\Omega}}{M}{E})\Big) \mrm_1^{n-k-1}(\Mappp{\Omega}{M}{E}) + \cO(\eps^{M+1}).
    \end{multline}
     We now compute the first terms in that expression. The $n=1$ term is non vanishing, equal to $c_{1,0} (S_{1,1}+\gamma)$ since
    \begin{equation}
        \mrM(\Mappp{\Omega}{M}{E}) = \mrM(\Mappp{\widehat{\Omega}}{M}{E}) = 1,
    \end{equation}
    and is the term giving the point-vortex dynamics angular velocity as the first-order value. The $n=2$ terms vanish due to the facts that the first moments vanish, as in that case, either $k = 1$ or $n-k-1 = 1$. We now turn to the case $n=3$, computing for each value of $k$ the terms in the sums:
    \begin{align}
        & k = 0 \quad \rightarrow \quad 3c_{3,0} \Big( {\rm S}_{3,3}  + \gamma\Big) \mrm_1^{2}(\Mappp{\Omega}{M}{E}), \\
        & k = 1 \quad \rightarrow \quad 0 \\
        & k = 2 \quad \rightarrow \quad c_{3,2} \Big( {\rm S}_{3,1} \mrm_1^2( \Mappp{\Omega}{M}{E}) + \gamma \mrm_1^2( \Mappp{\widehat{\Omega}}{M}{E})\Big).
    \end{align}
    Since $\mrm_1^2( \Mappp{\Omega}{M}{E}) = \cO(\eps^2)$ and $\mrm_1^2( \Mappp{\widehat{\Omega}}{M}{E}) = \cO(\eps^2)$, these terms give rise to terms of order $\cO(\eps^5)$ in $\Im(\cB_{\eps,\Mapp{R}{M}}(\Mappp{\Psi}{M}{E},\Mappp{\widehat{\Psi}}{M}{E}))$. Let us now observe that there are no other lower-order terms. For $n=4$, since we already have  $4$ powers due to $\eps^n$, to obtain a term of order 5 or less it would require a contribution from the moments of order at most 1. However, $\Mappp{\Omega}{M}{E} = \Omega_0 + \cO(\eps^2)$ and $\Omega_0$ has only vanishing moments except $\mrM(\Omega_0)$. Since $k=0$ and $n-k-1 = 0$ can not be realized at the same time, no such term can exist. The same applies to the terms $n=5$, and higher terms are at least of order $\eps^n$. In conclusion, we have shown, using in addition that $\mrm_1^{2}(\Mappp{\Omega}{M}{E}) = \eps^2\mrm_1^2(\Omega_2) + \cO(\eps^3+\delta \eps^2)$ and $\mrm_1^{2}(\Mappp{\widehat{\Omega}}{M}{E}) = \eps^2\mrm_1^2(\widehat{\Omega}_2) + \cO(\eps^3+\delta \eps^2)$, that
    \begin{multline}
        \Im(\cB_{\eps,\Mapp{R}{M}}(\Mappp{\Psi}{M}{E},\Mappp{\widehat{\Psi}}{M}{E})) =  \frac{\eps}{\Mapp{R}{M}} c_{1,0} (S_{1,1}+\gamma) \\ + \frac{\eps^5}{(\Mapp{R}{M})^3} \Big(3c_{3,0}({\rm S}_{3,3}+\gamma) \mrm_1^2(\Omega_2) + c_{3,2} ({\rm S}_{3,1} \mrm_1^2(\Omega_2)+\gamma \mrm_1^2(\widehat{\Omega}_2)\big) \Big) + \cO(\eps^6 + \delta \eps^5).
    \end{multline}
    Gathering this expansion with the estimates~\cref{eq:deltaTerms,eq:delta2Term}, gives that
    \begin{multline}
         \Im(\cB_{\eps,\Mapp{R}{M}}(\Mapp{\Psi}{M},\Mapp{\widehat{\Psi}}{M})) = \frac{\eps}{\Mapp{R}{M}} c_{1,0} (S_{1,1}+\gamma) \\ + \frac{\eps^5}{(\Mapp{R}{M})^3} \Big(3c_{3,0}({\rm S}_{3,3}+\gamma) \mrm_1^2(\Omega_2) + c_{3,2} ({\rm S}_{3,1} \mrm_1^2(\Omega_2)+\gamma \mrm_1^2(\widehat{\Omega}_2)\big) \Big) + \cO(\eps^6 + \delta \eps^5).
    \end{multline}
    Notice that applying the expansions for $M \ge 5$ is actually necessary to conclude with that precision, although all terms of order $\le 5$ are induced only by $\Omega_2$ and $\widehat{\Omega}_2$. 

    We now turn to the real part of $\cB_{\eps,\Mapp{R}{M}}(\Mapp{\Psi}{M},\Mapp{\widehat{\Psi}}{M})$. By Lemma~\ref{lem:eveneven}, the purely Eulerian term has no real part. To compute the first terms of the expansion, one thus has to look to a more precise expansion of~\cref{eq:deltaTerms}. Using again relation~\cref{eq:eulerian_part_even} into~\cref{eq:expansionBepsbis}, we compute that
    \begin{multline}
        \Re\Big(\cB_{\eps,\Mapp{R}{M}}( \Mappp{\Psi}{M}{E},\Mappp{\widehat{\Psi}}{M}{E} ,\Mappp{\Psi}{M}{NS}) + \cB_{\eps,\Mapp{R}{M}}( \Mappp{\Psi}{M}{NS},\Mappp{\widehat{\Psi}}{M}{NS} ,\Mappp{\Psi}{M}{E}) \Big) \\ = - \sum_{i=1}^M \left(\frac{\eps}{\Mapp{R}{M}}\right)^n \sum_{k=0}^{n-1} (n-k)c_{n,k}
         \bigg[\big(S_{n,n-k} \mrm_1^k(\Mappp{\Omega}{M}{E}) + \gamma \mrm_1^k(\Mappp{\widehat{\Omega}}{M}{E})\big) \mrm_2^{n-k-1}(\Mappp{\Omega}{M}{NS})  \\ 
        + \big(S_{n,n-k} \mrm_2^k(\Mappp{\Omega}{M}{NS}) + \gamma \mrm_2^k(\Mappp{\widehat{\Omega}}{M}{NS})\big) \mrm_1^{n-k-1}(\Mappp{\Omega}{M}{E})\bigg] + \cO(\eps^{M+1}).
    \end{multline}
    To produce non vanishing terms, one needs in the first part of the sum that $n-k-1 \ge 2$, namely $n \ge 3$, and in the second part that $k \ge 2$, which again, requires $n \ge 3$. We thus compute the terms $n=3$, which are of order $\eps^5$:
        \begin{align}
        & k = 0 \quad \rightarrow \quad 3c_{3,0} \Big( {\rm S}_{3,3}  + \gamma\Big) \mrm_2^{2}(\Mappp{\Omega}{M}{NS}), \\
        & k = 1 \quad \rightarrow \quad 0 \\
        & k = 2 \quad \rightarrow \quad c_{3,2} \Big( {\rm S}_{3,1} \mrm_2^2( \Mappp{\Omega}{M}{NS}) + \gamma \mrm_2^2( \Mappp{\widehat{\Omega}}{M}{NS})\Big).
    \end{align}
    Terms with $n \ge 4$ are of order at least $\eps^6$ due to the presence of Navier-Stokes pieces. In conclusion,
    \begin{multline}
        \Re(\cB_{\eps,\Mapp{R}{M}}(\Mapp{\Psi}{M},\Mapp{\widehat{\Psi}}{M})) \\ = \delta\frac{\eps^5}{(\Mapp{R}{M})^3} \Big( 3c_{3,0} \big( {\rm S}_{3,3}  + \gamma\big) \mrm_2^{2}(\Omega_{2,\mathrm{NS}}) + c_{3,2} \big( {\rm S}_{3,1} \mrm_2^2( \Omega_{2,\mathrm{NS}}) + \gamma \mrm_2^2( \widehat{\Omega}_{2,\mathrm{NS}})\big) \Big) + \cO(\delta \eps^6).
    \end{multline}
    \end{proof}


\subsection*{Acknowledgements}
\text{~}
We would like to thank Thierry Gallay for helpful discussions.
Part of this work was conducted when the second author was supported by the grant BOURGEONS ANR-23-CE40-0014-01 of the French National Research Agency.
The first author was supported by the Swiss National Science Foundation (SNF Ambizione grant PZ00P2\_223294).

\bibliographystyle{siam}
\bibliography{bibclass}

\end{document}